\tikzset{node distance=2cm, auto}
\def\edge{\ar@{-}}
\def\dedge{\ar@{.}}
\newtheorem{thm}{Theorem}[section]
\newtheorem{pro}[thm]{Proposition}
\newtheorem{lem}[thm]{Lemma}
\newtheorem{cor}[thm]{Corollary}
\theoremstyle{definition}
\newtheorem{rem}[thm]{Remark}
\newtheorem{hypo}[thm]{Hypothesis}
\newtheorem{notn}[thm]{Notation}
\newcommand{\zbar}{Z}
\title{Poisson derivations of a semiclassical limit of  a family of quantum second Weyl algebras} 
\author{S Launois\footnote{The first-named author's research was partly supported by EPSRC grant EP/R009279/1.}~~and~ I Oppong\footnote{The second-named author's research was partly supported by EPSRC grant EP/W522454/1.}}
\begin{document}
\maketitle
\begin{abstract}
In \cite{lo}, we studied deformations $A_{\alpha,\beta}$ of the second Weyl algebra and  computed their derivations. In the present paper, we identify the semiclassical limits $\mathcal{A}_{\alpha,\beta}$ of these deformations and compute their Poisson derivations. Our results show that the first Hochschild cohomology group HH$^1(A_{\alpha,\beta})$ is isomorphic to the first Poisson cohomology group HP$^1( \mathcal{A}_{\alpha,\beta}).$
\end{abstract}

\section{Introduction} 
Throughout this study, $\mathbb{K}$ will denote  a field with characteristic zero. 

Given a non-commutative algebra $A$ and its semiclassical limit $\mathcal{A},$ an intriguing question has always been ``Do the properties of $A$ always reflect the (Poisson) properties of $\mathcal{A}$?" For example, given the centre, automorphisms, endomorphisms, derivations, and prime ideals of $A$, can one successfully  predict/conjecture the Poisson centre, Poisson automorphisms, Poisson endomorphisms, Poisson derivations and Poisson prime ideals of $\mathcal{A}$? Suppose for instance that the prime spectrum of $A$ reflects the Poisson prime spectrum of $\mathcal{A}.$ 
A follow-on question will be whether they are homeomorphic/isomorphic? 
The answers to these questions, in some specific cases and for some specific algebras, are affirmative.  For example, Goodearl \cite{gscl} has conjectured that the prime and  primitive spectra of the quantized coordinate rings are respectively homeomorphic to the Poisson prime and Poisson primitive spectra of their corresponding semiclassical limits when the base field is algebraically closed and of characteristic zero. This conjecture has been verified for the following quantized coordinate rings: $\mathcal{O}_q(\mathbb{K}^n)$ (see \cite[Theorem 4.1]{kenL}), $\mathcal{O}_q(SL_2(\mathbb{K}))$ (see \cite[Example 9.7]{gscl}), $\mathcal{O}_q(SL_3(\mathbb{K}))$ (see \cite[Theorem 5.21 \& Corollary 5.22]{fryer}) and $\mathcal{O}_{q}(GL_2)$ (see \cite[Corollary 5.23]{fryer}). Goodearl further established that the prime and primitive spectra of the  enveloping algebra $U(\mathfrak{g})$ are respectively homeomorphic  to the prime and primitive spectra of its semiclassical limit (see \cite[Theorem 8.11, Example 2.6]{gscl}). From \cite{cho}, we also have that  the Poisson endomorphisms of the Poisson quantum generalized Weyl algebra are precisely the Poisson analogue of the endomorphisms of the quantum generalised Weyl algebra. Belov-Kanel and Kontsevich \cite{bkk} have  also conjectured that the group of automorphisms of an $n^{\text{th}}$-Weyl algebra $A_n(\mathbb{K})$ is isomorphic to the group of Poisson automorphisms of the corresponding Poisson Weyl algebra in characteristic zero. 

In \cite{lo}, we studied a family of simple quotients $$A_{\alpha,\beta}:=U_q^+(G_2)/\langle \Omega_1-\alpha, \Omega_2-\beta\rangle \ \ \ \ (\alpha,\beta)\in \mathbb{K}^2\setminus \{(0,0)\})$$ of the positive part of the quantized enveloping algebra $U_q^+(G_2)$,  and concluded that the algebra $A_{\alpha,\beta}$ is a $q$-deformation of a quadratic extension of the second Weyl algebra $A_2(\mathbb{K}).$ Since $A_{\alpha,\beta}$ deforms approximately to $A_2(\mathbb{K}),$ it is considered as a \textit{quantum second Weyl algebra.}  Our goal here is to study a semiclassical limit $\mathcal{A}_{\alpha,\beta}$ of $A_{\alpha,\beta},$ and compare its Lie algebra of Poisson derivations to the Lie algebra of derivations of $A_{\alpha,\beta}$ studied in \cite{lo}. Another property that is also worth investigating is the Belov-Kanel and Kontsevich  conjecture [ibid]. Thus, it is natural to ask if the automorphism group of $A_{\alpha,\beta}$ is isomorphic to the Poisson automorphism group of $\mathcal{A}_{\alpha,\beta}$. We will return to this problem in the near future after we have successfully studied the automorphism group of $A_{\alpha,\beta}.$ In the present case and as already mentioned, we only focus on studying the Lie algebra of Poisson derivations of $\mathcal{A}_{\alpha,\beta}$,  and comparing them to their non-commutative counterparts in \cite{lo}. 

In the noncommutative world, the knowledge of the derivations of twisted group algebras, studied by Osborn and Passman \cite{op}, has helped in  studying the derivations of other non-commutative algebras such as the quantum second Weyl algebra (see \cite{lo}), quantum matrices (see \cite{sltl}), generalized Weyl algebras (see \cite{ak}) and some specific examples of quantum enveloping algebras (see \cite{ss}, \cite{xt}, and \cite{zt}). In view of this, we also study the Poisson derivations of the Poisson analogue of the twisted group algebras---called Poisson group algebras---and apply the results to study the Poisson derivations of a semiclassical limit $\mathcal{A}_{\alpha,\beta}$ of $A_{\alpha,\beta}$. The rest of the paper is organised as follows. 

In Section \ref{s2}, we recall some basics  on Poisson algebras and semiclassical limit. We then proceed to study the Poisson derivations of the Poisson group algebras. Similarly to their non-commutative counterparts in \cite{op}, every Poisson derivation of a Poisson group algebra is the sum of an inner Poisson derivation and a central/scalar Poisson derivation.

In Section \ref{sec2},  we study a semiclassical limit $\mathcal{A}$ of the quantum algebra $U_q^+(G_2)$ and establish that $\mathcal{A}$ is a Poisson polynomial $\mathbb{K}$-algebra generated by six indeterminates $X_1,\ldots, X_6.$ Since $\mathcal{A}=\mathbb{K}[X_1,\ldots, X_6]$ satisfies the conditions in \cite[Hypothesis 1.7]{sc}, we can apply the Poisson deleting derivations algorithm \cite{sc} to study its Poisson centre  $\mathcal{A}$:  a polynomial ring $\mathbb{K}[\Omega_1,\Omega_2]$ in two variables. In Section \ref{sec3}, we study some Poisson $\mathcal{H}$-prime ideals of 
$\mathcal{A}$ using Goodearl's $\mathcal{H}$-stratification theory \cite{gd}, and  proceed to study some family $(\langle \Omega_1-\alpha, \Omega_2-\alpha\rangle)_{(\alpha,\beta)\in \mathbb{K}^2\setminus \{(0,0)\})}$ of maximal and primitive Poisson prime ideals of $\mathcal{A}$.
Consequently, we study their corresponding Poisson simple quotients  $$\mathcal{A}_{\alpha,\beta}:=\mathbb{K}[X_1,\ldots, X_6]/\langle \Omega_1-\alpha, \Omega_2-\beta\rangle,$$  and conclude  that the Poisson algebra $\mathcal{A}_{\alpha,\beta}$ is a semiclassical limit of the quantum second Weyl algebra $A_{\alpha,\beta}.$ Having a complete description of the semiclassical limit $\mathcal{A}_{\alpha,\beta}$ of $A_{\alpha,\beta},$ we proceed to study its Poisson derivations in the final section of this paper, by following procedures similar to its non-commutative counterpart $A_{\alpha,\beta}$ (see \cite[\S 5]{lo}). That is, we successively embed $\mathcal{A}_{\alpha,\beta}$ into a suitable Poisson torus $\mathcal{R}_3$ via localization as  follows:
$$\mathcal{A}_{\alpha,\beta}=\mathcal{R}_7\subset \mathcal{R}_6=\mathcal{R}_7\Sigma_6^{-1}\subset \mathcal{R}_5=\mathcal{R}_6\Sigma_5^{-1}\subset \mathcal{R}_4=\mathcal{R}_5\Sigma_4^{-1}
 \subset \mathcal{R}_3.$$
 These embeddings and localization allow us to extend every Poisson derivation of $\mathcal{A}_{\alpha,\beta}$ successively and uniquely to a Poisson derivation of each of the Poisson algebras $\mathcal{R}_i$ through to the Poisson torus 
 $\mathcal{R}_3.$   Since a Poisson torus is an example of a Poisson group algebra, we have that every Poisson derivation of $\mathcal{R}_3$ is the sum of an inner Poisson derivation and a central/scalar Poisson derivation. 
Given the Poisson derivations of 
 $\mathcal{R}_3,$ we backwardly and successively pull the Poisson derivations of $\mathcal{R}_3$ to $\mathcal{A}_{\alpha,\beta}.$ This gives us a complete description of the Poisson derivations of $\mathcal{A}_{\alpha,\beta}.$  Similarly to their non-commutative counterparts in \cite{lo}, every Poisson derivation of $\mathcal{A}_{\alpha,\beta}$ is an inner Poisson  derivations provided $\alpha\beta\neq 0,$ and the sum of inner Poisson and scalar Poisson derivations whenever $\alpha$ or $\beta$ is zero. More precisely, the first Poisson cohomology group HP$^1( \mathcal{A}_{\alpha,\beta})$ is a one-dimensional vector space in the case where $\alpha$ or $\beta$ is zero. 
 
\section{Poisson derivations of Poisson group algebras}
\label{s2}
This section begins with a reminder about Poisson algebras and semiclassical limit. We will then proceed to introduce Poisson group algebras, and consequently, study their derivations. 
\subsection{Poisson algebras}
A \textit{Poisson algebra} $\mathcal{A}$ is a commutative algebra over  $\mathbb{K}$ endowed with a skew-symmetric $\mathbb{K}$-bilinear map $\{-,-\}:\mathcal{A}\times \mathcal{A}\longrightarrow \mathcal{A}$ which satisfies the Leibniz rule (i.e., $\{x,yz\}=\{x,y\}z+y\{x,z\};$  $x,y,z\in\mathcal{A}$)  and Jacobi identity (i.e., $\{x,\{y,z\}\}+\{y,\{z,x\}\}+\{z,\{x,y\}\}=0;$   $x,y,z\in\mathcal{A}$).

The map $\{-,-\}$ is called the \textit{Poisson bracket.}
 From \cite[Prop. 1.7]{fl}, every Poisson bracket of $\mathcal{A}$ extends uniquely to the localizations of $\mathcal{A}.$  A \textit{Poisson ideal}  of $\mathcal{A}$ is any ideal $I$ such that $\{\mathcal{A},I\}\subseteq I.$ 
Given a Poisson ideal $I$ of $\mathcal{A},$ it is well known that the quotient algebra $\mathcal{A}/I$ is a Poisson algebra with an induced Poisson bracket defined as $\{\bar{x},\bar{y}\}=\overline{\{x,y\}},$ where $\bar{x}:=x+I$ and $\bar{y}:=y+I,$ note that $x,y\in \mathcal{A}.$  
Finally, the subalgebra 
$Z_P(\mathcal{A}):=\{a\in\mathcal{A}\mid \{a,x\}=0, \ \forall x\in\mathcal{A}\}$ is called the \textit{ Poisson centre } of  $\mathcal{A}.$    
\begin{rem}
\label{ev29}
 If $\mathcal{A}$ is a Poisson algebra and $\{x_1,\ldots, x_n\}$ is a generating set for $\mathcal{A}$ (as an algebra), then 
 \begin{itemize}
\item[(1)] it is always enough to define a Poisson bracket $\{-,-\}$ on $\mathcal{A}$ by defining it on only the generating set.
\item[(2)]for all 
$f,g\in \mathcal{A},$ we have that
$\{f,g\}=\displaystyle\sum_{i,j=1}^n\{x_i,x_j\}\frac{\partial f}{\partial x_i}\frac{\partial g}{\partial x_j}$ (see \cite[Example 2.2(a)]{gscl}).
\end{itemize}
\end{rem}
\subsection{Semiclassical limit} 
Given a non-commutative algebra, one can move from the `Non-commutative World' to the `Poisson World' through a process called \textit{semiclassical limit}, and reverse this process through \textit{quantization.} This transformation (semiclassical limit) and its reverse transformation (quantization) have been widely studied (for example, see \cite[\S\S 1.1.3]{dumas}, \cite[Chapter III.5]{bg}, and \cite[\S 2]{gkls}). In line with the presentation in \cite[\S\S 1.1.3]{dumas}, we present the following overview of semiclassical limit. Let $R$ be a commutative principal ideal domain containing the field $\mathbb{K}$ and $hR$ be a maximal ideal of $R$ for a fix $h\in R.$  Let $A$ be an algebra which is not necessarily commutative torsion-free $R$-algebra such that the quotient  $\mathcal{A}:=A/hA$ is a commutative algebra.  For $u,v\in A;$ we have that $\bar{u}:=u+hA$ and $\bar{v}:=v+hA$ are their respective canonical images  in $\mathcal{A}$.  Since $\bar{u}\bar{v}=\bar{v}\bar{u},$ we have that $[u,v]:=uv-vu\in hA.$ There exists a unique element $\gamma(u,v)$ of $A$ such that $[u,v]=h\gamma(u,v).$ It follows that $$\{\bar{u},\bar{v}\}:=\gamma(u,v)+hA=\frac{[u,v]}{h}+hA$$ defines a Poisson bracket on $\mathcal{A}$  (see 
  \cite[\S\S 1.1.3]{dumas} for further details). We say that $A$ is  a \textit{quantization} of $\mathcal{A},$ and $\mathcal{A}$ is a \textit{semiclassical limit} of $A.$ Fix $\lambda\in \mathbb{K}.$ The algebra $\mathcal{A}_{\lambda}:=A/(h-\lambda)A$ is a \textit{deformation} of the Poisson algebra $\mathcal{A}=\mathcal{A}_0$  if the central element $h-\lambda$ is not invertible in $A.$ We refer the interested reader to \cite[\S 2]{gkls} for some known examples of semiclassical limit of some families of quantum algebras. 

\subsection{Introduction to Poisson group algebras}
In \cite[\S 1\&{2}]{op}, Osborn and Passman studied the derivations of twisted group algebras.  In line with their results, we also study the Poisson derivations of Poisson group algebras. The results in this section will be crucial in the final section of this paper where we study the Poisson derivations of a semiclassical limit of the quantum second Weyl algebra $A_{\alpha,\beta}.$

Let  $G$ represent a finitely generated abelian group and  $\lambda:G\times G\longrightarrow \mathbb{K}$ be a map such that $\lambda(y,x)=-\lambda(x,y)$ and $\lambda(x,yz)=\lambda(x,y)+\lambda(x,z).$     We define a \textit{Poisson group algebra} $\mathbb{K}_P^\lambda[G]$ as a commutative $\mathbb{K}$-algebra which has a copy $\overline{G}:=\{\bar{g}\mid g\in G\}$ of $G$ as a basis and  satisfies the Poisson bracket via
$\{\bar{x},\bar{y}\}=\lambda(x,y)\bar{x}\bar{y}=\lambda(x,y)\overline{xy}$ for all $x,y\in G$  (note that $\bar{x}\bar{y}=\overline{xy}$).  Observe that $\lambda(x,y) =0$ if and only if  $\{\bar{x},\bar{y}\}=0$.

As an example, take the group algebra 
$\mathbb{K}[\mathbb{Z}^2]$   generated by  $x^{\pm 1}, y^{\pm 1}$  over $\mathbb{K},$ with basis $\overline{\mathbb{Z}^2}:=\{x^iy^j\mid (i,j)\in \mathbb{Z}^2\}.$  A Poisson structure can be defined on $\mathbb{K}[\mathbb{Z}^2]$ via 
$\{x^iy^j,x^ky^l\}=\lambda((i,j),(k,l))x^{i+k}y^{j+l}$, where 
$\lambda((i,j),(k,l)):=il-jk$, to obtain a rank $2$ Poisson torus $\mathbb{K}_P^{\lambda}[\mathbb{Z}^2].$  
In general,  $\mathbb{K}_P^\lambda[\mathbb{Z}^n]$ is a Poisson torus of rank $n$ over the field $\mathbb{K}$ for some  $\lambda:\mathbb{Z}^n\times\mathbb{Z}^n\rightarrow \mathbb{K},$ where $\mathbb{Z}^n$ is the usual additive group.

Let $\gamma\in \mathbb{K}_P^\lambda[G].$  One can write $\gamma$ as $\gamma=\sum_{g}c_g\bar{g}$ with 
$g\in G$ and $c_g\in \mathbb{K}.$ Note that $c_g=0$ for almost all $g$.  The set supp$(\gamma):=\{g\in G\mid
 c_g\neq 0$ in $\gamma\}$  is called the \textit{suppor}t of $\gamma.$
Furthermore, the set 
$C:=\{g\in G\mid \{\bar{g},\bar{x}\}=0$ for all $ x\in G\}$ and 
$\Delta(\bar{x}):=\{g\in G\mid \{\bar{g},\bar{x}\}=0\}$  are both subgroups of $G.$ The following remark establishes a relationship between these two subgroups.
\begin{rem}
Let   $\{g_1,\ldots,g_n\}$ be a generating set for the group $G.$  Then,  $C=\bigcap_{i=1}^n\Delta(g_i).$
\end{rem}
 
\begin{lem}
The Poisson centre $Z_P(\mathbb{K}_P^\lambda[G])$ of $\mathbb{K}_P^\lambda[G]$ is $\mathbb{K}_P^\lambda[C]$. 
\end{lem}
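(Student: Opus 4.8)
The plan is to prove both inclusions by working with the support of elements and exploiting the fact that the Poisson bracket is bilinear and sends basis elements to scalar multiples of basis elements. First I would establish the easy inclusion $\mathbb{K}_P^\lambda[C] \subseteq Z_P(\mathbb{K}_P^\lambda[G])$: take $\gamma = \sum_{g \in C} c_g \bar g$ and an arbitrary $\delta = \sum_{x} d_x \bar x \in \mathbb{K}_P^\lambda[G]$; by bilinearity $\{\gamma, \delta\} = \sum_{g,x} c_g d_x \{\bar g, \bar x\} = \sum_{g,x} c_g d_x \lambda(g,x) \overline{gx}$, and since every $g$ in the sum lies in $C$ we have $\lambda(g,x) = 0$ for all $x \in G$, so the bracket vanishes. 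Hence such $\gamma$ is Poisson-central.

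For the reverse inclusion $Z_P(\mathbb{K}_P^\lambda[G]) \subseteq \mathbb{K}_P^\lambda[C]$, let $\gamma = \sum_{g \in \mathrm{supp}(\gamma)} c_g \bar g$ be Poisson-central, and suppose toward a contradiction that some $g_0 \in \mathrm{supp}(\gamma)$ does not lie in $C$. Then there is a generator $g_i$ of $G$ with $\{\bar g_0, \bar{g_i}\} \neq 0$, i.e. $\lambda(g_0, g_i) \neq 0$. Compute $\{\gamma, \bar{g_i}\} = \sum_{g \in \mathrm{supp}(\gamma)} c_g \lambda(g, g_i)\, \overline{g g_i}$. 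The key observation is that the map $g \mapsto g g_i$ is injective on $G$, so the monomials $\overline{g g_i}$ appearing here are pairwise distinct as $g$ ranges over $\mathrm{supp}(\gamma)$; therefore the coefficient of $\overline{g_0 g_i}$ in $\{\gamma, \bar{g_i}\}$ is exactly $c_{g_0}\lambda(g_0, g_i) \neq 0$, contradicting Poisson-centrality of $\gamma$. Hence $\mathrm{supp}(\gamma) \subseteq C$, which is precisely the statement that $\gamma \in \mathbb{K}_P^\lambda[C]$.

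A small point worth noting is that it suffices to test centrality against the basis elements $\bar x$ for $x \in G$ (indeed against the finitely many $\bar{g_i}$ for generators $g_i$), since $\{\gamma, -\}$ is $\mathbb{K}$-linear and satisfies the Leibniz rule, so it is determined by its values on an algebra generating set; this is the content of Remark \ref{ev29}(1) applied to the Poisson bracket, combined with the preceding Remark identifying $C = \bigcap_{i=1}^n \Delta(g_i)$. I do not expect any genuine obstacle here: the only thing to be careful about is the bookkeeping of supports, namely that right-translation by $g_i$ does not cause cancellation among distinct monomials — this is immediate from $G$ being a group. The argument is essentially the Poisson-bracket analogue of the classical computation of the centre of a twisted group algebra in \cite{op}, with commutators replaced by Poisson brackets.
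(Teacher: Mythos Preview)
Your proof is correct and follows essentially the same approach as the paper: compute $\{\gamma,\bar{x}\}=\sum_{g}c_g\lambda(g,x)\overline{gx}$, use injectivity of $g\mapsto gx$ to read off that each coefficient $c_g\lambda(g,x)$ vanishes, and conclude $\mathrm{supp}(\gamma)\subseteq C$. The only cosmetic differences are that the paper argues directly (rather than by contradiction) and tests against arbitrary $x\in G$ instead of restricting to generators.
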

\begin{proof}
Clearly, $\mathbb{K}_P^\lambda[C]\subseteq Z_P(\mathbb{K}_P^\lambda[G]).$  For the reverse inclusion, take $\gamma=\sum_{g}c_g\bar{g}\in Z_P(\mathbb{K}_P^\lambda[G]).$  It follows that $0=\{\gamma,\bar{x}\}=\sum_{g}c_g\{\bar{g},\bar{x}\}=\sum_{g}c_g\lambda(g,x)\bar{g}\bar{x}$ for any $x\in G.$ Consequently, $\lambda(g,x)=0$ for all $g\in$ supp$(\gamma).$ This implies that
supp$(\gamma)\subseteq C,$ hence $\gamma\in \mathbb{K}_P^\lambda[C].$
\end{proof}
\begin{rem}
Let $e$  be the identity element of $G$. One can easily observe that $ Z_P(\mathbb{K}_P^\lambda[G])=\mathbb{K}$ if and only if $C=\{e\}.$  
\end{rem}

\subsection{Central and inner Poisson derivations} 
\subsubsection{Central Poisson derivations.}
Let $\theta:(G,\cdot)\longrightarrow (\mathbb{K}_P^\lambda[C],+)$ be  a group homomorphism.  That is, $\theta(xy)=\theta(x)+\theta(y)$ for all $x,y\in G.$ Define a $\mathbb{K}$-linear operator 
$\mathcal{D}:=\mathcal{D}_{\theta}$ by $$\mathcal{D}(\bar{x})=\theta(x)\bar{x}$$ for all $x\in G.$ 
\begin{lem}
\label{dd}
 $\mathcal{D}$ is a Poisson derivation of $\mathbb{K}_P^\lambda[G]$.
\end{lem}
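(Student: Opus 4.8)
The plan is to verify directly that $\mathcal{D} = \mathcal{D}_\theta$ satisfies the two defining axioms of a Poisson derivation: the Leibniz rule for the associative product, $\mathcal{D}(uv) = \mathcal{D}(u)v + u\mathcal{D}(v)$, and the Leibniz rule for the Poisson bracket, $\mathcal{D}(\{u,v\}) = \{\mathcal{D}(u),v\} + \{u,\mathcal{D}(v)\}$. Since $\mathcal{D}$ is $\mathbb{K}$-linear and both sides of each identity are $\mathbb{K}$-bilinear in $u,v$, it suffices to check the identities on basis elements $\bar{x},\bar{y}$ with $x,y \in G$; here the key input is that $\theta\colon G \to \mathbb{K}_P^\lambda[C]$ is a group homomorphism into the \emph{additive} group and that the image lies in the Poisson centre $Z_P(\mathbb{K}_P^\lambda[G]) = \mathbb{K}_P^\lambda[C]$ (by the preceding Lemma).

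First I would check the associative Leibniz rule on basis elements. Using $\bar{x}\bar{y} = \overline{xy}$ we compute $\mathcal{D}(\bar{x}\bar{y}) = \mathcal{D}(\overline{xy}) = \theta(xy)\overline{xy} = (\theta(x)+\theta(y))\bar{x}\bar{y}$, which equals $\theta(x)\bar{x}\cdot\bar{y} + \bar{x}\cdot\theta(y)\bar{y} = \mathcal{D}(\bar{x})\bar{y} + \bar{x}\mathcal{D}(\bar{y})$, using that $\theta(x) \in \mathbb{K}_P^\lambda[C]$ is central for the commutative product (which is automatic) so the factors may be rearranged freely. Thus $\mathcal{D}$ is an ordinary algebra derivation.

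Next I would check the bracket Leibniz rule on basis elements $\bar{x},\bar{y}$. On the left, $\mathcal{D}(\{\bar{x},\bar{y}\}) = \mathcal{D}(\lambda(x,y)\overline{xy}) = \lambda(x,y)\theta(xy)\overline{xy} = \lambda(x,y)(\theta(x)+\theta(y))\overline{xy}$. On the right, $\{\mathcal{D}(\bar{x}),\bar{y}\} + \{\bar{x},\mathcal{D}(\bar{y})\} = \{\theta(x)\bar{x},\bar{y}\} + \{\bar{x},\theta(y)\bar{y}\}$; expanding each bracket by the Leibniz rule for $\{-,-\}$ gives $\theta(x)\{\bar{x},\bar{y}\} + \{\theta(x),\bar{y}\}\bar{x} + \theta(y)\{\bar{x},\bar{y}\} + \{\bar{x},\theta(y)\}\bar{y}$. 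Because $\theta(x),\theta(y) \in \mathbb{K}_P^\lambda[C] = Z_P(\mathbb{K}_P^\lambda[G])$, the terms $\{\theta(x),\bar{y}\}$ and $\{\bar{x},\theta(y)\}$ vanish, leaving $(\theta(x)+\theta(y))\{\bar{x},\bar{y}\} = (\theta(x)+\theta(y))\lambda(x,y)\overline{xy}$, which matches the left-hand side.

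There is no serious obstacle here; the only point requiring care is the systematic use of centrality of $\theta(x)$ — both for the associative product (trivial, since the algebra is commutative) and, crucially, for the Poisson bracket, where it is exactly the statement that $\theta(x) \in Z_P(\mathbb{K}_P^\lambda[G])$ that kills the extra terms. I would also remark that, since $\mathcal{D}$ is determined by its values on a generating set and these identities have been verified there, Remark \ref{ev29}(1) guarantees that $\mathcal{D}$ extends consistently to all of $\mathbb{K}_P^\lambda[G]$, completing the proof.
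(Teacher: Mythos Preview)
Your proof is correct and follows essentially the same approach as the paper: both verify the associative and Poisson Leibniz rules on basis elements $\bar{x},\bar{y}$ using $\theta(xy)=\theta(x)+\theta(y)$ and the fact that $\theta(x),\theta(y)\in\mathbb{K}_P^\lambda[C]=Z_P(\mathbb{K}_P^\lambda[G])$ kills the cross terms. One small remark: your appeal to Remark~\ref{ev29}(1) at the end is slightly misplaced, since that remark concerns Poisson brackets rather than derivations; but you already handled this point correctly at the outset via $\mathbb{K}$-linearity and bilinearity, so the argument stands without it.
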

\begin{proof}
We need to show that
$\mathcal{D}(\bar{x}\bar{y})=\mathcal{D}(\bar{x})\bar{y}+\bar{x}\mathcal{D}(\bar{y})\  \text{ and}  \ \mathcal{D}(\{\bar{x},\bar{y}\})=\{\mathcal{D}(\bar{x}),\bar{y}\}+\{\bar{x},\mathcal{D}(\bar{y})\}$ for all $x,y\in G.$ Now, $\mathcal{D}(\bar{x}\bar{y})=\theta(xy)\overline{xy}=\theta(xy)\bar{x}\bar{y}=\theta(x)\bar{x}\bar{y
}+\theta(y)\bar{x}
\bar{y}=\mathcal{D}(\bar{x})\bar{y}+\bar{x}\mathcal{D}(\bar{y}).$ 

Secondly, 
$
\mathcal{D}(\{\bar{x},\bar{y}\})=\lambda{(x,y)}\mathcal{D}(\bar{x}\bar{y})=
\theta(xy)\lambda{(x,y)}\bar{x}\bar{y}=\theta(xy)\{\bar{x},\bar{y}\}
=[\theta(x)+\theta(y)]\{\bar{x},\bar{y}\}
=\theta(x)\{\bar{x},\bar{y}\}+\theta(y)\{\bar{x},\bar{y}\}=\{\theta(x)\bar{x},\bar{y}\}+\{\bar{x},\theta(y)\bar{y}\}=\{\mathcal{D}(\bar{x}),\bar{y}\}+
\{\bar{x},\mathcal{D}(\bar{y})\}
$
(note that $\{\theta(x),\bar{y}\}=\{\bar{x},\theta(y)\}=0$, since $\theta(x)$ and $\theta(y)$ are Poisson central elements). 
\end{proof}
Similarly to \cite{op}, we will call $\mathcal{D}$ in Lemma \ref{dd} as a \textit{central Poisson derivation} when $Z_P(\mathbb{K}_P^\lambda[G])$ strictly contains $\mathbb{K},$ and a \textit{scalar Poisson derivation} when $Z_P(\mathbb{K}_P^\lambda[G])=\mathbb{K}.$ 

Observe that $\mathcal{D}(\bar{x})=\theta(x)\bar{x}\in \mathbb{K}_P^\lambda[Cx]$ for all $x\in G.$ 

\subsubsection{Inner Poisson derivations.}
Let $\gamma=\sum_{g}c_g\bar{g}\in \mathbb{K}_P^\lambda[G],$ where $ c_g\in \mathbb{K},$ and 
ham$_{\gamma}:=\{\gamma,-\}.$ It is well known that  ham$_{\gamma}:\mathbb{K}_P^\lambda[G]\longrightarrow \mathbb{K}_P^\lambda[G]$ is a derivation  called the \textit{hamiltonian derivation associated to $\gamma$.} 
Moreover, ham$_{\gamma}(\bar{x})=\{\gamma, \bar{x}\}=\sum_{g}\lambda{(g,x)}c_g\bar{g}\bar{x}\in \mathbb{K}_P^\lambda[Gx]$  for all $x\in G.$ As usual, we can assume that  $C \ \cap$ supp$(\gamma)=\emptyset.$
Therefore, ham$_{\gamma}( \bar{x}) \in \mathbb{K}_P^\lambda[(G\setminus C)x]$ for all $x\in G.$ 

We call  the hamiltonian derivation ham$_{\gamma}$ an \textit{ inner  Poisson derivation}. \\

We can now state our main result in this section in the theorem below.
\begin{thm}
\label{pic1}
Every Poisson derivation of the Poisson group algebra $\mathbb{K}_P^\lambda[G]$ is uniquely the sum of an inner Poisson  derivation and a central Poisson  derivation.
\end{thm}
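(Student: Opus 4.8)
The proof proceeds in two halves: first we show that a central Poisson derivation and an inner Poisson derivation can only agree (or, more precisely, that their difference can only be zero) under the obvious compatibility conditions, which will give both the decomposition and its uniqueness; second we show that an arbitrary Poisson derivation $\mathcal{D}$ of $\mathbb{K}_P^\lambda[G]$ actually decomposes this way. The key structural tool is the grading of $\mathbb{K}_P^\lambda[G]$ by the group $G$: every element is a finite $\mathbb{K}$-linear combination of the basis elements $\bar{g}$, and the Poisson bracket respects this grading in the sense that $\{\bar{x},\bar{y}\}\in\mathbb{K}\bar{xy}$. This lets us decompose $\mathcal{D}$ into graded pieces.

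First I would fix a generating set $\{g_1,\dots,g_n\}$ of $G$ and, for each generator $g_i$, write $\mathcal{D}(\bar{g_i})=\sum_{h\in G}c_{h}^{(i)}\,\bar{h}$. The Leibniz rule for the commutative product forces $\mathcal{D}(\overline{g^k})=k\,\overline{g^{k-1}}\,\mathcal{D}(\bar g)$ and, more generally, $\mathcal{D}(\bar x\bar y)=\mathcal D(\bar x)\bar y+\bar x\mathcal D(\bar y)$, so $\mathcal{D}$ is determined by its values on the generators; moreover, writing $\mathcal{D}(\bar x)=\delta(x)\bar x$ where a priori $\delta(x)\in\mathbb{K}_P^\lambda[Gx^{-1}]$ is a ``logarithmic derivative,'' the product rule translates into the \emph{cocycle condition} $\delta(xy)=\delta(x)+\delta(y)$ for all $x,y\in G$. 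Thus $\delta:G\to\mathbb{K}_P^\lambda[G]$ is additive, hence determined by its values on the $g_i$ and valued in the subgroup-of-group-algebra generated by those values; in particular each homogeneous component of $\delta$ is itself additive.

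Next I would bring in the Poisson-compatibility axiom $\mathcal{D}(\{\bar x,\bar y\})=\{\mathcal D(\bar x),\bar y\}+\{\bar x,\mathcal D(\bar y)\}$. Writing everything in terms of $\delta$ and using $\{\bar x,\bar y\}=\lambda(x,y)\overline{xy}$, this becomes, after cancelling $\lambda(x,y)\overline{xy}$ on the nonzero-$\lambda$ part, a relation of the form $\delta(xy)=\delta(x)+\delta(y)+(\text{Hamiltonian-type correction involving }\lambda)$; comparing with the cocycle condition already obtained, one extracts that for each homogeneous component $\delta_g$ (the coefficient function picking out $\bar g$) one has $\lambda(g,x)\,=\,$(something forced). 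Concretely, I expect the argument to show: if $g\notin C$, the component of $\delta$ supported on translates by $g$ must be exactly the contribution of a Hamiltonian $\{c_g\bar g,-\}$ for a uniquely determined scalar $c_g$; summing these over all $g\notin C$ (a finite sum, by support finiteness of $\mathcal D(\bar g_i)$) gives an inner Poisson derivation $\mathrm{ham}_\gamma$ with $\gamma=\sum_{g\notin C}c_g\bar g$. Subtracting $\mathrm{ham}_\gamma$ from $\mathcal D$ leaves a Poisson derivation whose ``logarithmic derivative'' is supported entirely on $C$ and is still additive in the sense $\delta(xy)=\delta(x)+\delta(y)$; that is precisely a group homomorphism $\theta:G\to(\mathbb{K}_P^\lambda[C],+)$, i.e.\ a central Poisson derivation $\mathcal{D}_\theta$. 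Uniqueness follows because an inner derivation $\mathrm{ham}_\gamma$ with $C\cap\mathrm{supp}(\gamma)=\emptyset$ has all its homogeneous logarithmic components off $C$, while a central derivation has them all on $C$, so the two can coincide only if both vanish.

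\textbf{Main obstacle.} The delicate point is the bookkeeping that separates the ``$C$-part'' from the ``non-$C$-part'' of $\delta$ and shows the non-$C$-part is \emph{exactly} Hamiltonian — not merely that it satisfies the same bracket identities. This requires carefully exploiting both the Leibniz/cocycle identity and the Jacobi-type Poisson identity simultaneously, and using that $\lambda(-,x):G\to\mathbb{K}$ is a homomorphism to pin down the scalars $c_g$ consistently across different generators (so that one global $\gamma$ works, rather than a different Hamiltonian for each generator). I would handle this by first treating a single homogeneous component $\delta_g$, proving it is additive and that the Poisson identity forces $\delta_g(x)$ to be proportional to $\lambda(g,x)$ with a constant of proportionality independent of $x$, and only then assembling the finitely many components into $\gamma$ and checking the leftover is central.
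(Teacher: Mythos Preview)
Your proposal is correct and follows essentially the same route as the paper: write $\mathcal{D}(\bar x)=\sum_g a_g(x)\,\bar g\,\bar x$, use the Leibniz rule to get $a_g(xy)=a_g(x)+a_g(y)$, and use the Poisson condition to get the key relation $a_g(x)\lambda(g,y)=a_g(y)\lambda(g,x)$, then split according to whether $g\in C$ or not. The ``main obstacle'' you flag dissolves once this last identity is written down explicitly: for $g\notin C$ pick any $y$ with $\lambda(g,y)\neq 0$, set $c_g:=a_g(y)/\lambda(g,y)$, and the identity immediately gives $a_g(x)=c_g\lambda(g,x)$ for \emph{all} $x$, so no separate consistency check across generators is needed and $\gamma=\sum_{g\notin C}c_g\bar g$ works globally; uniqueness is exactly your support argument via $\mathbb{K}_P^\lambda[Gx]=\mathbb{K}_P^\lambda[Cx]\oplus\mathbb{K}_P^\lambda[(G\setminus C)x]$.
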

\begin{proof} 
Let $\mathcal{D}$ be a Poisson derivation of $\mathbb{K}_P^\lambda[G].$ Then, for $x\in G,$ we have that $\mathcal{D}(\bar{x})\in \mathbb{K}_P^\lambda[G].$ Hence, $\mathcal{D}(\bar{x})=\sum_{h\in G}b_h(x)\bar{h}=\sum_{h\in G}b_h(x)\bar{h}\bar{x}^{-1}\bar{x}.$ Now, the map $G\rightarrow G$ with 
$h\mapsto hx^{-1}$ is bijective, and so $$\mathcal{D}(\bar{x})=\sum_{g}a_g(x)\bar{g}\bar{x},$$ where $g:=hx^{-1}$ and $a_g(x):=b_{gx}(x).$ Note that $a_g:G\longrightarrow \mathbb{K}$ and $ a_g(x)=0$ for almost all $x\in G.$

 Since $\mathcal{D}$ is a Poisson derivation, we have that
$\mathcal{D}(\bar{x}\bar{y})=\mathcal{D}(\bar{x})\bar{y}+\bar{x}\mathcal{D}(\bar{y})$ for all $x,y\in G.$ As a result,
$$\sum_{g}a_g(xy)\bar{g}\bar{x}\bar{y}=\sum_{g}a_g(x)\bar{g}\bar{x}\bar{y}
+\sum_{g}a_g(y)\bar{g}\bar{x}\bar{y}=\sum_{g}[a_g(x)+a_g(y)]\bar{g}\bar{x}\bar{y}.$$ Identifying the coefficients in the above equality reveals that $$a_g(xy)=a_g(x)+a_g(y).$$

Secondly, $\mathcal{D}(\{\bar{x},\bar{y}\})=\{\mathcal{D}(\bar{x}),\bar{y}\}+\{\bar{x},\mathcal{D}(\bar{y})\}.$  Now,
\begin{align}
\mathcal{D}(\{\bar{x},\bar{y}\})=\lambda{(x,y)}\mathcal{D}(\bar{x}\bar{y})=\sum_{g}\lambda{(x,y)}a_g(xy)\bar{g}\bar{x}
\bar{y}.\label{e27}
\end{align}
 On the other hand,
\begin{align}
\{\mathcal{D}(\bar{x}),\bar{y}\}+\{\bar{x},\mathcal{D}(\bar{y})\}&=\sum_{g}a_g(x)\{\bar{g}\bar{x},\bar{y}\}+\sum_{g}a_g(y)\{\bar{x},\bar{g}\bar{y}\}\nonumber\\
&=\sum_{g}a_g(x)\{\overline{gx},\bar{y}\}+\sum_{g}a_g(y)\{\bar{x},\overline{gy}\}\nonumber\\
&=\sum_{g}[a_g(x)\lambda(gx,y)+a_g(y)\lambda(x,gy)]\bar{g}\bar{x}\bar{y}\nonumber\\
&=\sum_{g} a_g(x)[\lambda(g,y)+\lambda(x,y)]+a_g(y)[\lambda(x,g)+\lambda(x,y)]\bar{g}\bar{x}\bar{y}\nonumber\\
&=\sum_{g}[ \lambda(x,y)a_g(xy) + a_g(x)\lambda(g,y)-a_g(y)\lambda(g,x)]\bar{g}\bar{x}\bar{y}. \label{e28}
\end{align}
Since $\mathcal{D}(\{\bar{x},\bar{y}\}=\{\mathcal{D}(\bar{x}),\bar{y}\}+\{\bar{x},\mathcal{D}(\bar{y})\},$ comparing \eqref{e27} to \eqref{e28} reveals that
\begin{align*}
\lambda{(x,y)}a_g(xy)&= \lambda(x,y)a_g(xy) + a_g(x)\lambda(g,y)-a_g(y)\lambda(g,x).
\end{align*}
 This implies that
 \begin{equation}
 \label{sl}
 a_g(x)\lambda(g,y) = a_g(y)\lambda(g,x).
 \end{equation}
Suppose that $g\in C$. It follows that $\lambda(g,y)=\lambda(g,x)=0$ for all $x,y\in G.$ Since $a_g(xy)=a_g(x)+a_g(y),$ the map $\theta: (G,\cdot)\longrightarrow (\mathbb{K}_P^\lambda[C],+)$ given by
$\theta(x)=\sum_{g\in C}a_g(x)\bar{g}$ is a group homomorphism. Hence, $\theta$ defines a central Poisson derivation $\mathcal{D}_{\theta}$ of $\mathbb{K}_P^\lambda[G],$ where
\begin{align}
\label{pgae1}
\mathcal{D}_{\theta}(\bar{x})=\sum_{g\in C}a_g(x)\bar{g}\bar{x}.
\end{align} 
Now, let $g\not\in C.$ There exists $y\in G$ such that 
$\lambda(g,y)\neq 0.$ Fix $y$ and define 
$$c_g:=\frac{a_g(y)}{\lambda(g,y)}.$$
Take any arbitrary element $x\in G$. It follows that 
$$c_g\lambda(g,x)=\frac{a_g(y)\lambda(g,x)}{\lambda(g,y)}.$$
From \eqref{sl}, we have that 
$$c_g\lambda(g,x)=\frac{a_g(y)\lambda(g,x)}{\lambda(g,y)}=
\frac{a_g(x)\lambda(g,y)}{\lambda(g,y)}=a_g(x),$$ for all 
$x\in G. $

Define $\gamma\in \mathbb{K}_P^\lambda[G]$ as $\gamma:=\sum_{g\not\in C}c_g \bar{g}.$ Then,
\begin{align}
\label{pgae2}
\text{ham}_{\gamma}(\bar{x})=\{\gamma, \bar{x}\}=\sum_{g\not\in C}c_g\lambda(g,x)\bar{g}\bar{x}=
\sum_{g\not\in C}a_g(x)\bar{g}\bar{x}.
\end{align}
From \eqref{pgae1} and \eqref{pgae2}, 
one can conclude that every Poisson derivation $\mathcal{D}$ of $\mathbb{K}_P^\lambda[G]$ can be written as $\mathcal{D}=\mathcal{D}_\theta+\text{ham}_{\gamma}.$ This decomposition of $\mathcal{D}$ into an inner Poisson  derivation ($\text{ham}_{\gamma}$) and a central Poisson derivation ($\mathcal{D}_\theta$) is actually unique, because $\mathbb{K}_P^\lambda[Gx]$ can be decomposed as $\mathbb{K}_P^\lambda[Gx]=\mathbb{K}_P^\lambda[Cx]\oplus \mathbb{K}_P^\lambda[(G\setminus C)x].$ Now, every central Poisson derivation maps $\bar{x}$ to an element of the subspace $\mathbb{K}_P^\lambda[Cx],$ and every inner Poisson derivation maps $\bar{x}$ to an element of the subspace $\mathbb{K}_P^\lambda[(G\setminus C)x].$
\end{proof}
\begin{cor}
\label{pic2}
Suppose that $C=\{e\}$ (equivalently, $Z_P(\mathbb{K}_P^\lambda[G])=\mathbb{K}$). Then, every Poisson derivation of $\mathbb{K}_P^\lambda[G]$ is uniquely the sum of an inner and a scalar Poisson derivation.
\end{cor}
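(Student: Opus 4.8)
The plan is to derive Corollary \ref{pic2} directly from Theorem \ref{pic1} by specialising the decomposition to the case $C=\{e\}$. First I would invoke Theorem \ref{pic1}: any Poisson derivation $\mathcal{D}$ of $\mathbb{K}_P^\lambda[G]$ can be written uniquely as $\mathcal{D}=\mathcal{D}_\theta+\text{ham}_\gamma$, where $\mathcal{D}_\theta$ is the central Poisson derivation attached to a group homomorphism $\theta\colon (G,\cdot)\to (\mathbb{K}_P^\lambda[C],+)$ and $\text{ham}_\gamma$ is the inner Poisson derivation associated to some $\gamma\in\mathbb{K}_P^\lambda[G]$ with $C\cap\,\text{supp}(\gamma)=\emptyset$.

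Next I would observe that under the hypothesis $C=\{e\}$ the target of $\theta$ collapses: $\mathbb{K}_P^\lambda[C]=\mathbb{K}_P^\lambda[\{e\}]=\mathbb{K}\bar e\cong\mathbb{K}$, which by the Remark following the first Lemma is exactly the statement $Z_P(\mathbb{K}_P^\lambda[G])=\mathbb{K}$ — so the parenthetical equivalence in the statement is justified. Hence $\theta$ is a group homomorphism from $G$ to the additive group $(\mathbb{K},+)$, and by the definition in \S2.4.1 the central Poisson derivation $\mathcal{D}_\theta$ is precisely what the paper has agreed to call a \emph{scalar} Poisson derivation in this situation (the naming convention stated right after Lemma \ref{dd}). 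Therefore the term $\mathcal{D}_\theta$ in the decomposition is a scalar Poisson derivation.

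Finally I would conclude: combining these two observations, every Poisson derivation $\mathcal{D}$ of $\mathbb{K}_P^\lambda[G]$ is the sum $\mathcal{D}=\mathcal{D}_\theta+\text{ham}_\gamma$ of a scalar Poisson derivation and an inner Poisson derivation, and uniqueness of this decomposition is inherited verbatim from Theorem \ref{pic1} (the direct-sum splitting $\mathbb{K}_P^\lambda[Gx]=\mathbb{K}_P^\lambda[Cx]\oplus\mathbb{K}_P^\lambda[(G\setminus C)x]$ used there still separates the scalar part, which lands in $\mathbb{K}_P^\lambda[Cx]=\mathbb{K}\bar x$, from the inner part, which lands in $\mathbb{K}_P^\lambda[(G\setminus C)x]$).

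There is essentially no obstacle here: the corollary is a direct specialisation of the theorem, so the only thing to be careful about is matching the terminology — verifying that ``central Poisson derivation with $Z_P=\mathbb{K}$'' is exactly what was defined to be a ``scalar Poisson derivation'', and that the equivalence $C=\{e\}\iff Z_P(\mathbb{K}_P^\lambda[G])=\mathbb{K}$ is already recorded in the Remark above. No new computation is required.
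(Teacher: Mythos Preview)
Your proposal is correct and matches the paper's approach exactly: the corollary is stated without proof in the paper because it is an immediate specialisation of Theorem~\ref{pic1}, and you have spelled out precisely that specialisation, including the terminology check (central becomes scalar when $Z_P=\mathbb{K}$) and the equivalence $C=\{e\}\iff Z_P(\mathbb{K}_P^\lambda[G])=\mathbb{K}$ recorded in the preceding Remark.
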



\section{Poisson prime spectrum and Poisson deleting derivations algorithm of a semiclassical limit of  $U_q^+(G_2)$}
\label{sec2}
This section aims to study a semiclassical limit of the positive part of the quantized enveloping algebra of type $G_2,$ $U_q^+(G_2)$. Given the semiclassical limit of $U_q^+(G_2)$, we will study its Poisson prime spectrum using Goodearl's $\mathcal{H}$-stratification theory \cite{gd}, and its Poisson deleting derivations algorithm--introduced by Launois and Lecoutre \cite{sc}. Given the data of the Poisson deleting derivations algorithm, we will study the Poisson centre of the semiclassical limit.  
\subsection{Semiclassical limit of the algebra $U_q^+(G_2)$}
\label{sub1}
Recall from \cite[\S 2]{lo} that $U_q^+(G_2)$ satisfies the following relations:
\begin{align*}
E_2E_1&=q^{-3}E_1E_2& E_3E_1&=q^{-1}E_1E_3-(q+q^{-1}+q^{-3})E_2\\
E_3E_2&=q^{-3}E_2E_3&E_4E_1&=E_1E_4+(1-q^2)E_3^2\\
E_4E_2&=q^{-3}E_2E_4-\frac{q^4-2q^2+1}{q^4+q^2+1}E_3^3&E_4E_3&=q^{-3}E_3E_4\\
E_5E_1&=qE_1E_5-(1+q^2)E_3&E_5E_2&=E_2E_5+(1-q^2)E_3^2\\
E_5E_3&=q^{-1}E_3E_5-(q+q^{-1}+q^{-3})E_4&E_5E_4&=q^{-3}E_4E_5\\
E_6E_1&=q^3E_1E_6-q^3E_5&E_6E_2&=q^3E_2E_6+(q^4+q^2-1)E_4+\\
 E_6E_3&=E_3E_6+(1-q^2)E_5^2  &&\qquad \ (q^2-q^4)E_3E_5\\
E_6E_4&=q^{-3}E_4E_6-\frac{q^4-2q^2+1}{q^4+q^2+1}E_5^3& E_6E_5&=q^{-3}E_5E_6.
\end{align*}
Set
$U_i:= (q-1)E_i$
for $i=1,3,4,5,$  and $U_i:=f(q)(q-1)E_i$ for $i=2,6;$ where $f(q)=q^4+q^2+1.$ Then, $U_q^+(G_2)$ is now generated by $U_1, \ldots, U_6$ subject to the relations: 
\begin{align*}
U_2U_1&=q^{-3}U_1U_2& \qquad U_3U_2&=q^{-3}U_2U_3 \\
U_3U_1&=q^{-1}U_1U_3-q^{-3}(q-1)U_2&\qquad U_4U_1&=U_1U_4+(1-q^2)U_3^2\\
U_4U_2&=q^{-3}U_2U_4-(q+1)^2(q-1)U_3^3&\qquad U_4U_3&=q^{-3}U_3U_4\\
U_5U_1&=qU_1U_5-(1+q^2)(q-1)U_3&\qquad U_5U_2&=U_2U_5+f(q) (1-q^2)U_3^2\\
U_5U_3&=q^{-1}U_3U_5-f(q)(q^{-2}-q^{-3})U_4&\qquad U_5U_4&=q^{-3}U_4U_5\\
U_6U_1&=q^3U_1U_6-f(q)(q^4-q^3)U_5&\qquad U_6U_2&=q^3U_2U_6+f(q)^2(q^2-q^4)U_3U_5+\\
U_6U_3&=U_3U_6+f(q)(1-q^2)U_5^2&\qquad & \qquad f(q)^2 (q^4+q^2-1)(q-1)U_4  \\
U_6U_4&=q^{-3}U_4U_6-(q+1)^2(q-1)U_5^3&\qquad U_6U_5&=q^{-3}U_5U_6.
\end{align*}
We now find a `new' presentation for  $U_q^+(G_2)$ that allows us to introduce a quantisation of $U_q^+(G_2).$
 Let $\widehat{U_q^+(G_2)} $ be a $\mathbb{K}[z^{\pm 1}]$-algebra generated by 
$\widehat{U}_1,\ldots,\widehat{U}_6$ subject to the relations:
\begin{align*}
\widehat{U}_2\widehat{U}_1&=z^{-3}\widehat{U}_1\widehat{U}_2&\qquad \widehat{U}_3\widehat{U}_2&
=z^{-3}\widehat{U}_2\widehat{U}_3 \\
\widehat{U}_3\widehat{U}_1&=z^{-1}\widehat{U}_1\widehat{U}_3-z^{-3}(z-1)\widehat{U}_2&\qquad \widehat{U}_4\widehat{U}_1&=\widehat{U}_1\widehat{U}_4+(1-z^2)\widehat{U}_3^2\\
\widehat{U}_4\widehat{U}_2&=z^{-3}\widehat{U}_2\widehat{U}_4-(z+1)^2(z-1)\widehat{U}_3^3
&\qquad \widehat{U}_4\widehat{U}_3&=z^{-3}\widehat{U}_3\widehat{U}_4\\
\widehat{U}_5\widehat{U}_1&=z\widehat{U}_1\widehat{U}_5-(1+z^2)(z-1)\widehat{U}_3&\qquad \widehat{U}_5\widehat{U}_2&=\widehat{U}_2\widehat{U}_5+f(z)(1-z^2)\widehat{U}_3^2\\
\widehat{U}_5\widehat{U}_3&=z^{-1}\widehat{U}_3\widehat{U}_5-f(z)(z^{-2}-z^{-3})\widehat{U}_4&\qquad \widehat{U}_5\widehat{U}_4&=z^{-3}\widehat{U}_4\widehat{U}_5\\
\widehat{U}_6\widehat{U}_1&=z^3\widehat{U}_1\widehat{U}_6-f(z)(z^4-z^3)\widehat{U}_5&\qquad \widehat{U}_6\widehat{U}_2&=z^3\widehat{U}_2\widehat{U}_6
+ f(z)^2(z^2-z^4)\widehat{U}_3\widehat{U}_5+\\
\widehat{U}_6\widehat{U}_3&=\widehat{U}_3\widehat{U}_6+f(z)(1-z^2)\widehat{U}_5^2& \qquad &\qquad f(z)^2(z^4+z^2-1)(z-1)\widehat{U}_4 \\
\widehat{U}_6\widehat{U}_4&=z^{-3}\widehat{U}_4\widehat{U}_6-
(z+1)^2(z-1)\widehat{U}_5^3& \qquad \widehat{U}_6\widehat{U}_5&=z^{-3}\widehat{U}_5\widehat{U}_6,
\end{align*}
where $f(z)=z^4+z^2+1.$
Fix $\lambda\in \mathbb{K}^*.$ Observe that the element $z-\lambda$ is central and not invertible in $\widehat{U_q^+(G_2)},$ hence we set
$\mathcal{A}_\lambda:=\widehat{U_q^+(G_2)}/(z-\lambda)\widehat{U_q^+(G_2)}.$
Now, $\mathcal{A}_q$ is the non-commutative algebra $U_q^+(G_2)$ and 
$\mathcal{A}_1=\mathbb{K}[X_1,\ldots,X_6]$
with $X_i:=\widehat{U}_i+(z-1)\widehat{U_q^+(G_2)}$ is a Poisson algebra with the Poisson bracket defined as follows:
\begin{align*}
\{X_2, X_1\}&=-3X_1X_2& \{X_3,X_1\}&=-X_1X_3-X_2 \\
\{X_3,X_2\}&=-3X_2X_3& \{X_4, X_1\}&=-2X_3^2 \\
\{X_4, X_2\}&=-3X_2X_4-{4}X_3^3&\{X_4, X_3\}&=-3X_3X_4\\
\{X_5, X_1\}&=X_1X_5-2X_3& \{X_5,X_2\}&=-6X_3^2\\
\{X_5,X_3\}&=-X_3X_5-3X_4&\{X_5,X_4\}&=-3X_4X_5\\
 \{X_6,X_1\}&=3X_1X_6-3X_5&\{X_6, X_2\}&=3X_2X_6+9X_4-18X_3X_5\\
 \{X_6, X_3\}&=-6X_5^2&\{X_6, X_4\}&=-3X_4X_6-4X_5^3\\
\{X_6, X_5\}&=-3X_5X_6.
\end{align*}
  Therefore, $\mathcal{A}_1$ is a semiclassical limit of the non-commutative algebra 
$\widehat{U_q^+(G_2)},$
 and  $\mathcal{A}_q$ is a deformation of the Poisson algebra $\mathcal{A}_1.$ 
For simplicity, we set $$\mathcal{A}:=\mathcal{A}_1=\mathbb{K}[X_1,\ldots,X_6]$$ in the rest of this paper.  
One can write the Poisson algebra $\mathcal{A}$ as an iterated Poisson-Ore extension over $\mathbb{K}$ (see \cite[Theorem 1.1]{oh} for the definition of iterated Poisson-Ore extension) as follows:
\begin{equation}
\label{eqs1}
\mathcal{A}=\mathbb{K}[X_1][X_2;\sigma_2]_P[X_3;\sigma_3,\delta_3]_P[X_4;\sigma_4,\delta_4]_P[X_5;\sigma_5,\delta_5]_P[X_6;\sigma_6,\delta_6]_P;
\end{equation}
 where, $\sigma_i$ and $\delta_i$ are respectively the Poisson derivations and Poisson $\sigma_i$-derivations of
$$
\mathbb{K}[X_1][X_2;\sigma_2]_P[X_3;\sigma_3,\delta_3]_P\ldots [X_{i-1};\sigma_{i-1},\delta_{i-1}]_P
$$
($2\leq i\leq 6$ and $\delta_2=0$) defined as follows:
\begin{align*}
\sigma_2(X_1)&=-3X_1& \sigma_3(X_1)&=-X_1&\sigma_3(X_2)&=-3X_2& \sigma_4(X_1)&=0\\
\sigma_4(X_2)&=-3X_2& \sigma_4(X_3)&=-3X_3&\sigma_5(X_1)&=X_1&\sigma_5(X_2)&=0\\
\sigma_5(X_3)&=-X_3&\sigma_5(X_4)&=-3X_4& \sigma_6(X_1)&=3X_1&\sigma_6(X_2)&=3X_2\\
\sigma_6(X_3)&=0&\sigma_6(X_4)&=-3X_4& \sigma_6(X_5)&=-3X_5,
\end{align*}
and 
\begin{align*}
\delta_3(X_1)&=-X_2&\delta_3(X_2)&=0&\delta_4(X_1)&=-2X_3^2&\delta_4(X_2)&=-{4} X_3^3\\
\delta_4(X_3)&=0&\delta_5(X_1)&=-2X_3&\delta_5(X_2)&=-6X_3^2&\delta_5(X_3)&=-3 X_4\\
\delta_5(X_4)&=0&\delta_6(X_1)&=-3X_5&\delta_6(X_2)&=9X_4-18X_3X_5&\delta_6(X_3)&=-6X_5^2\\
\delta_6(X_4)&=-{4}X_5^3&\delta_6(X_5)&=0.
\end{align*}
\begin{rem} \cite[Theorem I.1.13]{bg} (Hilbert's Basis Theorem)
Since  $\mathbb{K}$ is a noetherian domain,
the Poisson algebra $\mathcal{A}=\mathbb{K}[X_1][X_2;\sigma_2]_P[X_3;\sigma_3,\delta_3]_P[X_4;\sigma_4,\delta_4]_P[X_5;\sigma_5,\delta_5]_P[X_6;\sigma_6,\delta_6]_P$ is a noetherian domain. 
\end{rem}
\subsection{Poisson prime spectrum of the semiclassical limit of the algebra $U_q^+(G_2)$}
We study the Poisson prime spectrum of the Poisson algebra $\mathcal{A}=\mathbb{K}[X_1,\ldots,X_6]$ in this subsection. Let $P$ be a proper Poisson ideal of a Poisson algebra $\mathcal{A}$ and $I_1$, $I_2$  be Poisson ideals of $\mathcal{A}$ such that $P\supseteq I_1I_2.$ The ideal  $P$ is called a \textit{Poisson prime ideal} provided $P\supseteq I_1$ or $P\supseteq I_2.$ Since the Poisson algebra $\mathcal{A}$ is a noetherian domain, every Poisson ideal which is also a prime ideal is a Poisson prime ideal and vice versa (see \cite[Lemma 1.1]{gd}).
 The set of all the Poisson prime ideals of  $\mathcal{A}$ is called the    \textit{Poisson prime spectrum} of $\mathcal{A},$ denoted by P.Spec$(\mathcal{A}).$  The largest Poisson prime ideal contained in a given maximal ideal of $\mathcal{A}$ is called a \textit{Poisson primitive ideal.}  The set of all these Poisson primitive ideals is  called the \textit{Poisson primitive spectrum} of $\mathcal{A}$, denoted by P.Prim$(\mathcal{A}).$ 

Let  $\mathcal{H}$ be an algebraic torus acting rationally  on $\mathcal{A}$ by  Poisson automorphism (see \cite[\S 2]{gd} for the definition of a rational torus action). A Poisson prime ideal $P$ is $\mathcal{H}$-\textit{invariant} if $h\cdot P=P$ for all 
$h\in\mathcal{H}.$ Moreover, $J:=\bigcap_{h\in\mathcal{H}}h\cdot P$ is the largest Poisson  $\mathcal{H}$-invariant prime ideal contained in $P.$

The set
$$\text{P.Spec}_J(\mathcal{A}):=\{P\in\text{P.Spec}(\mathcal{A})\mid (P:\mathcal{H})=J\}$$
is called the \textit{$J$-stratum} of P.Spec$(\mathcal{A}).$ 
 The $\mathcal{H}$-strata
 $\text{P.Spec}_J(\mathcal{A})$ partitioned $\text{P.Spec}(\mathcal{A})$ into a disjoint union of strata. Hence,
 $$\text{P.Spec}(\mathcal{A})=\underset{J\in\mathcal{H}\text{-P.Spec}(\mathcal{A})}{\bigsqcup}\text{P.Spec}_J(A),$$ where $\mathcal{H}\text{-P.Spec}(\mathcal{A})$ is the collection of all the Poisson $\mathcal{H}$-invariant prime ideals of $\mathcal{A}.$ This partition is called an $\mathcal{H}$-\textit{stratification} of $\text{P.Spec}(\mathcal{A}).$ In a similar manner, an $\mathcal{H}$-stratification of P.Prim$(\mathcal{A})$ is obtained as follows:  
$$\text{P.Prim}(\mathcal{A})=\underset{J\in\mathcal{H}\text{-P.Prim}(\mathcal{A})}{\bigsqcup}\text{P.Prim}_J(\mathcal{A}),$$
where $\text{P.Prim}_J(\mathcal{A})=\text{P.Spec}_J(\mathcal{A})\cap \text{P.Prim}(\mathcal{A})$. 

The iterated-Poisson algebra $\mathcal{A}$ has only finitely many Poisson $\mathcal{H}$-primes by \cite[Theorem 1.7]{gkls}, and so we deduce from  \cite[Theorem 4.3]{gd} the following result.

\begin{pro} 
\label{ppf1}
Let $P\in$ P.Spec$_J(\mathcal{A}),$ P is a Poisson primitive ideal of $\mathcal{A}$ if and only if $P$ is maximal in   P.Spec$_J(\mathcal{A}).$
\end{pro}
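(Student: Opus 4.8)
The statement to prove---Proposition \ref{ppf1}, characterizing the Poisson primitive ideals in a given $\mathcal{H}$-stratum $\mathrm{P.Spec}_J(\mathcal{A})$ as precisely the maximal elements of that stratum---is exactly the conclusion of \cite[Theorem 4.3]{gd} once its hypotheses are verified. So the proof is really a verification that $\mathcal{A}=\mathbb{K}[X_1,\dots,X_6]$ with the bracket displayed in \S\ref{sub1}, together with the torus $\mathcal{H}$, satisfies the standing assumptions of that theorem. The two things that need checking are: (i) $\mathcal{A}$ is a noetherian Poisson algebra over the field $\mathbb{K}$ of characteristic zero (which I would quote from the Hilbert Basis Theorem remark already in the excerpt, since $\mathcal{A}$ is an iterated Poisson-Ore extension of $\mathbb{K}$); and (ii) $\mathcal{H}$ is an algebraic torus acting rationally on $\mathcal{A}$ by Poisson automorphisms with only \emph{finitely many} $\mathcal{H}$-invariant Poisson prime ideals. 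The finiteness is the key hypothesis, and it is supplied by \cite[Theorem 1.7]{gkls} as already noted in the text just before the proposition.

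\textbf{The steps, in order.} First I would recall the precise form of the torus action: $\mathcal{H}=(\mathbb{K}^\times)^r$ acts on the generators $X_i$ by rescaling, $h\cdot X_i = h_i X_i$ for a suitable character, and one checks this respects the Poisson bracket because every defining bracket $\{X_j,X_i\}$ in the displayed list is a sum of monomials each of which is homogeneous of the same multidegree as $X_iX_j$ (this is what makes the action by Poisson automorphisms, and rational). Second, I would note that $\mathcal{A}$ being a noetherian domain (from the Hilbert Basis remark) guarantees, via \cite[Lemma 1.1]{gd}, that the Poisson prime ideals coincide with the prime ideals that happen to be Poisson ideals, so the $\mathcal{H}$-stratification theory applies verbatim. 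Third, with finiteness of $\mathcal{H}\text{-}\mathrm{P.Spec}(\mathcal{A})$ in hand, \cite[Theorem 4.3]{gd} applies and yields exactly the desired equivalence: within each stratum $\mathrm{P.Spec}_J(\mathcal{A})$, the Poisson primitive ideals are the maximal ideals of that stratum (equivalently, the stratum, homeomorphic to the prime spectrum of a commutative Laurent polynomial ring over the central field, has its primitive ideals sitting at the tops).

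\textbf{The main obstacle} is not conceptual but bookkeeping: one must be honest that all hypotheses of \cite[Theorem 4.3]{gd} are met, in particular that the base field has characteristic zero (used for rationality/local finiteness of the torus action) and that the torus action is genuinely rational in Goodearl's sense, not merely algebraic. Since the excerpt has set $\operatorname{char}\mathbb{K}=0$ globally and the action is diagonal on a polynomial generating set, rationality is immediate, so in fact there is no real obstacle---the proposition is a direct citation. Accordingly the written proof should be short: cite the finiteness of Poisson $\mathcal{H}$-primes from \cite[Theorem 1.7]{gkls}, then invoke \cite[Theorem 4.3]{gd} to conclude.

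\begin{proof}
As noted above, $\mathcal{A}$ is a noetherian domain, so by \cite[Lemma 1.1]{gd} the Poisson prime ideals of $\mathcal{A}$ are exactly the prime ideals that are Poisson ideals, and Goodearl's $\mathcal{H}$-stratification theory \cite{gd} applies. The torus $\mathcal{H}$ acts on $\mathcal{A}=\mathbb{K}[X_1,\dots,X_6]$ rationally by Poisson automorphisms (each defining bracket $\{X_j,X_i\}$ is $\mathcal{H}$-homogeneous of the same degree as $X_iX_j$), and by \cite[Theorem 1.7]{gkls} there are only finitely many Poisson $\mathcal{H}$-invariant prime ideals of $\mathcal{A}$. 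Hence the hypotheses of \cite[Theorem 4.3]{gd} are satisfied, and that result gives precisely the statement: for $P\in \mathrm{P.Spec}_J(\mathcal{A})$, the ideal $P$ is a Poisson primitive ideal of $\mathcal{A}$ if and only if $P$ is maximal in $\mathrm{P.Spec}_J(\mathcal{A})$.
\end{proof}
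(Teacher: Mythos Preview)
Your proposal is correct and takes essentially the same approach as the paper: the paper simply notes that $\mathcal{A}$ has only finitely many Poisson $\mathcal{H}$-primes by \cite[Theorem 1.7]{gkls} and then deduces the proposition directly from \cite[Theorem 4.3]{gd}. Your version just makes the verification of the remaining hypotheses (noetherianity, rationality of the torus action) a bit more explicit, but the argument is identical.
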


\subsection{Poisson deleting derivations algorithm of the semiclassical limit of $U_q^+(G_2)$} 
\label{w}
\label{pdda}
In \cite{sc}, the authors studied a Poisson version, called the \textit{Poisson deleting derivations algorithm} (PDDA for short), of the well-known Cauchon's deleting derivations algorithm (see \cite{ca}).  In this subsection, we  study PDDA  of $\mathcal{A}=\mathbb{K}[X_1,\ldots,X_6].$ 
From \eqref{eqs1}, we have that $\mathcal{A}=\mathbb{K}[X_1][X_2;\sigma_2,\delta_2]_p\ldots [X_6;\sigma_6,\delta_6]_P.$ 
One can  easily verify that  $\mathcal{A}$ satisfies the conditions in the hypothesis below. 
\begin{hypo}
\label{hyp}
\begin{itemize}
\item[(H1)] For all $1\leq j<i\leq 6,$ there exists 
$\mu_{ij}\in\mathbb{K}$ with $\mu_{ji}:=-\mu_{ij},$ such that $\sigma_i(X_j)=\mu_{ij}X_j.$
\item[(H2)] The derivations $\delta_i$ are all locally nilpotent and $\delta_i\alpha-\alpha\delta_i=\eta_i\delta_i$ for some non-zero scalar $\eta_i,$ for all $2\leq i\leq 6.$
\end{itemize} 
\end{hypo}
As a result, the PDDA can be used to study the Poisson prime spectrum of $\mathcal{A}$ (see \cite[Hypothesis 1.7]{sc}). Let $1\leq i\leq 6$ and $2\leq j\leq 7.$ Using the relation
\begin{equation*}
  X_{i,j}:=\begin{cases}
    X_{i,j+1} & \text{if $i\geq j$}\\
    \displaystyle\sum_{k=0}^{+\infty}\frac{1}{\eta_j^kk!}
    \delta_j^k(X_{i,j+1})X_{j,j+1}^{-k} & \text{if $i<j$},
  \end{cases}
\end{equation*}
(note that since $\delta_j$ is locally nilpotent, the  summation is finite), one can construct a family $(X_{1,j},\ldots,X_{6,j})$ of elements of the division ring of fractions Fract$(\mathcal{A})$ of  $\mathcal{A}$ as follows: 
\begin{align*}
X_{1,6}&=X_1-\frac{1}{2}X_5X_6^{-1}\\
X_{2,6}&=X_2+\frac{3}{2}X_4X_6^{-1}-3X_3X_5X_6^{-1}+X_5^3X_6^{-2}\\
X_{3,6}&=X_3-X_5^2X_6^{-1}\\
X_{4,6}&=X_4-\frac{2}{3}X_5^3X_6^{-1}\\
X_{1,5}&=X_{1,6}-X_{3,6}X_{5,6}^{-1}+\frac{3}{4}X_{4,6}X_{5,6}^{-2}\\
X_{2,5}&=X_{2,6}-3X_{3,6}^2X_{5,6}^{-1}+\frac{9}{2}X_{3,6}
X_{4,6}X_{5,6}^{-2}-\frac{9}{4}X_{4,6}^2X_{5,6}^{-3}\\
X_{3,5}&=X_{3,6}-\frac{3}{2}X_{4,6}X_{5,6}^{-1}\\
X_{1,4}&=X_{1,5}-\frac{1}{3}X_{3,5}^2X_{4,5}^{-1}\\
X_{2,4}&=X_{2,5}-\frac{2}{3}X_{3,5}^3X_{4,5}^{-1}\\
X_{1,3}&=X_{1,4}-\frac{1}{2}X_{2,4}X_{3,4}^{-1}\\
T_1&:=X_{1,2}=X_{1,3}\\
T_2&:=X_{2,2}=X_{2,3}=X_{2,4}\\
T_3&:=X_{3,2}=X_{3,3}=X_{3,4}=X_{3,5}\\
T_4&:=X_{4,2}=X_{4,3}=X_{4,4}=X_{4,5}=X_{4,6}\\
T_5&:=X_{5,2}=X_{5,3}=X_{5,4}=X_{5,5}=X_{5,6}=X_5\\
T_6&:=X_{6,2}=X_{6,3}=X_{6,4}=X_{6,5}=X_{6,6}=X_6.
\end{align*}
For each $2\leq j\leq 7,$ the algebra $\mathcal{A}^{(j)}$ represents the subalgebra of Fract$(\mathcal{A})$  generated by all the  $X_{i,j}.$
That is, $\mathcal{A}^{(j)}=\mathbb{K}[ X_{1,j},\ldots, X_{6,j}].$ Note that  $\mathcal{A}^{(7)}=\mathcal{A}.$  It follows from \cite[Prop. 1.11]{sc} that
\begin{align*}
\mathcal{A}^{(j)}&\cong\mathbb{K}[X_1][X_2;\sigma_2,\delta_2]_P\cdots[X_{j-1};\sigma_{j-1},\delta_{j-1}]_P[X_j;\tau_j]_P\cdots[X_7;\tau_7]_P,
\end{align*}
 by an isomorphism that maps $X_{i,j}$ to $X_i,$ and $\tau_j,\ldots, \tau_7$ denote the Poisson derivations defined by $\tau_l(X_i)=\mu_{li}X_i$ for all $1\leq i< l\leq 7.$ With a slight abuse of notation,  one can identify $\tau_j,\ldots,\tau_7$ with $\sigma_j,\ldots, \sigma_7$ respectively. 
\begin{notn}
$\overline{\mathcal{A}}:=\mathcal{A}^{(2)}=\mathbb{K}[T_1,\ldots, T_6].$
\end{notn}
One can easily check that $\overline{\mathcal{A}}$ is a Poisson affine space associated to the skew-symmetric matrix
\begin{align*}
M:=&\begin{bmatrix}
0&3&1&0&-1&-3\\
-3&0&3&3&0&-3\\
-1&-3&0&3&1&0\\
0&-3&-3&0&3&3\\
1&0&-1&-3&0&3\\
3&3&0&-3&-3&0
\end{bmatrix}.
\end{align*}
That is, $\overline{\mathcal{A}}$ satisfies the relation $\{T_i,T_j\}=\mu_{ij}T_jT_i$ for all 
$1\leq i,j\leq 6,$ where $\mu_{ij}$ are the entries of $ M.$

\subsection{Canonical embedding}
\label{sub4}
The set $\Sigma:=\{X_{j,j+1}^n\mid n\in\mathbb{N}\}=\{X_{j,j}^n\mid n\in\mathbb{N}\}$ is a multiplicative system of regular elements of $\mathcal{A}^{(j)}$ and $\mathcal{A}^{(j+1)}.$ Moreover,  $\mathcal{A}^{(j)}\Sigma_j^{-1}=\mathcal{A}^{(j+1)}\Sigma_j^{-1}$ \cite[Prop. 1.11]{sc}.
One can use the PDDA to relate 
P.Spec$(\mathcal{A})$ to P.Spec$(\overline{\mathcal{A}})$  by constructing an embedding $\psi_j:$ P.Spec$(\mathcal{A}^{(j+1)})\hookrightarrow$ P.Spec$(\mathcal{A}^{(j)})$ defined by 
\begin{equation*}
  \psi_j(P):=\begin{cases}
    P\Sigma_j^{-1}\cap \mathcal{A}^{(j)} & \text{if $X_{j,j+1}=T_j\not\in P,$}\\
    g_j^{-1}(P/\langle X_{j,j+1}\rangle) & \text{if $X_{j,j+1}\in P,$}
  \end{cases}
\end{equation*}
for each $2\leq j\leq 6$ (see \cite[Lemma 2.3]{sc}).  The map
 $g_j$ is a surjective homomorphism $$g_j:\mathcal{A}^{(j)}\rightarrow \mathcal{A}^{(j+1)}/\langle X_{j,j+1}\rangle$$ defined by
 $$g_j(X_{j,j}):=X_{j,j+1}+\langle X_{j,j+1}\rangle$$
(further details can be found in \cite[\S 2]{sc}). 
From \cite[\S 2.1]{sc}, there exists an increasing homeomorphism from the topological space 
$$\{P\in \text{P.Spec}(\mathcal{A}^{(j+1)})\mid X_{j,j+1}\not\in P\}$$
onto the topological space
$$\{Q\in \text{P.Spec}(\mathcal{A}^{(j)})\mid X_{j,j}\not\in P\}$$
whose inverse is also an increasing homeomorphism (the topology being the Zariski topology). The map $\psi_j$ is injective but not necessarily bijective. However, $\psi_j$ induces a bijection  between $\{ P\in \text{P.Spec}(\mathcal{A}^{(j+1)})\mid P\cap \Sigma_j=\emptyset \}$ and $\{Q\in\text{P.Spec}(\mathcal{A}^{(j)})\mid Q\cap \Sigma_j=\emptyset\}$ \cite[Lemma 2.1]{sc}. 
The so-called \textit{canonical embedding} $$\psi:=\psi_2\circ \cdots \circ \psi_6=: \text{P.Spec}(\mathcal{A})\hookrightarrow \text{P.Spec}(\overline{\mathcal{A}})$$ is obtained by composing all the $\psi_j.$  This canonical embedding $\psi$ helps to construct a partition of P.Spec$(\mathcal{A})$ into a disjoint union of strata known as the \textit{canonical partition} via the Cauchon diagrams.

The torus invariant Poisson prime spectrum has generally been described in \cite[\S 2.2]{sc} via Cauchon's diagrams. Based on that, we can describe the set $\mathcal{H}\text{-P.Spec}(\overline{\mathcal{A}})$ as follows.
For any subset $C$ of $\{1,\dots,6\}$, let $K_C$ denote the Poisson $\mathcal{H}$-invariant prime ideal of 
 $\overline{\mathcal{A}}$  generated by the $T_i$ with $i\in C$. We deduce from \cite[\S 2.2]{sc} that
 \[
K_C= \langle T_i\mid i\in C\rangle ,
\]
and  
$$\mathcal{H}\text{-P.Spec}(\overline{\mathcal{A}}) = \{K_C ~|~ C \subseteq \{1,\dots,6\} \},$$
so that 
$$\psi (\mathcal{H}\text{-P.Spec}(\mathcal{A})) \subseteq \{K_C ~|~ C \subseteq \{1,\dots,6\} \}.$$

\subsection{Poisson centre of $\mathcal{A}$ }
\label{sub2}
The monomials $\Omega_1:=T_1T_3T_5$ and $\Omega_2:=T_2T_4T_6$ are Poisson central elements of the Poisson affine space $\overline{\mathcal{A}},$ since $\{\Omega_i,T_j\}=0$ for all $i=1,2,$ and $1\leq j\leq 6.$
We now want to successively pull $\Omega_1$ and $\Omega_2$ from  $\overline{\mathcal{A}}$ into  ${\mathcal{A}}$ using the data of the PDDA of ${\mathcal{A}}$.   Through a direct computation, one can confirm that
\begin{align*}
\Omega_1&:=T_1T_3T_5\\
&=X_{1,3}X_{3,3}X_{5,3}\\
&=X_{1,4}X_{3,4}X_{5,4}-\frac{1}{2}X_{2,4}X_{5,4}\\
&=X_{1,5}X_{3,5}X_{5,5}-\frac{1}{2}X_{2,5}X_{5,5}\\
&=X_{1,6}X_{3,6}X_{5,6}-\frac{3}{2}X_{1,6}X_{4,6}-\frac{1}{2}X_{2,6}X_{5,6}+\frac{1}{2}X_{3,6}^2\\
&=X_1X_3X_5-\frac{3}{2}X_1X_4-\frac{1}{2}X_2X_5+\frac{1}{2}X_3^2, 
\end{align*}
and
\begin{align*}
\Omega_2&:=T_2T_4T_6\\&=X_{2,4}X_{4,4}X_{6,4}\\
&=X_{2,5}X_{4,5}X_{6,5}-\frac{2}{3}X_{3,5}^3X_{6,5}\\
&=X_{2,6}X_{4,6}X_{6,6}-\frac{2}{3}X_{3,6}^3X_{6,6}\\
&=X_2X_4X_6-\frac{2}{3}X_3^3X_6-\frac{2}{3}X_2X_5^3+2X_3^2X_5^2-3
X_3X_4X_5+\frac{3}{2}X_4^2.
\end{align*}
Since $\Omega_i\in \mathcal{A}^{(7)}\subseteq \mathcal{A}^{(j)}\subseteq 
\overline{\mathcal{A}},$ we have that $\Omega_1$ and $\Omega_2$ are also Poisson central elements of $\mathcal{A}^{(j)}$ for each $2\leq j\leq 7.$  We proceed to show that  the Poisson centre of $\mathcal{A}$ is a polynomial ring in two variables: $\Omega_1$ and $\Omega_2.$ That is,  $Z_P(\mathcal{A})=\mathbb{K}[\Omega_1,\Omega_2].$ The following discussions   will lead us to the proof.

The set $S_j:=\{\lambda T_j^{i_j}T_{j+1}^{i_{j+1}}\ldots T_6^{i_6}\mid i_j,\ldots,i_6\in \mathbb{N}\}$  is a multiplicative system of non-zero divisors of $\mathcal{A}^{(j)}$ for each $2\leq j\leq 6.$ One can therefore localize $\mathcal{A}^{(j)}$ at $S_j$ as follows:
$$\mathfrak{R}_j:=\mathcal{A}^{(j)}S_j^{-1}.$$
Note that the set $\Sigma_j:=\{T_j^n\mid n\in \mathbb{N}\}$ is also a multiplicative set in both $\mathcal{A}^{(j)}$ and $\mathcal{A}^{(j+1)}$ for each $2\leq j\leq 6.$ It follows from \cite[Prop. 1.11]{sc} that 
$$\mathcal{A}^{(j)}\Sigma_j^{-1}=\mathcal{A}^{(j+1)}\Sigma_j^{-1}.$$
One can easily verify that
\begin{align*}
\mathfrak{R}_j=\mathfrak{R}_{j+1}\Sigma_j^{-1}, \ \ \text{for all} \ \ 2\leq j\leq 6.
\end{align*}
Moreover, the localization $$\mathfrak{R}_1:=\mathfrak{R}_2[T_1^{-1}]$$ also holds in $\mathfrak{R}_2,$ since $T_1$ generates a multiplicative system in $\mathfrak{R}_2$.  In fact, $\mathfrak{R}_1$ is the Poisson torus associated to the Poisson affine space $\overline{\mathcal{A}}.$ As a result, $\mathfrak{R}_1=\mathbb{\mathbb{K}}[T_1^{\pm 1},\ldots, T_6^{\pm 1}],$ where  $\{T_i,T_j\}={\mu_{ij}}T_jT_i$ for all $1\leq i,j\leq 6$.  
The PDDA helps to construct the following embeddings:
\begin{align}
\label{embp}
\mathcal{A}:=\mathfrak{R}_7&\subset \mathfrak{R}_6=\mathfrak{R}_7\Sigma_6^{-1}\subset \mathfrak{R}_5=\mathfrak{R}_6\Sigma_5^{-1}\subset \mathfrak{R}_4=\mathfrak{R}_5\Sigma_4^{-1}\nonumber\\
&\subset \mathfrak{R}_3=\mathfrak{R}_4\Sigma_3^{-1}\subset \mathfrak{R}_2=\mathfrak{R}_3\Sigma_2^{-1}\subset \mathfrak{R}_1.
\end{align}
Note that the family $(X_{1,j}^{k_1}\ldots X_{6,j}^{k_6}),$ where $k_i\in \mathbb{N}$ if $i<j$ and $k_i\in \mathbb{Z}$ otherwise is a PBW-basis of 
$\mathfrak{R}_j$ for all $2\leq j\leq 7.$ In addition, the family $( T_1^{k_1}\ldots T_6^{k_6})_{k_1,\ldots, k_6\in \mathbb{Z}}$ is a basis of $\mathfrak{R}_1.$
\begin{lem}
\label{pl11}
\label{pl1}
\begin{itemize}
\item[1.] $Z_P(\mathfrak{R}_1)=\mathbb{K}[\Omega_1^{\pm 1},\Omega_2^{\pm 1}].$ 
\item[2.]  $Z_P(\overline{\mathcal{A}})=\mathbb{K}[\Omega_1,\Omega_2].$
\item[3.] $Z_P(\mathfrak{R}_3)=\mathbb{K}[\Omega_1,\Omega_2].$
\item[4.] $Z_P(\mathcal{A})=\mathbb{K}[\Omega_1,\Omega_2].$
\end{itemize}
\end{lem}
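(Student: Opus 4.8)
The plan is to deduce all four assertions from the earlier lemma on Poisson centres of Poisson group algebras, plus one kernel computation. Since $\mathfrak{R}_1$ is the Poisson torus $\mathbb{K}_P^\lambda[\mathbb{Z}^6]$ attached to the skew-symmetric matrix $M$, that lemma gives $Z_P(\mathfrak{R}_1)=\mathbb{K}_P^\lambda[C]$, where $C=\{a\in\mathbb{Z}^6:Ma=0\}$ is the subgroup of $a$ with $\{\,\overline{a},\overline{b}\,\}=0$ for all $b$. Part (1) therefore reduces to computing $\ker M$. Solving $Mx=0$ directly shows $\ker M$ is spanned by $(1,0,1,0,1,0)$ and $(0,1,0,1,0,1)$; reading off the first two coordinates of an arbitrary kernel vector shows these two vectors form a $\mathbb{Z}$-basis of $C$. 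As the corresponding monomials are $T_1T_3T_5=\Omega_1$ and $T_2T_4T_6=\Omega_2$, we get $Z_P(\mathfrak{R}_1)=\mathbb{K}_P^\lambda[C]=\mathbb{K}[\Omega_1^{\pm 1},\Omega_2^{\pm 1}]$; in particular $\Omega_1,\Omega_2$ are algebraically independent, since the Laurent monomials $\Omega_1^m\Omega_2^n$ are pairwise distinct in $\mathfrak{R}_1$.

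For parts (2)--(4) I would sandwich the relevant Poisson centre between $\mathbb{K}[\Omega_1,\Omega_2]$ and $Z_P(\mathfrak{R}_1)\cap(\,\cdot\,)$. Let $\mathcal{B}$ be any one of $\overline{\mathcal{A}}$, $\mathfrak{R}_3$, and $\mathcal{A}=\mathfrak{R}_7$. First, $\Omega_1,\Omega_2\in\mathcal{B}$: for $\overline{\mathcal{A}}=\mathbb{K}[T_1,\dots,T_6]$ and $\mathfrak{R}_3$ this is immediate, and for $\mathcal{A}$ it is exactly the explicit expressions for $\Omega_1,\Omega_2$ in the $X_i$ computed just before the lemma. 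Since $\{\Omega_i,T_j\}=0$ in $\overline{\mathcal{A}}$ and the Poisson bracket extends uniquely to localizations by \cite[Prop. 1.7]{fl}, the $\Omega_i$ remain Poisson central in $\mathfrak{R}_1$, hence in every Poisson subalgebra of $\mathfrak{R}_1$ containing them; so $\mathbb{K}[\Omega_1,\Omega_2]\subseteq Z_P(\mathcal{B})$. Conversely, $\mathfrak{R}_1$ is a localization of $\mathcal{B}$ in each case --- invert $T_1,\dots,T_6$ when $\mathcal{B}=\overline{\mathcal{A}}$, and use the chain \eqref{embp} when $\mathcal{B}=\mathcal{A}$ --- so any Poisson-central element of $\mathcal{B}$ stays Poisson central in $\mathfrak{R}_1$; hence $Z_P(\mathcal{B})\subseteq Z_P(\mathfrak{R}_1)\cap\mathcal{B}=\mathbb{K}[\Omega_1^{\pm 1},\Omega_2^{\pm 1}]\cap\mathcal{B}$.

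It remains to compute $\mathbb{K}[\Omega_1^{\pm 1},\Omega_2^{\pm 1}]\cap\mathcal{B}$ inside $\mathfrak{R}_1$. An element of $\mathbb{K}[\Omega_1^{\pm 1},\Omega_2^{\pm 1}]$ equals $\sum_{m,n}c_{mn}T_1^mT_2^nT_3^mT_4^nT_5^mT_6^n$, and distinct pairs $(m,n)$ contribute distinct monomials, so membership in a sub-(Laurent-)polynomial ring is decided coordinatewise on the exponent vectors $(m,n,m,n,m,n)$. Thus it lies in $\overline{\mathcal{A}}=\mathbb{K}[T_1,\dots,T_6]$ iff $m,n\ge 0$ throughout its support, i.e.\ iff it lies in $\mathbb{K}[\Omega_1,\Omega_2]$; this gives (2). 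Because the $j=2$ step of the algorithm is trivial ($\delta_2=0$, so $X_{i,3}=T_i$ for all $i$) we have $\mathcal{A}^{(3)}=\overline{\mathcal{A}}$, whence $\mathfrak{R}_3=\mathbb{K}[T_1,T_2,T_3^{\pm 1},\dots,T_6^{\pm 1}]$; membership there again forces $m,n\ge 0$ (the exponents of $T_1$ and $T_2$), so $\mathbb{K}[\Omega_1^{\pm 1},\Omega_2^{\pm 1}]\cap\mathfrak{R}_3=\mathbb{K}[\Omega_1,\Omega_2]$, giving (3). Finally, $\mathcal{A}\subseteq\mathfrak{R}_3$ yields $\mathbb{K}[\Omega_1^{\pm 1},\Omega_2^{\pm 1}]\cap\mathcal{A}\subseteq\mathbb{K}[\Omega_1,\Omega_2]$, and combined with $\mathbb{K}[\Omega_1,\Omega_2]\subseteq Z_P(\mathcal{A})$ this proves (4).

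The only genuinely computational ingredient is the explicit description of $\ker M$ (a short linear system). The step needing most care is (4): because $\mathcal{A}$ is not itself a Poisson affine space, one cannot run the monomial-support argument inside $\mathcal{A}$ directly, and one must use both that $\Omega_1,\Omega_2\in\mathcal{A}$ (the pull-back formulas) and the embeddings $\mathcal{A}=\mathfrak{R}_7\subseteq\mathfrak{R}_3\subseteq\mathfrak{R}_1$ supplied by the deleting-derivations algorithm, in order to transport the question to $\overline{\mathcal{A}}$ where monomials are directly comparable.
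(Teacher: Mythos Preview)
Your proof is correct and follows essentially the same approach as the paper: compute the kernel of $M$ to identify $Z_P(\mathfrak{R}_1)$, then use the localization chain \eqref{embp} to sandwich each $Z_P(\mathcal{B})$ between $\mathbb{K}[\Omega_1,\Omega_2]$ and $\mathbb{K}[\Omega_1^{\pm 1},\Omega_2^{\pm 1}]\cap\mathcal{B}$. Your observation that $\delta_2=0$ forces $\mathcal{A}^{(3)}=\overline{\mathcal{A}}$, hence $\mathfrak{R}_3=\mathbb{K}[T_1,T_2,T_3^{\pm 1},\dots,T_6^{\pm 1}]$, makes the argument for (3) particularly transparent; the paper handles (2) and (3) more tersely (repeating the monomial computation of (1) for $\overline{\mathcal{A}}$ and then invoking a sandwich for $\mathfrak{R}_3$), but the substance is the same.
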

\begin{proof}
1.  Obviously, $\mathbb{K}[\Omega_1^{\pm 1},\Omega_2^{\pm 1}]\subseteq Z_P(\mathfrak{R}_1).$ For the reverse inclusion, let $y\in Z_P(\mathfrak{R}_1).$ Then, $y$ can be written in terms of the basis of $\mathfrak{R}_1$ as  $y=\sum_{(i,\ldots,n)\in \mathbb{Z}^6}a_{(i,\ldots,n)} T_1^iT_2^{j}T_3^{k}T_4^{l}T_5^{m}T_6^{n}.$ One can verify that $\{y,T_1\}=(-3j-k+m+3n) yT_1.$ Since $y\in Z_P(\mathfrak{R}_1),$ it follows that
 $-3j-k+m+3n=0.$   
Following the same pattern for $T_2, T_3, T_4, T_5$ and $T_6$, one can confirm that
$3i-3k-3l+3n=0, \ $
 $i+3j-3l-m=0, \ $
 $3j+3k-3m-3n=0,\ $
 $-i+k+3l-3n=0,$ and
 $-3i-3j+3l+3m=0.$
 Solving this system of six equations will reveal that
 $i=k=m$ and $j=l=n.$  One can therefore write 
 \begin{equation*}
 y=\displaystyle \sum_{(i,j)\in \mathbb{Z}^2}a_{(i,j)} T_1^iT_2^{j}T_3^{i}T_4^{j}T_5^{i}T_6^{j}
=\sum_{(i,j)\in \mathbb{Z}^2}q^{\bullet}a_{(i,j)} T_1^iT_3^{i}T_5^{i}T_2^{j}T_4^{j}T_6^{j}=
\sum_{(i,j)\in \mathbb{Z}^2}q^{\bullet}a_{(i,j)}\Omega_1^i\Omega_2^j
\in \mathbb{K}[\Omega_1^{\pm 1},\Omega_2^{\pm 1}].
 \end{equation*}

2. Similar argument as in (1) will prove the result.

3. Observe that $\mathbb{K}[\Omega_1,\Omega_2]\subseteq Z_P(\overline{\mathcal{A}})\subseteq Z_P(\mathfrak{R}_3)=\mathbb{K}[\Omega_1,\Omega_2].$ Hence, $Z_P(\mathfrak{R}_3)=\mathbb{K}[\Omega_1,\Omega_2].$

4. Since $\mathfrak{R}_i$ is a localization of $\mathfrak{R}_{i+1},$ it follows that $Z_P(\mathfrak{R}_{i+1})\subseteq Z_P(\mathfrak{R}_i).$ From \eqref{embp}, we have that  $Z_P(\mathcal{A})\subseteq Z_P(\mathfrak{R}_3).$ Observe that
 $\mathbb{K}[\Omega_1,\Omega_2]\subseteq Z_P(\mathcal{A})\subseteq Z_P(\mathfrak{R}_3)=\mathbb{K}[\Omega_1,\Omega_2].$ Hence, $Z_P(\mathcal{A})=\mathbb{K}[\Omega_1,\Omega_2].$
\end{proof}

\section{Maximal and primitive Poisson ideals, and simple quotients of the semiclassical limit of $U_q^+(G_2)$ }
\label{sec3}
\label{sec4}
This section studies the  Poisson $\mathcal{H}$-invariant prime ideals of  $\mathcal{A}=\mathbb{K}[X_1,\ldots,X_6]$ of height one. Obviously, $\langle 0\rangle$ is the only height zero Poisson $\mathcal{H}$-invariant prime ideal of  $\mathcal{A}.$ Now, the Poisson $\mathcal{H}$-invariant prime ideals of at most height one will enable us to study a family of primitive and maximal Poisson ideals of $\mathcal{A},$ and consequently study a family of simple quotients of $\mathcal{A}$. 
\subsection{Height one Poisson $\mathcal{H}$-invariant prime ideals of $\mathcal{A}$}
In this subsection, we study the Poisson $\mathcal{H}$-invariant prime ideals of  $\mathcal{A}=\mathbb{K}[X_1,\ldots,X_6]$ with height one.  
We will begin by showing that  $\langle\Omega_1\rangle$ and $\langle\Omega_2\rangle$
are Poisson prime ideals.
Note that
$\langle \Theta\rangle_R$ will denote an ideal generated by the element $\Theta$ in any the ring $R.$ Where no doubt arises, we will simply write $\langle \Theta\rangle.$

Recall from Subsection \ref{sub4} that there exists a bijection  between $\{ P\in \text{P.Spec}(\mathcal{A}^{(j+1)})\mid P\cap \Sigma_j=\emptyset \}$ and $\{Q\in\text{P.Spec}(\mathcal{A}^{(j)})\mid Q\cap \Sigma_j=\emptyset\}$.  Observe that $\langle T_1\rangle$ and $ \langle T_2\rangle$ are both elements of  P.Spec$(\overline{\mathcal{A}}).$ 
The following result  shows that $\langle T_1\rangle\in$ Im$(\psi)$, and that  $\langle\Omega_1\rangle$ is the Poisson prime ideal of $\mathcal{A}$ such that $\psi (\langle\Omega_1\rangle)=\langle T_1\rangle.$

\begin{lem}
\label{plem1}
$\langle\Omega_1\rangle$ is the Poisson prime ideal of $\mathcal{A}$ such that $\psi (\langle\Omega_1\rangle)=\langle T_1\rangle$.
\end{lem}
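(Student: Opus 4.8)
The plan is to track the element $\Omega_1$ through the Poisson deleting derivations algorithm, using the explicit formulas for $\Omega_1$ at each stage $\mathcal{A}^{(j)}$ already computed in Subsection \ref{sub2}, and to apply the description of $\psi_j$ stage by stage. Recall that $\langle T_1\rangle \in \mathrm{P.Spec}(\overline{\mathcal{A}}) = \mathrm{P.Spec}(\mathcal{A}^{(2)})$, and we must produce a Poisson prime ideal $P$ of $\mathcal{A}=\mathcal{A}^{(7)}$ with $\psi(P)=\psi_2\circ\cdots\circ\psi_6(P)=\langle T_1\rangle$. The natural candidate is $P=\langle\Omega_1\rangle_{\mathcal{A}}$, since $\Omega_1$ specializes (via the successive PDDA substitutions) precisely to $T_1T_3T_5$, and we expect the ``trace'' of $\langle T_1\rangle$ back through the algorithm to pick up the extra factors $T_3, T_5$ which become invertible in each relevant localization.

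First I would show that $\langle\Omega_1\rangle_{\mathcal{A}}$ is a Poisson prime ideal of $\mathcal{A}$. Since $\Omega_1$ is Poisson central in $\mathcal{A}$ (established in Subsection \ref{sub2}), the ideal $\langle\Omega_1\rangle$ is automatically a Poisson ideal, so by \cite[Lemma 1.1]{gd} it suffices to check it is a prime ideal of the commutative ring $\mathbb{K}[X_1,\ldots,X_6]$; for this I would verify that $\Omega_1 = X_1X_3X_5 - \tfrac32 X_1X_4 - \tfrac12 X_2X_5 + \tfrac12 X_3^2$ is an irreducible polynomial (e.g. it is degree one in $X_2$ with unit-up-to-content leading coefficient $-\tfrac12 X_5$, hence irreducible once one checks the content $\gcd$ with the $X_2$-free part is trivial), so the principal ideal it generates is prime.

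Next I would run the canonical embedding $\psi = \psi_2\circ\cdots\circ\psi_6$ on $\langle\Omega_1\rangle_{\mathcal{A}}$. Working from $j=6$ down to $j=2$, at each step I use the branch of the definition of $\psi_j$ according to whether $X_{j,j+1}=T_j$ lies in the ideal under consideration. Since for $j=6,5,4,3$ the relevant variable $T_j$ (resp. $X_{j,j+1}$) is \emph{not} in the ideal generated by the image of $\Omega_1$ (as $\Omega_1$ is a product of $T_1,T_3,T_5$ and does not involve $T_6,T_4,T_2$ as a factor), one uses $\psi_j(P)=P\Sigma_j^{-1}\cap\mathcal{A}^{(j)}$, and the explicit chain of equalities for $\Omega_1$ in Subsection \ref{sub2} shows that $\psi_j$ sends $\langle\Omega_1\rangle_{\mathcal{A}^{(j+1)}}$ to $\langle\Omega_1\rangle_{\mathcal{A}^{(j)}}$ — because $\Omega_1$ generates the same ideal after adjoining the invertible $T_j$ and contracting. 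At the final step $j=2$, the ideal $\langle\Omega_1\rangle_{\mathcal{A}^{(3)}} = \langle T_1T_3T_5\rangle_{\mathcal{A}^{(3)}}$ is pushed to $\overline{\mathcal{A}}=\mathcal{A}^{(2)}$; since $T_3,T_5$ become units in the Poisson torus $\mathfrak{R}_1$ and, more relevantly, in the appropriate localization $\Sigma_2^{-1}$, one finds $\langle\Omega_1\rangle\Sigma_2^{-1}\cap\overline{\mathcal{A}} = \langle T_1T_3T_5\rangle\Sigma_2^{-1}\cap\overline{\mathcal{A}} = \langle T_1\rangle$, using that $T_3T_5$ is a non-zero-divisor and the contraction of a principal ideal generated by a product of variables in a polynomial/affine-space setting. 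This simultaneously shows $\langle T_1\rangle\in\mathrm{Im}(\psi)$ and identifies its preimage. Finally, injectivity of $\psi$ on the relevant strata (\cite[Lemma 2.1]{sc}, since $\langle\Omega_1\rangle\cap\Sigma_j=\emptyset$ for all $j$ as $\Omega_1$ is prime and not a power of any $T_j$) yields the \emph{uniqueness} assertion: $\langle\Omega_1\rangle$ is \emph{the} Poisson prime ideal of $\mathcal{A}$ mapping to $\langle T_1\rangle$.

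The main obstacle I anticipate is the bookkeeping at the intermediate steps: one must argue carefully that at each stage $j\in\{6,5,4,3\}$ the variable $T_j$ is genuinely outside the ideal $\langle\Omega_1\rangle_{\mathcal{A}^{(j+1)}}$ (so the correct branch of $\psi_j$ is used), and that contraction of the localized principal ideal back to $\mathcal{A}^{(j)}$ returns exactly $\langle\Omega_1\rangle_{\mathcal{A}^{(j)}}$ rather than something larger — this is where the precise chain of substitutions for $\Omega_1$ and the fact that $\Sigma_j$-localization only inverts $T_j$ (a single variable, not a factor of $\Omega_1$ until we reach the torus) do the work. The verification that $\Omega_1$ is irreducible, while elementary, also needs to be done honestly rather than waved away.
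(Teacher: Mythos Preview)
Your approach has a genuine gap: the claim that $\psi_j$ sends $\langle\Omega_1\rangle_{\mathcal{A}^{(j+1)}}$ to $\langle\Omega_1\rangle_{\mathcal{A}^{(j)}}$ for each $j\in\{3,4,5,6\}$ is false. The element $\Omega_1$ \emph{factors} in the intermediate algebras. Concretely, in $\mathcal{A}^{(5)}$ one has
\[
\Omega_1 = X_{1,5}T_3T_5 - \tfrac{1}{2}X_{2,5}T_5 = \bigl(X_{1,5}T_3 - \tfrac{1}{2}X_{2,5}\bigr)\cdot T_5,
\]
so $\langle\Omega_1\rangle_{\mathcal{A}^{(5)}}$ is not prime and hence not in the image of $\psi_5$. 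What actually happens is that $\psi_5(\langle\Omega_1\rangle_{\mathcal{A}^{(6)}}) = \langle\Omega_1\rangle[T_5^{-1}]\cap\mathcal{A}^{(5)}$ strips off the factor $T_5$ (now a unit) and contracts to the prime ideal generated by $X_{1,5}T_3 - \tfrac{1}{2}X_{2,5}$. A further shrinkage occurs at $j=3$, where one lands on $\langle T_1\rangle$ rather than $\langle T_1T_3T_5\rangle$. Your final remark that ``$T_3,T_5$ become units in $\Sigma_2^{-1}$'' is also incorrect: $\Sigma_2=\{T_2^n\}$ only inverts $T_2$, and in any case $\psi_2$ is essentially the identity here since $\delta_2=0$.

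The paper's proof proceeds in the opposite direction --- starting from $\langle T_1\rangle$ in $\mathcal{A}^{(3)}$ and tracking the ideal \emph{forward} to $\mathcal{A}^{(7)}$ --- precisely because this makes the changing generators transparent: $\langle T_1\rangle$, then $\langle X_{1,4}T_3-\tfrac12 T_2\rangle$, then $\langle X_{1,5}T_3-\tfrac12 X_{2,5}\rangle$, and only at $\mathcal{A}^{(6)}$ does the generator become $\Omega_1$ itself. Primality at each stage is checked directly for $j=3,4,5$ (the quotient is a polynomial ring), and the contraction steps at $j=5,6$ require a minimality-of-exponent argument to show nothing extra creeps in. Your irreducibility argument for $\Omega_1\in\mathcal{A}$ is fine and could replace the paper's step~5, but you still need the intermediate bookkeeping with the correct (shrinking) generators to compute $\psi$.
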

\begin{proof}
We will prove this result in several steps by showing that:
\begin{itemize}
\item[1.] $\langle T_1\rangle\in$  P.Spec$(\mathcal{A}^{(3)}).$
\item[2.] $\langle X_{1,4}T_3-\frac{1}{2} T_2\rangle\in$  P.Spec$(\mathcal{A}^{(4)})$ [note that $\langle T_1\rangle[T_3^{-1}]\cap \mathcal{A}^{(4)}= \langle X_{1,4}T_3-\frac{1}{2} T_2\rangle$].
\item[3.] $\langle X_{1,5}T_3-\frac{1}{2} X_{2,5} \rangle\in$ P.Spec$(\mathcal{A}^{(5)})$ [note that $\langle  X_{1,4}T_3-\frac{1}{2} T_2\rangle[T_4^{-1}]\cap \mathcal{A}^{(5)}=$\newline $\langle X_{1,5}T_3-\frac{1}{2} X_{2,5}\rangle$].
\item[4.] $ \langle X_{1,5}T_3-\frac{1}{2} X_{2,5} \rangle [T_5^{-1}]\cap \mathcal{A}^{(6)}=\langle\Omega_1\rangle_{\mathcal{A}^{(6)}},$ hence $\langle \Omega_{1}\rangle_{\mathcal{A}^{(6)}}\in$ P.Spec$(\mathcal{A}^{(6)}).$
\item[5.] $\langle\Omega_1\rangle_{\mathcal{A}^{(6)}} [T_6^{-1}]\cap \mathcal{A}=\langle\Omega_1\rangle_{\mathcal{A}}$, hence $\langle \Omega_1\rangle_{\mathcal{A}}\in$ P.Spec$(\mathcal{A}).$
\end{itemize}
We proceed to prove the above claims.

1. One can easily verify that 
${\mathcal{A}^{(3)}}/\langle T_1\rangle$ is isomorphic to a Poisson affine space of rank 5.  Hence, $\langle T_1\rangle$ is a Poisson prime ideal in  $\mathcal{A}^{(3)}.$

2. Set $I:= \langle X_{1,4}T_3-\frac{1}{2} T_2\rangle.$  One can verify that $\{X_{i,4},I\}\subseteq I$ for all $i=1,\ldots,6.$ Therefore, $I$ is a Poisson ideal in $\mathcal{A}^{(4)}$.
In addition, ${\mathcal{A}^{(4)}}/I$ is isomorphic to a polynomial ring in five variables which is a domain, hence $I$ is a prime ideal. Since $I$ is both Poisson and prime ideal, it is a Poisson prime ideal in $\mathcal{A}^{(4)}.$ 

3. Similar to (2).

4. Observe that $\Omega_1':=X_{1,5}T_3-\frac{1}{2} X_{2,5}=\Omega_1T_5^{-1}$ in $\mathcal{A}^{(6)}[T_5^{-1}].$  Since $\langle\Omega_1'\rangle\in$ P.Spec$(\mathcal{A}^{(5)}),$ we want to show that  $ \langle\Omega_1'\rangle[T_5^{-1}]\cap \mathcal{A}^{(6)}=\langle\Omega_1\rangle_{\mathcal{A}^{(6)}}.$ Observe that $\langle\Omega_1\rangle_{\mathcal{A}^{(6)}}\subseteq \langle\Omega_1'\rangle[T_5^{-1}]\cap \mathcal{A}^{(6)}.$ We establish the reverse inclusion. Let $y\in\langle\Omega_1'\rangle[T_5^{-1}]\cap \mathcal{A}^{(6)}.$
 Then,
$y\in \langle\Omega_1'\rangle[T_5^{-1}].$ There exists $i\in\mathbb{N}$ such that $yT_5^i\in \langle\Omega_1'\rangle.$ Hence, $yT_5^i=\Omega_1'v,$ for some $v\in \mathcal{A}^{(5)}.$ Furthermore, since $\mathcal{A}^{(5)}[T_5^{-1}]=\mathcal{A}^{(6)}[T_5^{-1}],$ there exists $j\in\mathbb{N}$ such that 
$vT_5^j=v'$ for some $v'\in \mathcal{A}^{(6)}.$ It follows from $yT_5^i=\Omega_1'v$  that $yT_5^{i+j}=\Omega_1'vT_5^j=\Omega_1'v'.$ Hence, $yT_5^{\delta}=\Omega_1'T_5v'=\Omega_1v',$ where
$\delta=i+j+1$ (note that $\Omega_1'T_5=\Omega_1$ in $\mathcal{A}^{(6)}$). Let $S=\{s\in\mathbb{N}\mid \ \exists  v'\in \mathcal{A}^{(6)}: y T_5^{s}=\Omega_{1}v'\}.$ Since $\delta\in S,$ we have that  $S\neq \emptyset.$  Let $s=s_0$ be the minimum element of $S$ such that $y T_5^{s_0}=\Omega_{1}v'$  for some $v'\in \mathcal{A}^{(6)}.$
We want to show that $s_0=0.$ Suppose that  $s_0>0.$ Since $T_5$ is irreducible, $y T_5^{s_0}=\Omega_{1}v'$ implies that $T_5$ is a factor of $\Omega_1$ or $v'.$ Clearly, $T_5$ is not a factor of
$\Omega_1,$ hence, it must be a factor of $v'.$
Now $\mathcal{A}^{(6)}$ can be viewed as a free
$\mathbb{K}[ X_{1,6},X_{2,6},X_{3,6},T_4,T_6]-$module with basis $\left( T_5^{\xi}\right) _{\xi\in\mathbb{N}}.$ One can therefore write $v'=\sum_{\xi=1}^n\beta_\xi T_5^{\xi},$ where $\beta_\xi\in \mathbb{K}[ X_{1,6},X_{2,6},X_{3,6},T_4,T_6].$
Returning to $yT_5^{s_0}=\Omega_{1}v',$ we have that $y T_5^{s_0}=\Omega_{1}\sum_{\xi=1}^n\beta_\xi T_5^{\xi}.$ This implies that $y T_5^{s_0-1}=\Omega_{1}v^{\prime\prime},$
where $v^{\prime\prime}=\sum_{\xi=1}^n\beta_\xi T_5^{\xi-1}\in \mathcal{A}^{(6)}.$ Consequently,
$s_0-1\in S,$ a contradiction! Therefore, $s_0=0$ and $y=\Omega_{1}v'\in\langle\Omega_1\rangle_{\mathcal{A}^{(6)}}.$ Hence, $\langle\Omega_1'\rangle[T_5^{-1}]\cap \mathcal{A}^{(6)}\subseteq \langle\Omega_1\rangle_{\mathcal{A}^{(6)}}$ as expected. 

5. The proof is similar to (4). 
\end{proof}

Following similar procedures, one can also prove that $\langle T_2\rangle\in$ Im$(\psi),$ and that  $\langle\Omega_2\rangle$ is the Poisson prime ideal of $\mathcal{A}$ such that $\psi (\langle\Omega_2\rangle)=\langle T_2\rangle$ (the interested reader can check out the details of the proof in \cite{io}). We state only the result in the following lemma.
\begin{lem}
\label{plem2}
  $\langle\Omega_2\rangle$ is the Poisson prime ideal of $\mathcal{A}$ such that $\psi (\langle\Omega_2\rangle)=\langle T_2\rangle$.
\end{lem}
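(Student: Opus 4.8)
The plan is to mirror the five-step structure of the proof of Lemma~\ref{plem1}, replacing the chain that captures $\Omega_1$ by the analogous chain that captures $\Omega_2$. Recall from Subsection~\ref{sub2} the successive expressions
\[
\Omega_2 = T_2T_4T_6 = X_{2,4}X_{4,4}X_{6,4} = X_{2,5}X_{4,5}X_{6,5}-\tfrac{2}{3}X_{3,5}^3X_{6,5} = X_{2,6}X_{4,6}X_{6,6}-\tfrac{2}{3}X_{3,6}^3X_{6,6}.
\]
The key point is that the deleting-derivations variable $T_2=X_{2,2}=X_{2,3}=X_{2,4}$ is \emph{not} altered until the step $j=4$ (since $\delta_4(X_2)=-4X_3^3\neq 0$ but earlier $\delta_2,\delta_3$ do not involve $X_2$ in a way that changes $X_{2,j}$ for $j\le 4$), so the first nontrivial lift occurs at level $\mathcal{A}^{(4)}$. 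First I would check that $\langle T_2\rangle\in\text{P.Spec}(\mathcal{A}^{(3)})$: since $\mathcal{A}^{(3)}$ is built from a Poisson affine space by two Poisson-Ore extensions and $T_2$ is one of the affine-space generators, the quotient $\mathcal{A}^{(3)}/\langle T_2\rangle$ is again an iterated Poisson-Ore extension over a polynomial/affine ring, hence a domain, and $\langle T_2\rangle$ is a Poisson ideal because $\{X_{i,3},T_2\}\in\langle T_2\rangle$ for every $i$ (indeed $\{X_{i,3},T_2\}$ is a scalar multiple of $T_2 X_{i,3}$ together with lower-order $\delta$-terms that still lie in $\langle T_2\rangle$).

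Next I would carry out the four localization/contraction steps. At level $j=3\to 4$ one has $T_2\notin\langle T_2\rangle$ is false, so the relevant map is the localization one: localizing $\langle T_2\rangle$ at $\Sigma_3=\{T_3^n\}$ and contracting to $\mathcal{A}^{(4)}$ should give $\langle X_{2,4}\rangle=\langle T_2\rangle_{\mathcal{A}^{(4)}}$ — here actually the generator is unchanged, so this step is essentially trivial. The genuinely substantive steps are the passages $\mathcal{A}^{(4)}\to\mathcal{A}^{(5)}\to\mathcal{A}^{(6)}\to\mathcal{A}$, where one successively verifies
\[
\langle T_2\rangle_{\mathcal{A}^{(4)}}[T_4^{-1}]\cap\mathcal{A}^{(5)} = \langle X_{2,5}X_{4,5}X_{6,5}-\tfrac23 X_{3,5}^3 X_{6,5}\rangle\big/\text{(rescale)} = \langle \Omega_2 T_4^{-1}\cdots\rangle,
\]
precisely as in steps~2--5 of Lemma~\ref{plem1}: at each stage one shows the candidate generator generates a Poisson ideal (by checking $\{X_{i,j},\,\cdot\,\}$ closes), that the quotient is a polynomial ring in five variables hence a domain so the ideal is prime, and then runs the minimal-exponent descent argument (using that $T_j$ is irreducible in the free-module presentation of $\mathcal{A}^{(j)}$ over $\mathbb{K}[X_{1,j},\ldots,\widehat{T_j},\ldots]$) to conclude $\langle\Omega_2'\rangle[T_j^{-1}]\cap\mathcal{A}^{(j)}=\langle\Omega_2\rangle_{\mathcal{A}^{(j)}}$. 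Composing over $j=6,5,4,3$ yields $\psi(\langle\Omega_2\rangle)=\langle T_2\rangle$.

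The main obstacle, as in Lemma~\ref{plem1}, is step~4's analogue: the minimal-exponent/irreducibility argument showing that no extra $T_j$-power sneaks into the contraction, i.e.\ that $\langle\Omega_2'\rangle[T_j^{-1}]\cap\mathcal{A}^{(j)}\subseteq\langle\Omega_2\rangle_{\mathcal{A}^{(j)}}$ rather than merely $\supseteq$. One must be slightly more careful than for $\Omega_1$ because the intermediate expressions for $\Omega_2$ carry the cubic correction terms $-\tfrac23 X_{3,j}^3 X_{6,j}$, so one should verify that $T_j$ is genuinely not a factor of $\Omega_2$ in $\mathcal{A}^{(j+1)}$ (looking at the expansion in the free-module basis $(T_j^{\xi})$, the constant term in $T_j$ is nonzero), which is what makes the descent go through. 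Everything else is routine bookkeeping parallel to the $\Omega_1$ case, and since the referenced note \cite{io} contains the full computation, I would present the statement with the proof above as a sketch and refer the reader there for the Poisson-bracket verifications.
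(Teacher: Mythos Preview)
Your approach is essentially the same as the paper's (which explicitly says the proof follows procedures similar to Lemma~\ref{plem1} and refers to \cite{io} for details), so the overall structure is right. A few imprecisions to fix: (i) the branching criterion for $\psi_3$ is whether $T_3\in\langle T_2\rangle$, not whether $T_2\in\langle T_2\rangle$; since $T_3\notin\langle T_2\rangle$ you correctly use the localization branch, but your stated reason is confused. (ii) The intermediate generator at level $\mathcal{A}^{(5)}$ should be $X_{2,5}T_4-\tfrac{2}{3}T_3^3=\Omega_2 T_6^{-1}$, not $\Omega_2$ itself; the $T_6$ factor only enters at the final step $j=6$, exactly as $T_5$ did for $\Omega_1$. (iii) Because this generator carries a cubic term, the quotient $\mathcal{A}^{(5)}/\langle X_{2,5}T_4-\tfrac{2}{3}T_3^3\rangle$ is a hypersurface domain rather than a polynomial ring in five variables; primality follows from irreducibility of the generator (it is linear in $X_{2,5}$ with coprime coefficients $T_4$ and $-\tfrac{2}{3}T_3^3$), not from the quotient being a polynomial ring. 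With these corrections your sketch matches the intended argument.
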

Observe that $\langle T_1,T_2\rangle$ and $\langle T_2,T_3\rangle$ are  Poisson prime ideals of $\mathcal{\overline{A}}.$
In the next lemma, we will show that $\langle T_1,T_2\rangle, \langle T_2,T_3\rangle\in \psi(\text{P.Spec}(\mathcal{A}))$. 
\begin{lem}
\label{ht2}
$\langle T_1,T_2\rangle, \langle T_2,T_3\rangle\in \psi(\text{P.Spec}(\mathcal{A})).$
\end{lem}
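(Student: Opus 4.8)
The plan is to mimic the strategy used for Lemma \ref{plem1}, but now working with the height-two $\mathcal{H}$-invariant prime ideals $\langle T_1,T_2\rangle$ and $\langle T_2,T_3\rangle$ of $\overline{\mathcal{A}}$, and pulling them back step by step through the maps $\psi_j$ from $\overline{\mathcal{A}}=\mathcal{A}^{(2)}$ up to $\mathcal{A}=\mathcal{A}^{(7)}$. The key observation is that $\langle T_1,T_2\rangle$ and $\langle T_2,T_3\rangle$ both contain $T_1$ or $T_3$ (the generators killed at early stages of the algorithm) together with $T_2$, so at each stage $j$ one traces whether the relevant generator $X_{j,j+1}=T_j$ lies in the ideal: if it does, $\psi_j$ acts by $g_j^{-1}$ of the image in the quotient, and if not, one localizes at $T_j$ and contracts. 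The upshot should be an explicit formula for the preimage in $\mathcal{A}$: since $\langle\Omega_1\rangle$ pulls back from $\langle T_1\rangle$ and $\langle\Omega_2\rangle$ from $\langle T_2\rangle$ (Lemmas \ref{plem1} and \ref{plem2}), and since the deleting-derivations maps are compatible with sums of ideals when the relevant supports interact controllably, the natural candidate is that $\langle\Omega_1,\Omega_2\rangle$ pulls back to $\langle T_1,T_2\rangle$ and some ideal $\langle\Omega_2,\Theta\rangle$ (where $\Theta$ is the pull-back of $T_3$) pulls back to $\langle T_2,T_3\rangle$.

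First I would compute, using the PDDA data already tabulated, the pull-back $\Theta$ of $T_3$ into $\mathcal{A}$, i.e. track $T_3=X_{3,2}=X_{3,3}=X_{3,4}=X_{3,5}$ through the steps $X_{3,5}=X_{3,6}-\tfrac32 X_{4,6}X_{5,6}^{-1}$ and then $X_{3,6}=X_3-X_5^2X_6^{-1}$, $X_{4,6}=X_4-\tfrac23 X_5^3X_6^{-1}$, clearing denominators to obtain an honest element of $\mathcal{A}$ (multiplying by the appropriate power of $T_5=X_5$ and $T_6=X_6$); by the same reasoning as in step 4 of Lemma \ref{plem1}, the denominators can be absorbed after passing to an ideal generated by $\Omega_2$, since $\Omega_2=T_2T_4T_6$ already carries the factor $X_6$. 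Then I would show, stage by stage and exactly as in steps 1--5 of the proof of Lemma \ref{plem1}, that at each $j$ the ideal generated by the partially-pulled-back generators is Poisson (check $\{X_{i,j},I\}\subseteq I$ on generators) and prime (exhibit the quotient as a polynomial ring or Poisson affine space of the correct rank, which is visibly a domain), and that the contraction formulae $\langle\,\cdot\,\rangle[T_j^{-1}]\cap\mathcal{A}^{(j+1)}=\langle\,\cdot\,\rangle_{\mathcal{A}^{(j+1)}}$ hold — the latter via the minimal-exponent / irreducibility argument already deployed for $T_5$ and $T_6$, using that $T_j$ is irreducible in $\mathcal{A}^{(j+1)}$ and is not a factor of the generators of the ideal.

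The main obstacle I anticipate is the bookkeeping in step 2 of the analogue for $\langle T_2,T_3\rangle$: unlike $\langle T_1,T_2\rangle$, where both generators ($T_1$ at stage $2$, $T_2$ at stage $3$ via $X_{1,4}T_3-\tfrac12 T_2$) were handled in Lemma \ref{plem1} and Lemma \ref{plem2} separately and one just checks that the sum stays prime, here $T_3=X_{3,4}$ is deleted later (at stage $5$) and the pull-back of the pair $\langle T_2,T_3\rangle$ involves simultaneously the generator coming from $T_2$ (which picks up the $X_{1,j}$-corrections) and the generator coming from $T_3$ (which picks up the $X_{4,j},X_{5,j}$-corrections); verifying that the resulting two-generator ideal is still prime at each intermediate $\mathcal{A}^{(j)}$ — i.e. that the quotient is a domain — is the delicate point, since one must check that the two defining polynomials form something like a regular sequence or that the quotient is again a (twisted-)polynomial domain. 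I expect this to come down to a direct but slightly intricate verification that the quotient of $\mathcal{A}^{(j)}$ by the two relations is isomorphic to a polynomial ring in four variables for each $j$, analogous to step 2 of Lemma \ref{plem1}, and that the localize-and-contract identities go through verbatim. Once those are in place, the composite $\psi=\psi_2\circ\cdots\circ\psi_6$ sends the pulled-back ideals to $\langle T_1,T_2\rangle$ and $\langle T_2,T_3\rangle$ respectively, which gives the claim.
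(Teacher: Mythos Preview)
Your approach is a valid alternative but is substantially more laborious than what the paper actually does. The paper's proof is entirely \emph{abstract}: it never computes the explicit generators of the preimage ideals $J_{1,2}$ or $J_{2,3}$ in $\mathcal{A}$. Instead, starting from $J_{1,2}^{(3)}:=\langle T_1,T_2\rangle\in\text{P.Spec}(\mathcal{A}^{(3)})$, it simply checks at each step $j=3,4,5,6$ that $T_j\notin J_{1,2}^{(j)}$, so that $\psi_j$ acts by localize-and-contract and automatically produces a Poisson prime $J_{1,2}^{(j+1)}\in\text{P.Spec}(\mathcal{A}^{(j+1)})$. The non-membership $T_{j+1}\notin J_{1,2}^{(j+1)}$ is shown by a one-line contradiction: if $T_{j+1}\in J_{1,2}^{(j+1)}$, then since $T_{j+1}\in\mathcal{A}^{(j)}$ and $J_{1,2}^{(j)}[T_j^{-1}]=J_{1,2}^{(j+1)}[T_j^{-1}]$, one would get $T_{j+1}\in J_{1,2}^{(j)}[T_j^{-1}]\cap\mathcal{A}^{(j)}=J_{1,2}^{(j)}$, which is already known to be false. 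The same reasoning handles $\langle T_2,T_3\rangle$.

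Your route via explicit generators, following Lemma~\ref{plem1} step by step, would also succeed but requires you to identify two-generator ideals at each intermediate $\mathcal{A}^{(j)}$ and verify primality by hand (the ``main obstacle'' you correctly flag). The paper sidesteps all of this. Note also that your conjecture that $J_{1,2}=\langle\Omega_1,\Omega_2\rangle$ is neither needed nor established in the paper; Lemma~\ref{lcl} only shows $\Omega_1,\Omega_2\in J_{1,2}$, not that they generate it. The lemma at hand asserts only \emph{existence} of preimages, and the paper proves exactly that --- no more.
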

\begin{proof}
 Let $J^{(j)}\in$ P.Spec$(\mathcal{A}^{(j)})$ for all $2\leq j\leq 6.$ We already know that $J^{(j+1)}=J^{(j)}[T_{j}^{-1}]\cap \mathcal{A}^{(j+1)}\in$ P.Spec$(\mathcal{A}^{(1+j)})$ provided $T_{j}\not\in J^{(j)}.$ Note that
 $J^{(7)}=J$ and $\mathcal{A}^{(7)}=\mathcal{A}.$
 
 We begin by showing that $\langle T_1,T_2\rangle\in \psi(\text{P.Spec}(\mathcal{A})).$\newline  Set $J_{1,2}^{(3)}:=\langle T_1,T_2\rangle \in$ P.Spec$(\mathcal{A}^{(3)})$. Observe that $T_3\not\in J_{1,2}^{(3)}.$ Therefore, $J_{1,2}^{(3)}[T_3^{-1
}]\cap \mathcal{A}^{(4)}=J_{1,2}^{(4)}.$ Suppose that $T_4\in J_{1,2}^{(4)}.$ Then, since
 $J_{1,2}^{(3)}[T_3^{-1}]=J_{1,2}^{(4)}[T_3^{-1}],$ we have that $T_4\in J_{1,2}^{(3)}[T_3^{-1}]\cap \mathcal{A}^{(4)}=J_{1,2}^{(4)}[T_3^{-1}]\cap \mathcal{A}^{(3)}=J_{1,2}^{(3)},$ a contradiction! Therefore, $T_4\not\in J_{1,2}^{(4)}.$ Hence, $J_{1,2}^{(4)}[T_4^{-1}]\cap \mathcal{A}^{(5)}=J_{1,2}^{(5)}.$ Suppose that $T_5\in J_{1,2}^{(5)}.$ Then, 
 $T_5\in J_{1,2}^{(4)}[T_4^{-1}]\cap \mathcal{A}^{(5)}=J_{1,2}^{(5)}[T_4^{-1}]\cap \mathcal{A}^{(4)}=J_{1,2}^{(4)},$ a contradiction! Therefore, $T_5\not\in J_{1,2}^{(5)}.$
Hence, $J_{1,2}^{(5)}[T_5^{-1}]\cap \mathcal{A}^{(6)}=J_{1,2}^{(6)}.$ Similarly, one can show that $T_6\not\in J_{1,2}^{(6)}.$ Hence, $J_{1,2}^{(6)}[T_6^{-1}]\cap \mathcal{A}=J_{1,2}.$ Therefore, there exists 
$J_{1,2}\in \text{P.Spec}(\mathcal{A})$ such that $\psi (J_{1,2})=\langle T_1,T_2\rangle.$ Similar arguments will also show that there exists $J_{2,3}\in \text{P.Spec}(\mathcal{A})$ such that
$\psi(J_{2,3})=\langle T_2,T_3\rangle.$   
\end{proof}

Recall that $\Omega_1=T_1T_3T_5$ and $\Omega_2=T_2T_4T_6$ in $\overline{\mathcal{A}}.$ Observe that
$\Omega_1,\Omega_2$ are both elements of $\langle T_1,T_2\rangle$ and $\langle T_2,T_3\rangle.$
From Lemma \ref{ht2}, we know that  $J_{1,2}$ and 
$J_{2,3}$ are the elements of P.Spec$(\mathcal{A})$ such that 
$\psi(J_{1,2})=\langle T_1,T_2\rangle$ and $\psi(J_{2,3})=\langle T_2,T_3\rangle.$ In the next lemma, we show that $J_{1,2}$ and $J_{2,3}$  contain  $\Omega_1$ and $\Omega_2.$ 
\begin{lem}
\label{lcl}
$\Omega_1$ and $\Omega_2$ are elements of $J_{1,2}$ and $J_{2,3}.$ 
\end{lem}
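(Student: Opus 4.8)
The plan is to leverage the canonical embedding $\psi$ together with the explicit formulas for $\Omega_1$ and $\Omega_2$ computed in Subsection \ref{sub2}. The key observation is that $\psi(J_{1,2}) = \langle T_1, T_2\rangle$ and $\psi(J_{2,3}) = \langle T_2, T_3\rangle$ both contain $\Omega_1 = T_1T_3T_5$ and $\Omega_2 = T_2T_4T_6$ (as already noted in the paragraph preceding the statement), so one expects $\Omega_1, \Omega_2 \in J_{1,2}$ and $\Omega_1, \Omega_2 \in J_{2,3}$. Since $\Omega_1$ and $\Omega_2$ are Poisson central elements of each $\mathcal{A}^{(j)}$ (by Subsection \ref{sub2}), and since $\psi$ is built by composing the maps $\psi_j$ which at each step either localize at $\Sigma_j$ and intersect back, or quotient by $\langle X_{j,j+1}\rangle$ and pull back along $g_j$, I would track $\Omega_1$ and $\Omega_2$ through this process stepwise.

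First I would recall from the proof of Lemma \ref{ht2} that $J_{1,2}$ arises as the successive pullback $J_{1,2}^{(3)} = \langle T_1, T_2\rangle \subset \mathcal{A}^{(3)}$, with $T_j \notin J_{1,2}^{(j)}$ at each stage, so that $\psi_j$ acts on $J_{1,2}^{(j+1)}$ via the localization formula $\psi_j(P) = P\Sigma_j^{-1} \cap \mathcal{A}^{(j)}$ (equivalently, $J_{1,2}^{(j+1)} = J_{1,2}^{(j)}[T_j^{-1}] \cap \mathcal{A}^{(j+1)}$). Now $\Omega_1 = T_1 T_3 T_5 \in \langle T_1 \rangle \subseteq J_{1,2}^{(3)}$ and $\Omega_2 = T_2 T_4 T_6 \in \langle T_2 \rangle \subseteq J_{1,2}^{(3)}$ as elements of $\overline{\mathcal{A}} = \mathcal{A}^{(2)}$; but crucially, by the computations in Subsection \ref{sub2}, $\Omega_1$ and $\Omega_2$ can also be expressed in each $\mathcal{A}^{(j)}$ (e.g.\ $\Omega_1 = X_{1,3}X_{3,3}X_{5,3} \in \mathcal{A}^{(3)}$, and so on), and these expressions are compatible under the identifications. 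Since $\Omega_1 \in J_{1,2}^{(3)}$ and $\Omega_1 \in \mathcal{A}^{(4)}$, we get $\Omega_1 \in J_{1,2}^{(3)}[T_3^{-1}] \cap \mathcal{A}^{(4)} = J_{1,2}^{(4)}$. Iterating this argument for $j = 4, 5, 6$ — using at each step that $\Omega_1, \Omega_2$ lie in $\mathcal{A}^{(j+1)}$ and in $J_{1,2}^{(j)}$, hence in $J_{1,2}^{(j)}[T_j^{-1}] \cap \mathcal{A}^{(j+1)} = J_{1,2}^{(j+1)}$ — yields $\Omega_1, \Omega_2 \in J_{1,2}^{(7)} = J_{1,2}$. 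The identical argument, run through $J_{2,3}^{(3)} = \langle T_2, T_3\rangle$ with $\Omega_1 \in \langle T_3\rangle$ and $\Omega_2 \in \langle T_2\rangle$, gives $\Omega_1, \Omega_2 \in J_{2,3}$.

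The main obstacle I anticipate is purely bookkeeping: one must be careful that the expressions for $\Omega_1$ and $\Omega_2$ as displayed in Subsection \ref{sub2} — which are written in terms of the various $X_{i,j}$ — are genuinely the \emph{same} element of $\mathrm{Fract}(\mathcal{A})$ at each level (this is exactly what the chain of equalities in that subsection establishes, ending with the formula in the original generators $X_i$), and that membership in $J_{1,2}^{(j)}$ is preserved under the intersection-localization step. The latter is immediate from the elementary fact that if an element $a$ of a ring $R$ lies in an ideal $I$ of $R$ and also in a subring $R' \subseteq R[S^{-1}]$, then $a \in I[S^{-1}] \cap R'$. No delicate argument of the minimal-exponent type used in Lemma \ref{plem1} is needed here, precisely because we already know $\Omega_1, \Omega_2$ belong to $\mathcal{A}^{(j)}$ for every $j$; the containment goes the ``easy'' direction. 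Hence the proof reduces to stating the stepwise inclusion and invoking Lemma \ref{ht2} for the structure of $J_{1,2}$ and $J_{2,3}$.
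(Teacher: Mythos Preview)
Your proposal is correct and follows essentially the same approach as the paper: both track $\Omega_1,\Omega_2$ through the chain $J_{1,2}^{(j)}$ (resp.\ $J_{2,3}^{(j)}$) using the localization--intersection description together with the fact that $\Omega_1,\Omega_2\in\mathcal{A}^{(j)}$ for every $j$. One small bookkeeping slip: for $J_{2,3}$ you start at $J_{2,3}^{(3)}=\langle T_2,T_3\rangle$ and invoke the ``identical argument,'' but since $T_3\in J_{2,3}^{(3)}$ the step at $j=3$ is \emph{not} given by the localization formula $J^{(j)}[T_j^{-1}]\cap\mathcal{A}^{(j+1)}$; the paper avoids this by starting the induction for $J_{2,3}$ at level $j=4$, where $J_{2,3}^{(4)}=\langle T_2,T_3\rangle$ in $\mathcal{A}^{(4)}$ and $T_4,T_5,T_6\notin J_{2,3}^{(j)}$ thereafter.
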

\begin{proof}
Recall that $\Omega_1$ and $\Omega_2$ are central elements of $\mathcal{A}^{(j)}$ for all $2\leq j\leq 7.$
Given the set-up in the proof of Lemma \ref{ht2}, we know that 
$\Omega_1, \Omega_2\in$ $J_{1,2}^{(3)}=\langle T_1,T_2\rangle.$  By an induction on $j$ (where $j=3,4,5,6$), one can confirm that  
$\Omega_1, \Omega_2\in J_{1,2}^{(j)}[T_j^{-1
}]\cap \mathcal{A}^{(j+1)}=J_{1,2}^{(j+1)}$. Therefore; $\Omega_1, \Omega_2\in J_{1,2}.$ 
Similarly, one can easily show  that $\Omega_1,\Omega_2\in J_{2,3}^{(4)}=
\langle T_2, T_3\rangle.$ 
\end{proof}
We now want to find the height one Poisson $\mathcal{H}$-invariant prime ideals of $\mathcal{A},$ and show that the height $2$ Poisson $\mathcal{H}$-invariant prime ideals of $\mathcal{A}$ contain those of height one. 

Let the torus $\mathcal{H}:=(\mathbb{K}^*)^2$ acts by Poisson automorphisms on $\mathcal{A}$ via:
\begin{align*}
h\cdot X_1&=\alpha_1X_1& h\cdot X_2&=\alpha_1^3\alpha_6X_2& h\cdot X_3&=\alpha_1^2\alpha_6X_3\\
h\cdot X_4&=\alpha_1^3\alpha_6^2X_4&h\cdot X_5&=\alpha_1\alpha_6X_5&h\cdot X_6&=\alpha_6X_6,
\end{align*}
for all $h:=(\alpha_1,\alpha_6)\in \mathcal{H}.$ This $\mathcal{H}$-action is rational. Furthermore,  $\Omega_1$ and $\Omega_2$ are Poisson $\mathcal{H}$-eigenvectors. Hence, $\langle\Omega_1\rangle$ and $\langle\Omega_2\rangle$ are Poisson $\mathcal{H}$-invariant prime ideals.

Set $\theta_1:=$ id$_{\text{P.Spec}(\overline{\mathcal{A}})}.$ For all $2\leq j\leq 6,$ define 
$\theta_j:=\theta_{j-1}\circ \psi_j.$ Then, $\theta_j:$ P.Spec$(\mathcal{A}^{(j+1)})\hookrightarrow$ P.Spec$(\overline{\mathcal{A}}).$ The map $\theta_j$ is injective \cite[\S 2.3]{sc}. Let $\langle X_{j,j+1}\rangle_P$ denote the smallest Poisson ideal in $\mathcal{A}^{(1+j)}$ containing $X_{j,j+1}.$ It follows from \cite[Lemma 2.2]{sc} that there is a surjective Poisson algebra homomorphism $g_j:\mathcal{A}^{(j)}\longrightarrow \mathcal{A}^{(j+1)}/\langle X_{j,j+1}\rangle_P$ given by
$g_j(X_{i,j})=\overline{X_{i,j+1}}:=X_{i,j+1}+\langle X_{j,j+1}\rangle_P$ for all $1\leq i\leq 6.$ Denote the kernel of $g_j$ by ker$(g_j)$ and the image of $\psi$ by Im$(\psi)$. We have the following lemma.  
\begin{lem}\cite[Proposition 2.8]{sc}
\label{pl3}
Let $P\in$ P.Spec$(\overline{\mathcal{A}}).$ The following are equivalent:
\begin{itemize}
\item[•] $P\in$ Im$(\psi),$
\item[•] for all $2\leq j\leq 6,$ we have that $P\in$ Im$(\theta_{j-1})$ and either
$X_{j,j}=X_{j,j+1}\not\in\theta_{j-1}^{-1}(P)$ or ker$(g_j)\subseteq \theta_{j-1}^{-1}(P).$
\end{itemize}
\end{lem}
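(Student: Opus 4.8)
The plan is to reduce the equivalence to a local description of the image of a single deleting‑derivations step $\psi_j$, and then telescope up from $\overline{\mathcal{A}}$ to $\mathcal{A}$. For fixed $j$ with $2\le j\le 6$, I would first show that a Poisson prime $Q$ of $\mathcal{A}^{(j)}$ lies in Im$(\psi_j)$ if and only if $X_{j,j}=X_{j,j+1}\notin Q$ or ker$(g_j)\subseteq Q$. The two defining branches of $\psi_j$ are selected precisely by whether $X_{j,j+1}=T_j$ belongs to the argument, so one treats them separately. If $T_j\notin Q$ then $Q\cap\Sigma_j=\emptyset$, because $\Sigma_j=\{T_j^n\}$ and $Q$ is prime; since the restriction of $\psi_j$ to $\Sigma_j$‑disjoint Poisson primes is a bijection onto the $\Sigma_j$‑disjoint Poisson primes of $\mathcal{A}^{(j)}$ (\cite[Lemma 2.1]{sc}), this $Q$ has a preimage, necessarily through the first branch, whose argument likewise avoids $\Sigma_j$. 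If instead ker$(g_j)\subseteq Q$ then, because $g_j\colon\mathcal{A}^{(j)}\to\mathcal{A}^{(j+1)}/\langle X_{j,j+1}\rangle_P$ is a surjective Poisson homomorphism (\cite[Lemma 2.2]{sc}), $g_j(Q)$ is a Poisson prime of the quotient with $g_j^{-1}(g_j(Q))=Q$; lifting $g_j(Q)$ through the quotient map to a Poisson prime $P$ of $\mathcal{A}^{(j+1)}$ with $X_{j,j+1}\in P$ gives $\psi_j(P)=g_j^{-1}(P/\langle X_{j,j+1}\rangle_P)=Q$ via the second branch. For the converse, any value of the first branch, $Q=P\Sigma_j^{-1}\cap\mathcal{A}^{(j)}$, is a proper ideal disjoint from $\Sigma_j$, so $T_j\notin Q$, while any value of the second branch contains ker$(g_j)$ by construction; moreover $g_j(X_{j,j})=0$, so ker$(g_j)\subseteq Q$ already forces $T_j\in Q$, whence the two alternatives are disjoint and $Q\notin$ Im$(\psi_j)$ exactly when neither holds.

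Next I would telescope. Each $\psi_j$ is injective and $\theta_j=\theta_{j-1}\circ\psi_j$ with $\theta_{j-1}$ injective, so $\mathrm{Im}(\theta_j)=\theta_{j-1}(\mathrm{Im}(\psi_j))$; hence for $P\in$ P.Spec$(\overline{\mathcal{A}})$ one has $P\in$ Im$(\theta_j)$ if and only if $P\in$ Im$(\theta_{j-1})$ and the prime $\theta_{j-1}^{-1}(P)$ of $\mathcal{A}^{(j)}$, well defined since $\theta_{j-1}$ is injective, lies in Im$(\psi_j)$; by the local step this last condition reads $X_{j,j}=X_{j,j+1}\notin\theta_{j-1}^{-1}(P)$ or ker$(g_j)\subseteq\theta_{j-1}^{-1}(P)$. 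Since $\theta_1=\mathrm{id}$ and $\psi=\theta_6$, iterating this equivalence finishes the proof: applying it downward from $j=6$ to $j=2$ shows that $P\in$ Im$(\psi)$ forces $P\in$ Im$(\theta_{j-1})$ together with the displayed condition for every $j\in\{2,\dots,6\}$; conversely, assuming the condition for all such $j$ and applying the equivalence upward from $j=2$, starting from the trivial membership $P\in$ Im$(\theta_1)$, promotes $P$ successively into Im$(\theta_2),\dots,$ Im$(\theta_6)=$ Im$(\psi)$.

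I expect the local step to be the main obstacle, above all the analysis of the second branch: identifying its image with the set of Poisson primes of $\mathcal{A}^{(j)}$ containing ker$(g_j)$ requires the quotient‑and‑pullback correspondence of prime ideals to be compatible with the Poisson structure along the surjective Poisson homomorphism $g_j$ — so that $\langle X_{j,j+1}\rangle_P$ is a Poisson ideal and the correspondence restricts to Poisson primes — together with the clean observation that the two branches of $\psi_j$ partition its domain according to whether $T_j$ lies in the argument. Everything after the local step is formal manipulation of injective maps and a five‑step induction.
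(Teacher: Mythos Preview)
The paper does not prove this lemma; it merely cites it as \cite[Proposition 2.8]{sc}, so there is no in-paper argument to compare against. Your proposal is a correct and self-contained reconstruction of the natural proof: you identify the image of a single step $\psi_j$ by analysing its two branches (localisation versus pullback along the surjective Poisson homomorphism $g_j$), and then telescope using the injectivity of each $\theta_{j-1}$. The local step is exactly right, including the observation that the two branches are distinguished by whether $T_j$ lies in the prime, and the use of \cite[Lemmas 2.1 and 2.2]{sc} for the two cases. The telescoping is formal once injectivity is in hand. This is precisely the argument one would expect to find in \cite{sc}, so your write-up recovers the cited result rather than offering an alternative route.
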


Note that the map $\psi$ induces a canonical embedding from $\mathcal{H}$-P.Spec$(\mathcal{A})$ to $\mathcal{H}$-P.Spec$(\overline{\mathcal{A}}).$ Observe that $\{\langle T_i\rangle\mid i=1,\ldots, 6 \}$ is the set of only height one Poisson $\mathcal{H}$-invariant prime ideals we have in $\overline{\mathcal{A}}.$ Since 
 $\psi$ preserves the height of a prime ideal, if $\psi^{-1}(\langle T_i\rangle)\in$  
P.Spec$(\mathcal{A}),$ then it is a height one Poisson $\mathcal{H}$-invariant prime ideal in $\mathcal{A}$ for all $1\leq i\leq 6.$ For example, we already know that $\psi^{-1}(\langle T_1\rangle)=\langle\Omega_1\rangle$ and $\psi^{-1}(\langle T_2\rangle)=\langle\Omega_2\rangle$. Therefore, $\langle\Omega_1\rangle$ and $\langle\Omega_2\rangle$ are height one Poisson $\mathcal{H}$-invariant prime ideals in $\mathcal{A}.$ We will show in the next lemma that $\langle\Omega_1\rangle$ and $\langle\Omega_2\rangle$ are the only height one Poisson $\mathcal{H}$-invariant prime ideals  in $\mathcal{A}.$
\begin{lem} \label{chr} For each $j\in \{3,4,5,6\},$
$\langle T_j\rangle\not\in\psi(\text{P.Spec}(\mathcal{A})).$ 
\end{lem}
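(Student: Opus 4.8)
The plan is to apply the characterization of $\text{Im}(\psi)$ recorded in Lemma~\ref{pl3}. Fix $m\in\{3,4,5,6\}$ and put $P:=\langle T_m\rangle\in\text{P.Spec}(\overline{\mathcal{A}})$. Since the equivalent condition of Lemma~\ref{pl3} must hold for \emph{every} index $2\le j\le 6$ in order for $P$ to lie in $\text{Im}(\psi)$, it suffices to show that it breaks down at the single index $j=m$; this will give $\langle T_m\rangle\notin\psi(\text{P.Spec}(\mathcal{A}))$.

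First I would track $P$ backwards through the deleting-derivations steps $k=2,\dots,m-1$. For each such $k$ the variable with index $m$ is left unchanged, $X_{m,k}=X_{m,k+1}=T_m$, and the ideal $\langle T_m\rangle$ meets none of the multiplicative sets $\Sigma_k=\{T_k^n\mid n\in\mathbb{N}\}$ (these are distinct generators of a polynomial ring); hence $\psi_k$ acts on it by localisation and contraction, and an easy induction on $k$ (using \cite[Lemma 2.1]{sc} and \cite[Prop. 1.11]{sc}) gives $\theta_{k-1}^{-1}(\langle T_m\rangle)=\langle T_m\rangle_{\mathcal{A}^{(k)}}$ for all $2\le k\le m$. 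In particular $P\in\text{Im}(\theta_{m-1})$ and $X_{m,m}=X_{m,m+1}=T_m\in\theta_{m-1}^{-1}(P)=\langle X_{m,m}\rangle_{\mathcal{A}^{(m)}}$, so the first alternative ``$X_{m,m}\notin\theta_{m-1}^{-1}(P)$'' of Lemma~\ref{pl3} at $j=m$ fails.

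It then remains to see that the second alternative fails too, i.e. that $\ker(g_m)\not\subseteq\langle X_{m,m}\rangle_{\mathcal{A}^{(m)}}$; for this it is enough to exhibit one element of $\ker(g_m)$ lying outside $\langle X_{m,m}\rangle_{\mathcal{A}^{(m)}}$. Working in $\mathcal{A}^{(m+1)}$ and bracketing $X_{m,m+1}$ with $X_{1,m+1}$, and using that the Poisson ideal $\langle X_{m,m+1}\rangle_P$ absorbs every $X_{m,m+1}$-multiple, the relations of Subsection~\ref{sub1} (transported by the isomorphism of \cite[Prop. 1.11]{sc} that sends $X_{i,m+1}\mapsto X_i$, under which $\{X_{1,m+1},X_{m,m+1}\}$ corresponds to $\{X_1,X_m\}$) give $X_{2,4}\in\langle X_{3,4}\rangle_P$, $X_{3,5}^2\in\langle X_{4,5}\rangle_P$, $X_{3,6}\in\langle X_{5,6}\rangle_P$ and $X_{5,7}\in\langle X_{6,7}\rangle_P$ for $m=3,4,5,6$ respectively (these use $\delta_m(X_1)=-X_2,-2X_3^2,-2X_3,-3X_5$). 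Since $g_m(X_{i,m})=\overline{X_{i,m+1}}$, it follows that $X_{2,3}$, $X_{3,4}^2$, $X_{3,5}$, $X_{5,6}$ respectively lie in $\ker(g_m)$, and each is a power of a polynomial generator of $\mathcal{A}^{(m)}=\mathbb{K}[X_{1,m},\dots,X_{6,m}]$ different from $X_{m,m}$, hence does not belong to $\langle X_{m,m}\rangle_{\mathcal{A}^{(m)}}$. Therefore $\ker(g_m)\not\subseteq\theta_{m-1}^{-1}(P)$, the condition of Lemma~\ref{pl3} fails at $j=m$, and we conclude $\langle T_m\rangle\notin\psi(\text{P.Spec}(\mathcal{A}))$. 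The point requiring the most care is the inductive identification $\theta_{m-1}^{-1}(\langle T_m\rangle)=\langle X_{m,m}\rangle_{\mathcal{A}^{(m)}}$, i.e. keeping precise track of the ideal through the localisations of the Poisson deleting-derivations algorithm; the rest is a short bracket computation using the explicit $\sigma_i,\delta_i$.
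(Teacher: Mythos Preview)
Your proposal is correct and follows essentially the same route as the paper: both arguments invoke Lemma~\ref{pl3} at the index $j=m$, observe that $\theta_{m-1}^{-1}(\langle T_m\rangle)=\langle T_m\rangle_{\mathcal{A}^{(m)}}$ contains $X_{m,m}$, and then produce an explicit element of $\ker(g_m)$ outside $\langle T_m\rangle_{\mathcal{A}^{(m)}}$ via a Poisson-bracket computation in $\mathcal{A}^{(m+1)}$. The only differences are cosmetic: you make the identification $\theta_{m-1}^{-1}(\langle T_m\rangle)=\langle T_m\rangle_{\mathcal{A}^{(m)}}$ explicit (the paper leaves this implicit), and for $m=5$ you bracket $X_{5,6}$ with $X_{1,6}$ to obtain $T_3\in\ker(g_5)$, whereas the paper brackets with $X_{3,6}$ to obtain $T_4\in\ker(g_5)$; either choice yields the required contradiction.
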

\begin{proof}
Suppose that there exists $j\in \{3,4,5,6\}$ such that $\langle T_j\rangle\in\psi(\text{P.Spec}(\mathcal{A})).$ Then, there exists $P\in$ P.Spec$(\mathcal{A})$ such that $\psi(P)=\langle T_j\rangle_{\mathcal{A}^{(j)}},$ where 
$\psi:=\psi_j\circ \cdots\circ \psi_6.$ Set 
$P^{(j)}:=\langle T_j\rangle_{\mathcal{A}^{(j)}}.$  Since $T_j\in P^{(j)},$ it follows from Lemma \ref{pl3} that ker$(g_j)\subseteq P^{(j)}.$ The rest follows in cases.
\begin{itemize}
\item[•] When $j=3,$ then ker$(g_3)\subseteq P^{(3)}=\langle T_3\rangle_{\mathcal{A}^{(3)}}.$ One can easily deduce from the Poisson bracket of $\mathcal{A}$ that  $\{X_{3,4},X_{1,4}\}=-X_{1,4}X_{3,4}-X_{2,4}.$
This implies that $X_{2,4}=-\{X_{3,4},X_{1,4}\}-X_{1,4}X_{3,4}\in \langle X_{3,4}\rangle_{P}=\langle T_{3}\rangle_{P}.$  It follows that $g_3(X_{2,3})=X_{2,4}+\langle T_{3}\rangle_{P}=\overline{0}.$ Hence, $X_{2,3}=T_2\in$ ker$(g_3)\subseteq \langle T_3\rangle_{\mathcal{A}^{(3)}},$ a contradiction! 
Consequently, $\langle T_3\rangle\not\in\psi(\text{P.Spec}(\mathcal{A})).$
\item[•]When $j=4,$ then ker$(g_4)\subseteq P^{(4)}=\langle T_4\rangle_{\mathcal{A}^{(4)}}.$ From the Poisson bracket of $\mathcal{A},$ we have that 
$X_{3,5}^2=-\frac{1}{2}\{X_{4,5}, X_{1,5}\}\in\langle X_{4,5}\rangle_P=\langle T_4\rangle_P.$ Since $g_4$ is a homomorphism, it follows that $g_4(X_{3,4}^2)=(g_4(X_{3,4}))^2=X_{3,5}^2+\langle T_4\rangle_P=\overline{0}.$ 
Therefore, $X_{3,4}^2=T_3^2\in$ ker$(g_4)\subseteq \langle T_4\rangle_{\mathcal{A}^{(4)}},$ a contradiction! Hence, $\langle T_4\rangle\not\in\psi(\text{P.Spec}(\mathcal{A})).$
\item[•] When $j=5$, then ker$(g_5)\subseteq P^{(5)}=\langle T_5\rangle_{\mathcal{A}^{(5)}}.$ One can deduce from the Poisson bracket of $\mathcal{A}$ that  $X_{4,6}=-\frac{1}{3}X_{3,6}X_{5,6}-\frac{1}{3}\{X_{5,6}, X_{3,6}\}\in \langle X_{5,6}\rangle_P=\langle T_5\rangle_P.$ It follows that
$g_5(X_{4,5})=X_{4,6}+\langle T_5\rangle_P=\overline{0}.$ Hence, 
$X_{4,6}=T_4\in$ ker$(g_5)\subseteq \langle T_5\rangle_{\mathcal{A}^{(5)}},$ a contradiction! Therefore, $\langle T_5\rangle\not\in\psi(\text{P.Spec}(\mathcal{A})).$
\item[•] Finally, when $j=6,$ then ker$(g_6)\subseteq P^{(6)}=\langle T_6\rangle_{\mathcal{A}^{(6)}}.$ Similarly, one can verify that  $X_5=X_1X_6-\frac{1}{3}\{X_6,X_1\}\in \langle X_6\rangle_P=\langle T_6\rangle_P.$ Now,
$g_6(X_{5,6})=X_5+\langle T_6\rangle_P=\overline{0}.$ Therefore,
$X_{5,6}=T_5\in$ ker$(g_6)\subseteq \langle T_6\rangle_{\mathcal{A}^{(6)}},$ a contradiction! Hence, $\langle T_6\rangle\not\in\psi(\text{P.Spec}(\mathcal{A})).$
\end{itemize} 
In conclusion, $\langle T_j\rangle\not\in\psi(\text{P.Spec}(\mathcal{A}))$ for all $j\in \{3,4,5,6\}$  as expected. 
\end{proof}
The corollary below is deduced from  the proof of Lemma \ref{chr}. 

\begin{cor}
\label{ccc}
Let  $P\in\psi($P.Spec$(\mathcal{A})).$ If $T_j\in P,$ then  $T_{j-1}, \ldots, T_2\in P$ for all $3\leq j\leq 6.$ 
\end{cor}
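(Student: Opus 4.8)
The plan is to propagate membership of the $T_i$ through the chain of embeddings $\psi_6,\dots,\psi_2$, using the description of $\mathrm{Im}(\psi)$ in Lemma~\ref{pl3} together with the explicit computations already carried out in the proof of Lemma~\ref{chr}. Fix $Q\in\text{P.Spec}(\mathcal{A})$ with $\psi(Q)=P$, and for $2\leq i\leq 7$ set $P^{(i)}:=(\psi_i\circ\cdots\circ\psi_6)(Q)\in\text{P.Spec}(\mathcal{A}^{(i)})$, so that $P^{(7)}=Q$, $P^{(2)}=P$, and $P^{(i)}=\theta_{i-1}^{-1}(P)$ since $\theta_{i-1}$ is injective and $\psi=\theta_{i-1}\circ(\psi_i\circ\cdots\circ\psi_6)$. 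The key preliminary observation is a bookkeeping fact: for $2\leq i<k\leq 7$ one has $T_k\in P^{(i)}\iff T_k\in P^{(i+1)}$. Indeed $T_k=X_{k,i}=X_{k,i+1}$ lies in both $\mathcal{A}^{(i)}$ and $\mathcal{A}^{(i+1)}$, and $\psi_i$ acts either as $I\mapsto I\Sigma_i^{-1}\cap\mathcal{A}^{(i)}$ (when the pivot $T_i\notin P^{(i+1)}$) or as $I\mapsto g_i^{-1}(I/\langle T_i\rangle_P)$ (when $T_i\in P^{(i+1)}$); in the first case primeness of $P^{(i+1)}$ together with $T_i\notin P^{(i+1)}$ yields the equivalence, and in the second case it follows from $g_i(X_{k,i})=\overline{X_{k,i+1}}$ and $\langle T_i\rangle_P\subseteq P^{(i+1)}$. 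Iterating gives $T_k\in P\iff T_k\in P^{(k)}$ for every $k$.

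Now assume $T_j\in P$ for some $j\in\{3,4,5,6\}$. By the bookkeeping fact $T_j=X_{j,j}\in P^{(j)}$, so the first alternative in Lemma~\ref{pl3} fails at the index $j$ and hence $\mathrm{ker}(g_j)\subseteq P^{(j)}$. The computations in the proof of Lemma~\ref{chr} exhibit an explicit element of $\mathrm{ker}(g_j)$ --- namely $T_2$ if $j=3$, $T_3^2$ if $j=4$, $T_4$ if $j=5$, and $T_5$ if $j=6$ --- so that $T_{j-1}\in P^{(j)}$, after invoking primeness of $P^{(j)}$ when $j=4$. Since $T_{j-1}=X_{j-1,j}$ is exactly the pivot of $\psi_{j-1}$ and lies in $P^{(j)}$, the map $\psi_{j-1}$ is realised on $P^{(j)}$ via its second branch, whence $g_{j-1}(X_{j-1,j-1})=\overline{X_{j-1,j}}=0$, i.e.\ $T_{j-1}=X_{j-1,j-1}\in\mathrm{ker}(g_{j-1})\subseteq P^{(j-1)}$. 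Applying the bookkeeping fact downwards gives $T_{j-1}\in P^{(2)}=P$. If $j-1\geq 3$ I would then repeat the argument with $j-1$ in place of $j$, obtaining successively $T_{j-1},T_{j-2},\dots,T_2\in P$, which is the assertion.

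The overwhelming majority of the computation is already done inside Lemma~\ref{chr}; the only genuinely new ingredient is the bookkeeping fact, and the one step that needs attention is the passage through $\psi_{j-1}$: one must observe that $T_{j-1}\in P^{(j)}$ forces the non-localisation branch of $\psi_{j-1}$, which is precisely what deposits $T_{j-1}$ inside $\mathrm{ker}(g_{j-1})\subseteq P^{(j-1)}$. Everything else is routine transport of ideal membership along the Poisson deleting-derivations embeddings.
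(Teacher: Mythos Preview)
Your argument is correct and follows exactly the route the paper has in mind: the corollary is stated as an immediate consequence of the proof of Lemma~\ref{chr}, and you have simply made explicit the transport of membership of the $T_k$ along the chain $P=P^{(2)},P^{(3)},\dots,P^{(j)}$ that the paper leaves to the reader. The only additional content in your write-up is the ``bookkeeping fact'' $T_k\in P^{(i)}\iff T_k\in P^{(i+1)}$ for $i<k$, which you verify cleanly in both branches of $\psi_i$; this is precisely the unstated step needed to pass from $T_j\in P$ to $T_j\in P^{(j)}$ before invoking Lemma~\ref{pl3}.
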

Recall from Lemma \ref{ht2} that there exist $J_{1,2}$ and $J_{2,3}$ of 
$\text{P.Spec}(\mathcal{A})$ such that 
$\psi(J_{1,2})=\langle T_1, T_2\rangle$ and $\psi(J_{2,3})=\langle T_2, T_3\rangle.$
As a result of Corollary \ref{ccc}, the Poisson ideals $\langle T_1, T_2\rangle$ and $ \langle T_2, T_3\rangle$ are the only height two Poisson $\mathcal{H}$-invariant prime ideals  of $\psi (\text{P.Spec}(\mathcal{A})).$
 Since  $\psi$ preserves Poisson $\mathcal{H}$-invariant prime ideals and the height of a Poisson prime ideal, this  implies that $J_{1,2}$ and $J_{2,3}$ are the only height two Poisson $\mathcal{H}$-invariant prime ideals of $\mathcal{A}.$ It follows from Lemma \ref{lcl} that the height two Poisson 
 $\mathcal{H}$-invariant prime ideals of  $\mathcal{A}$ contain 
  $\Omega_1$ and $\Omega_2.$ 
\begin{rem}
\label{pr1}
Since the height two Poisson 
$\mathcal{H}$-invariant prime ideals of  $\mathcal{A}$ contain 
$\Omega_1$ and $\Omega_2,$  every non-zero Poisson $\mathcal{H}$-invariant prime ideal of $\mathcal{A}$ will contain either $\Omega_1$ or $\Omega_2.$ 
Note that those Poisson $\mathcal{H}$-invariant primes of at least height 2 will contain both $\Omega_1$ and $\Omega_2.$
\end{rem}

\subsection{Some maximal and primitive Poisson ideals of $\mathcal{A}$} 
We begin this subsection by finding the $\mathcal{H}$-strata corresponding to $\langle 0\rangle,$ $\langle\Omega_1\rangle$ and  $\langle\Omega_2\rangle$.    Note that all Poisson ideals in $\mathcal{A}$ shall be written as $\langle \Theta\rangle,$ where $\Theta\subset \mathcal{A}.$ However, if we want to refer to a Poisson ideal in any other Poisson algebra, say $R$, then that Poisson ideal shall be written as $\langle \Theta\rangle_R,$ where in this case, $\Theta\subset R.$

\begin{pro}
\label{pp0}
Assume that $\mathbb{K}$ is algebraically closed. Let $\mathcal{P}$ be the set of those monic irreducible polynomials 
$P(\Omega_1,\Omega_2)\in \mathbb{K}[\Omega_1,\Omega_2]$
 with $P(\Omega_1,\Omega_2)\neq \Omega_1$ and $P(\Omega_1,\Omega_2)\neq \Omega_2$.
 Then P.Spec$_{\langle 0\rangle}(\mathcal{A})=
\{\langle 0\rangle\}\cup \{\langle P(\Omega_1,\Omega_2)\rangle\mid P(\Omega_1,\Omega_2)\in \mathcal{P}\}\cup \{\langle \Omega_1-\alpha,\Omega_2-\beta\rangle\mid\alpha,\beta\in \mathbb{K}^*\}.$  
\end{pro}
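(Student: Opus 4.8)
The plan is to reduce the description of the stratum $\text{P.Spec}_{\langle0\rangle}(\mathcal{A})$ to a purely commutative computation over the Laurent polynomial ring $\mathbb{K}[\Omega_1^{\pm1},\Omega_2^{\pm1}]$, by combining Goodearl's $\mathcal{H}$-stratification theory with Hilbert's Nullstellensatz.

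\textbf{Step 1 (the stratum set-theoretically).} Since $\langle\Omega_1\rangle$ and $\langle\Omega_2\rangle$ are nonzero Poisson $\mathcal{H}$-invariant prime ideals, a Poisson prime $P$ with $(P:\mathcal{H})=\langle0\rangle$ can contain neither $\Omega_1$ nor $\Omega_2$. Conversely, if $\Omega_1\notin P$ and $\Omega_2\notin P$, then by Remark \ref{pr1} every nonzero Poisson $\mathcal{H}$-invariant prime contains $\Omega_1$ or $\Omega_2$, so $P$ contains no such ideal and hence $(P:\mathcal{H})=\langle0\rangle$. Therefore $\text{P.Spec}_{\langle0\rangle}(\mathcal{A})=\{P\in\text{P.Spec}(\mathcal{A})\mid\Omega_1\notin P,\ \Omega_2\notin P\}$.

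\textbf{Step 2 (Goodearl's homeomorphism and the centre of the localisation).} Applying \cite[Theorem 4.3]{gd} with $J=\langle0\rangle$, localising $\mathcal{A}$ at its multiplicative set $\mathcal{E}$ of nonzero $\mathcal{H}$-eigenvectors produces a Poisson algebra $\mathcal{A}_0$ whose Poisson centre $Z_0$ is a Laurent polynomial $\mathbb{K}$-algebra, together with an inclusion-preserving homeomorphism $\text{P.Spec}_{\langle0\rangle}(\mathcal{A})\xrightarrow{\ \sim\ }\text{Spec}(Z_0)$, sending $P$ to $P\mathcal{A}_0\cap Z_0$ and $\mathfrak{q}$ to $\mathfrak{q}\mathcal{A}_0\cap\mathcal{A}$. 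I then identify $Z_0=\mathbb{K}[\Omega_1^{\pm1},\Omega_2^{\pm1}]$: the inclusion $\supseteq$ is clear since $\Omega_1,\Omega_2$ are Poisson central $\mathcal{H}$-eigenvectors, hence units of $\mathcal{A}_0$ lying in $Z_0$; for the reverse, the intermediate variables $X_{i,j}$ of the PDDA are all $\mathcal{H}$-homogeneous, so $\mathfrak{R}_1=\mathbb{K}[T_1^{\pm1},\dots,T_6^{\pm1}]\subseteq\mathcal{A}_0$, while $\mathcal{A}\subseteq\mathfrak{R}_1$ by \eqref{embp}, giving $\mathfrak{R}_1\subseteq\mathcal{A}_0\subseteq\text{Fract}(\mathfrak{R}_1)$ and hence $Z_0\subseteq Z_P(\text{Fract}(\mathfrak{R}_1))=\mathbb{K}(\Omega_1,\Omega_2)$ by the exponent computation of Lemma \ref{pl1}(1) applied in the fraction field. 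Thus $Z_0$ is a Laurent polynomial ring sandwiched as $\mathbb{K}[\Omega_1^{\pm1},\Omega_2^{\pm1}]\subseteq Z_0\subseteq\mathbb{K}(\Omega_1,\Omega_2)$, so $\text{Fract}(Z_0)=\mathbb{K}(\Omega_1,\Omega_2)$; comparing groups of units (those of a Laurent polynomial ring being $\mathbb{K}^{*}$ times monomials) forces $Z_0=\mathbb{K}[\Omega_1^{\pm1},\Omega_2^{\pm1}]$.

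\textbf{Step 3 (commutative spectrum and transport).} Since $\mathbb{K}$ is algebraically closed, $\text{Spec}(\mathbb{K}[\Omega_1^{\pm1},\Omega_2^{\pm1}])$ consists of the primes of $\mathbb{K}[\Omega_1,\Omega_2]$ avoiding the multiplicative set generated by $\Omega_1,\Omega_2$: the zero ideal; the principal primes $\langle P(\Omega_1,\Omega_2)\rangle$ with $P$ monic irreducible and $P\neq\Omega_1,\Omega_2$, i.e. $P\in\mathcal{P}$; and, by the Nullstellensatz, the maximal ideals $\langle\Omega_1-\alpha,\Omega_2-\beta\rangle$ with $\alpha,\beta\in\mathbb{K}^{*}$. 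Pulling these back along $\mathfrak{q}\mapsto\mathfrak{q}\mathcal{A}_0\cap\mathcal{A}$: the zero ideal yields $\langle0\rangle$, and for the remaining two the natural candidates are $\langle P(\Omega_1,\Omega_2)\rangle_{\mathcal{A}}$ and $\langle\Omega_1-\alpha,\Omega_2-\beta\rangle_{\mathcal{A}}$, which are Poisson ideals because $\Omega_1,\Omega_2$ are Poisson central, contract in $\mathbb{K}[\Omega_1,\Omega_2]$ to the corresponding $\mathfrak{q}$ (so they avoid $\mathcal{E}$, as $\alpha,\beta\neq0$ and $P\in\mathcal{P}$), and — once one knows each is \emph{prime} in $\mathcal{A}$ — must therefore coincide with $\mathfrak{q}\mathcal{A}_0\cap\mathcal{A}$ by injectivity of the homeomorphism. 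This yields exactly the asserted list; Proposition \ref{ppf1} additionally identifies the maximal members $\langle\Omega_1-\alpha,\Omega_2-\beta\rangle$ as the Poisson primitive ideals of this stratum.

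\textbf{Main obstacle.} The two technical points are the identification $Z_0=\mathbb{K}[\Omega_1^{\pm1},\Omega_2^{\pm1}]$ in Step 2, and the primality in $\mathcal{A}$ of $\langle P(\Omega_1,\Omega_2)\rangle_{\mathcal{A}}$ and $\langle\Omega_1-\alpha,\Omega_2-\beta\rangle_{\mathcal{A}}$ in Step 3; the latter is most cleanly deduced from flatness of $\mathcal{A}$ over the central polynomial subring $\mathbb{K}[\Omega_1,\Omega_2]$ with integral fibres, a property typical of this deleting-derivations setting. Everything else is routine bookkeeping with Goodearl's machinery, the localisation $\mathcal{A}_0$, and the Nullstellensatz.
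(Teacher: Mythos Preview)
Your overall strategy---identify the stratum set-theoretically, pass to a localisation, reduce to $\mathrm{Spec}$ of a Laurent polynomial ring in $\Omega_1,\Omega_2$, and transport back---is exactly the paper's. The execution differs in two places, and in both the paper's route is simpler and sidesteps the obstacles you flag.

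First, instead of localising at the full set $\mathcal{E}$ of $\mathcal{H}$-eigenvectors and then arguing about unit groups to pin down $Z_0$, the paper localises only at the central multiplicative set $\{\Omega_1^i\Omega_2^j\}$, obtaining $R=\mathcal{A}[\Omega_1^{-1},\Omega_2^{-1}]$. One then checks directly that $R$ is Poisson $\mathcal{H}$-simple (the only $\mathcal{H}$-prime not containing $\Omega_1$ or $\Omega_2$ is $\langle0\rangle$, by Remark~\ref{pr1}), applies \cite[Theorem~4.2]{gd}, and reads off $Z_P(R)=\mathbb{K}[\Omega_1^{\pm1},\Omega_2^{\pm1}]$ immediately from Lemma~\ref{pl1}. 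Your sandwich-and-units argument is salvageable (equal fraction fields forces index~1 between the two rank-2 lattices of units), but it is more work than needed.

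Second, and more importantly, the paper never needs to establish primality of $\langle P(\Omega_1,\Omega_2)\rangle_{\mathcal{A}}$ or $\langle\Omega_1-\alpha,\Omega_2-\beta\rangle_{\mathcal{A}}$ in advance, so no flatness argument is required. Since $\langle P\rangle_R$ (resp.\ $\langle\Omega_1-\alpha,\Omega_2-\beta\rangle_R$) is a Poisson prime of $R$ by the bijection with $\mathrm{Spec}(Z_P(R))$, its contraction to $\mathcal{A}$ is automatically a Poisson prime; the paper then shows by a short minimality argument---choose $(i,j)$ minimal with $y\Omega_1^i\Omega_2^j\in\langle P\rangle_{\mathcal{A}}$ and use that $\langle\Omega_1\rangle,\langle\Omega_2\rangle$ are prime in $\mathcal{A}$ (Lemmas~\ref{plem1},~\ref{plem2})---that this contraction equals $\langle P\rangle_{\mathcal{A}}$. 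Primality of the candidate ideals is thus a \emph{conclusion}, not a hypothesis. Reorganising your Step~3 in this direction removes your ``main obstacle'' entirely.
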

\begin{proof}
We claim that P.Spec$_{\langle 0\rangle}(\mathcal{A})\subseteq \{Q \in \text{P.Spec}(\mathcal{A})\mid\Omega_1,\Omega_2\not\in Q\}.$ To establish this claim, let us assume that this is not the case. Suppose that there exists $ Q\in$ P.Spec$_{\langle 0\rangle}(\mathcal{A})$ such that $\Omega_1$ or $\Omega_2$ belongs to $ Q;$ then the product $\Omega_1\Omega_2$ which is Poisson $\mathcal{H}$-eigenvector belongs to $Q.$ Consequently, $\Omega_1\Omega_2\in \bigcap_{h\in \mathcal{H}}h\cdot Q=\langle 0\rangle,$ a contradiction! Hence, P.Spec$_{\langle 0\rangle}(\mathcal{A})\subseteq \{Q \in \text{P.Spec}(\mathcal{A})\mid\Omega_1,\Omega_2\not\in Q\}.$ Conversely, suppose that $Q\in$ P.Spec$(\mathcal{A})$  such that $\Omega_1,\Omega_2\not\in Q,$ then $\bigcap_{h\in \mathcal{H}}h\cdot Q$ is a Poisson $\mathcal{H}$-invariant prime ideal of $\mathcal{A},$ which contains neither $\Omega_1$ nor $\Omega_2.$ The only possibility for $\bigcap_{h\in \mathcal{H}}h\cdot Q$ is the zero ideal since every non-zero Poisson $\mathcal{H}$-invariant prime ideal of $\mathcal{A}$ contains $\Omega_1$ or $ \Omega_2$ (Remark \ref{pr1}). Thus, $\bigcap_{h\in \mathcal{H}}h\cdot Q=\langle 0\rangle.$ Hence, $Q\in$ P.Spec$_{\langle 0\rangle}(\mathcal{A}).$ Therefore, $\{Q \in \text{P.Spec}(\mathcal{A})\mid\Omega_1,\Omega_2\not\in Q\}\subseteq$ P.Spec$_{\langle 0\rangle}(\mathcal{A})$, and so we have proved that P.Spec$_{\langle 0\rangle}(\mathcal{A})= \{Q \in \text{P.Spec}(\mathcal{A})\mid\Omega_1,\Omega_2\not\in Q\}.$

Since $\Omega_1,\Omega_2\in Z_P(\mathcal{A}),$ we have that the set $\{\Omega_1^i\Omega_2^j\mid i,j\in\mathbb{N}\}$ is a multiplicative set in $\mathcal{A}.$ We can now localize $\mathcal{A}$ as $R:=\mathcal{A}[\Omega_1^{-1},\Omega_2^{-1}].$ Given
$Q\in$ P.Spec$_{\langle 0\rangle}(\mathcal{A}),$ the map
 $\phi:Q\mapsto
Q[\Omega_1^{-1},\Omega_2^{-1}]$ is an increasing bijection from 
P.Spec$_{\langle 0\rangle}(\mathcal{A})$ onto P.Spec($R$).

Let us verify that $R$ is Poisson $\mathcal{H}$-simple before we describe P.Spec($R$). Now, $\phi$ still induces a bijection between the set of those Poisson $\mathcal{H}$-invariant prime ideals of P.Spec$_{\langle 0\rangle}(\mathcal{A})$ and the set of Poisson $\mathcal{H}$-invariant prime ideals of P.Spec$(R)$.
It is already known that the set of Poisson $\mathcal{H}$-invariant prime ideals of $\mathcal{A}$ that contains neither $\Omega_1$ nor $\Omega_2$ consists only of the zero ideal $\{\langle 0\rangle\}$ (Remark \ref{pr1}). This implies that $\langle 0\rangle_R$ is the only Poisson $\mathcal{H}$-invariant prime ideal of $R.$ Every Poisson $\mathcal{H}$-invariant proper ideal of $R$ is contained in a 
Poisson $\mathcal{H}$-invariant prime ideal of $R$. Therefore, $\langle 0\rangle_R$ is the only unique Poisson $\mathcal{H}$-invariant proper ideal of $R.$ This confirms that $R$ is Poisson $\mathcal{H}$-simple.
It follows from \cite[Theorem 4.2]{gd} that the extension and contraction maps provide mutually inverse bijections between P.Spec($R$) and Spec($Z_P(R)$). From Lemma \ref{pl1}, $Z_P(\mathcal{A})=\mathbb{K}[\Omega_1,\Omega_2],$ and so $Z_P(R)=\mathbb{K}[\Omega_1^{\pm 1},\Omega_2^{\pm 1}].$ Since $\mathbb{K}$ is algebraically closed, we have that 
Spec$(Z_P(R))=\{\langle 0\rangle_{Z_P(R)}\}\cup \{\langle P(\Omega_1,\Omega_2)\rangle_{Z_P(R)}\mid P(\Omega_1,\Omega_2)\in \mathcal{P}\}\cup \{\langle \Omega_1-\alpha,\Omega_2-\beta\rangle_{Z_P(R)}\mid\alpha,\beta\in \mathbb{K}^*\}.$ One can now recover P.Spec($R$) from Spec($Z_P(R)$) as follows:
P.Spec($R$)$=\{\langle 0\rangle_{R}\}\cup \{\langle P(\Omega_1,\Omega_2)\rangle_{R}\mid P(\Omega_1,\Omega_2)\in \mathcal{P}\}\cup \{\langle \Omega_1-\alpha,\Omega_2-\beta\rangle_R\mid\alpha,\beta\in \mathbb{K}^*\}.$  It follows that
P.Spec$_{\langle 0\rangle}(\mathcal{A})=\{\langle 0\rangle_{R}\cap \mathcal{A}\}\cup \{\langle P(\Omega_1,\Omega_2)\rangle_{R}\cap \mathcal{A}\mid P(\Omega_1,\Omega_2)\in \mathcal{P}\}\cup \{\langle \Omega_1-\alpha,\Omega_2-\beta\rangle_R\cap \mathcal{A}\mid \alpha,\beta\in \mathbb{K}^*\}.$

Undoubtedly, $\langle 0\rangle_{R}\cap \mathcal{A}=\langle 0 \rangle.$ We now have to show that $\langle P(\Omega_1,\Omega_2)\rangle_{R}\cap \mathcal{A}=\langle P(\Omega_1,\Omega_2)\rangle,$ $\forall P(\Omega_1,\Omega_2)\in \mathcal{P},$ and $\langle \Omega_1-\alpha,\Omega_2-\beta\rangle_R\cap \mathcal{A}=\langle \Omega_1-\alpha,\Omega_2-\beta\rangle,$ $\forall \alpha,\beta\in \mathbb{K}^*,$ in order to complete the proof.

Fix $P(\Omega_1,\Omega_2)\in \mathcal{P}.$ Clearly, $\langle P(\Omega_1,\Omega_2) \rangle \subseteq \langle P(\Omega_1,\Omega_2)\rangle_{R}\cap \mathcal{A}.$ To show the reverse inclusion,
let $y\in \langle P(\Omega_1,\Omega_2)\rangle_{R}\cap \mathcal{A}$. Since $y\in \langle P(\Omega_1,\Omega_2) \rangle_R,$ this implies that $y=dP(\Omega_1,\Omega_2)$ for some $ d\in R.$  Also, 
$d\in R$ implies that there exists $ i,j\in \mathbb{N}$ such that $d=a\Omega_1^{-i}\Omega_2^{-j},$ where $a\in \mathcal{A}.$ Therefore, 
$y=a\Omega_1^{-i}\Omega_2^{-j}P(\Omega_1,\Omega_2), $ which implies that $y\Omega_1^i\Omega_2^j=aP(\Omega_1,\Omega_2).$ Choose  $(i,j)\in \mathbb{N}^2$ minimal (in the lexicographic order on $\mathbb{N}^2$) such that the equality holds. Without loss of generality, let us suppose that $i>0,$ then 
$aP(\Omega_1,\Omega_2)\in \langle\Omega_1\rangle.$ Since $\langle\Omega_1\rangle$ is a prime ideal, this implies that $a\in \langle\Omega_1\rangle$ or $P(\Omega_1,\Omega_2)\in \langle\Omega_1\rangle.$ Since $P(\Omega_1,\Omega_2)\in \mathcal{P},$ this implies that  $P(\Omega_1,\Omega_2)\not\in \langle\Omega_1\rangle.$ Hence, $a\in \langle \Omega_1\rangle,$ which implies that $a=t\Omega_1$ for some $t\in \mathcal{A}.$ Returning to $y\Omega_1^i\Omega_2^j=aP(\Omega_1,\Omega_2),$ we have that $y\Omega_1^i\Omega_2^j=t\Omega_1 P(\Omega_1,\Omega_2).$ Finally, $y\Omega_1^{i-1}\Omega_2^j=tP(\Omega_1,\Omega_2).$ This clearly contradicts the minimality of $(i,j),$ hence $(i,j)=(0,0).$ As a result, 
$y=aP(\Omega_1,\Omega_2)=\langle P(\Omega_1,\Omega_2)\rangle.$ Consequently,
$\langle P(\Omega_1,\Omega_2)\rangle_{R}\cap \mathcal{A}=\langle P(\Omega_1,\Omega_2)\rangle$ for all $P(\Omega_1,\Omega_2)\in \mathcal{P}$ as desired.

Similarly, we show that  $\langle \Omega_1-\alpha,\Omega_2-\beta\rangle_R\cap \mathcal{A}=\langle \Omega_1-\alpha,\Omega_2-\beta\rangle;$ $\forall \alpha,\beta\in \mathbb{K}^*.$ Fix $ \alpha,\beta\in \mathbb{K}^*.$ Obviously, 
$\langle \Omega_1-\alpha,\Omega_2-\beta\rangle\subseteq\langle \Omega_1-\alpha,\Omega_2-\beta\rangle_R\cap \mathcal{A}.$ We establish the reverse inclusion. Let $y\in \langle \Omega_1-\alpha,\Omega_2-\beta\rangle_R \cap \mathcal{A}.$ Since $y\in \langle \Omega_1-\alpha,\Omega_2-\beta\rangle_R,$ we have that $y=m_0(\Omega_1-\alpha)+n_0(\Omega_2-\beta),$ where $m_0,n_0\in R.$  Also, $m_0,n_0\in R$ implies that there exists 
 $i,j\in\mathbb{N}$ such that 
$m_0=m\Omega_1^{-i}\Omega_2^{-j}$ and $n_0=n\Omega_1^{-i}\Omega_2^{-j}$ for some $m,n\in \mathcal{A}.$ Therefore, $y=m\Omega_1^{-i}\Omega_2^{-j}(\Omega_1-\alpha)+n\Omega_1^{-i}\Omega_2^{-j}(\Omega_2-\beta)$, which implies that $ y\Omega_1^{i}\Omega_2^{j}=m(\Omega_1-\alpha)+n(\Omega_2-\beta).$ Choose  $(i,j)\in \mathbb{N}^2$ minimal (in the lexicographic order on $\mathbb{N}^2$) such that the equality holds. Without loss of generality, suppose that 
$i>0$ and let $f:\mathcal{A}\longrightarrow {\mathcal{A}}/{\langle \Omega_2-\beta\rangle}$ be a canonical surjection.
We have that $f(y)f(\Omega_1)^if(\Omega_2)^j=f(m)f(\Omega_1-\alpha).$ It follows that
$f(m)f(\Omega_1-\alpha)\in \langle f(\Omega_1)\rangle.$ 
Note that $f(\Omega_1-\alpha)\not\in \langle f(\Omega_1)\rangle.$  This implies that $f(m)\in \langle f(\Omega_1)\rangle.$  Therefore, 
$\exists \lambda\in \mathcal{A}$ such that $f(m)=f(\lambda)f(\Omega_1).$
Consequently, $f(y)f(\Omega_1)^if(\Omega_2)^j=f(\lambda)f(\Omega_1)f(\Omega_1-\alpha).$  Since
$f(\Omega_1)\neq 0,$ this implies that $f(y)f(\Omega_1)^{i-1}f(\Omega_2)^j=f(\lambda)f(\Omega_1-\alpha).$ Therefore, $y\Omega_1^{i-1}\Omega_2^j=\lambda(\Omega_1-\alpha)+\lambda^\prime(\Omega_2-\beta),$ for some $\lambda^\prime\in \mathcal{A}$.  This contradicts the minimality of $(i,j).$ Hence, $(i,j)=(0,0)$ and so $y=m(\Omega_1-\alpha)+n(\Omega_2-\beta)\in \langle \Omega_1-\alpha,\Omega_2-\beta\rangle.$ In conclusion, $\langle \Omega_1-\alpha,\Omega_2-\beta\rangle_R\cap \mathcal{A}=\langle \Omega_1-\alpha,\Omega_2-\beta\rangle,$ $\forall \alpha,\beta\in \mathbb{K}^*.$  
\end{proof}
One can also proove the following results in a similar manner. The details of the proof can be deduced from \cite[\S 2.4]{io}. 
\begin{pro}
\label{p1p1}
\label{pp2}
Assume that $\mathbb{K}$ is algebraically closed.
\begin{itemize}
\item[1.] P.Spec$_{\langle\Omega_1\rangle}(\mathcal{A})=
\{\langle\Omega_1\rangle\}\cup \{\langle\Omega_1,\Omega_2-\beta\rangle\mid \beta\in \mathbb{K}^*\}.$
\item[2.]  P.Spec$_{\langle\Omega_2\rangle}(\mathcal{A})=
\{\langle\Omega_2\rangle\}\cup \{\langle\Omega_1-\alpha,\Omega_2\rangle\mid\alpha\in \mathbb{K}^*\}.$
\end{itemize}
\end{pro}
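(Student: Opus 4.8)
The plan is to mirror the proof of Proposition \ref{pp0}, replacing the zero ideal by the height-one Poisson $\mathcal{H}$-invariant prime $\langle\Omega_1\rangle$ of Lemma \ref{plem1}; part~2 is then obtained by interchanging the roles of $\Omega_1$ and $\Omega_2$ (equivalently of $T_1$ and $T_2$), using Lemma \ref{plem2} in place of Lemma \ref{plem1}. For part~1, the first thing I would prove is the set-theoretic identification
\[
\text{P.Spec}_{\langle\Omega_1\rangle}(\mathcal{A})=\{Q\in\text{P.Spec}(\mathcal{A})\mid \Omega_1\in Q\ \text{and}\ \Omega_2\notin Q\}.
\]
For ``$\subseteq$'': if $Q$ lies in the stratum then $\langle\Omega_1\rangle=\bigcap_{h\in\mathcal{H}}h\cdot Q\subseteq Q$, so $\Omega_1\in Q$; and if $\Omega_2\in Q$, then since $\Omega_2$ is an $\mathcal{H}$-eigenvector, $\Omega_2\in\bigcap_{h\in\mathcal{H}}h\cdot Q=\langle\Omega_1\rangle$, which is impossible because $\Omega_2\notin\langle\Omega_1\rangle$ (as $Z_P(\mathcal{A})=\mathbb{K}[\Omega_1,\Omega_2]$ is a polynomial ring in two variables, Lemma \ref{pl1}). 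For ``$\supseteq$'': if $\Omega_1\in Q$ and $\Omega_2\notin Q$, then $J:=\bigcap_{h\in\mathcal{H}}h\cdot Q$ is a Poisson $\mathcal{H}$-invariant prime with $\Omega_1\in J$ (eigenvector) and $\Omega_2\notin J$; by Remark \ref{pr1} it cannot have height $\geq 2$, and it is nonzero, hence of height one, and since $\langle\Omega_1\rangle$ and $\langle\Omega_2\rangle$ are the only height-one Poisson $\mathcal{H}$-invariant primes of $\mathcal{A}$ (Lemma \ref{chr} and the discussion around it), we must have $J=\langle\Omega_1\rangle$, i.e.\ $Q$ lies in the stratum.

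Next I would pass to a Poisson $\mathcal{H}$-simple algebra. Set $\mathcal{B}:=\mathcal{A}/\langle\Omega_1\rangle$, a Poisson domain (Lemma \ref{plem1}) with the induced rational $\mathcal{H}$-action, and let $\bar\Omega_2$ be the image of $\Omega_2$, which is Poisson central and an $\mathcal{H}$-eigenvector. Put $R:=\mathcal{B}[\bar\Omega_2^{-1}]$. Standard localization theory together with the identification above gives an inclusion-preserving bijection $Q\mapsto(Q/\langle\Omega_1\rangle)[\bar\Omega_2^{-1}]$ from $\text{P.Spec}_{\langle\Omega_1\rangle}(\mathcal{A})$ onto $\text{P.Spec}(R)$, restricting to a bijection between the Poisson $\mathcal{H}$-invariant primes of the two. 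Since the only Poisson $\mathcal{H}$-invariant prime of $\mathcal{A}$ containing $\Omega_1$ but not $\Omega_2$ is $\langle\Omega_1\rangle$ itself, $\langle 0\rangle_R$ is the unique Poisson $\mathcal{H}$-invariant prime of $R$, and as $R$ is Noetherian this forces $R$ to be Poisson $\mathcal{H}$-simple. By \cite[Theorem 4.2]{gd}, contraction and extension are then mutually inverse bijections between $\text{P.Spec}(R)$ and $\text{Spec}(Z_P(R))$.

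It remains to show $Z_P(R)=\mathbb{K}[\bar\Omega_2^{\pm 1}]$, and this is the step I expect to be the main obstacle. The inclusion $\supseteq$ is clear. For $\subseteq$ I would transport the computation to a Poisson torus exactly as in Lemma \ref{pl1}: using $\psi(\langle\Omega_1\rangle)=\langle T_1\rangle$ (Lemma \ref{plem1}) together with the deleting-derivations localizations of Subsection \ref{sub4}, one identifies a localization of $R$ with a localization of $\overline{\mathcal{A}}/\langle T_1\rangle\cong\mathbb{K}[T_2,\dots,T_6]$ in which $T_2,\dots,T_6$ are all inverted, i.e.\ with the rank-five Poisson torus whose commutation data is the submatrix $M'$ of $M$ indexed by $\{2,\dots,6\}$; there a monomial $T_2^{a_2}\cdots T_6^{a_6}$ is Poisson central precisely when $(a_2,\dots,a_6)\in\ker M'$, and a direct rank computation shows $\ker M'$ is the line spanned by the exponent vector of $\Omega_2=T_2T_4T_6$. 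Hence $Z_P(R)\subseteq\mathbb{K}[\bar\Omega_2^{\pm 1}]$, giving equality. Since $\mathbb{K}$ is algebraically closed and $\bar\Omega_2$ is a unit in $R$, $\text{Spec}(Z_P(R))=\{\langle 0\rangle\}\cup\{\langle\bar\Omega_2-\beta\rangle\mid\beta\in\mathbb{K}^*\}$; pulling these back along the bijection above, and running the same minimality-of-exponent argument as in Proposition \ref{pp0} to verify $\langle\Omega_1,\Omega_2-\beta\rangle_R\cap\mathcal{A}=\langle\Omega_1,\Omega_2-\beta\rangle$, yields
\[
\text{P.Spec}_{\langle\Omega_1\rangle}(\mathcal{A})=\{\langle\Omega_1\rangle\}\cup\{\langle\Omega_1,\Omega_2-\beta\rangle\mid\beta\in\mathbb{K}^*\}.
\]

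The delicate point is making precise that the canonical embedding $\psi$ (equivalently, the chain of deleting-derivations localizations) really does identify the relevant localization of $R$ with one of the Poisson affine space $\overline{\mathcal{A}}/\langle T_1\rangle$, since $\mathcal{A}/\langle\Omega_1\rangle$ carries no obvious PBW-type basis adapted to $\Omega_2$; an alternative is to compute $Z_P(\mathcal{B}[\bar\Omega_2^{-1}])$ directly from the presentation of $\mathcal{B}$ as a free $\mathbb{K}[X_1,X_2,X_4,X_5,X_6]$-module with basis $\{1,X_3\}$ (recall $\Omega_1$ is, up to a nonzero scalar, monic of degree two in $X_3$), but the reduction to the torus is cleaner and parallels Lemma \ref{pl1}. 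Finally, part~2 is proved verbatim after the substitutions $\Omega_1\leftrightarrow\Omega_2$, $T_1\leftrightarrow T_2$, and $M'$ replaced by the submatrix of $M$ indexed by $\{1,3,4,5,6\}$, whose kernel is the line spanned by the exponent vector of $\Omega_1=T_1T_3T_5$.
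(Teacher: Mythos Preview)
Your proposal is correct and follows essentially the same route the paper indicates (``in a similar manner'' to Proposition~\ref{pp0}, with details deferred to \cite{io}): identify the stratum set-theoretically, pass to a Poisson $\mathcal{H}$-simple localization, apply \cite[Theorem~4.2]{gd}, compute the Poisson centre, and contract. One small point: your justification that $\Omega_2\notin\langle\Omega_1\rangle$ via ``$Z_P(\mathcal{A})$ is a polynomial ring'' is not quite an argument (that concerns $\langle\Omega_1\rangle\cap Z_P(\mathcal{A})$, not $\langle\Omega_1\rangle_{\mathcal{A}}$); instead use that $\langle\Omega_1\rangle$ and $\langle\Omega_2\rangle$ are distinct height-one primes by Lemmas~\ref{plem1}--\ref{plem2}, so $\Omega_2\in\langle\Omega_1\rangle$ would force $\langle\Omega_2\rangle=\langle\Omega_1\rangle$, a contradiction.
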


\begin{cor}
\label{corPoissonprimitive}
The Poisson ideal $\langle \Omega_1-\alpha,\Omega_2-\beta\rangle$ is Poisson primitive in $\mathcal{A}$ for each $(\alpha,\beta)\in \mathbb{K}^2\setminus \{(0,0)\}.$ 
\end{cor}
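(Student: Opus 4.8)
The plan is to invoke the $\mathcal{H}$-stratification machinery together with the explicit strata computed in Propositions~\ref{pp0} and~\ref{pp2}. Recall that by Goodearl's stratification theorem (cf.\ \cite[Theorem 4.3]{gd}), as restated in Proposition~\ref{ppf1}, a Poisson prime ideal $P$ lying in a stratum $\text{P.Spec}_J(\mathcal{A})$ is Poisson primitive if and only if $P$ is maximal within that stratum. So the entire argument reduces to identifying, for each $(\alpha,\beta)\in\mathbb{K}^2\setminus\{(0,0)\}$, which $\mathcal{H}$-stratum the ideal $\langle\Omega_1-\alpha,\Omega_2-\beta\rangle$ belongs to, and then checking maximality inside that stratum.

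The case analysis follows the three propositions. First, if $\alpha\beta\neq 0$, then $\langle\Omega_1-\alpha,\Omega_2-\beta\rangle$ contains neither $\Omega_1$ nor $\Omega_2$ (since $\Omega_1\equiv\alpha\neq 0$ and $\Omega_2\equiv\beta\neq 0$ modulo the ideal), so it lies in $\text{P.Spec}_{\langle 0\rangle}(\mathcal{A})$ by the characterization proved in Proposition~\ref{pp0}. Second, if $\alpha=0$ and $\beta\neq 0$, then $\Omega_1\in\langle\Omega_1-\alpha,\Omega_2-\beta\rangle$ while $\Omega_2\notin$, placing it in $\text{P.Spec}_{\langle\Omega_1\rangle}(\mathcal{A})$ by Proposition~\ref{pp2}(1); symmetrically, if $\beta=0$ and $\alpha\neq 0$, it lies in $\text{P.Spec}_{\langle\Omega_2\rangle}(\mathcal{A})$ by Proposition~\ref{pp2}(2). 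In each of these strata the explicit description shows that the ideals of the form $\langle\Omega_1-\alpha,\Omega_2-\beta\rangle$ (with the appropriate coordinate nonzero) are precisely the maximal elements: in the $\langle 0\rangle$-stratum the poset is $\{\langle 0\rangle\}\cup\{\langle P(\Omega_1,\Omega_2)\rangle\}\cup\{\langle\Omega_1-\alpha,\Omega_2-\beta\rangle\mid\alpha,\beta\in\mathbb{K}^*\}$, and since the inclusion order on these is governed by divisibility of polynomials in $\mathbb{K}[\Omega_1,\Omega_2]$, the codimension-two ideals $\langle\Omega_1-\alpha,\Omega_2-\beta\rangle$ are maximal; similarly in the other two strata the chain is $\{\langle\Omega_i\rangle\}\cup\{\langle\Omega_i,\Omega_j-\gamma\rangle\mid\gamma\in\mathbb{K}^*\}$ and the two-generator ideals are exactly the tops.

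The step that requires the most care is verifying maximality within each stratum directly from the stratum descriptions rather than hand-waving. For the $\langle 0\rangle$-stratum one must observe that $\langle\Omega_1-\alpha,\Omega_2-\beta\rangle$ corresponds under the bijection $\text{P.Spec}_{\langle 0\rangle}(\mathcal{A})\cong\text{Spec}(R)\cong\text{Spec}(\mathbb{K}[\Omega_1^{\pm1},\Omega_2^{\pm1}])$ (established in the proof of Proposition~\ref{pp0} via $\mathcal{H}$-simplicity of $R$ and \cite[Theorem 4.2]{gd}) to the maximal ideal $\langle\Omega_1-\alpha,\Omega_2-\beta\rangle$ of the Laurent polynomial ring, which is maximal there because $\mathbb{K}$ is algebraically closed and $\alpha,\beta\in\mathbb{K}^*$; since the bijection is an order isomorphism, maximality transports back. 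For the strata $\langle\Omega_1\rangle$ and $\langle\Omega_2\rangle$, the finite chain structure displayed in Proposition~\ref{pp2} makes maximality immediate: the only ideals strictly containing $\langle\Omega_1,\Omega_2-\beta\rangle$ would have to be larger elements of that stratum, but there are none.

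Finally, assembling the three cases: in every case $(\alpha,\beta)\in\mathbb{K}^2\setminus\{(0,0)\}$, the ideal $\langle\Omega_1-\alpha,\Omega_2-\beta\rangle$ is maximal in its $\mathcal{H}$-stratum, hence Poisson primitive by Proposition~\ref{ppf1}. I expect the only genuinely subtle point to be making the order-isomorphism argument in the $\langle 0\rangle$-stratum fully rigorous --- in particular confirming that $\langle\Omega_1-\alpha,\Omega_2-\beta\rangle_R$ really is a maximal ideal of $R$ and not merely a prime, which follows because its contraction to the Poisson centre $Z_P(R)=\mathbb{K}[\Omega_1^{\pm1},\Omega_2^{\pm1}]$ is the maximal ideal $\langle\Omega_1-\alpha,\Omega_2-\beta\rangle$ and the extension-contraction bijection of \cite[Theorem 4.2]{gd} preserves the inclusion order.
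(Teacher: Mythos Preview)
Your proposal is correct and follows essentially the same approach as the paper: both invoke Proposition~\ref{ppf1} after noting that $\langle\Omega_1-\alpha,\Omega_2-\beta\rangle$ is maximal in its $\mathcal{H}$-stratum as described in Propositions~\ref{pp0} and~\ref{pp2}. The paper's proof is a single sentence, whereas you spell out the case analysis and the maximality verification in detail, but the underlying argument is identical.
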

\begin{proof}
Since the Poisson ideal $\langle \Omega_1-\alpha,\Omega_2-\beta\rangle$ is maximal in its respective strata for each $(\alpha,\beta)\in \mathbb{K}^2\setminus \{(0,0)\},$ it is also  Poisson primitive (see Prop. \ref{ppf1}).
\end{proof}

While we assumed that $\mathbb{K}$ is algebraically closed in the above propositions and corollary, Corollary \ref{corPoissonprimitive} is also true when we drop this assumption (with the same proof). 

\begin{pro}
\label{c3p2}
Let $(\alpha, \beta)\in \mathbb{K}^2\setminus \{(0,0)\}.$ The Poisson prime ideal
$\langle \Omega_1-\alpha,\Omega_2-\beta\rangle$
is maximal  in $\mathcal{A}.$
\end{pro}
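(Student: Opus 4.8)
Here $\Q:=\langle\Omega_1-\alpha,\Omega_2-\beta\rangle$ denotes the Poisson prime ideal in question, and ``maximal'' is understood in the sense of maximal Poisson ideal (equivalently, $\mathcal{A}_{\alpha,\beta}$ is Poisson simple). The plan is to show that no proper Poisson prime ideal of $\mathcal{A}$ strictly contains $\Q$; this suffices, because if $I$ were a Poisson ideal with $\Q\subsetneq I\subsetneq\mathcal{A}$, then a minimal prime $P$ over $I$ would be a Poisson prime ideal (since $\mathcal{A}$ is noetherian and $\operatorname{char}\mathbb{K}=0$, minimal primes over a Poisson ideal are Poisson), and $\Q\subsetneq I\subseteq P\subsetneq\mathcal{A}$ would contradict this. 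So suppose, for contradiction, that $P\in\text{P.Spec}(\mathcal{A})$ satisfies $\Q\subsetneq P\subsetneq\mathcal{A}$.

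The key idea is to locate $\Q$ in Goodearl's $\mathcal{H}$-stratification and show that $P$ is forced into a strictly larger stratum. By Corollary \ref{corPoissonprimitive} the ideal $\Q$ is Poisson primitive, so by Proposition \ref{ppf1} it is maximal inside its own stratum $\mathrm{P.Spec}_{J_0}(\mathcal{A})$, where $J_0:=(\Q:\mathcal{H})$. Propositions \ref{pp0} and \ref{p1p1} identify $J_0$ according to the vanishing of $\alpha$ and $\beta$: if $\alpha,\beta\in\mathbb{K}^*$ then $J_0=\langle 0\rangle$; if $\alpha=0$ and $\beta\in\mathbb{K}^*$ then $J_0=\langle\Omega_1\rangle$; and symmetrically $J_0=\langle\Omega_2\rangle$ if $\alpha\in\mathbb{K}^*$ and $\beta=0$. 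Since $P\supsetneq\Q$ and $\Q$ is maximal in $\mathrm{P.Spec}_{J_0}(\mathcal{A})$, the prime $P$ cannot lie in this stratum, so $J:=(P:\mathcal{H})\neq J_0$; on the other hand $P\supseteq\Q$ gives $J=\bigcap_{h\in\mathcal{H}}h\cdot P\supseteq\bigcap_{h\in\mathcal{H}}h\cdot\Q=J_0$, hence $J\supsetneq J_0$.

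Now I would invoke Remark \ref{pr1}. If $J_0=\langle 0\rangle$, then $J$ is a non-zero Poisson $\mathcal{H}$-invariant prime, so $\Omega_1\in J$ or $\Omega_2\in J$; as $J\subseteq P$, say $\Omega_1\in P$, whence $\alpha=\Omega_1-(\Omega_1-\alpha)\in P$, and since $\alpha\in\mathbb{K}^*$ this forces $P=\mathcal{A}$, a contradiction (the subcase $\Omega_2\in P$ is identical using $\beta$). If $J_0=\langle\Omega_1\rangle$, then $J$ strictly contains the height one prime $\langle\Omega_1\rangle$, so $\height(J)\geq 2$; by the second assertion of Remark \ref{pr1} both $\Omega_1$ and $\Omega_2$ lie in $J\subseteq P$, so $\beta=\Omega_2-(\Omega_2-\beta)\in P$ with $\beta\in\mathbb{K}^*$, again forcing $P=\mathcal{A}$; the case $J_0=\langle\Omega_2\rangle$ is symmetric. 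This covers all $(\alpha,\beta)\in\mathbb{K}^2\setminus\{(0,0)\}$, so no such $P$ exists and $\Q$ is a maximal Poisson ideal of $\mathcal{A}$. I expect the two slightly delicate points to be the reduction from ``maximal Poisson prime'' to ``maximal Poisson ideal'' (which relies only on the standard fact that minimal primes over a Poisson ideal are Poisson in characteristic zero), and reading off the stratum $J_0$ of $\Q$ correctly from Propositions \ref{pp0} and \ref{p1p1} in the three cases; the rest is bookkeeping with $J_0\subseteq J$ and the observation that a unit then lands in $P$.
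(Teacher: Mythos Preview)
Your proof is correct and follows essentially the same route as the paper's: use the $\mathcal{H}$-stratification together with Remark \ref{pr1} to force $\Omega_1$ or $\Omega_2$ into any strictly larger Poisson prime, and then observe that a nonzero scalar lands in the ideal. The paper's version is marginally more streamlined in that it does not split into cases on $J_0$; it simply rules out $J\in\{\langle 0\rangle,\langle\Omega_1\rangle,\langle\Omega_2\rangle\}$ directly from Propositions \ref{pp0} and \ref{p1p1}, concludes that $J$ must contain both $\Omega_1$ and $\Omega_2$, and then uses $(\alpha,\beta)\neq(0,0)$ once to obtain the contradiction---but this is a cosmetic difference, not a substantive one.
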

\begin{proof}
Suppose that there exists a maximal Poisson ideal $I$ of $\mathcal{A}$ such that $\langle \Omega_1-\alpha,\Omega_2-\beta\rangle\varsubsetneq I\varsubsetneq \mathcal{A}.$ Let $J$ be the Poisson $\mathcal{H}$-invariant prime ideal in $\mathcal{A}$ such that $I\in$ P.Spec$_J(\mathcal{A}).$ By Propositions \ref{pp0} and \ref{pp2}, $J$ cannot be $\langle 0\rangle,$  
$\langle\Omega_1\rangle$ or $\langle\Omega_2\rangle,$ since either of these will lead to a contradiction.  Every non-zero Poisson $\mathcal{H}$-invariant prime ideal contains  only $\Omega_1$ or  only  $\Omega_2$  or both (Remark \ref{pr1}). Since $J\neq \langle \Omega_1\rangle, \langle \Omega_2\rangle,$ this implies that $J$ contains both $\Omega_1$ and $\Omega_2.$ Moreover, since $ J\subseteq I,$ this implies that $\Omega_1, \Omega_2\in I.$  Given  $\langle \Omega_1-\alpha,\Omega_2-\beta\rangle\subset I$, we have that $\Omega_1-\alpha,\Omega_2-\beta\in I.$ It follows that $\alpha, \beta\in I,$ hence   $I=\mathcal{A},$ a contradiction!  This confirms that $\langle \Omega_1-\alpha,\Omega_2-\beta\rangle$  is a maximal Poisson ideal in $\mathcal{A}.$
\end{proof}
\subsection{Simple quotients of the semiclassical limit of $U_q^+(G_2)$}
\label{sec6.4}
Given that $\Omega_1-\alpha$ and $\Omega_2-\beta$ generate a maximal Poisson prime ideal of $\mathcal{A},$ the factor algebra
$$\mathcal{A}_{\alpha,\beta}:=\frac{\mathcal{A}}{\langle\Omega_1-\alpha,\Omega_2-\beta\rangle}$$ is a Poisson-simple noetherian domain for all $(\alpha,\beta)\in \mathbb{K}^2\setminus \{(0,0)\}.$ 
Denote the canonical image of $X_i$ by 
$x_i:=X_i+\langle\Omega_1-\alpha,\Omega_2-\beta\rangle$ for each $1\leq i\leq 6.$ The algebra $\mathcal{A}_{\alpha,\beta}$ is commutative, and  satisfies the following two relations:
\begin{align}
x_1x_3x_5&-\frac{3}{2}x_1x_4-\frac{1}{2}x_2x_5+\frac{1}{2}x_3^2=\alpha,\label{pe1e}\\
x_2x_4x_6&-\frac{2}{3}x_3^3x_6-\frac{2}{3}x_2x_5^3+2x_3^2x_5^2-3
x_3x_4x_5+\frac{3}{2}x_4^2=\beta.\label{pe2e}
\end{align}
We also have the following extra relations in $\mathcal{A}_{\alpha,\beta},$ which can be verified through direct computations. 
\begin{lem}
\label{pl2}
\begin{align*}
(1)\ x_3^2=&2\alpha+3x_1x_4+x_2x_5-2x_1x_3x_5.\\
 \\
(2)\ x_4^2=&\frac{2}{3}\beta+\frac{8}{9}\alpha x_3x_6+\frac{4}{3}x_1x_3x_4x_6+\frac{4}{9}x_2x_3x_5x_6
-\frac{16}{9}\alpha x_1x_5x_6-\frac{8}{3}x_1^2x_4x_5x_6\\&+
\frac{16}{9}x_1^2x_3x_5^2x_6-\frac{8}{9}x_2x_5^3-\frac{8}{3}\alpha x_5^2-4x_1x_4x_5^2
+\frac{8}{3}x_1x_3x_5^3+2x_3x_4x_5-\frac{2}{3}x_2x_4x_6\\
&-\frac{8}{9}x_1x_2x_5^2x_6.
\\
\\
(3)\ x_3^2x_4&=2\alpha x_4+x_2x_4x_5+2\beta x_1+\frac{8}{3}\alpha x_1x_3x_6+4x_1^2x_3x_4x_6+\frac{4}{3}x_1x_2x_3x_5x_6\\&
-8x_1^3x_4x_5x_6-\frac{8}{3}x_1^2x_2x_5^2x_6+\frac{16}{3}x_1^3x_3x_5^2x_6-\frac{8}{3}x_1x_2x_5^3
-8\alpha x_1x_5^2-12x_1^2x_4x_5^2\\&+8x_1^2x_3x_5^3+4x_1x_3x_4x_5-2x_1x_2x_4x_6-\frac{16}{3}\alpha x_1^2x_5x_6.
\end{align*}
\begin{align*}
(4) \ x_3x_4^2=\frac{2}{3}&\beta x_3+\frac{16}{9}\alpha^2 x_6+\frac{16}{3}\alpha x_1x_4x_6+
\frac{16}{9}\alpha  x_2x_5x_6+\frac{16}{9}\alpha x_1x_3x_5x_6
+\frac{4}{9}x_2^2x_5^2x_6\\+&\frac{8}{9}x_1x_2x_3x_5^2x_6
-\frac{64}{9}\alpha x_1^3x_5x_6^2-\frac{160}{9}\alpha
x_1^2x_5^2x_6-\frac{80}{3}x_1^3x_4x_5^2x_6-\frac{64}{9}x_1^2x_2x_5^3x_6
\\-&\frac{8}{9}x_2x_3x_5^3-\frac{8}{3}\alpha x_3x_5^2
+4x_1x_3x_4x_5^2+\frac{160}{9}x_1^3x_3x_5^3x_6-16x_1^2x_4x_5^3-\frac{8}{3}x_1x_2x_5^4\\-& \frac{4}{3}x_1x_2x_4x_5x_6+\frac{8}{3}\beta x_1^2x_6+\frac{32}{9}\alpha x_1^2x_3x_6^2+
\frac{16}{3}x_1^3x_3x_4x_6^2+\frac{16}{9}x_1^2x_2x_3x_5x_6^2 \\-&\frac{32}{3}x_1^4x_4x_5x_6^2
-\frac{8}{3}x_1^2x_2x_4x_6^2+4\alpha x_4x_5+2x_2x_4x_5^2+4\beta x_1x_5+\frac{64}{9}x_1^4x_3x_5^2x_6^2\\
-&\frac{2}{3}x_2x_3x_4x_6-\frac{32}{3}\alpha x_1x_5^3 +
\frac{32}{3}x_1^2x_3x_5^4+\frac{32}{3}x_1^2x_3x_4x_5x_6-\frac{32}{9}x_1^3x_2x_5^2x_6^2.
\end{align*} 
 \end{lem}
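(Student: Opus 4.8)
The plan is to treat Lemma~\ref{pl2} as a purely ring-theoretic computation inside the commutative algebra $\mathcal{A}_{\alpha,\beta}=\mathbb{K}[x_1,\dots,x_6]$ subject only to the two defining relations \eqref{pe1e} and \eqref{pe2e}; the Poisson bracket plays no role in this particular statement. The whole point is to organise the elimination of $x_3^2$ (equivalently $x_3^3$) and of $x_4^2$ in a sensible order, so that the right-hand sides of (1)--(4) come out exactly as written. Identity (1) is immediate: multiplying \eqref{pe1e} by $2$ and solving for $x_3^2$ gives $x_3^2=2\alpha+3x_1x_4+x_2x_5-2x_1x_3x_5$.

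For (2), I would first solve \eqref{pe2e} for $x_4^2$, obtaining
$$x_4^2=\tfrac{2}{3}\beta-\tfrac{2}{3}x_2x_4x_6+\tfrac{4}{9}x_3^3x_6+\tfrac{4}{9}x_2x_5^3-\tfrac{4}{3}x_3^2x_5^2+2x_3x_4x_5,$$
and then remove the monomials $x_3^3x_6=(x_3x_6)\,x_3^2$ and $x_3^2x_5^2=(x_5^2)\,x_3^2$ by substituting the right-hand side of (1) for each factor $x_3^2$. Since that right-hand side has $x_3$-degree $1$, every such substitution strictly lowers the $x_3$-degree of the monomial it acts on, so after finitely many substitutions no power $x_3^k$ with $k\ge 2$ remains; collecting the surviving terms then gives precisely the asserted formula for $x_4^2$. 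For (3), I would multiply (1) by $x_4$ to get $x_3^2x_4=2\alpha x_4+3x_1x_4^2+x_2x_4x_5-2x_1x_3x_4x_5$ and then substitute the expression for $x_4^2$ just found, collecting terms. For (4), I would multiply (2) by $x_3$; this produces several monomials containing $x_3^2$, which are then cleared by the same iterated use of (1) as in the proof of (2), and collecting terms yields the stated closed form for $x_3x_4^2$.

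I expect the only real obstacle to be the bookkeeping: the expressions in (2)--(4) are long, and the iterated substitutions must be carried out carefully --- ideally cross-checked with a computer algebra system --- so that no monomial is lost and every coefficient is correct. There is, however, no conceptual difficulty: all four identities are formal consequences of \eqref{pe1e} and \eqref{pe2e}, that is, the differences between their two sides lie in the ideal $\langle\Omega_1-\alpha,\Omega_2-\beta\rangle$.
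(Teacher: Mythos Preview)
Your proposal is correct and matches the paper's own treatment: the paper does not give a detailed proof but simply states that these relations ``can be verified through direct computations,'' and your outlined scheme --- solve \eqref{pe1e} for $x_3^2$, solve \eqref{pe2e} for $x_4^2$ and reduce the resulting $x_3^3$ and $x_3^2$ terms via (1), then obtain (3) and (4) by multiplying by $x_4$ or $x_3$ and reducing again --- is precisely that direct computation. Your remark that the Poisson bracket is irrelevant here and that the only issue is careful bookkeeping is also exactly right.
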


Now, the commutative algebra $\mathcal{A}_{\alpha,\beta}$ is a Poisson $\mathbb{K}$-algebra with the Poisson bracket defined as follows:
\begin{align*}
\{x_2, x_1\}&=-3x_1x_2& \{x_3,x_1\}&=-x_1x_3-x_2&\{x_3,x_2\}&=-3x_2x_3\\
\{x_4, x_1\}&=-2x_3^2 & \{x_4, x_2\}&=-3x_2x_4-{4}x_3^3& \{x_4, x_3\}&=-3x_3x_4\\
\{x_5, x_1\}&=x_1x_5-2x_3& \{x_5,x_2\}&=-6x_3^2&\{x_5,x_3\}&=-x_3x_5-3x_4\\
\{x_5,x_4\}&=-3x_4x_5& \{x_6,x_1\}&=3x_1x_6-3x_5&\{x_6, x_2\}&=3x_2x_6+9x_4-18x_3x_5\\
\{x_6, x_3\}&=-6x_5^2&\{x_6, x_4\}&=-3x_4x_6-4x_5^3&\{x_6, x_5\}&=-3x_5x_6.
\end{align*} 
\begin{rem}
The Poisson algebra $\mathcal{A}_{\alpha,\beta}$ is the semiclassical limit of the quantum second Weyl algebra $A_{\alpha,\beta}$ studied in \cite{lo}. 
\end{rem}
In the remainder of this section, we study a Poisson torus arising from a localization of $\mathcal{A}_{\alpha,\beta},$ and a linear basis of $\mathcal{A}_{\alpha,\beta},$ both of which will be useful in computing the Poisson derivations of $\mathcal{A}_{\alpha,\beta}$ in the next section.
\subsubsection{A Poisson torus}
\label{sub3}
Let $\alpha, \beta\neq 0.$ Recall from Subsection \ref{sub2} that $\Omega_1=T_1T_3T_5$ and $\Omega_2=T_2T_4T_6$ in $\overline{\mathcal{A}}.$ From \cite[Corollaries 3.3 \& 3.4]{sc}, there exists a multiplicative set $S_{\alpha,\beta}$ such that
$$\mathcal{A}_{\alpha,\beta}S_{\alpha,\beta}^{-1}\cong \mathscr{P}_{\alpha,\beta}:=\frac{\mathfrak{R}_1}{\langle T_1T_3T_5-\alpha,T_2T_4T_6-\beta\rangle},$$ where  $\mathfrak{R}_1=\mathbb{K}[T_1^{\pm 1},\ldots,T_6^{\pm 1}]$ is a Poisson torus associated to the Poisson affine space $\overline{\mathcal{A}}.$ Let $t_i:=T_i+\langle T_1T_3T_5-\alpha,T_2T_4T_6-\beta\rangle$ denote the canonical image of $T_i$ in $\mathscr{P}_{\alpha,\beta}$ for each $1\leq i\leq 6.$ The algebra 
 $\mathscr{P}_{\alpha,\beta}$ is a Poisson torus  generated by $t_1^{\pm 1},\ldots, t_6^{\pm 1}$ subject to the relations:
 $$t_1=\alpha t_3^{-1}t_5^{-1}  \ \ \text{and} \ \ t_2=\beta t_4^{-1}t_5^{-1}.$$
One can verify that
$\mathscr{P}_{\alpha,\beta}\cong \mathbb{K}[t_3^{\pm 1},t_4^{\pm 1},t_5^{\pm 1},t_6^{\pm 1}],$
and that the isomorphism  holds whether $\alpha$ or $\beta$ is zero.

\subsubsection{Linear basis for $\mathcal{A}_{\alpha,\beta}$}
Set $\mathcal{A}_{\beta}:=\mathcal{A}/\langle\Omega_2-\beta\rangle,  \ \beta\in \mathbb{K}.$ 
Denote the canonical image of $X_i$ in $\mathcal{A}_{\beta}$ by 
$\widehat{x_i}:=X_i+\langle\Omega_2-\beta\rangle$ for each $1\leq i\leq 6.$ It can be verified that $\mathcal{A}_{\alpha,\beta}\cong \mathcal{A}_{\beta}/\langle\widehat{\Omega}_1-\alpha\rangle.$ Note that $\mathcal{A}_{\beta}$ satisfies the relation:
\begin{align}
\label{pe3}
\widehat{x_4}^2=\frac{2}{3}\beta-\frac{2}{3}\widehat{x_2}\widehat{x_4}\widehat{x_6}+\frac{4}{9}\widehat{x_3}^3\widehat{x_6}+\frac{4}{9}\widehat{x_2}\widehat{x_5}^3-
\frac{4}{3}\widehat{x_3}^2\widehat{x_5}^2+2\widehat{x_3}\widehat{x_4}\widehat{x_5}.
\end{align}
\begin{pro}
\label{pp1}
The set $\mathfrak{F}=\{ \widehat{x_1}^i\widehat{x_2}^j\widehat{x_3}^k\widehat{x_4}^{\xi}\widehat{x_5}^l\widehat{x_6}^m\mid (\xi, i,j,k,l,m)\in\{0,1\}\times \mathbb{N}^5\}$
is a $\mathbb{K}$-basis of $\mathcal{A}_{\beta}.$ 
\end{pro}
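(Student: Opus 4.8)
The plan is to realise $\mathcal{A}_{\beta}$ as a quotient of a one‑variable polynomial ring by an (essentially) monic quadratic, and then invoke the ordinary division algorithm. I would set $B:=\mathbb{K}[X_1,X_2,X_3,X_5,X_6]$, so that $\mathcal{A}=\mathbb{K}[X_1,\ldots,X_6]=B[X_4]$. Viewing $\Omega_2$ inside $B[X_4]$ --- equivalently, reading relation \eqref{pe3} before passing to the quotient --- one has
\[
\Omega_2-\beta=\tfrac{3}{2}X_4^2+(X_2X_6-3X_3X_5)X_4+c,\qquad c:=-\tfrac{2}{3}X_3^3X_6-\tfrac{2}{3}X_2X_5^3+2X_3^2X_5^2-\beta\in B .
\]
Since $\operatorname{char}\mathbb{K}=0$, the scalar $\tfrac32$ is invertible, so $g:=\tfrac23(\Omega_2-\beta)=X_4^2+\tfrac23(X_2X_6-3X_3X_5)X_4+\tfrac23 c$ is a monic polynomial of degree $2$ in $X_4$ over the commutative ring $B$, and $\langle\Omega_2-\beta\rangle=\langle g\rangle$ as ideals of $\mathcal{A}$.

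For the spanning half I would apply the division algorithm for the monic polynomial $g$ over $B$: every $f\in B[X_4]=\mathcal{A}$ decomposes as $f=qg+r$ with $q\in\mathcal{A}$ and $r\in B\oplus BX_4$. Hence the images of $B$ and of $BX_4$ together span $\mathcal{A}_{\beta}=\mathcal{A}/\langle g\rangle$ as a $B$-module, and therefore $\mathfrak{F}$ --- which is the image of the monomial $\mathbb{K}$-basis $\{X_1^iX_2^jX_3^kX_5^lX_6^m\}\cup\{X_1^iX_2^jX_3^kX_5^lX_6^mX_4\}$ of $B\oplus BX_4$ --- spans $\mathcal{A}_{\beta}$ over $\mathbb{K}$. (One may equally iterate \eqref{pe3} directly to rewrite any power $\widehat{x_4}^{\,n}$ with $n\ge 2$ in terms of $1$ and $\widehat{x_4}$, the recursion terminating because each substitution strictly lowers the $\widehat{x_4}$-degree of the top term.)

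For linear independence I would take a vanishing combination $\sum c_{\xi,i,j,k,l,m}\,\widehat{x_1}^{\,i}\widehat{x_2}^{\,j}\widehat{x_3}^{\,k}\widehat{x_4}^{\,\xi}\widehat{x_5}^{\,l}\widehat{x_6}^{\,m}=0$ in $\mathcal{A}_{\beta}$ with scalars $c_{\xi,i,j,k,l,m}\in\mathbb{K}$, almost all zero. Lifting to $\mathcal{A}=B[X_4]$, the corresponding element $f$ has $X_4$-degree at most $1$ and lies in $\langle g\rangle$, so $f=hg$ for some $h\in\mathcal{A}$. If $h\neq 0$ then, $g$ being monic of degree $2$, $\deg_{X_4}(hg)=\deg_{X_4}(h)+2\ge 2$, contradicting $\deg_{X_4}(f)\le 1$; hence $h=0$ and $f=0$ in the polynomial ring $\mathcal{A}$. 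Equating coefficients of the pairwise distinct monomials $X_1^iX_2^jX_3^kX_4^{\xi}X_5^lX_6^m$ forces all $c_{\xi,i,j,k,l,m}=0$, so $\mathfrak{F}$ is a $\mathbb{K}$-basis. There is no genuine obstacle in this argument: the only two points that need attention are the invertibility of $\tfrac32$ (this is where $\operatorname{char}\mathbb{K}=0$ is used) and the observation that \eqref{pe3} is precisely the relation $\Omega_2=\beta$ solved for $X_4^2$, which is exactly what makes $g$ monic in $X_4$ and lets the single-variable division argument apply verbatim.
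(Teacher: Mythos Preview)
Your argument is correct and rests on the same key observation as the paper: $\Omega_2-\beta$ is quadratic in $X_4$ with unit leading coefficient $\tfrac32$. The paper proves spanning by an explicit induction on the $X_4$-exponent and proves linear independence by lifting to $\mathcal{A}$, introducing an ad hoc total order $<_4$ (whose primary key is precisely the $X_4$-degree), and chasing the leading coefficient to a contradiction. Your packaging via the division algorithm for the monic polynomial $g=\tfrac23(\Omega_2-\beta)\in B[X_4]$ compresses both halves into the standard fact that $B[X_4]/(g)$ is free of rank $2$ over $B$ with basis $\{1,X_4\}$; this is more direct and avoids the explicit ordering, while the paper's approach has the virtue of being a template that is reused (with a different ordering $<_3$) in the subsequent Proposition~\ref{pc3p5}, where no single variable plays such a clean role.
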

\begin{proof}
Since $(\Pi_{s=1}^{6}X_s^{i_s})_{i_s\in\mathbb{N}}$ is a basis of $\mathcal{A}$ over $\mathbb{K},$ we have that  $(\Pi_{s=1}^{6}\widehat{x_s}^{i_s})_{i_s\in\mathbb{N}}$
 is a spanning set of $\mathcal{A}_{\beta}$ over $\mathbb{K}.$  We want to show that $\mathfrak{F}$ is a spanning set of $\mathcal{A}_{\beta}.$ It is sufficient to do that by showing that
$\widehat{x_1}^{i_1}\widehat{x_2}^{i_2}\widehat{x_3}^{i_3}\widehat{x_4}^{i_4}
\widehat{x_5}^{i_5}\widehat{x_6}^{i_6}$ can be written as a finite linear combination of 
the elements of $\mathfrak{F}$ over $\mathbb{K}$ for all $i_1,\ldots,i_6\in \mathbb{N}.$
We do this by an induction on $i_4.$ The result is clear when $i_4=0.$ For $i_4\geq 0,$ suppose that 
$$\widehat{x_1}^{i_1}\widehat{x_2}^{i_2}\widehat{x_3}^{i_3}\widehat{x_4}^{i_4}
\widehat{x_5}^{i_5}\widehat{x_6}^{i_6}=\sum_{(\xi,\underline{v})\in I}
a_{(\xi,\underline{v})}\widehat{x_1}^i\widehat{x_2}^j\widehat{x_3}^k\widehat{x_4}^{\xi}\widehat{x_5}^l
\widehat{x_6}^m,$$
where $\underline{v}:=(i,j,k,l,m)\in \mathbb{N}^5,$ $I$ is a finite subset of $\{0,1\}\times \mathbb{N}^5,$ and the $a_{(\xi,\underline{v})}$ are scalars. 
It follows that 
$$\widehat{x_1}^{i_1}\widehat{x_2}^{i_2}\widehat{x_3}^{i_3}\widehat{x_4}^{i_4+1}
\widehat{x_5}^{i_5}\widehat{x_6}^{i_6}=\sum_{(\xi,\underline{v})\in I}
a_{(\xi,\underline{v})}\widehat{x_1}^i\widehat{x_2}^j\widehat{x_3}^k\widehat{x_4}^{\xi+1}\widehat{x_5}^l
\widehat{x_6}^m.$$
We have to show that $\widehat{x_1}^i\widehat{x_2}^j\widehat{x_3}^k\widehat{x_4}^{\xi+1}\widehat{x_5}^l
\widehat{x_6}^m\in$ Span$(\mathfrak{F})$ for all $(\xi,\underline{v})\in I.$ The result is obvious when $\xi=0.$ 
For $\xi=1,$ then using \eqref{pe3}, one can verify that
$\widehat{x_1}^i\widehat{x_2}^j\widehat{x_3}^k\widehat{x_4}^{2}\widehat{x_5}^l
\widehat{x_6}^m\in$ Span$(\mathfrak{F}).$ 
Consequently, 
$\widehat{x_1}^{i_1}\widehat{x_2}^{i_2}\widehat{x_3}^{i_3}\widehat{x_4}^{i_4+1}
\widehat{x_5}^{i_5}\widehat{x_6}^{i_6}\in$ Span$(\mathfrak{F}).$ Therefore, $\mathfrak{F}$ spans $\mathcal{A}_{\beta}.$ 
Before we continue the proof, the following ordering $<_4$ needs to be noted.
\begin{itemize}
\item[$\clubsuit$ ] 
Let $(i',j',k',l',m',n'),$  $(i,j,k,l,m,n)\in$ $ \mathbb{N}^6.$ We say that $(i,j,k,l,m,n)<_4 (i',j',k',l',m',n')$ if $[l<l']$ or $[l=l'$ and $i<i']$ or $[l=l', \ i=i'$ and $j<j']$ or $[l=l', \ i=i', \ j=j'$ and $k<k']$ or $[l=l', \ i=i', \ j=j', \ k=k'$ and $m<m']$ or $[l=l', \ i=i', \ j=j', \ k=k', \ m=m'$ and $n\leq n'].$ 

Note that the square brackets $[ \ ]$ are there to differentiate the options.
\end{itemize}
We proceed to show that $\mathfrak{F}$ is a linearly independent set.
  Suppose that
$$\sum_{(\xi,\underline{v})\in I}
a_{(\xi,\underline{v})}\widehat{x_1}^i\widehat{x_2}^j\widehat{x_3}^k\widehat{x_4}^{\xi}\widehat{x_5}^l
\widehat{x_6}^m=0.$$ It follows that 
$$\sum_{(\xi,\underline{v})\in I}
a_{(\xi,\underline{v})}X_1^iX_2^jX_3^kX_4^{\xi}X_5^l
X_6^m=\nu(\Omega_2-\beta),$$ where $\nu\in\mathcal{A}.$ Write $\nu=\sum_{(i,\ldots,n)\in J}b_{(i,\ldots,n)}
X_1^iX_2^jX_3^kX_4^{l}X_5^mX_6^{n},$ where $J$ is a finite subset of $\mathbb{N}^6,$ and 
$b_{(i,\ldots,n)}$ are scalars. From Subsection \ref{sub2}, we have that $$\Omega_2=X_2X_4X_6-\frac{2}{3}X_3^3X_6-\frac{2}{3}X_2X_5^3+2X_3^2X_5^2-3
X_3X_4X_5+\frac{3}{2}X_4^2.$$ It follows that
\begin{align*}
\sum_{(\xi,\underline{v})\in I}
a_{(\xi,\underline{v})}
X_1^iX_2^jX_3^kX_4^{\xi}X_5^l
X_6^m&= \sum_{(i,\ldots,n)\in J}b_{(i,\ldots,n)}
X_1^iX_2^jX_3^kX_4^{l}X_5^mX_6^{n} (\Omega_2-\beta)\\
&=\sum_{(i,\ldots, n)\in J}\frac{3}{2} b_{(i,\ldots,n)}X_1^iX_2^jX_3^kX_4^{l+2}X_5^mX_6^{n}+\text{LT}_{<_4},
\end{align*}
where $\text{LT}_{<_4}$ contains lower order terms with respect to 
$<_4$ (see item $\clubsuit$).  Moreover, $\text{LT}_{<_4}$ vanishes when $b_{(i,\ldots,n)}=0$ for all $(i,\ldots,n)\in J.$ One can easily confirm this when the previous line of equality (right hand side) is fully expanded.

Suppose that there exists $(i,j,k,l,m,n)\in J$ such that $b_{(i,j,k,l,m,n)}\neq 0.$  
Let  $(i',j',k',l',m',n')$ be the greatest element of $J$ with respect to 
$<_4$  such that  $b_{(i',j',k',l',m',n')}\neq 0.$  Identifying the coefficients
of $X_1^{i'}X_2^{j'}X_3^{k'}X_4^{l'+2}X_5^{m'}X_6^{n'},$ we have $\frac{3}{2} b_{(i',j',k',l',m',n')}=0$ (note that the family  $(X_1^iX_2^jX_3^kX_4^lX_5^mX_6^n)_{i,\ldots,n\in\mathbb{N}}$ is a basis for $\mathcal{A}$ and $\text{LT}_{<_4}$ contains lower order terms). Therefore, $b_{(i',j',k',l',m',n')}=0,$ a contradiction! As a result, $b_{(i,j,k,l,m,n)}=0$ for all $(i,j,k,l,m,n)\in J,$ and
$$\sum_{(\xi,\underline{v})\in I}
a_{(\xi,\underline{v})}X_1^iX_2^jX_3^kX_4^{\xi}X_5^l
X_6^m=0.$$ Consequently, $a_{(\xi, i,j,k,l)}=0$ for all $(\xi, i,j,k,l)\in I.$ 
\end{proof} 

We are now ready to find a basis for $\mathcal{A}_{\alpha,\beta}.$ 
\begin{pro}
\label{pc3p5}
The set $\mathfrak{P}=\{ x_1^ix_2^jx_3^{\epsilon_1}x_4^{\epsilon_2}x_5^kx_6^l\mid (\epsilon_1,\epsilon_2, i,j,k,l)\in \{0,1\}^2\times \mathbb{N}^4\}$ is a $\mathbb{K}$-basis of $\mathcal{A}_{\alpha,\beta}.$
\end{pro}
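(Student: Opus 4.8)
The plan is to forget the Poisson structure entirely and work in the commutative polynomial ring $\mathcal{A}=\mathbb{K}[X_1,\dots,X_6]$, where $\mathcal{A}_{\alpha,\beta}=\mathcal{A}/I$ with $I:=\langle\Omega_1-\alpha,\Omega_2-\beta\rangle$, and to exhibit a monomial order for which $\{\Omega_1-\alpha,\Omega_2-\beta\}$ is already a Gröbner basis of $I$, with respective leading terms $\tfrac12X_3^2$ and $\tfrac32X_4^2$. Recall from Subsection~\ref{sub2} that, as polynomials in the $X_i$,
\[
\Omega_1-\alpha=\tfrac12X_3^2+X_1X_3X_5-\tfrac32X_1X_4-\tfrac12X_2X_5-\alpha,
\]
\[
\Omega_2-\beta=\tfrac32X_4^2+X_2X_4X_6-\tfrac23X_3^3X_6-\tfrac23X_2X_5^3+2X_3^2X_5^2-3X_3X_4X_5-\beta.
\]
I would first assign the weights $\mathrm{wt}(X_1)=\mathrm{wt}(X_2)=\mathrm{wt}(X_5)=\mathrm{wt}(X_6)=1$, $\mathrm{wt}(X_3)=16$, $\mathrm{wt}(X_4)=27$, and take the monomial order that compares weighted degrees first and breaks ties by, say, the lexicographic order. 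Computing the weighted degree of each monomial above, $X_3^2$ (degree $32$) strictly exceeds every other monomial of $\Omega_1-\alpha$ (the largest competitor $X_1X_4$ has degree $28$), and $X_4^2$ (degree $54$) strictly exceeds every other monomial of $\Omega_2-\beta$ (the largest competitor $X_3^3X_6$ has degree $49$). Hence the two leading monomials are $X_3^2$ and $X_4^2$.

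Since $X_3^2$ and $X_4^2$ are coprime, Buchberger's criterion applies: the $S$-polynomial of two polynomials with coprime leading monomials reduces to $0$ modulo the pair, so $\{\Omega_1-\alpha,\Omega_2-\beta\}$ is a Gröbner basis of $I$ and the initial ideal of $I$ is $\langle X_3^2,X_4^2\rangle$. By the standard theory of Gröbner bases, the images in $\mathcal{A}/I=\mathcal{A}_{\alpha,\beta}$ of the monomials not lying in $\langle X_3^2,X_4^2\rangle$ form a $\mathbb{K}$-basis. A monomial $X_1^iX_2^jX_3^aX_4^bX_5^kX_6^l$ avoids $\langle X_3^2,X_4^2\rangle$ exactly when $a\le 1$ and $b\le 1$; thus this basis is precisely the image of $\mathfrak{P}=\{x_1^ix_2^jx_3^{\epsilon_1}x_4^{\epsilon_2}x_5^kx_6^l\mid(\epsilon_1,\epsilon_2,i,j,k,l)\in\{0,1\}^2\times\mathbb{N}^4\}$, which is the assertion. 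Note no hypothesis on $\mathbb{K}$ beyond characteristic zero is needed (only to invert the small integers appearing in the $\Omega_i$).

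The one genuinely fiddly point is the calibration of the weights: $X_4$ must be heavy enough that $X_4^2$ overtakes $X_3^3X_6$ in $\Omega_2$ (which needs $2\,\mathrm{wt}(X_4)>3\,\mathrm{wt}(X_3)+\mathrm{wt}(X_6)$) yet light enough that $X_3^2$ still overtakes $X_1X_4$ in $\Omega_1$ (which needs $2\,\mathrm{wt}(X_3)>\mathrm{wt}(X_1)+\mathrm{wt}(X_4)$); these are jointly satisfiable once $\mathrm{wt}(X_3)$ is taken large relative to the unit weights, and $(1,1,16,27,1,1)$ is one convenient choice. If one instead prefers to stay in the computational idiom of the proof of Proposition~\ref{pp1}, one can argue in two steps: pass to $\mathcal{A}_\beta=\mathcal{A}/\langle\Omega_2-\beta\rangle$ with its known basis $\mathfrak{F}$, then divide by $\widehat{\Omega}_1-\alpha$. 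Spanning of $\mathfrak{P}$ then follows by repeatedly substituting Lemma~\ref{pl2}(1) and (3) to drop any $x_3$-power of size $\ge 2$ while keeping the $x_4$-degree $\le 1$; linear independence follows from a leading-term computation for $\nu(\widehat{\Omega}_1-\alpha)$ written in the basis $\mathfrak{F}$, the delicate step being the reduction of $\widehat{x_4}^{\,2}$ through \eqref{pe3}, which feeds back an $\widehat{x_3}^{\,3}$-term and so must be handled with an order that still pins down the top monomial. Either way, the crux is the same: the two defining relations have coprime "leading squares" $X_3^2$ and $X_4^2$.
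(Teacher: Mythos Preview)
Your Gr\"obner basis argument is correct and genuinely cleaner than the paper's route. The paper proceeds in two stages: it first establishes a basis $\mathfrak{F}$ for the intermediate quotient $\mathcal{A}_\beta=\mathcal{A}/\langle\Omega_2-\beta\rangle$ (Proposition~\ref{pp1}), then identifies $\mathcal{A}_{\alpha,\beta}$ with $\mathcal{A}_\beta/\langle\widehat{\Omega}_1-\alpha\rangle$ and repeats the game, handling spanning by induction on the $x_3$-exponent via Lemma~\ref{pl2}(1),(3) and linear independence by a hand-built leading-term argument using the bespoke order $<_3$. Your approach collapses both steps into one: with the weight vector $(1,1,16,27,1,1)$ refined by lex, the leading monomials of $\Omega_1-\alpha$ and $\Omega_2-\beta$ are $X_3^2$ and $X_4^2$, which are coprime, so Buchberger's first criterion gives a Gr\"obner basis immediately and the standard monomial basis of $\mathcal{A}/\langle X_3^2,X_4^2\rangle$ is exactly $\mathfrak{P}$. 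What your argument buys is brevity and the elimination of the auxiliary algebra $\mathcal{A}_\beta$ (which is not used elsewhere in the paper); what the paper's argument buys is self-containment for a reader unfamiliar with Gr\"obner bases, though at the cost of the somewhat intricate bookkeeping with $<_3$ and the $\widehat{x_4}^{\,2}$-reduction you rightly flag as the delicate point. Your calibration of the weights, and the verification that $32>28$ in $\Omega_1$ and $54>49$ in $\Omega_2$, is exactly what makes the argument go through.
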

\begin{proof}
Since the set  $\mathfrak{F}=\{ \widehat{x_1}^{i_1}\widehat{x_2}^{i_2}\widehat{x_3}^{i_3}\widehat{x_4}^{\xi}
\widehat{x_5}^{i_5}\widehat{x_6}^{i_6}\mid (i_1,i_2,i_3,\xi, i_5,i_6)\in \{0,1\}\times \mathbb{N}^5\}
$ is a $\mathbb{K}$-basis of $\mathcal{A}_\beta$ (Prop. \ref{pp1}) and $\mathcal{A}_{\alpha,\beta}$ is identified with ${\mathcal{A}_\beta}/{\langle \widehat{\Omega}_1-\alpha\rangle},$ it follows that\newline  $(x_1^{i_1}x_2^{i_2}x_3^{i_3}x_4^\xi x_5^{i_5}x_6^{i_6})_{(i_1,i_2, i_3,\xi,i_5,i_6)\in \{0,1\}\times \mathbb{N}^5}$ is a spanning set of $\mathcal{A}_{\alpha,\beta}$ over $\mathbb{K}.$  We want to show that $\mathfrak{P}$ spans $\mathcal{A}_{\alpha,\beta}$ by showing that $e_1^{i_1}e_2^{i_2}e_3^{i_3}e_4^\xi e_5^{i_5}e_6^{i_6}$ can be written as a finite linear combination of the elements of $\mathfrak{P}$ over $\mathbb{K}$ for all $(i_1,i_2,i_3,\xi, i_5,i_6)\in \{0,1\}\times \mathbb{N}^5$. By Proposition \ref{pp1}, it is sufficient to do this by an induction on $i_3.$ The result is obvious when  $i_3=0$ or $1.$ For $i_3\geq 1,$ suppose that
\begin{align*}
x_1^{i_1}x_2^{i_2}x_3^{i_3}x_4^\xi x_5^{i_5}x_6^{i_6}
&=\sum_{(\epsilon_1,\epsilon_2,\underline{v})\in I}a_{(\epsilon_1,\epsilon_2,\underline{v})}
x_1^ix_2^jx_3^{\epsilon_1}x_4^{\epsilon_2}x_5^kx_6^l,
\end{align*}
where $\underline{v}:=(i,j,k,l)\in \mathbb{N}^4,$ and the $a_{(\epsilon_1, \epsilon_2, \underline{v})}$ are all scalars. Moreover, $I$ is a  finite subset of $\{0,1\}^2\times \mathbb{N}^4$.
It follows from the inductive hypothesis that
\begin{align*}
x_1^{i_1}x_2^{i_2}x_3^{i_3+1}x_4^\xi x_5^{i_5}x_6^{i_6}
=\sum_{(\epsilon_1,\epsilon_2,\underline{v})\in I}a_{(\epsilon_1,\epsilon_2,\underline{v})}
x_1^ix_2^jx_3^{\epsilon_1+1}x_4^{\epsilon_2}x_5^kx_6^l.
\end{align*}
We need to show that $x_1^ix_2^jx_3^{\epsilon_1+1}x_4^{\epsilon_2}x_5^kx_6^l\in$ Span$(\mathfrak{P})$ for all $(\epsilon_1,\epsilon_2,\underline{v})\in I.$ The result is obvious when $(\epsilon_1,\epsilon_2)
=(0,0), (0,1).$ 
Using Lemma \ref{pl2}(1),(3), one can also show that $x_1^ix_2^jx_3^{\epsilon_1+1}x_4^{\epsilon_2}x_5^kx_6^l\in$ Span$(\mathfrak{P})$ for all
$(\epsilon_1,\epsilon_2)=(1,0), (1,1);$ and $i,j,k,l\in \mathbb{N}.$ Therefore, $x_1^{i_1}x_2^{i_2}x_3^{i_3+1}x_4^\xi x_5^{i_5}x_6^{i_6}\in$ Span$(\mathfrak{P}).$ As a result, $\mathfrak{P}$ spans $\mathcal{A}_{\alpha,\beta}.$ 

We proceed to show that $\mathfrak{F}$ is a linearly independent set. Suppose that
\begin{align*}
\sum_{(\epsilon_1,\epsilon_2,\underline{v})\in I}a_{(\epsilon_1,\epsilon_2,\underline{v})}
x_1^ix_2^jx_3^{\epsilon_1}x_4^{\epsilon_2}x_5^kx_6^l=0.
\end{align*}
Then, 
\begin{align*}
\sum_{(\epsilon_1,\epsilon_2,\underline{v})\in I}a_{(\epsilon_1,\epsilon_2,\underline{v})}
\widehat{x_1}^i\widehat{x_2}^j\widehat{x_3}^{\epsilon_1}\widehat{x_4}^{\epsilon_2}
\widehat{e_5}^k\widehat{e_6}^l
&= (\widehat{\Omega}_1-\alpha)\nu
\end{align*}
in $\mathcal{A}_\beta,$ where $\nu\in \mathcal{A}_{\beta}.$ Set $\underline{w}:=(i,j,k,l,m)\in \mathbb{N}^5$, and let $J_1, J_2$ be finite subsets of $\mathbb{N}^5.$ One can write $\nu$ in terms of the basis 
$\mathfrak{F}$ of $\mathcal{A}_{\beta}$ as: 
$$\nu=\displaystyle\sum_{\underline{w}\in J_1}b_{\underline{w}}\widehat{x_1}^i\widehat{x_2}^j\widehat{x_3}^k
\widehat{x_4}\widehat{x_5}^l
 \widehat{x_6}^m+\displaystyle\sum_{\underline{w}\in J_2}c_{\underline{w}}\widehat{x_1}^i\widehat{x_2}^j\widehat{x_3}^k
\widehat{x_5}^l
 \widehat{x_6}^m,$$
 where   $b_{\underline{w}}$ and $  c_{\underline{w}}$ are scalars.  Note that
 $\widehat{\Omega}_1=\widehat{x_1}\widehat{x_3}\widehat{x_5}-\frac{3}{2}\widehat{x_1}\widehat{x_4}-\frac{1}{2}\widehat{x_2}\widehat{x_5}+\frac{1}{2}\widehat{x_3}^2.$ Given this expression, and the relation \eqref{pe3}, one can express the above equality as follows:
\begin{align*}
\sum_{(\epsilon_1,\epsilon_2,\underline{v})\in I}a_{(\epsilon_1,\epsilon_2,\underline{v})}
\widehat{x_1}^i\widehat{x_2}^j\widehat{x_3}^{\epsilon_1}\widehat{x_4}^{\epsilon_2}
\widehat{e_5}^k\widehat{e_6}^l=&\displaystyle\sum_{\underline{w}\in J_1}b_{\underline{w}}\widehat{x_1}^i\widehat{x_2}^j\widehat{x_3}^k
\widehat{x_4}\widehat{x_5}^l
 \widehat{x_6}^m(\widehat{\Omega}_1-\alpha)\\&+\displaystyle\sum_{\underline{w}\in J_2}c_{\underline{w}}\widehat{x_1}^i\widehat{x_2}^j\widehat{x_3}^k
\widehat{x_5}^l
 \widehat{x_6}^m(\widehat{\Omega}_1-\alpha)\\
 =& \sum_{\underline{w}\in J_1}\frac{1}{2}b_{\underline{w}}\widehat{x_1}^i\widehat{x_2}^j\widehat{x_3}^{k+2}
\widehat{x_4}\widehat{x_5}^l
 \widehat{x_6}^m \\
&-\sum_{\underline{w}\in J_1}\frac{2}{3} b_{\underline{w}}\widehat{x_1}^{i+1}\widehat{x_2}^j
 \widehat{x_3}^{k+3}
\widehat{x_5}^{l}
 \widehat{x_6}^{m+1}\\
 &-\sum_{\underline{w}\in J_2}\frac{3}{2}c_{\underline{w}}\widehat{x_1}^{i+1}\widehat{x_2}^j\widehat{x_3}^k
 \widehat{x_4}\widehat{x_5}^l
 \widehat{x_6}^m\\
 &+\sum_{\underline{w}\in J_2}\frac{1}{2}c_{\underline{w}}\widehat{x_1}^i\widehat{x_2}^j\widehat{x_3}^{k+2}
\widehat{x_5}^l
 \widehat{x_6}^m+\Upsilon,
 \end{align*}
 where 
 \begin{align*}
 \Upsilon =&\sum_{\underline{w}\in J_1}r_1 b_{\underline{w}} \widehat{x_1}^{i+1}\widehat{x_2}^{j}\widehat{x_3}^{k}
 \widehat{x_5}^{l}\widehat{x_6}^m+
 \sum_{\underline{w}\in J_1}r_2 b_{\underline{w}} \widehat{x_1}^{i+1}\widehat{x_2}^{1+j}\widehat{x_3}^{k}\widehat{x_4}
 \widehat{x_5}^{l}\widehat{x_6}^{m+1}\\
&+ \sum_{\underline{w}\in J_1}r_3 b_{\underline{w}}\widehat{x_1}^{i+1}\widehat{x_2}^{j+1}\widehat{x_3}^{k}
 \widehat{x_5}^{l+3}\widehat{x_6}^m+
 \sum_{\underline{w}\in J_1}r_4 b_{\underline{w}}\widehat{x_1}^{i+1}\widehat{x_2}^{j}\widehat{x_3}^{k+2}
 \widehat{x_5}^{l+2}\widehat{x_6}^m\\
 &+\sum_{\underline{w}\in J_1}r_5 b_{\underline{w}}\widehat{x_1}^{i+1}\widehat{x_2}^{j}\widehat{x_3}^{k+1}
 \widehat{x_4}\widehat{x_5}^{l+1}\widehat{x_6}^m
 +\sum_{\underline{w}\in J_1}r_6 b_{\underline{w}} \widehat{x_1}^{i}\widehat{x_2}^{j+1}\widehat{x_3}^{k}
 \widehat{x_4}\widehat{x_5}^{l+1}\widehat{x_6}^m\\
 &+\sum_{\underline{w}\in J_1} r_7b_{\underline{w}}\beta \widehat{x_1}^{i}\widehat{x_2}^{j}\widehat{x_3}^{k}\widehat{x_4}
\widehat{x_5}^{l}\widehat{x_6}^m+ \sum_{\underline{w}\in J_1} r_8 b_{\underline{w}} \widehat{x_1}^{i+1}\widehat{x_2}^j\widehat{x_3}^{k+1}\widehat{x_4}
 \widehat{x_5}^{l+1}\widehat{x_6}^m \\
&+\sum_{\underline{w}\in J_2}r_9 c_{\underline{w}}\widehat{x_1}^{i+1}\widehat{x_2}^{j}\widehat{x_3}^{k+1}
\widehat{x_5}^{l+1}\widehat{x_6}^m
+\sum_{\underline{w}\in J_2}r_{10}c_{\underline{w}}\widehat{x_1}^{i}\widehat{x_2}^{j+1}\widehat{x_3}^{k}
\widehat{x_5}^{l+1}\widehat{x_6}^m\\&+\sum_{\underline{w}\in J_2}r_{11}c_{\underline{w}}\alpha\widehat{x_1}^{i}\widehat{x_2}^{j}\widehat{x_3}^{k}
\widehat{x_5}^{l}\widehat{x_6}^m.
  \end{align*} 
 Note that $r_1, \ldots, r_{11}$ are some non-zero rational numbers.
Before we continue the proof, the following ordering $<_3$ needs to be noted.
\begin{itemize}
\item[$\spadesuit$] Let $(\vartheta_1,\vartheta_2,\vartheta_3,\vartheta_5,\vartheta_6)$, $(\varsigma_1,\varsigma_2,\varsigma_3,\varsigma_5,\varsigma_6)\in\mathbb{N}^5.$ We say that $(\varsigma_1,\varsigma_2,\varsigma_3,\varsigma_5,\varsigma_6)<_3 (\vartheta_1,\vartheta_2,\vartheta_3,\vartheta_5,\vartheta_6)$ if $[\vartheta_3>\varsigma_3]$ or 
$[\vartheta_3=\varsigma_3$ and $\vartheta_1>\varsigma_1]$ or $[\vartheta_3=\varsigma_3, \ \vartheta_1=\varsigma_1$ and $\vartheta_2>\varsigma_2]$ 
 or $[\vartheta_3=\varsigma_3,\  \vartheta_1=\varsigma_1, \ \vartheta_2=\varsigma_2$ and $\vartheta_5>\varsigma_5]$ or $[\vartheta_3=\varsigma_3,\ \vartheta_1=\varsigma_1, \ \vartheta_2=\varsigma_2,\  \vartheta_5=\varsigma_5$ and $\vartheta_6\geq \varsigma_6].$
\end{itemize} 
Observe that $\Upsilon$ contains lower order terms with respect to  
$<_3$ in each monomial type (note that there are two different types of monomials in the basis of $\mathcal{A}_{\beta};$ one with $\widehat{x_4}$ and the other without $\widehat{x_4}$ ). Now, suppose that there exists $(i,j,k,l,m)\in J_1$ and $(i,j,k,l,m)\in J_2$ such that $b_{(i,j,k,l,m)}\neq 0$ and $c_{(i,j,k,l,m)}\neq 0.$ 
Let $(v_1,v_2,v_3,v_5,v_6)$ and $(w_1,w_2,w_3,w_5,w_6)$ be the greatest elements  of  $J_1$ and $J_2$  respectively with respect to $<_3$  such that $b_{(v_1,v_2,v_3, v_5,v_6)}$ and $ c_{(w_1,w_2,w_3,w_5,w_6)}$  are non-zero.
 Since  $\mathfrak{F}$ is a linear basis for $\mathcal{A}_\beta$ and $\Upsilon$ contains lower order terms with respect to $<_3$, we have the following:
 if $w_3-v_3<2,$ then identifying the coefficients of $\widehat{x_1}^{v_1}\widehat{x_2}^{v_2}\widehat{x_3}^{v_3+2}\widehat{x_4}
 \widehat{x_5}^{v_5}\widehat{x_6}^{v_6}$ implies that $\frac{1}{2}b_{(v_1,v_2,v_3, v_5,v_6)}=0,$   a contradiction!
 Finally, if $w_3-v_3\geq 2,$ then identifying the coefficients  of $\widehat{x_1}^{w_1}\widehat{x_2}^{w_2}\widehat{x_3}^{w_3+2}\widehat{x_5}^{w_5}\widehat{x
_6}^{w_6}$ implies that $\frac{1}{2}c_{(w_1,w_2,w_3,w_5,w_6)}=0$, another contradiction!  
Therefore, either all $b_{(i,j,k,m,n)}$ or all $ c_{(i,j,k,m,n)}$ are zero. 
  Without loss of generality, suppose that there exists $(i,j,k,m,n)\in J_2$ such that  $c_{(i,j,k,m,n)}$ is not zero. Then, $ b_{(i,j,k,m,n)}$ are all zero. Let $(w_1,w_2,w_3,w_5,w_6)$ be the greatest element of $J_2$ such that $ c_{(w_1,w_2,w_3,w_5,w_6)}\neq 0.$  Identifying the coefficients of 
  $\widehat{x_1}^{w_1}\widehat{x_2}^{w_2}
 \widehat{x_3}^{w_3+2} \widehat{x_5}^{w_5}\widehat{x_6}^{w_6}$ in the above equality implies that $\frac{1}{2} c_{(w_1,w_2,w_3,w_5,w_6)}=0,$ a contradiction! We can therefore conclude that  $b_{(i,j,k,m,n)}$ and $ c_{(i,j,k,m,n)}$ are all zero. 
 Consequently,
\begin{align*}
\sum_{(\epsilon_1,\epsilon_2,\underline{v})\in I}a_{(\epsilon_1,\epsilon_2,\underline{v})}
\widehat{x_1}^i\widehat{x_2}^j\widehat{x_3}^{\epsilon_1}\widehat{x_4}^{\epsilon_2}
\widehat{x_5}^k\widehat{x_6}^l
&= 0.
\end{align*}
Since $\mathfrak{F}$ is a basis for $\mathcal{A}_{\beta}$, this implies that
$a_{(\epsilon_1,\epsilon_2,\underline{v})}=0$ for all $(\epsilon_1,\epsilon_2,\underline{v})\in I.$ Therefore, $\mathfrak{P}$ is a linearly independent set.
\end{proof}

\begin{cor}
\label{pc4c2}
Let $\underline{v}=(i,j,k,l)\in \mathbb{N}^2\times \mathbb{Z}^2, \ I$ represent a finite subset of $\{0,1\}^2\times \mathbb{N}^2\times \mathbb{Z}^2,$   and
$(a_{(\epsilon_1,\epsilon_2,\underline{v})})_{(\epsilon_1,\epsilon_2, \underline{v})\in I}$ be a family of scalars.  If 
$$\sum_{(\epsilon_1,\epsilon_2, \underline{v})\in I}a_{(\epsilon_1,\epsilon_2, \underline{v})}x_{1}^{i}x_{2}^{j}x_3^{\epsilon_1}
x_4^{\epsilon_2}t_5^{k}t_6^{l}=0,$$ then $a_{(\epsilon_1,\epsilon_2, \underline{v})}=0$ for all $(\epsilon_1,\epsilon_2, \underline{v})\in I.$ 
\end{cor}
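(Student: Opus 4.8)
The plan is to deduce the statement from Proposition~\ref{pc3p5} by clearing the negative powers of $t_5$ and $t_6$. The first observation is purely bookkeeping: under the isomorphism $\mathcal{A}_{\alpha,\beta}S_{\alpha,\beta}^{-1}\cong\mathscr{P}_{\alpha,\beta}$ of Subsection~\ref{sub3}, the elements $t_5$ and $t_6$ are exactly the images of $x_5$ and $x_6$, because $X_5=T_5$ and $X_6=T_6$ are left unchanged by the PDDA; moreover $t_5$ and $t_6$ are invertible in $\mathscr{P}_{\alpha,\beta}$. So the displayed relation can be regarded as a $\mathbb{K}$-linear relation among elements of the localization $\mathcal{A}_{\alpha,\beta}S_{\alpha,\beta}^{-1}$, in which $x_5$ and $x_6$ are units.

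I would then argue as follows. Since the index set $I$ is finite, choose $N\in\mathbb{N}$ so large that $k+N\geq 0$ and $l+N\geq 0$ for every $(\epsilon_1,\epsilon_2,(i,j,k,l))\in I$, and multiply the given relation by the unit $(t_5t_6)^N=(x_5x_6)^N$. This produces
\[
\sum_{(\epsilon_1,\epsilon_2,\underline{v})\in I}a_{(\epsilon_1,\epsilon_2,\underline{v})}\,x_1^ix_2^jx_3^{\epsilon_1}x_4^{\epsilon_2}x_5^{k+N}x_6^{l+N}=0
\]
in $\mathcal{A}_{\alpha,\beta}S_{\alpha,\beta}^{-1}$, whose left-hand side is now the image of a genuine element of $\mathcal{A}_{\alpha,\beta}$. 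Because $\mathcal{A}_{\alpha,\beta}$ is a noetherian domain and $S_{\alpha,\beta}$ consists of non-zero (hence regular) elements, the localization map $\mathcal{A}_{\alpha,\beta}\hookrightarrow\mathcal{A}_{\alpha,\beta}S_{\alpha,\beta}^{-1}$ is injective, so the same relation already holds in $\mathcal{A}_{\alpha,\beta}$. Each monomial occurring there lies in the family $\mathfrak{P}=\{x_1^ix_2^jx_3^{\epsilon_1}x_4^{\epsilon_2}x_5^kx_6^l:(\epsilon_1,\epsilon_2,i,j,k,l)\in\{0,1\}^2\times\mathbb{N}^4\}$, and the map $(\epsilon_1,\epsilon_2,i,j,k,l)\mapsto(\epsilon_1,\epsilon_2,i,j,k+N,l+N)$ sends distinct indices of $I$ to distinct elements of $\mathfrak{P}$. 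Since $\mathfrak{P}$ is a $\mathbb{K}$-basis of $\mathcal{A}_{\alpha,\beta}$ by Proposition~\ref{pc3p5}, all the coefficients $a_{(\epsilon_1,\epsilon_2,\underline{v})}$ must vanish.

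The argument involves no computation; its only substantive ingredients are the two structural facts recalled above, namely that $t_5,t_6$ are the images of $x_5,x_6$ (so that multiplication by a power of $x_5x_6$ really does clear every denominator appearing in the given expression) and that the localization map is injective (which is immediate from $\mathcal{A}_{\alpha,\beta}$ being a domain). The combinatorial heavy lifting — establishing that $\mathfrak{P}$ is linearly independent — has already been carried out in Proposition~\ref{pc3p5}, so there is no real obstacle here; the only mild point to be careful about is to verify that the chosen shift $N$ genuinely lands every term inside the basis $\mathfrak{P}$ and preserves distinctness of indices.
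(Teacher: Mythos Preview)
Your argument is correct and follows essentially the same approach as the paper's own proof: multiply through by a high enough power of $t_5$ and $t_6$ to clear all negative exponents, then invoke the linear independence of $\mathfrak{P}$ from Proposition~\ref{pc3p5}. Your write-up is in fact more careful than the paper's (you explicitly justify why $t_5=x_5$, $t_6=x_6$, why the localization map is injective, and why the shift preserves distinctness of indices), but the underlying idea is identical.
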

\begin{proof}
The result is obvious when $k, l\geq 0$ due to Proposition \ref{pc3p5}.
When $k$ (resp. $l$) is negative, then one can multiply the above equality enough times by $t_5$ (resp. $t_6$) to kill all the negative powers and then apply Proposition \ref{pc3p5} to complete the proof.
\end{proof}


\section{ Poisson derivations of the semiclassical limit of the quantum second Weyl algebra $A_{\alpha,\beta}$}
\label{chap6}
This section focuses on studying the Poisson derivations of the Poisson algebra  $\mathcal{A}_{\alpha,\beta}$.
\subsection{Preliminaries and strategy}
\label{pr2}
 Let $2\leq j\leq 7$ and  $(\alpha,\beta)\in \mathbb{K}^2\setminus \{(0,0)\}.$ Set 
$$\mathcal{A}_{\alpha,\beta}^{(j)}:=\frac{\mathcal{A}^{(j)}}{\langle \Omega_1-\alpha,\Omega_2-\beta\rangle},$$
where $\mathcal{A}^{(j)}$ is defined in Subsection \ref{pdda}, and $\Omega_1$ and $\Omega_2$ are the generators of the centre of $\mathcal{A}^{(j)}$ (see Subsection \ref{pdda}). Recall that $\mathcal{A}^{(7)}=\mathcal{A}=\mathbb{K}[X_1,\ldots,X_6].$ It follows that 
$\mathcal{A}_{\alpha,\beta}^{(7)}=\mathcal{A}_{\alpha,\beta}.$
For each $2\leq j\leq 7,$ denote the canonical images of the generators $X_{i,j}$ of $\mathcal{A}^{(j)}$ in $\mathcal{A}_{\alpha,\beta}^{(j)}$ by $x_{i,j}$ for all 
$1\leq i\leq 6.$  Furthermore, from \cite{sc}, one can deduce the following data 
 of $\mathcal{A}_{\alpha,\beta}$ from the PDDA  of  $\mathcal{A}$ (see Subsection \ref{pdda}) as follows:
\begin{align*}
x_{1,6}&=x_1-\frac{1}{2}x_5x_6^{-1}\\
x_{2,6}&=x_2+\frac{3}{2}x_4x_6^{-1}-3x_3x_5x_6^{-1}+x_5^3x_6^{-2}\\
x_{3,6}&=x_3-x_5^2x_6^{-1}\\
x_{4,6}&=x_4-\frac{2}{3}x_5^3x_6^{-1}\\
x_{1,5}&=x_{1,6}-x_{3,6}x_{5,6}^{-1}+\frac{3}{4}x_{4,6}x_{5,6}^{-2}\\
x_{2,5}&=x_{2,6}-3x_{3,6}^2x_{5,6}^{-1}+\frac{9}{2}x_{3,6}
x_{4,6}x_{5,6}^{-2}-\frac{9}{4}x_{4,6}^2x_{5,6}^{-3}\\
x_{3,5}&=x_{3,6}-\frac{3}{2}x_{4,6}x_{5,6}^{-1}\\
x_{1,4}&=x_{1,5}-\frac{1}{3}x_{3,5}^2x_{4,5}^{-1}\\
x_{2,4}&=x_{2,5}-\frac{2}{3}x_{3,5}^3x_{4,5}^{-1}\\
x_{1,3}&=x_{1,4}-\frac{1}{2}x_{2,4}x_{3,4}^{-1}\\
t_1&:=x_{1,2}=x_{1,3}\\
t_2&:=x_{2,2}=x_{2,3}=x_{2,4}\\
t_3&:=x_{3,2}=x_{3,3}=x_{3,4}=x_{3,5}\\
t_4&:=x_{4,2}=x_{4,3}=x_{4,4}=x_{4,5}=x_{4,6}\\
t_5&:=x_{5,2}=x_{5,3}=x_{5,4}=x_{5,5}=x_{5,6}=x_5\\
t_6&:=x_{6,2}=x_{6,3}=x_{6,4}=x_{6,5}=x_{6,6}=x_6.
\end{align*}
For simplicity, we will refer to the above data as the PDDA of $\mathcal{A}_{\alpha,\beta}.$

Note that the $t_i$ are  the canonical images of $T_i$ in $\mathcal{A}^{(2)}_{\alpha,\beta}$ for all $1\leq i\leq 6.$ 
For each $2\leq j<7,$  set $S_j:=\{\lambda t_j^{i_j}t_{j+1}^{i_{j+1}}\ldots t_6^{i_6}\mid i_j,\ldots,i_6\in\mathbb{N}, \ \lambda\in \mathbb{K}^*\}.$ One can observe that $S_j$ is a multiplicative system of non-zero divisors (or regular elements) of $\mathcal{A}_{\alpha,\beta}^{(j)}.$  As a result, one can localize $\mathcal{A}_{\alpha,\beta}^{(j)}$ at $S_j.$ Let us denote this localization by $\mathcal{R}_j.$ That is,
$$\mathcal{R}_j:=\mathcal{A}_{\alpha,\beta}^{(j)}S_j^{-1}.$$ Again, the set $\Sigma_j:=\{t_j^k\mid k\in \mathbb{N}\}$  is a multiplicative set in both $\mathcal{A}_{\alpha,\beta}^{(j)}$ and $\mathcal{A}_{\alpha,\beta}^{(j+1)}$ for each  $2\leq j\leq 6.$  Therefore, $$\mathcal{A}_{\alpha,\beta}^{(j)}\Sigma_j^{-1}=\mathcal{A}_{\alpha,\beta}^{(j+1)}\Sigma_j^{-1}.$$
It follows that
\begin{align}
\label{pe0}
\mathcal{R}_j=\mathcal{R}_{j+1}\Sigma_j^{-1},
\end{align}
for all $2\leq j\leq 6,$ with the convention that $\mathcal{R}_7:=\mathcal{A}_{\alpha,\beta}.$ 
Similarly to \eqref{embp}, we construct the following embeddings:
\begin{align}
\label{pe}
\mathcal{R}_7=\mathcal{A}_{\alpha,\beta}\subset \mathcal{R}_6=\mathcal{R}_7\Sigma_6^{-1}\subset \mathcal{R}_5=\mathcal{R}_6\Sigma_5^{-1}\subset \mathcal{R}_4=\mathcal{R}_5\Sigma_4^{-1}
 \subset \mathcal{R}_3.
\end{align} 
Observe that $\mathcal{R}_3=\mathcal{A}_{\alpha,\beta}^{(3)}S_3^{-1}=\mathcal{R}_4\Sigma_3^{-1}$  is the Poisson torus   $\mathscr{P}_{\alpha,\beta}=\mathbb{K}[t_3^{\pm 1},t_4^{\pm 1},t_5^{\pm 1},t_6^{\pm 1}]$ in Subsection \ref{sec6.4}. \\

\textbf{Strategy to compute the Poisson derivations of $\mathcal{A}_{\alpha,\beta}=\mathcal{R}_7$}. From Corollary \ref{pic2}, we know that the Poisson derivations of the Poisson torus $\mathcal{R}_3.$ Therefore, we will extend the Poisson derivations of $\mathcal{A}_{\alpha,\beta}$ to Poisson derivations of $\mathcal{R}_3.$  We will then `pull back' the Poisson derivations of $\mathcal{R}_3$ sequentially to the Poisson derivations of $\mathcal{A}_{\alpha,\beta},$ and this will give us a complete description of the Poisson derivations of $\mathcal{A}_{\alpha,\beta}.$  This process will be carried out in steps, and the linear basis for $\mathcal{R}_i$ will play crucial role. As a result, we will compute these bases in the subsequent subsection. 

The embeddings in \eqref{pe} present an opportunity to compute the centre of each of the algebras $\mathcal{R}_i,$ which will be helpful in studying the Poisson derivations of  $\mathcal{A}_{\alpha,\beta}.$
\begin{lem}
As usual, let $Z_P(\mathcal{R}_i)$ denote the Poisson centre of $\mathcal{R}_i.$ Then  $Z_P(\mathcal{R}_i)=\mathbb{K}$ for each $3\leq i\leq 7.$
\end{lem}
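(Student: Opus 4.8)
The goal is to show $Z_P(\mathcal{R}_i) = \mathbb{K}$ for each $3 \le i \le 7$, i.e. the Poisson centre of each localized algebra in the chain $\mathcal{R}_7 = \mathcal{A}_{\alpha,\beta} \subset \mathcal{R}_6 \subset \mathcal{R}_5 \subset \mathcal{R}_4 \subset \mathcal{R}_3 = \mathscr{P}_{\alpha,\beta}$ reduces to the scalars. The key observation is that localization only enlarges the Poisson centre: since $\mathcal{R}_i = \mathcal{R}_{i+1}\Sigma_i^{-1}$ is a localization of $\mathcal{R}_{i+1}$, we have $Z_P(\mathcal{R}_{i+1}) \subseteq Z_P(\mathcal{R}_i)$, and more precisely any element of $Z_P(\mathcal{R}_i)$ can be written as $z/t_i^n$ with $z \in \mathcal{R}_{i+1}$, $n \in \mathbb{N}$, whose central-ness forces $z$ itself to be Poisson central in $\mathcal{R}_{i+1}$ (using that $t_i$ is Poisson normal, so $\{t_i^{-1}, -\}$ behaves predictably). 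Hence it suffices to compute $Z_P(\mathcal{R}_3)$ and show it equals $\mathbb{K}$; then $Z_P(\mathcal{R}_i) \subseteq Z_P(\mathcal{R}_3) = \mathbb{K}$ for all $3 \le i \le 7$, and the reverse inclusion $\mathbb{K} \subseteq Z_P(\mathcal{R}_i)$ is trivial.

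The heart of the argument is therefore the computation of $Z_P(\mathcal{R}_3)$, where $\mathcal{R}_3 = \mathscr{P}_{\alpha,\beta} \cong \mathbb{K}[t_3^{\pm 1}, t_4^{\pm 1}, t_5^{\pm 1}, t_6^{\pm 1}]$ is a Poisson torus. First I would determine the Poisson bracket on the four surviving generators. Recall from the computations leading to $\mathscr{P}_{\alpha,\beta}$ that $t_1 = \alpha t_3^{-1}t_5^{-1}$ and $t_2 = \beta t_4^{-1}t_5^{-1}$ in $\mathscr{P}_{\alpha,\beta}$; substituting these into the relations $\{t_i, t_j\} = \mu_{ij} t_j t_i$ of the Poisson affine space $\overline{\mathcal{A}}$ (with $\mu_{ij}$ the entries of the matrix $M$) yields the bracket on $t_3, t_4, t_5, t_6$. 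Concretely, reading off the $M$-submatrix on indices $\{3,4,5,6\}$ gives $\{t_3,t_4\} = 3 t_3 t_4$, $\{t_3,t_5\} = t_3 t_5$, $\{t_3,t_6\} = 0$, $\{t_4,t_5\} = 3 t_4 t_5$, $\{t_4,t_6\} = 3 t_4 t_6$, $\{t_5,t_6\} = -3 t_5 t_6$. Thus $\mathscr{P}_{\alpha,\beta}$ is itself a Poisson torus associated to the skew-symmetric matrix
\begin{align*}
N := \begin{bmatrix} 0 & 3 & 1 & 0 \\ -3 & 0 & 3 & 3 \\ -1 & -3 & 0 & -3 \\ 0 & -3 & 3 & 0 \end{bmatrix}.
\end{align*}
By the same technique used in the proof of Lemma~\ref{pl11}(1), an element $y = \sum_{(a,b,c,d)\in\mathbb{Z}^4} \lambda_{(a,b,c,d)} t_3^a t_4^b t_5^c t_6^d$ lies in $Z_P(\mathscr{P}_{\alpha,\beta})$ if and only if each monomial $t_3^a t_4^b t_5^c t_6^d$ appearing in $y$ Poisson-commutes with $t_3, t_4, t_5, t_6$; this translates into the homogeneous linear system $N \cdot (a,b,c,d)^{\mathrm{T}} = 0$. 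One then checks that $N$ is nonsingular (equivalently, $\det N \ne 0$, which a direct expansion confirms — a skew-symmetric $4\times 4$ matrix has determinant the square of its Pfaffian, and here the Pfaffian $3\cdot(-3) - 1\cdot 3 + 0 = -12 \ne 0$), so the only solution is $(a,b,c,d) = (0,0,0,0)$, forcing $y \in \mathbb{K}$. Hence $Z_P(\mathscr{P}_{\alpha,\beta}) = \mathbb{K}$, and since the isomorphism $\mathscr{P}_{\alpha,\beta} \cong \mathbb{K}[t_3^{\pm1},t_4^{\pm1},t_5^{\pm1},t_6^{\pm1}]$ holds whether or not $\alpha$ or $\beta$ is zero, this computation is uniform in $(\alpha,\beta)$.

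The main obstacle I anticipate is the careful bookkeeping in the reduction step: showing rigorously that if $z/t_i^n \in Z_P(\mathcal{R}_i)$ then $z \in Z_P(\mathcal{R}_{i+1})$ requires knowing that $t_i$ is Poisson normal in $\mathcal{R}_{i+1}$ (so that $\{t_i^{-1}, w\} = -t_i^{-2}\{t_i, w\}$ is again in $\mathcal{R}_{i+1}$, and the Leibniz expansion of $\{z t_i^{-n}, w\} = 0$ can be cleared of denominators and evaluated against a PBW-type basis of $\mathcal{R}_{i+1}$). This is essentially the same argument as in part (4) of the proof of Lemma~\ref{pl11}, so I would either invoke that pattern or spell it out once. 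An alternative, cleaner route that sidesteps the normality bookkeeping entirely: simply note $\mathcal{R}_7 \subseteq \mathcal{R}_6 \subseteq \mathcal{R}_5 \subseteq \mathcal{R}_4 \subseteq \mathcal{R}_3$ with each inclusion a localization, hence $Z_P(\mathcal{R}_7) \subseteq Z_P(\mathcal{R}_6) \subseteq \cdots \subseteq Z_P(\mathcal{R}_3) = \mathbb{K}$ by the monotonicity of the Poisson centre under localization (already used in Lemma~\ref{pl11}(4)), and $\mathbb{K} \subseteq Z_P(\mathcal{R}_i)$ always; this gives the result for all $3 \le i \le 7$ at once with no further work beyond the torus computation.
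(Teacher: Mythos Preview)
Your approach is essentially the paper's: compute $Z_P(\mathcal{R}_3)=\mathbb{K}$ directly on the rank-four Poisson torus (the paper merely says ``it is easy to verify''), then use the monotonicity chain $\mathbb{K}\subseteq Z_P(\mathcal{R}_7)\subseteq Z_P(\mathcal{R}_6)\subseteq\cdots\subseteq Z_P(\mathcal{R}_3)=\mathbb{K}$ coming from the successive localizations --- exactly your ``alternative, cleaner route''. One minor slip: reading off row~5 of $M$ gives $\mu_{56}=3$, not $-3$, so your submatrix $N$ should have $(3,4)$-entry $3$ and Pfaffian $3\cdot 3-1\cdot 3+0=6$ rather than $-12$; either way the Pfaffian is nonzero and your argument goes through unchanged.
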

\begin{proof}
It is easy to verify that $Z_P(\mathcal{R}_3)=\mathbb{K}.$ Note that $\mathcal{R}_7=\mathcal{A}_{\alpha,\beta}.$ Since $\mathcal{R}_i$ is a localization of $\mathcal{R}_{i+1},$ we have that 
$\mathbb{K}\subseteq Z_P(\mathcal{R}_7)\subseteq Z_P(\mathcal{R}_6)\subseteq \cdots \subseteq Z_P(\mathcal{R}_3)=\mathbb{K}.$ Therefore,
$Z_P(\mathcal{R}_7)= Z_P(\mathcal{R}_6)= \cdots = Z_P(\mathcal{R}_3)=\mathbb{K}.$
\end{proof}

\subsection{Linear bases for $\mathcal{R}_3,  \mathcal{R}_4$ and $\mathcal{R}_5.$}
We aim to find a basis for $\mathcal{R}_j$ for each $j=3,4,5.$  
Since $\mathcal{R}_3$ is a Poisson torus generated by $t_3^{\pm 1},\ldots,t_6^{\pm 1}$ over $\mathbb{K},$ the set $\{t_3^{i}t_4^{j}t_5^{k}t_6^{l}\mid i,j,k,l \in \mathbb{Z}\}$ is a basis of $\mathcal{R}_3.$ 

For simplicity,  we set
\begin{align*}
f_1:&=x_{1,4} & F_1:&=X_{1,4}\\
z_1:&=x_{1,5} & \zbar_1:&=X_{1,5} \\
z_2:&=x_{2,5} & \zbar_2:&=X_{2,5}.
\end{align*}

\subsubsection{Basis for $\mathcal{R}_4$} Observe that $$\mathcal{A}_{\alpha,\beta}^{(4)}=\frac{\mathcal{A}^{(4)}}{\langle \Omega_{1}-\alpha, \Omega_{2}-\beta\rangle},$$
where $\Omega_{1}=F_1T_3T_5-\frac{1}{2}T_2T_5$ and $\Omega_{2}=T_2T_4T_6$ in $\mathcal{A}^{(4)}$ (Subsection \ref{pdda}).
Set
$$\mathcal{A}_\beta^{(4)}S_4^{-1}:=\frac{\mathcal{A}^{(4)}S_4^{-1}}{\langle \Omega_{2}-\beta \rangle},$$ 
where $\beta\in \mathbb{K}$.  We will denote  the canonical images of $X_{i,4}$ (resp. $T_i$) in $\mathcal{A}_{\beta}^{(4)}$ by $\widehat{x_{i,4}}$ (resp.  $\widehat{t_i}$)  for all $1\leq i\leq 6.$  Observe that $\widehat{t_2}=\beta \widehat{t_6}^{-1}\widehat{t_4}^{-1}$ in  $\mathcal{A}_\beta^{(4)}S_4^{-1}.$  As usual, one can identify 
$\mathcal{R}_4$ with $\mathcal{A}_\beta^{(4)}S_4^{-1}/\langle\Omega_1-\alpha\rangle.$
\begin{pro}
\label{ev28}
The set $\mathfrak{P}_4=\{f_{1}^{i_1}t_4^{i_4}t_5^{i_5}
t_6^{i_6}, t_3^{i_3}t_4^{i_4}t_5^{i_5}
t_6^{i_6}\mid (i_1,i_3,i_4,i_5,i_6)\in\mathbb{N}^2\times\mathbb{Z}^3 \}$ is a $\mathbb{K}$-basis of $\mathcal{R}_4.$
\end{pro}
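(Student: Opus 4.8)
The statement concerns only the underlying commutative $\mathbb{K}$-algebra, so the Poisson bracket plays no role here. Write $f_1 = x_{1,4}$, $t_2 = x_{2,4}$, $t_3 = x_{3,4}$, and recall from Subsection~\ref{pdda} that in $\mathcal{A}^{(4)} = \mathbb{K}[X_{1,4},\ldots,X_{6,4}]$ one has $\Omega_1 = X_{1,4}X_{3,4}X_{5,4} - \tfrac12 X_{2,4}X_{5,4}$ and $\Omega_2 = X_{2,4}X_{4,4}X_{6,4}$. Hence in $\mathcal{R}_4 = \mathcal{A}_{\alpha,\beta}^{(4)}S_4^{-1}$, where $t_4,t_5,t_6$ are units, the defining relations $\Omega_2 = \beta$ and $\Omega_1 = \alpha$ become
$$t_2 = \beta\,t_4^{-1}t_6^{-1}, \qquad f_1 t_3 = \alpha\,t_5^{-1} + \tfrac{\beta}{2}\,t_4^{-1}t_6^{-1}.$$
These two identities drive the whole argument: the first deletes the generator $t_2$, and the second lets one trade a factor $f_1t_3$ for a Laurent monomial in $t_4,t_5,t_6$.

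For the spanning assertion I would start from the monomial basis $(X_{1,4}^{k_1}\cdots X_{6,4}^{k_6})$ of the polynomial ring $\mathcal{A}^{(4)}$; passing to $\mathcal{A}_{\alpha,\beta}^{(4)}$ and localising at $S_4$, the algebra $\mathcal{R}_4$ is spanned over $\mathbb{K}$ by the monomials $x_{1,4}^{k_1}x_{2,4}^{k_2}x_{3,4}^{k_3}t_4^{k_4}t_5^{k_5}t_6^{k_6}$ with $k_1,k_2,k_3\in\mathbb{N}$ and $k_4,k_5,k_6\in\mathbb{Z}$. Using $t_2 = \beta t_4^{-1}t_6^{-1}$ to substitute away every power of $x_{2,4}$, this spanning set reduces to the monomials $f_1^{k_1}t_3^{k_3}t_4^{k_4}t_5^{k_5}t_6^{k_6}$ with $k_1,k_3\in\mathbb{N}$. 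Finally, when $k_1,k_3\geq 1$ the second identity gives
$$f_1^{k_1}t_3^{k_3}t_4^{k_4}t_5^{k_5}t_6^{k_6} = \alpha\,f_1^{k_1-1}t_3^{k_3-1}t_4^{k_4}t_5^{k_5-1}t_6^{k_6} + \tfrac{\beta}{2}\,f_1^{k_1-1}t_3^{k_3-1}t_4^{k_4-1}t_5^{k_5}t_6^{k_6-1},$$
so an induction on $\min(k_1,k_3)$ rewrites every such monomial as a $\mathbb{K}$-combination of monomials with $k_1 = 0$ or $k_3 = 0$, i.e.\ of elements of $\mathfrak{P}_4$. (The two families listing $\mathfrak{P}_4$ overlap only in the pure Laurent monomials $t_4^{k_4}t_5^{k_5}t_6^{k_6}$, which is harmless.) Thus $\mathfrak{P}_4$ spans $\mathcal{R}_4$.

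For linear independence I would work inside the Poisson torus $\mathcal{R}_3 = \mathbb{K}[t_3^{\pm1},t_4^{\pm1},t_5^{\pm1},t_6^{\pm1}]$, using the inclusion $\mathcal{R}_4\subset\mathcal{R}_3$ from \eqref{pe}. There $t_3$ is a unit, so $f_1 = t_3^{-1}u$ with $u := \alpha t_5^{-1} + \tfrac{\beta}{2}t_4^{-1}t_6^{-1}$ lying in the Laurent subring $\mathcal{T} := \mathbb{K}[t_4^{\pm1},t_5^{\pm1},t_6^{\pm1}]$, and $u\neq 0$ because $(\alpha,\beta)\neq(0,0)$ and $t_5^{-1}$, $t_4^{-1}t_6^{-1}$ are distinct Laurent monomials. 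Grade $\mathcal{R}_3$ by the exponent of $t_3$: a monomial $f_1^{i_1}t_4^{i_4}t_5^{i_5}t_6^{i_6}$ of $\mathfrak{P}_4$ with $i_1\geq 1$ equals $t_3^{-i_1}u^{i_1}t_4^{i_4}t_5^{i_5}t_6^{i_6}$, hence is homogeneous of degree $-i_1<0$, while $t_3^{i_3}t_4^{i_4}t_5^{i_5}t_6^{i_6}$ with $i_3\geq 0$ is homogeneous of degree $i_3\geq 0$. So in any finite dependence among elements of $\mathfrak{P}_4$ the homogeneous component of each degree $d$ vanishes separately: for $d\geq 0$ it is a $\mathbb{K}$-combination of distinct Laurent monomials $t_3^d t_4^{i_4}t_5^{i_5}t_6^{i_6}$, forcing the relevant coefficients to be $0$; for $d = -n<0$ it reads $\sum a_{(n,i_4,i_5,i_6)}\,t_3^{-n}u^{n}t_4^{i_4}t_5^{i_5}t_6^{i_6} = 0$, and multiplying by the unit $t_3^{n}$ and cancelling the nonzero element $u^{n}$ in the domain $\mathcal{T}$ yields $\sum a_{(n,i_4,i_5,i_6)}t_4^{i_4}t_5^{i_5}t_6^{i_6} = 0$, so those coefficients vanish too. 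Hence $\mathfrak{P}_4$ is a $\mathbb{K}$-basis of $\mathcal{R}_4$.

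The \emph{crux} is precisely the linear independence of the ``$f_1$-part'': when $\alpha\beta\neq 0$ the element $f_1^{i_1}$ is a genuine sum of $i_1+1$ Laurent monomials in $t_3,\ldots,t_6$, so a naive leading-term induction on $(i_4,i_5,i_6)$ fails because distinct index triples produce overlapping monomials. Multiplying by $t_3^{i_1}$ and stripping off the common factor $u^{i_1}$ circumvents this, and since the device uses only $u\neq 0$ and that $\mathcal{T}$ is a domain, it covers the three cases $\alpha\beta\neq 0$, $\alpha = 0$ and $\beta = 0$ uniformly.
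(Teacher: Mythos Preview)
Your proof is correct. The spanning argument is essentially the paper's (both reduce via $t_2=\beta t_4^{-1}t_6^{-1}$ and then induct using $f_1t_3=\alpha t_5^{-1}+\tfrac{\beta}{2}t_4^{-1}t_6^{-1}$), but your linear-independence argument takes a genuinely different route.

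The paper proves independence by lifting a dependence relation \emph{upward} to the intermediate quotient $\mathcal{A}_\beta^{(4)}S_4^{-1}=\mathcal{A}^{(4)}S_4^{-1}/\langle\Omega_2-\beta\rangle$, where it becomes $(\widehat{\Omega}_1-\alpha)\nu$; expanding $\widehat{\Omega}_1$ and $\nu$ in the basis $(\widehat{f_1}^{\,k_1}\widehat{t_3}^{\,k_3}\widehat{t_4}^{\,k_4}\widehat{t_5}^{\,k_5}\widehat{t_6}^{\,k_6})$ of that algebra, a lexicographic leading-term comparison forces $\nu=0$. You instead push the relation \emph{downward} into the torus $\mathcal{R}_3$, observe that every element of $\mathfrak{P}_4$ is homogeneous for the $t_3$-grading (with the $f_1$-monomials sitting in strictly negative degree via $f_1^n=t_3^{-n}u^n$), and then in each graded piece cancel the nonzero factor $u^n$ in the domain $\mathcal{T}=\mathbb{K}[t_4^{\pm1},t_5^{\pm1},t_6^{\pm1}]$. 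Your approach is shorter, avoids having to first assert a basis for the intermediate algebra $\mathcal{A}_\beta^{(4)}S_4^{-1}$, and---as you note---handles the three parameter regimes $\alpha\beta\neq0$, $\alpha=0$, $\beta=0$ uniformly since it only uses $u\neq0$. The paper's lift-and-leading-term method, on the other hand, is the template it reuses for $\mathcal{R}_5$ and $\mathcal{A}_{\alpha,\beta}$ itself, so it fits a uniform pattern across the paper even if it is heavier here.
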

\begin{proof}
One can easily verify that
$\left(\widehat{f_{1}}^{k_1}\widehat{t_3}^{k_3}
\widehat{t_4}^{k_4}\widehat{t_5}^{k_5}\widehat{t_6}^{k_6}\right)_{(k_1,k_3,\ldots,k_6)\in \mathbb{N}^2\times \mathbb{Z}^3}$ is a basis of $\mathcal{A}_{\beta}^{(4)}S_4^{-1}$. Since $\mathcal{A}_\beta^{(4)}S_4^{-1}={\mathcal{A}^{(4)}S_4^{-1}}/{\langle \Omega_{2}-\beta \rangle},$ the family
$(f_{1}^{k_1}t_3^{k_3}
t_4^{k_4}t_5^{k_5}t_6^{k_6})_{(k_1,k_3,\ldots,k_6)\in \mathbb{N}^2\times \mathbb{Z}^3}$ spans $\mathcal{R}_4.$ We show that $\mathfrak{P}_4$ is a spanning set of $\mathcal{R}_4$ by showing that $f_{1}^{k_1}t_3^{k_3}
t_4^{k_4}t_5^{k_5}t_6^{k_6}$ can be written as a finite linear combination of the elements of $\mathfrak{P}_4$ for all $(k_1,k_3,\ldots,k_6)\in \mathbb{N}^2\times \mathbb{Z}^3.$ 
  It is sufficient to do this by an induction on  $k_1.$ The result is clear when $k_1=0.$ Assume that the statement is true for $k_1\geq 0.$ That is,
$$f_{1}^{k_1}t_3^{k_3}t_4^{k_4}t_5^{k_5}t_6^{k_6}
=\sum_{\underline{i}\in I_1}{a}_{\underline{i}}f_{1}^{i_1}t_4^{i_4}t_5^{i_5}t_6^{i_6}+\sum_{\underline{j} \in I_2}b_{\underline{j}}
t_3^{i_3}t_4^{i_4}t_5^{i_5}t_6^{i_6},$$ 
where $\underline{i}=(i_{1},i_{4},i_{5},i_{6})\in I_1\subset  \mathbb{N}\times\mathbb{Z}^3$ and  $\underline{j}=(i_{3},i_{4},i_{5},i_{6})\in I_2\subset  \mathbb{N}\times\mathbb{Z}^3.$
Note that the $a_{\underline{i}}$ and $b_{\underline{j}}$ are all scalars.
\begin{align*}
f_{1}^{k_1+1}t_3^{k_3}t_4^{k_4}t_5^{k_5}t_6^{k_6}&=
f_{1}(f_{1}^{k_1}t_3^{k_3}t_4^{k_4}t_5^{k_5}t_6^{k_6})
=\sum_{\underline{i}\in I_1}{a}_{\underline{i}}f_{1}^{i_1+1}t_4^{i_4}t_5^{i_5}t_6^{i_6}+\sum_{\underline{j}\in I_2}b_{\underline{j}}
f_{1}t_3^{i_3}t_4^{i_4}t_5^{i_5}t_6^{i_6}.
\end{align*}
Clearly, the monomial $f_{1}^{i_1+1}t_4^{i_4}t_5^{i_5}t_6^{i_6}\in$ Span($\mathfrak{P}_4$). We have to also show 
that $f_{1}t_3^{i_3}t_4^{i_4}t_5^{i_5}t_6^{i_6}\in$ Span($\mathfrak{P}_4$) for all $i_3\in \mathbb{N}$ and 
$i_4,i_5,i_6\in\mathbb{Z}.$ This can easily be achieved by induction on $i_3,$ and the  use of the relation $f_1t_3=\alpha t_5^{-1}+\frac{1}{2}\beta t_4^{-1}t_6^{-1}.$ 
Therefore, by the principle of mathematical induction, $\mathfrak{P}_4$ is a spanning set of $\mathcal{R}_4$ over $\mathbb{K}.$

We prove that $\mathfrak{P}_4$ is a linearly independent set. Suppose that
$$\sum_{\underline{i}\in I_1}a_{\underline{i}}f_{1}^{i_1}t_4^{i_4}t_5^{i_5}t_6^{i_6}
+\sum_{\underline{j}\in I_2}b_{\underline{j}}t_3^{i_3}t_4^{i_4}t_5^{i_5}t_6^{i_6}=0.$$ 
 It follows that there exists $\nu\in \mathcal{A}_\beta^{(4)}S_4^{-1}$ such that
$$\displaystyle\sum_{\underline{i}\in I_1}a_{\underline{i}}\widehat{f_{1}}^{i_1}\widehat{t_4}^{i_4}
\widehat{t_5}^{i_5}\widehat{t_6}^{i_6}
+\displaystyle\sum_{\underline{j}\in I_2}b_{\underline{j}}\widehat{t_3}^{i_3}\widehat{t_4}^{i_4}
\widehat{t_5}^{i_5}\widehat{t_6}^{i_6}
=\left(\widehat{\Omega}_{1}-\alpha\right)\nu.$$
Write $\nu=\displaystyle\sum_{\underline{l}\in J}c_{\underline{l}}\widehat{f_{1}}^{i_1}\widehat{t_3}^{i_3}\widehat{t_4}^{i_4}
\widehat{t_5}^{i_5}\widehat{t_6}^{i_6},$
with $\underline{l}={(i_1,i_3,i_4,i_5,i_6)}\in J\subset\mathbb{N}^2\times\mathbb{Z}^3$ and $c_{\underline{l}}\in \mathbb{K}.$ One can easily deduce  that $\widehat{\Omega}_1=
 \widehat{f_{1}}\widehat{t_3}\widehat{t_5}-\frac{1}{2}\widehat{t_2}\widehat{t_5}=\widehat{f_{1}}\widehat{t_3}\widehat{t_5}-\frac{1}{2}\beta\widehat{t_6}^{-1}\widehat{t_4}^{-1}\widehat{t_5}$ (note that
 $\widehat{t_2}=\beta\widehat{t_6}^{-1}\widehat{t_4}^{-1}$).  It follows that
 \begin{align*}
\displaystyle\sum_{\underline{i}\in I_1}a_{\underline{i}}\widehat{f_{1}}^{i_1}\widehat{t_4}^{i_4}
\widehat{t_5}^{i_5}\widehat{t_6}^{i_6}
+\displaystyle\sum_{\underline{j}\in I_2}b_{\underline{j}}\widehat{t_3}^{i_3}\widehat{t_4}^{i_4}
\widehat{t_5}^{i_5}\widehat{t_6}^{i_6}=&\displaystyle\sum_{\underline{l}\in J}c_{\underline{l}}
\widehat{f_{1}}^{i_1+1}\widehat{t_3}^{i_3+1}
\widehat{t_4}^{i_4}
\widehat{t_5}^{i_5+1}\widehat{t_6}^{i_6}\\
&-\sum_{\underline{l}\in J}\frac{1}{2}\beta c_{\underline{l}}\widehat{f_{1}}^{i_1}
\widehat{t_3}^{i_3}\widehat{t_4}^{i_4-1}
\widehat{t_5}^{i_5+1}\widehat{t_6}^{i_6-1}\\&
-\displaystyle\sum_{\underline{l}\in J}\alpha c_{\underline{l}}\widehat{f_{1}}^{i_1}\widehat{t_3}^{i_3}
\widehat{t_4}^{i_4}
\widehat{t_5}^{i_5}\widehat{t_6}^{i_6}.
\end{align*}
 Suppose that there exists $(i_1,i_3,i_4,i_5,i_6)\in J$ such that $c_{(i_1,i_3,i_4,i_5,i_6)}\neq 0.$
  Let $(w_1,w_3,w_4,w_5,w_6)\in J$ be the greatest element (in the lexicographic order on 
$\mathbb{N}^2\times \mathbb{Z}^3$) of  $J$ such that $c_{(w_1,w_3,w_4,w_5,w_6)}\neq 0.$ Since
   $\left( \widehat{f_{1}}^{k_1}\widehat{t_3}^{k_3}
\widehat{t_4}^{k_4}\widehat{t_5}^{k_5}\widehat{t_6}^{k_6}\right) _{(k_1,k_3,\ldots,k_6)\in \mathbb{N}^2\times \mathbb{Z}^3}$ is a basis of  $\mathcal{A}^{(4)}S_4^{-1},$ this implies that the coefficients of  $\widehat{f_{1}}^{w_1+1}\widehat{t_3}^{w_3+1}\widehat{t_4}^{w_4}\widehat{t_5}^{w_5+1}
  \widehat{t_6}^{w_6}$ in the above equality can be identified as: $c_{(w_1,w_3,w_4,w_5,w_6)}=0,$ a contradiction! Therefore, $c_{(i_1,i_3,i_4,i_5,i_6)}= 0$ for all $(i_1,i_3,i_4,i_5,i_6)\in J.$  This further implies that
  $$\displaystyle\sum_{\underline{i}\in I_1}a_{ \underline{i}}\widehat{f_{1}}^{i_1}\widehat{t_4}^{i_4}
\widehat{t_5}^{i_5}\widehat{t_6}^{i_6}
+\displaystyle\sum_{\underline{j}\in I_2}b_{\underline{j}}\widehat{t_3}^{i_3}\widehat{t_4}^{i_4}
\widehat{t_5}^{i_5}\widehat{t_6}^{i_6}
=0.$$
Consequently,  $a_{\underline{i}}$ and $ b_{\underline{j}}$ are all zero.   In conclusion, $\mathfrak{P}_4$ is a linearly independent set.
\end{proof}

\subsubsection{Basis for $\mathcal{R}_5.$}
We will identify $\mathcal{R}_5$ with ${\mathcal{A}_{\alpha}^{(5)}S_5^{-1}}/{\langle \widehat{\Omega}_{2}-\beta\rangle},$ where $\mathcal{A}_\alpha^{(5)}S_5^{-1}=\mathcal{A}^{(5)}S_5^{-1}/\langle \Omega_{1}-\alpha \rangle.$ Note that the canonical images of $X_{i,5}$ (resp. $T_i$) in $\mathcal{A}_{\alpha}^{(5)}$ will be denoted by $\widehat{x_{i,5}}$ (resp. $\widehat{t_i}$) for all $1\leq i\leq 6$.
 We now find a basis for $\mathcal{A}_\alpha^{(5)}S_5^{-1}.$ Recall from Subsection \ref{pdda} that  $\Omega_{1}=\zbar_1T_3T_5-\frac{1}{2}\zbar_2T_5$ and $\Omega_2=\zbar_2T_4T_6-\frac{2}{3} T_3^3T_6$ in $\mathcal{A}^{(5)}$ (remember that $\zbar_1:=X_{1,5}$ and $\zbar_2:=X_{2,5}$ ). Since $z_2t_4t_6-\frac{2}{3} t_3^3t_6=\beta$  and $\widehat{z_1}\widehat{t_3}\widehat{t_5}-\frac{1}{2} \widehat{z_2}\widehat{t_5}=\alpha$ in $\mathcal{R}_5$ and  $\mathcal{A}_\alpha^{(5)}S_5^{-1}$ respectively, we have the relation $\widehat{z_{2}}=
 2\widehat{z_{1}}\widehat{t_3}-2\alpha\widehat{t_5}^{-1}$ in $A_\alpha^{(5)}S_5^{-1}$ and, in $R_5,$ we have the following two relations:     
 \begin{align}
 z_{2}&=2(z_{1}t_3-\alpha t_5^{-1}).\label{pde2}
 \\
 t_3^3&=\dfrac{3}{2}(z_{2}t_4-\beta t_6^{-1})=3z_{1}t_3t_4-\dfrac{3}{2}\beta t_6^{-1}-3\alpha t_4t_5^{-1}.\label{pde3}
 \end{align}
 \begin{pro} 
 \label{pc4p20}
The set $\mathfrak{P}_5=\left\lbrace z_{1}^{i_1}t_3^{\xi}t_4^{i_4}t_5^{i_5}t_6^{i_6}\mid (\xi, i_1,i_4,i_5,i_6)\in \{0,1,2\}\times \mathbb{N}^2\times\mathbb{Z}^2\right\rbrace $ is a $\mathbb{K}$-basis of $\mathcal{R}_5$.
\end{pro}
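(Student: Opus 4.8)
Looking at Proposition \ref{pc4p20}, we need to show that $\mathfrak{P}_5 = \{z_1^{i_1} t_3^\xi t_4^{i_4} t_5^{i_5} t_6^{i_6} \mid (\xi, i_1, i_4, i_5, i_6) \in \{0,1,2\} \times \mathbb{N}^2 \times \mathbb{Z}^2\}$ is a $\mathbb{K}$-basis of $\mathcal{R}_5$.

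The overall structure should closely mirror the proof of Proposition \ref{ev28}. First I would establish spanning. We identify $\mathcal{R}_5$ with $\mathcal{A}_\alpha^{(5)} S_5^{-1}/\langle \widehat{\Omega}_2 - \beta\rangle$, and a PBW-type basis of $\mathcal{A}_\alpha^{(5)} S_5^{-1}$ is $(\widehat{z_1}^{k_1} \widehat{t_3}^{k_3} \widehat{t_4}^{k_4} \widehat{t_5}^{k_5} \widehat{t_6}^{k_6})$ with $k_1, k_3 \in \mathbb{N}$ and $k_4, k_5, k_6 \in \mathbb{Z}$ (using the relation $\widehat{z_2} = 2\widehat{z_1}\widehat{t_3} - 2\alpha\widehat{t_5}^{-1}$ to eliminate $\widehat{z_2}$). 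This gives a spanning set of $\mathcal{R}_5$ of the form $z_1^{k_1} t_3^{k_3} t_4^{k_4} t_5^{k_5} t_6^{k_6}$. To cut the exponent of $t_3$ down to $\{0,1,2\}$, I would perform an induction on $k_3$: the base cases $k_3 = 0, 1, 2$ are immediate, and for the inductive step I use relation \eqref{pde3}, namely $t_3^3 = 3z_1 t_3 t_4 - \frac{3}{2}\beta t_6^{-1} - 3\alpha t_4 t_5^{-1}$, to rewrite $z_1^{k_1} t_3^{k_3+1} t_4^{k_4}t_5^{k_5}t_6^{k_6}$ (when the $t_3$-power is $\geq 3$) in terms of monomials with strictly smaller $t_3$-exponent (noting that multiplying by $z_1 t_3$ raises the $z_1$-power but not the $t_3$-power beyond what is allowed after one more reduction — a short sub-induction, as in Proposition \ref{ev28}, handles the bookkeeping). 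Hence $\mathfrak{P}_5$ spans $\mathcal{R}_5$.

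For linear independence, suppose $\sum_{(\xi, \underline{i}) \in I} a_{(\xi,\underline{i})} z_1^{i_1} t_3^\xi t_4^{i_4} t_5^{i_5} t_6^{i_6} = 0$ in $\mathcal{R}_5$. Lifting to $\mathcal{A}_\alpha^{(5)} S_5^{-1}$, this means $\sum a_{(\xi,\underline{i})} \widehat{z_1}^{i_1} \widehat{t_3}^\xi \widehat{t_4}^{i_4}\widehat{t_5}^{i_5}\widehat{t_6}^{i_6} = (\widehat{\Omega}_2 - \beta)\nu$ for some $\nu \in \mathcal{A}_\alpha^{(5)} S_5^{-1}$, where $\widehat{\Omega}_2 = \widehat{z_2}\widehat{t_4}\widehat{t_6} - \frac{2}{3}\widehat{t_3}^3\widehat{t_6} = 2\widehat{z_1}\widehat{t_3}\widehat{t_4}\widehat{t_6} - 2\alpha \widehat{t_4}\widehat{t_5}^{-1}\widehat{t_6} - \frac{2}{3}\widehat{t_3}^3\widehat{t_6}$ after substituting for $\widehat{z_2}$. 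Writing $\nu = \sum_{\underline{l} \in J} c_{\underline{l}} \widehat{z_1}^{i_1}\widehat{t_3}^{i_3}\widehat{t_4}^{i_4}\widehat{t_5}^{i_5}\widehat{t_6}^{i_6}$ in the PBW basis and expanding $(\widehat{\Omega}_2 - \beta)\nu$, the term $2\widehat{z_1}\widehat{t_3}\widehat{t_4}\widehat{t_6}$ contributes a ``leading'' term $2c_{\underline{l}} \widehat{z_1}^{i_1+1}\widehat{t_3}^{i_3+1}\widehat{t_4}^{i_4+1}\widehat{t_5}^{i_5}\widehat{t_6}^{i_6+1}$ and the $-\frac{2}{3}\widehat{t_3}^3\widehat{t_6}$ term contributes $-\frac{2}{3}c_{\underline{l}}\widehat{z_1}^{i_1}\widehat{t_3}^{i_3+3}\widehat{t_4}^{i_4}\widehat{t_5}^{i_5}\widehat{t_6}^{i_6+1}$; picking a suitable monomial order on $\mathbb{N}^2 \times \mathbb{Z}^3$ (prioritizing, say, $i_3$ then $i_1$ then the rest, analogous to the orderings $<_4$ and $<_3$ elsewhere) and taking the $<$-greatest $\underline{l} \in J$ with $c_{\underline{l}} \neq 0$, one identifies the coefficient of the corresponding top monomial and forces $c_{\underline{l}} = 0$, a contradiction. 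Hence $\nu = 0$ (i.e.\ all $c_{\underline{l}} = 0$), so $\sum a_{(\xi,\underline{i})} \widehat{z_1}^{i_1}\widehat{t_3}^\xi \widehat{t_4}^{i_4}\widehat{t_5}^{i_5}\widehat{t_6}^{i_6} = 0$ in $\mathcal{A}_\alpha^{(5)} S_5^{-1}$; since the monomials $\widehat{z_1}^{i_1}\widehat{t_3}^\xi \cdots$ with $\xi \in \{0,1,2\}$ are part of the PBW basis, all $a_{(\xi,\underline{i})} = 0$.

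The main obstacle I anticipate is the bookkeeping in the linear-independence step: since $\widehat{\Omega}_2$ has \emph{three} monomial summands (after eliminating $\widehat{z_2}$), multiplying by $\nu$ produces several families of terms, and one must choose the monomial order carefully so that exactly one monomial type carries the unique $<$-maximal term with a nonzero coefficient — in particular verifying that the $\widehat{t_3}^3$-term and the $\widehat{z_1}\widehat{t_3}\widehat{t_4}$-term don't interfere with each other's leading contributions. This is precisely the kind of ``case split on which monomial dominates'' argument that appears in the proof of Proposition \ref{pc3p5} with the ordering $<_3$, and the resolution should follow the same template: compare the relevant exponent differences and handle the two cases separately. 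The spanning step, by contrast, should be routine given relation \eqref{pde3}.
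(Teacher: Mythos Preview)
Your approach is essentially the paper's own: spanning by induction on the $t_3$-exponent via \eqref{pde3}, and linear independence by lifting to $\mathcal{A}_\alpha^{(5)}S_5^{-1}$, expanding $(\widehat{\Omega}_2-\beta)\nu$, and killing the top coefficient of $\nu$. Two small remarks. First, $S_5$ inverts only $t_5,t_6$, so in the PBW basis of $\mathcal{A}_\alpha^{(5)}S_5^{-1}$ the exponent $k_4$ lies in $\mathbb{N}$, not $\mathbb{Z}$ (the index set is $\mathbb{N}^3\times\mathbb{Z}^2$). Second, the obstacle you anticipate---possible interference between the $\widehat{z_1}\widehat{t_3}\widehat{t_4}\widehat{t_6}$ and $\widehat{t_3}^3\widehat{t_6}$ contributions---disappears if you order lexicographically with the $z_1$-exponent \emph{first}: only the term $2\widehat{z_1}\widehat{t_3}\widehat{t_4}\widehat{t_6}$ raises the $z_1$-degree, so for the $<$-greatest $(w_1,w_3,w_4,w_5,w_6)\in J$ the monomial $\widehat{z_1}^{w_1+1}\widehat{t_3}^{w_3+1}\widehat{t_4}^{w_4+1}\widehat{t_5}^{w_5}\widehat{t_6}^{w_6+1}$ is unmatched and forces $c_{(w_1,\ldots,w_6)}=0$ directly, with no case split required. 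This is exactly what the paper does.
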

\begin{proof}
One can easily show that the family $\left( \widehat{z_1}^{k_1}\widehat{t_3}^{k_3}\widehat{t_4}^{k_4}\widehat{t_5}^{k_5}\widehat{t_6}^{k_6}\right)_{(k_1,k_3,k_4,k_5,k_6)\in\mathbb{N}^2\times\mathbb{Z}^2}$ is a basis of $\mathcal{A}_{\alpha}^{(5)}S_5^{-1}/\langle \widehat{\Omega}_{2}-\beta\rangle$.
Since $\mathcal{R}_5$ is identified with $\mathcal{A}_{\alpha}^{(5)}S_5^{-1}/{\langle \widehat{\Omega}_{2}-\beta\rangle},$ we show that 
$z_1^{k_1}t_3^{k_3}t_4^{k_4}t_5^{k_5}t_6^{k_6}$ can be written as a finite linear combination of the elements of $\mathfrak{P}_5$ for all $(k_1,k_3,k_4,k_5,k_6)\in \mathbb{N}^3\times \mathbb{Z}^2.$ It is sufficient to do this by an induction on $k_3.$ The result is obvious when $k_3=0, 1, 2.$ For $k_3\geq 2,$ suppose that 
$$z_1^{k_1}t_3^{k_3}t_4^{k_4}t_5^{k_5}t_6^{k_6}=\sum_{(\xi,\underline{i})\in I}a_{(\xi,\underline{i})} z_{1}^{i_1}t_3^\xi t_4^{i_4}t_5^{i_5}t_6^{i_6},$$
where $I$ is a finite subset of $\{0,1,2\}\times \mathbb{N}^2\times \mathbb{Z}^2,$ and the $a_{(\xi,\underline{i})}$ are all scalars.  It follows that
$$z_1^{k_1}t_3^{k_3+1}t_4^{k_4}t_5^{k_5}t_6^{k_6}= \left( z_1^{k_1}t_3^{k_3}t_4^{k_4}t_5^{k_5}t_6^{k_6}\right) t_3=\sum_{(\xi,\underline{i})\in I} a_{(\xi,\underline{i})} z_{1}^{i_1}t_3^{\xi+1} t_4^{i_4}t_5^{i_5}t_6^{i_6}.$$ 
Now, $z_{1}^{i_1}t_3^{\xi+1} t_4^{i_4}t_5^{i_5}t_6^{i_6}\in$ Span($\mathfrak{P}_5$) when $\xi=0,1.$ For
 $\xi=2,$ 
one can easily verify that $z_{1}^{i_1}t_3^{3} t_4^{i_4}t_5^{i_5}t_6^{i_6}\in $ Span($\mathfrak{P}_5$) by using the relation in \eqref{pde3}. Therefore, by the principle of mathematical induction, $\mathfrak{P}_5$ spans $\mathcal{R}_5.$

We now prove that $\mathfrak{P}_5$ is a linearly independent set. Suppose that
$$\sum_{(\xi,\underline{i})\in I}a_{(\xi,\underline{i})} z_{1}^{i_1}t_3^\xi t_4^{i_4}t_5^{i_5}t_6^{i_6}=0.$$
Since $\mathcal{R}_5$ is identified with ${\mathcal{A}_{\alpha}^{(5)}S_5^{-1}}/{\langle \widehat{\Omega}_{2}-\beta\rangle},$ we have that
$$\sum_{(\xi,\underline{i})\in I}a_{(\xi.\underline{i})}\widehat{z_1}^{i_1}\widehat{t_3}^\xi \widehat{t_4}^{i_4}\widehat{t_5}^{i_5}\widehat{t_6}^{i_6}=\langle \widehat{\Omega}_{2}-\beta\rangle\nu,$$ where 
$\nu\in \mathcal{A}_{\alpha}^{(5)}S_5^{-1}.$ Write $\nu=\sum_{\underline{j}\in J}b_{\underline{j}}\widehat{z_1}^{i_1}\widehat{t_3}^{i_3}\widehat{t_4}^{i_4}\widehat{t_5}^{i_5}
\widehat{t_6}^{i_6},$ with  $\underline{j}=(i_1,i_3,i_4,i_5,i_6)\in J\subset\mathbb{N}^3\times \mathbb{Z}^2$ and  $b_{\underline{j}}\in \mathbb{K}.$ Given that $\Omega_2=\zbar_2T_4T_6-\frac{2}{3}T_3^3T_6$ in $\mathcal{A}^{(5)}$  and the relation \eqref{pde2}, one can  deduce that
 $$\widehat{\Omega}_2=\widehat{z_2}\widehat{t_4}\widehat{t_6}-\frac{2}{3}\widehat{t_3}^3\widehat{t_6}=2 \widehat{z_1}\widehat{t_3}
\widehat{t_4}\widehat{t_6}-2\alpha \widehat{t_4}\widehat{t_5}^{-1}\widehat{t_6}-\frac{2}{3}\widehat{t_3}^3\widehat{t_6}.$$  Therefore,
\begin{align*}
\sum_{(\xi,\underline{i})\in I}a_{(\xi.\underline{i})}\widehat{z_1}^{i_1}\widehat{t_3}^\xi \widehat{t_4}^{i_4}\widehat{t_5}^{i_5}\widehat{t_6}^{i_6}=&\sum_{\underline{j}\in J}2b_{\underline{j}}\widehat{z_1}^{i_1+1}\widehat{t_3}^{i_3+1}\widehat{t_4}^{i_4+1}
\widehat{t_5}^{i_5}
\widehat{t_6}^{i_6+1}\\
&-\sum_{\underline{j}\in J}\frac{2}{3}b_{\underline{j}}\widehat{z_1}^{i_1}\widehat{t_3}^{i_3+3}\widehat{t_4}^{i_4}\widehat{t_5}^{i_5}
\widehat{t_6}^{i_6+1}
\\
&- \sum_{\underline{j}\in J} 2\alpha b_{\underline{j}} \widehat{z_1}^{i_1}\widehat{t_3}^{i_3}\widehat{t_4}^{i_4+1}\widehat{t_5}^{i_5-1}
\widehat{t_6}^{i_6+1}\\&
-\sum_{\underline{j}\in J}\beta b_{\underline{j}}\widehat{z_1}^{i_1}\widehat{t_3}^{i_3}\widehat{t_4}^{i_4}\widehat{t_5}^{i_5}
\widehat{t_6}^{i_6}.
\end{align*}
Suppose that there exists $(i_1,i_3,i_4,i_5,i_6)\in J$ such that $b_{(i_1,i_3,i_4,i_5,i_6)}\neq 0.$
 Let $(w_1,w_3,w_4,w_5,w_6)\in J$ be the greatest element (in the lexicographic order on $\mathbb{N}^3\times \mathbb{Z}^2$) of $J$ such that $b_{(w_1,w_3,w_4,w_5,w_6)}\neq 0.$ Given that $\left( \widehat{z_{1}}^{k_1}\widehat{t_3}^{k_3}
\widehat{t_4}^{k_4}\widehat{t_5}^{k_5}\widehat{t_6}^{k_6}\right)_{(k_1,k_3,\ldots,k_6)\in \mathbb{N}^3\times \mathbb{Z}^2}$ is a basis of 
$\mathcal{A}_\alpha^{(5)}S_5^{-1},$
 one can identify the coefficients of  $\widehat{z_{1}}^{w_1+1}\widehat{t_3}^{w_3+1}\widehat{t_4}^{w_4+1}\widehat{t_5}^{w_5}
  \widehat{t_6}^{w_6+1}$ in the above equality as: $2 b_{(w_1,w_3,w_4,w_5,w_6)}=0.$ Hence, $b_{(w_1,w_3,w_4,w_5,w_6)}=0,$ a contradiction! Therefore, $b_{(i_1,i_3,i_4,i_5,i_6)}= 0$ for all $(i_1,i_3,i_4,i_5,i_6)\in J.$  Consequently,
  $$\sum_{(\xi,\underline{i})\in I}a_{(\xi.\underline{i})}\widehat{z_1}^{i_1}\widehat{t_3}^\xi \widehat{t_4}^{i_4}\widehat{t_5}^{i_5}\widehat{t_6}^{i_6}=0.$$
It follows that $a_{(\xi,\underline{i})}=0$ for all $(\xi,\underline{i})\in I.$  As a result, $\mathfrak{P}_5$ is a linearly independent set. 
\end{proof}
\begin{cor}
\label{pc4c1}
Let $I$ be a finite subset of $\{0,1,2\}\times \mathbb{N}\times\mathbb{Z}^3$ and $(a_{(\xi,\underline{i})})_{\underline{i}\in I}$ be a family of scalars.  If $$\sum_{(\xi,\underline{i})\in I}a_{(\xi.\underline{i})} z_{1}^{i_1}t_3^\xi t_4^{i_4}
t_5^{i_5}t_6^{i_6}=0, 
$$ then $a_{(\xi,\underline{i})}=0$ for all $(\xi, \underline{i})\in I.$
\end{cor}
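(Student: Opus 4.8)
The plan is to reduce the statement to Proposition \ref{pc4p20} by clearing the negative powers of $t_4$, in exactly the spirit of the proof of Corollary \ref{pc4c2}.

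First I would note that, although $t_4$ is not invertible in $\mathcal{R}_5$, it becomes invertible in $\mathcal{R}_4=\mathcal{R}_5\Sigma_4^{-1}$, and $\mathcal{R}_5\subset\mathcal{R}_4$ by \eqref{pe}; hence every monomial $z_1^{i_1}t_3^{\xi}t_4^{i_4}t_5^{i_5}t_6^{i_6}$ occurring in the given relation is a well-defined element of the commutative algebra $\mathcal{R}_4$, and the relation $\sum_{(\xi,\underline{i})\in I}a_{(\xi,\underline{i})}z_1^{i_1}t_3^{\xi}t_4^{i_4}t_5^{i_5}t_6^{i_6}=0$ holds there. Since $I$ is finite, I can choose $N\in\mathbb{N}$ with $i_4+N\geq 0$ for every $(\xi,\underline{i})\in I$.

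Next, I would multiply the relation through by $t_4^{N}$. Because $\mathcal{R}_4$ is commutative and $t_4$ is a unit in $\mathcal{R}_4$, this produces the identity
\[
\sum_{(\xi,\underline{i})\in I} a_{(\xi,\underline{i})}\, z_1^{i_1}t_3^{\xi}t_4^{\,i_4+N}t_5^{i_5}t_6^{i_6}=0 ,
\]
in which every exponent of $t_4$ now lies in $\mathbb{N}$. Thus each monomial on the left-hand side belongs to the basis $\mathfrak{P}_5$ of $\mathcal{R}_5$ (the remaining index constraints $\xi\in\{0,1,2\}$, $i_1\in\mathbb{N}$, $i_5,i_6\in\mathbb{Z}$ are unchanged), and the reindexing $(\xi,i_1,i_4,i_5,i_6)\mapsto(\xi,i_1,i_4+N,i_5,i_6)$ is injective, so the left-hand side is a $\mathbb{K}$-linear combination of pairwise distinct elements of $\mathfrak{P}_5$. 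Applying Proposition \ref{pc4p20}, every coefficient vanishes, i.e. $a_{(\xi,\underline{i})}=0$ for all $(\xi,\underline{i})\in I$.

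There is no genuine obstacle here: the only points requiring a moment's care are the bookkeeping ones --- that the shifted index set still sits inside $\{0,1,2\}\times\mathbb{N}^2\times\mathbb{Z}^2$ and that multiplication by $t_4^{N}$ sends distinct monomials to distinct monomials --- and both are immediate. The corollary is merely a localization-type strengthening of Proposition \ref{pc4p20}, proved by the same device already used for Corollary \ref{pc4c2}.
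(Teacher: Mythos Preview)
Your proof is correct and follows essentially the same approach as the paper: multiply through by a sufficiently high power of $t_4$ to clear negative exponents, then apply Proposition \ref{pc4p20}. Your version is simply a slightly more detailed write-up of the same idea.
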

\begin{proof}
When $i_4\geq 0,$ then the result is obvious as a  result of Proposition \ref{pc4p20}.
For $i_4<0,$ multiply both sides of the equality enough times by $t_4$ to kill all the negative powers of $t_4,$ and then apply Proposition \ref{pc4p20} to complete the proof.
\end{proof}
\begin{rem}
We were not successful in finding a basis for $\mathcal{R}_6.$ However, this has no effect on our main results in this section. Since $\mathcal{R}_7=\mathcal{A}_{\alpha,\beta},$ we already have a basis for $\mathcal{R}_7$ (Prop. \ref{pc3p5}).
\end{rem}
\begin{rem}
\label{prrr}
Recall the notations:
\begin{align*}
f_1:&=x_{1,4} & F_1:&=X_{1,4}\\
z_1:&=x_{1,5} & \zbar_1:&=X_{1,5} \\
z_2:&=x_{2,5} & \zbar_2:&=X_{2,5}.
\end{align*}
Then,
\begin{align*}
f_1&=t_1+\frac{1}{2} t_2t_3^{-1} & x_{3,6}&=t_3+\frac{3}{2} t_4t_5^{-1}\\
z_1&=f_1+\frac{1}{3} t_3^2t_4^{-1} &x_1&=x_{1,6}+\frac{1}{2} t_5t_6^{-1}\\
z_2&=t_2+\frac{2}{3} t_3^3t_4^{-1}& x_3&=x_{3,6}+t_5^2t_6^{-1}\\
x_{1,6}&=z_1+x_{3,6}t_5^{-1}-\frac{3}{4} t_4t_5^{-2}& x_4&=t_4+\frac{2}{3} t_5^3t_6^{-1}.
\end{align*}
These selected data of the PDDA of $\mathcal{A}_{\alpha,\beta}$   will be useful in Subsections \ref{pev1}.
\end{rem}
\subsection{Poisson derivations of $\mathcal{A}_{\alpha,\beta}$}
We are now ready to study the Poisson derivations of $\mathcal{A}_{\alpha,\beta}.$ We will begin with the case where both $\alpha$ and $\beta$ are non-zero, and subsequently look at the case where either $\alpha$ or $\beta$ is zero. 
\subsubsection{Poisson derivations of $\mathcal{A}_{\alpha,\beta} \ (\alpha, \beta \neq 0)$}
\label{pev1}
Throughout this subsection, we assume that $\alpha$ and 
$\beta$ are non-zero. Let Der$_P(\mathcal{A})$ be the collection of all the Poisson $\mathbb{K}$-derivations of $\mathcal{A}_{\alpha,\beta}$, and  $\mathcal{D}\in$ Der$_P(\mathcal{A}).$ Now, $\mathcal{D}$ extends uniquely to a Poisson derivation of each of the algebras in \eqref{pe} via localization. Hence, $\mathcal{D}$ is a Poisson derivation of the Poisson torus $\mathcal{R}_3=\mathbb{K}[t_3^{\pm 1}, t_4^{\pm 1}, t_5^{\pm 1}, t_6^{\pm 1}].$ It follows from  Corollary \ref{pic2} that $\mathcal{D}$ can be written as
$$\mathcal{D}=\text{ham}_{x}+\rho,$$ where  $\rho$ is a scalar Poisson derivation of $\mathcal{R}_3$ defined as $\rho(t_i)=\lambda_it_i, \ i=3,4,5,6;$ with $\lambda_i\in Z_P(\mathcal{R}_3)=\mathbb{K}$, and ham$_{x}=\{x,-\}:\mathcal{R}_3\rightarrow \mathcal{R}_3$ with $ x\in\mathcal{R}_3$ (see Corollary \ref{pic2}).   

We aim to describe $\mathcal{D}$ as a Poisson derivation of $\mathcal{A}_{\alpha,\beta}.$ We do this in several steps. We first describe $\mathcal{D}$ as a Poisson derivation of $\mathcal{R}_4.$ 
\begin{lem}
\begin{itemize}\label{ppl3}
\item[1.] $x\in \mathcal{R}_4.$
\item[2.]  $\lambda_5=\lambda_4+\lambda_6,$ $\rho(f_{1})=-(\lambda_3+\lambda_5)f_1$  
and $\rho(t_2)=-\lambda_5t_2.$ 
\item[3.] Set $\lambda_1:=-(\lambda_3+\lambda_5)$ and $\lambda_2:=-\lambda_5.$ Then, $\mathcal{D}(x_{\kappa,4})=\mathrm{ham}_{x}(x_{\kappa,4})+\lambda_\kappa x_{\kappa,4}$ for  all $\kappa\in \{1,\ldots,6\}.$ 
\end{itemize}
\end{lem}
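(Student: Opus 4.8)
The plan is to exploit the fact that $\mathcal{D} = \mathrm{ham}_x + \rho$ is already known on the Poisson torus $\mathcal{R}_3$, and to pull this description back one step to $\mathcal{R}_4$ using the linear basis $\mathfrak{P}_4$ from Proposition \ref{ev28}. The key structural input is the relation $f_1 t_3 = \alpha t_5^{-1} + \tfrac12 \beta t_4^{-1} t_6^{-1}$ (equivalently the defining relation $\Omega_1 - \alpha$ rewritten in $\mathcal{R}_4$), together with the embeddings $\mathcal{R}_4 \subset \mathcal{R}_3$ and the uniqueness of the extension of a Poisson derivation under localization.

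\emph{Step 1 ($x \in \mathcal{R}_4$).} Since $\mathcal{D}(\mathcal{R}_4) \subseteq \mathcal{R}_4$ (the derivation of $\mathcal{A}_{\alpha,\beta}$ restricts/extends compatibly through each $\mathcal{R}_i$), I would compute $\mathcal{D}(t_i) = \{x, t_i\} + \lambda_i t_i$ for $i = 3,4,5,6$ and observe these lie in $\mathcal{R}_4$; more usefully, I would apply $\mathcal{D}$ to $f_1 \in \mathcal{R}_4$ and to the generators $x_{\kappa,4}$, using the relation above to control $\{x, -\}$. Writing $x = \sum c_{\underline{m}}\, t_3^{m_3} t_4^{m_4} t_5^{m_5} t_6^{m_6} \in \mathcal{R}_3$ in the torus basis, the condition that $\mathrm{ham}_x(f_1) = \{x, f_1\}$ lies in $\mathcal{R}_4$ — combined with $f_1 t_3 = \alpha t_5^{-1} + \tfrac12\beta t_4^{-1}t_6^{-1}$, so that $\{x,f_1\}t_3 + f_1\{x,t_3\} = \{x, f_1 t_3\}$ is explicitly computable — forces the support of $x$ to satisfy inequalities that confine $x$ to $\mathcal{R}_4$ (i.e.\ nonnegative exponents in the appropriate direction after accounting for $f_1$). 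This is the step I expect to be the main obstacle: it is essentially a support/Newton-polytope argument in the torus, and one must be careful that the only monomials surviving are those expressible in $\mathfrak{P}_4$, ruling out genuinely Laurent terms in the "$t_3$-direction."

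\emph{Step 2 (constraints on the $\lambda_i$ and values of $\rho$ on $f_1$, $t_2$).} Once $x \in \mathcal{R}_4$, I would apply $\mathcal{D}$ to the defining relation $f_1 t_3 = \alpha t_5^{-1} + \tfrac12 \beta t_4^{-1} t_6^{-1}$, which holds in $\mathcal{R}_4$. Since $\alpha, \beta$ are scalars and $\mathcal{D}$ is a derivation, the Hamiltonian part contributes $\{x, f_1 t_3 - \alpha t_5^{-1} - \tfrac12\beta t_4^{-1}t_6^{-1}\} = 0$, so the scalar part must annihilate this relation: $\rho(f_1) t_3 + f_1 \rho(t_3) = \alpha\rho(t_5^{-1}) + \tfrac12\beta\rho(t_4^{-1}t_6^{-1})$, i.e. $(\rho(f_1)/f_1 + \lambda_3)\, f_1 t_3 = -\lambda_5 \,\alpha t_5^{-1} + (-\lambda_4-\lambda_6)\tfrac12\beta t_4^{-1}t_6^{-1}$. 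Comparing the two monomials on the right-hand side against $f_1 t_3$ on the left (using $\mathfrak{P}_4$-linear independence, Proposition \ref{ev28}) yields simultaneously $\lambda_5 = \lambda_4 + \lambda_6$ and $\rho(f_1)/f_1 + \lambda_3 = -\lambda_5$, hence $\rho(f_1) = -(\lambda_3+\lambda_5) f_1$. For $\rho(t_2)$: use $t_2 = \beta t_4^{-1} t_6^{-1}$ (valid in $\mathcal{R}_3$, hence in the relevant localization) to get $\rho(t_2) = -(\lambda_4+\lambda_6) t_2 = -\lambda_5 t_2$; alternatively apply $\rho$ to $z_2 = 2(z_1 t_3 - \alpha t_5^{-1})$ after Step 3, but the direct computation via $t_2 = \beta t_4^{-1}t_6^{-1}$ is cleanest.

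\emph{Step 3 (formula on all $x_{\kappa,4}$).} With $\lambda_1 := -(\lambda_3 + \lambda_5)$ and $\lambda_2 := -\lambda_5$, the cases $\kappa = 3,4,5,6$ are immediate since $\mathcal{D}(t_i) = \mathrm{ham}_x(t_i) + \lambda_i t_i$ by definition of $\rho$ on $\mathcal{R}_3$; the case $\kappa = 1$ is exactly $\rho(f_1) = \lambda_1 f_1$ from Step 2 (recall $x_{1,4} = f_1$); and the case $\kappa = 2$ is $x_{2,4} = t_2$ with $\rho(t_2) = \lambda_2 t_2$ from Step 2. In each case $\mathcal{D}(x_{\kappa,4}) = \mathrm{ham}_x(x_{\kappa,4}) + \lambda_\kappa x_{\kappa,4}$ as claimed. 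The only real work is Step 1 together with the coefficient comparison in Step 2, and the $\mathfrak{P}_4$-basis is precisely what makes that comparison rigorous.
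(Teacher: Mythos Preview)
Your overall plan for Steps 2 and 3 is sound and matches the paper, but Step 1 has a genuine gap. You correctly flag it as ``the main obstacle,'' yet your proposed ``support/Newton-polytope argument'' is not an argument: knowing that $\mathrm{ham}_x(t_i)\in\mathcal{R}_4$ for $i=3,4,5,6$ and that $\{x,f_1\}t_3+f_1\{x,t_3\}=\{x,f_1t_3\}$ is computable does not by itself force the negative $t_3$-part of $x$ to vanish, because $\mathcal{R}_4$ genuinely contains elements with negative $t_3$-degree (namely $\mathcal{Q}$-multiples of $f_1^r=c_r t_3^{-r}$). What must be shown is that each graded piece $b_j t_3^j$ of $x_-$ lies in $\mathcal{Q}f_1^{-j}$ individually, and nothing in your sketch isolates these pieces.

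The paper's device for this is specific and worth knowing: take $z:=t_4t_5^{-1}t_6$, which is Poisson-central in $\mathcal{Q}=\mathbb{K}[t_4^{\pm1},t_5^{\pm1},t_6^{\pm1}]$ and satisfies $\{t_3,z\}=2zt_3$. Then $\{x_-,z\}=\sum_{j<0}2jb_jzt_3^j$, and iterating $n$ times gives $\ell^{(n)}=\sum_j(2j)^n z^n b_j t_3^j\in\mathcal{R}_4$. The resulting linear system in the unknowns $b_jt_3^j$ has a Vandermonde coefficient matrix, hence is invertible, so each $b_jt_3^j\in\mathcal{R}_4$. This is the missing idea.

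A smaller issue in Step 2: you write $\rho(f_1)/f_1$ as though it were already a scalar, but that is precisely what must be deduced from $\rho(f_1)\in\mathcal{R}_4$ together with the explicit formula $\rho(f_1)=-(\lambda_3+\lambda_5)\alpha t_3^{-1}t_5^{-1}-\tfrac12(\lambda_3+\lambda_4+\lambda_6)\beta t_3^{-1}t_4^{-1}t_6^{-1}$. Once Step 1 is established, the membership $\rho(f_1)=\mathcal{D}(f_1)-\mathrm{ham}_x(f_1)\in\mathcal{R}_4$ follows, and then expanding in the basis $\mathfrak{P}_4$ forces $\rho(f_1)=a_1f_1$ with $a_1\in\mathcal{Q}$; matching against the explicit formula gives both $a_1=-(\lambda_3+\lambda_5)$ and $\lambda_5=\lambda_4+\lambda_6$. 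Your relation-based shortcut is fine in spirit but needs Step 1 to justify $\rho(f_1)\in\mathcal{R}_4$.
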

\begin{proof}
1. Observe that $\mathcal{Q}:=\mathbb{K}[t_4^{\pm1},t_5^{\pm 1},t_6^{\pm 1}]$ is a subalgebra of both $\mathcal{R}_3$ and $\mathcal{R}_4.$ Furthermore, $\mathcal{R}_3=\mathcal{R}_4[t_3^{-1}].$ One can easily verify that $z:=t_4t_5^{-1}t_6$ is a Poisson central element of $\mathcal{Q}.$ 
Since $\mathcal{R}_3$ is a Poisson torus, it can be presented as a free $\mathcal{Q}$-module with basis 
$(t_3^j)_{j\in \mathbb{Z}}.$
One can therefore write $x\in \mathcal{R}_3$ as
 $x=\sum_{j\in\mathbb{Z}}b_jt_3^j, \  \text{where} \ b_j\in \mathcal{Q}.$
  Decompose $x$ as $x=x_-+x_+,$ where
$x_{+}:=\sum_{j\geq 0}b_jt_3^j$ and $x_{-}:=\sum_{j<0}b_jt_3^j.$  
Clearly, $x_+\in \mathcal{R}_4.$ We now want to show that $x_-\in \mathcal{R}_4.$ 
Write $x_-=\sum_{j=-1}^{-m}b_jt_3^j$ for some $ m\in \mathbb{N}_{>0}.$ 

Now,
$\mathcal{D}(z)=$ ham$_{x}(z)+\rho(z)= \mathrm{ham}_{x_-}(z)+\text{ham}_{x_+}(z)+(\lambda_4-\lambda_5+\lambda_6)z\in \mathcal{R}_4.$ We have that $\text{ham}_{x_+}(z)+(\lambda_4-\lambda_5+\lambda_6)z\in\mathcal{R}_4,$ hence
$\text{ham}_{x_-}(z)\in \mathcal{R}_4.$ Note that $\{t_3,z\}=2zt_3,$ and 
$\{\gamma,z\}=0$ for all $\gamma\in \mathcal{Q}$ since $z$ is Poisson central in $\mathcal{Q}.$
One can therefore express $\text{ham}_{x_-}(z)$ as follows:
$$\text{ham}_{x_-}(z)=\{x_-,z\}=\sum_{j=-1}^{-m}b_j\{t_3^j,z\}=\sum_{j=-1}^{-m}2jb_jzt_3^j\in \mathcal{R}_4.$$
Let $n\in \mathbb{N}_{>0},$ and set 
$$\ell^{(n)}:=\underbrace{\{\{ \ldots \{}_{n-\text{times}}x_-,\underbrace{z\}, z\},\ldots , z\}}_{n-\text{times}}\in \mathcal{R}_4.$$   We claim that $$\ell^{(n)}=\sum_{j=-1}^{-m}(2j)^nz^nb_jt_3^j,$$ for all $n\in \mathbb{N}_{>0}.$  Observe that $$\ell^{(1)}=\text{ham}_{x_-}(z)=\sum_{j=-1}^{-m}2jb_jzt_3^j,$$ hence the result is true for $n=1.$ Suppose that 
the result is true for $n\geq 1$. Then,
$$\ell^{(n+1)}=\{\ell^{(n)},z\}=\sum_{j=-1}^{-m}(2j)^nz^nb_j\{t_3^j,z\}=\sum_{j=-1}^{-m}(2j)^{n+1}z^{n+1}b_jt_3^j$$ as expected. By the principle of mathematical induction, the claim is proved.

Given that  $\ell^{(n)}=\sum_{j=-1}^{-m}(2j)^nz^nb_jt_3^j,$ it follows that $$\mu_n=\sum_{j=-1}^{-m}(2j)^n b_jt_3^j, \  \ \text{where} \ \ \mu_n:=\ell^{(n)}z^{-n}\in \mathcal{R}_4.$$ The above equality can be written as a matrix equation:
$$
\begin{bmatrix}
-2&-4&-6&\cdots &-2m\\
(-2)^2&(-4)^2&(-6)^2&\cdots &(-2m)^2\\
(-2)^3&(-4)^3&(-6)^3&\cdots &(-2m)^3\\
\vdots&\vdots&\vdots&\ddots&\vdots\\
(-2)^m&(-4)^m&(-6)^m&\cdots &(-2m)^m\\
\end{bmatrix}
\begin{bmatrix}
b_{-1}t_3^{-1}\\
b_{-2}t_3^{-2}\\
b_{-3}t_3^{-3}\\
\vdots\\
b_{-m}t_3^{-m}\\
\end{bmatrix}=
\begin{bmatrix}
\mu_1\\
\mu_2\\
\mu_3\\
\vdots\\
\mu_m\\
\end{bmatrix}.
$$
One can observe that the coefficient matrix
$$\begin{bmatrix}
-2&-4&-6&\cdots &-2m\\
(-2)^2&(-4)^2&(-6)^2&\cdots &(-2m)^2\\
(-2)^3&(-4)^3&(-6)^3&\cdots &(-2m)^3\\
\vdots&\vdots&\vdots&\ddots&\vdots\\
(-2)^m&(-4)^m&(-6)^m&\cdots &(-2m)^m\\
\end{bmatrix}$$ is similar to a Vandermonde matrix (since the terms in each column form a geometric sequence) which is well known to be invertible. This therefore implies that each $b_jt_3^j$ is a linear combination of the 
$\mu_n\in \mathcal{R}_4.$ As a result, $b_jt_3^j\in \mathcal{R}_4$ for all 
$j\in \{-1,\ldots, -m\}.$ Consequently, $x_-=\sum_{j=-1}^{-m}b_jt_3^j \in \mathcal{R}_4$ as desired. 

2. Recall that $\rho (t_\kappa)=\lambda_\kappa t_\kappa$ for all $\kappa\in \{3,4,5,6\}$ and $\lambda_\kappa\in \mathbb{K}.$
From Remark \ref{prrr}, we have that 
$f_{1}=t_1+\frac{1}{2} t_2t_3^{-1}.$ Again, recall from Subsection \ref{sub3} that $t_1=\alpha t_3^{-1}t_5^{-1}$ and $t_2=\beta t_4^{-1}t_6^{-1}$ in $\mathcal{R}_3=\mathscr{P}_{\alpha,\beta}.$ As a result, $f_1=\alpha t_3^{-1}t_5^{-1}+\frac{1}{2}\beta t_3^{-1} t_4^{-1}t_6^{-1}.$  Therefore,
\begin{align}
\rho(f_{1})
=&-(\lambda_3+\lambda_5)\alpha t_3^{-1}t_5^{-1}-\frac{1}{2}(\lambda_3+\lambda_4+\lambda_6)\beta t_3^{-1}t_4^{-1}t_6^{-1}. \label{pe5}
\end{align}
Also, $\rho(f_{1})\in \mathcal{R}_4$ implies that  $\rho(f_{1})$ can be written in terms of the basis $\mathfrak{P}_4$ of $\mathcal{R}_4$ (Prop. \ref{ev28}) as
\begin{align}
\label{pe6}
\rho(f_{1})=&\displaystyle\sum_{r> 0}a_rf_{1}^r+\displaystyle\sum_{s\geq 0}b_st_3^s,
\end{align}
where $a_r$ and $b_s$ belong to $\mathcal{Q}=\mathbb{K}[t_4^{\pm 1},t_5^{\pm 1},t_6^{\pm 1}].$ Note that
\begin{align}
f_{1}^r&=\left( \alpha t_3^{-1}t_5^{-1}+\frac{1}{2}\beta t_3^{-1} t_4^{-1}t_6^{-1}\right)^r=\sum_{i=0}^r {r\choose i}(\alpha)^i(\beta/2)^{r-1}t_3^{-r}t_4^{i-r}t_5^{-i}t_6^{i-r}\nonumber\\
&=c_rt_3^{-r},\label{pe7}
\end{align}
where 
\begin{align}
c_r=\displaystyle\sum_{i=0}^r {r\choose i}(\alpha)^i(\beta/2)^{r-i}t_4^{i-r}t_5^{-i}t_6^{i-r} \in \mathcal{Q}\setminus \{0\}.\label{pe81}
\end{align}
Substitute \eqref{pe7} into \eqref{pe6} to obtain
\begin{align}
\rho(f_{1})=&\displaystyle\sum_{r> 0}a_rc_rt_3^{-r}+\displaystyle\sum_{s\geq 0}b_st_3^s.\label{pe8}
\end{align}
One can rewrite \eqref{pe5} as
\begin{equation}
\rho (f_{1})=dt_3^{-1}, \label{pe9}
\end{equation} 
where $d=-(\lambda_5+\lambda_3)\alpha t_5^{-1}-\frac{1}{2}(\lambda_6+\lambda_4+\lambda_3)\beta t_4^{-1}t_6^{-1}\in\mathcal{Q}.$
Comparing \eqref{pe8} to \eqref{pe9} shows that 
$b_s=0$ for all $s\geq 0,$ and $a_rc_r=0$ for all $r\neq 1$. Therefore, $\rho(f_{1})=a_1c_1t_3^{-1}.$ Moreover, from \eqref{pe81}, $c_1=\frac{1}{2}\beta t_4^{-1}t_6^{-1}+\alpha t_5^{-1}.$ Hence, 
\begin{align}
\rho(f_{1})=a_1c_1t_3^{-1}&=a_1\left( \frac{1}{2}\beta t_4^{-1} t_6^{-1}+\alpha t_5^{-1}\right) t_3^{-1}
=a_1\alpha t_3^{-1}t_5^{-1}+\frac{1}{2}a_1\beta t_3^{-1}t_4^{-1}t_6^{-1}.\label{pe10}
\end{align}
Comparing \eqref{pe10} to \eqref{pe5} reveals that
$a_1=-(\lambda_5+\lambda_3)=-(\lambda_6+\lambda_4+\lambda_3).$ Consequently, $\lambda_5=\lambda_6+\lambda_4.$ Hence,
$\rho(f_{1})=-(\lambda_5+\lambda_3)\alpha t_3^{-1}t_5^{-1}
-\frac{1}{2}(\lambda_5+\lambda_3)\beta t_3^{-1}t_4^{-1} t_6^{-1}
=-(\lambda_5+\lambda_3)f_{1}.$
Finally, since $t_2=\beta t_4^{-1}t_6^{-1}$ in $\mathcal{R}_4,$ it follows that $$\rho(t_2)=
-(\lambda_6+\lambda_4)\beta t_4^{-1}t_6^{-1}=-(\lambda_6+\lambda_4)
t_2=-\lambda_5t_2.$$

3. Set $\lambda_1:=-(\lambda_3+\lambda_5)$ and $\lambda_2:=-\lambda_5.$ Remember that $f_1=x_{1,4}$ and $t_i=x_{i,4}$ ($2\leq i\leq 6$). It follows from points (1) and (2) that $\mathcal{D}(x_{\kappa,4})=\text{ham}_x(x_{\kappa,4})+\rho(x_{\kappa,4})=\text{ham}_x(x_{\kappa,4})
+\lambda_\kappa x_{\kappa,4}$ for all $\kappa\in \{1,\ldots,6\}.$ 

In conclusion, $\mathcal{D}=\text{ham}_x+\rho,$ with $x\in \mathcal{R}_4.$ 
\end{proof}
We now proceed to describe $\mathcal{D}$ as a Poisson derivation of $\mathcal{R}_5.$
\begin{lem}
\begin{itemize}
\item[1.] $x\in \mathcal{R}_5.$
\item[2.] $\lambda_4=3\lambda_3+\lambda_5,$  $\lambda_6=-3\lambda_3,$  $\rho(z_{1})=-(\lambda_3+\lambda_5)z_{1}$ and $\rho(z_{2})=-\lambda_5z_{2}.$ 
\item[3.]Set $\lambda_1:=-(\lambda_3+\lambda_5)$ and $\lambda_2:=-\lambda_5,$  then $\mathcal{D}(x_{\kappa,5})=\mathrm{ham}_x(x_{\kappa,5})+\lambda_\kappa x_{\kappa,5}$ for  all $\kappa\in \{1,\ldots,6\}.$
\end{itemize}
\end{lem}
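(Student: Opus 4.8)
The plan is to mimic, one level further, the argument already carried out for $\mathcal{R}_4$ in Lemma \ref{ppl3}, exploiting the relations collected in Remark \ref{prrr} and the basis $\mathfrak{P}_5$ of $\mathcal{R}_5$ established in Proposition \ref{pc4p20}. We start from the conclusion of the previous lemma: $\mathcal{D}=\mathrm{ham}_x+\rho$ with $x\in\mathcal{R}_4$, $\rho(t_\kappa)=\lambda_\kappa t_\kappa$ for $\kappa\in\{3,4,5,6\}$, $\lambda_5=\lambda_4+\lambda_6$, and with $\lambda_1:=-(\lambda_3+\lambda_5)$, $\lambda_2:=-\lambda_5$ the derivation acts diagonally on all the $x_{\kappa,4}$. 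Recall $\mathcal{R}_4=\mathcal{R}_5\Sigma_4^{-1}=\mathcal{R}_5[t_4^{-1}]$ and $z_1=f_1+\tfrac13 t_3^2 t_4^{-1}$, $z_2=t_2+\tfrac23 t_3^3 t_4^{-1}$, together with the relations \eqref{pde2}, \eqref{pde3}.

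For part (1), I would follow the Vandermonde trick verbatim: find a Poisson-central element of a suitable subalgebra $\mathcal{Q}'\subseteq\mathcal{R}_4\cap\mathcal{R}_5$ (the natural candidate, analogous to $z=t_4t_5^{-1}t_6$ before, should be a monomial $t_3^at_5^bt_6^c$ or similar whose bracket with $t_4$ is a nonzero scalar multiple of itself while it is central for the rest), write $x=x_++x_-$ in the free $\mathcal{Q}'$-module decomposition of $\mathcal{R}_4$ along powers of $t_4^{-1}$, apply $\mathrm{ham}$ repeatedly to that central element to produce $\mu_1,\mu_2,\dots$, and invert the resulting Vandermonde-type matrix to conclude each negative-power component of $x$ lies in $\mathcal{R}_5$, hence $x\in\mathcal{R}_5$. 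For part (2), knowing $\rho$ extends to a scalar-type derivation of $\mathcal{R}_5$, I would compute $\rho(z_1)$ and $\rho(z_2)$ in two ways. On one hand, using $z_1=\alpha t_3^{-1}t_5^{-1}\cdot(\dots)$-type expressions via Remark \ref{prrr} and Subsection \ref{sub3} ($t_1=\alpha t_3^{-1}t_5^{-1}$, $t_2=\beta t_4^{-1}t_6^{-1}$, so $z_1$ and $z_2$ are explicit Laurent monomials in $t_3,t_4,t_5,t_6$), $\rho(z_1)$ and $\rho(z_2)$ are forced linear combinations of those monomials with coefficients linear in the $\lambda_\kappa$. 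On the other hand, $\rho(z_1),\rho(z_2)\in\mathcal{R}_5$ must expand in the basis $\mathfrak{P}_5$; matching the two expansions (as in \eqref{pe6}--\eqref{pe10}) kills all but one term and forces the linear relations $\lambda_4=3\lambda_3+\lambda_5$ and $\lambda_6=-3\lambda_3$, whence $\rho(z_1)=-(\lambda_3+\lambda_5)z_1=\lambda_1 z_1$ and $\rho(z_2)=-\lambda_5 z_2=\lambda_2 z_2$. Part (3) is then immediate: $x_{1,5}=z_1$, $x_{2,5}=z_2$, and $x_{\kappa,5}=x_{\kappa,4}=t_\kappa$ for $\kappa\in\{3,4,5,6\}$, so $\mathcal{D}(x_{\kappa,5})=\mathrm{ham}_x(x_{\kappa,5})+\rho(x_{\kappa,5})=\mathrm{ham}_x(x_{\kappa,5})+\lambda_\kappa x_{\kappa,5}$ for all $\kappa$, and $\mathcal{D}=\mathrm{ham}_x+\rho$ with $x\in\mathcal{R}_5$.

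The main obstacle I anticipate is part (2): unlike the $\mathcal{R}_4$ step, where $f_1=\alpha t_3^{-1}t_5^{-1}+\tfrac12\beta t_3^{-1}t_4^{-1}t_6^{-1}$ collapsed to a single power $t_3^{-1}$, here $z_1=f_1+\tfrac13 t_3^2 t_4^{-1}$ and $z_2=t_2+\tfrac23 t_3^3 t_4^{-1}$ involve several distinct Laurent monomials in $t_3,\dots,t_6$ with $t_3$-degrees $-1$, $2$ (resp. the $t_2$ term and $t_3^3 t_4^{-1}$), so the bookkeeping when re-expressing $\rho(z_1)$ in the basis $\mathfrak{P}_5$ — where the allowed $t_3$-exponents are only $0,1,2$ and everything else is pushed down via \eqref{pde3} — is more delicate, and one must be careful that the matching of coefficients genuinely isolates the claimed relations rather than leaving slack. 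The secondary obstacle is identifying the right Poisson-central monomial in part (1) so that the Vandermonde argument goes through cleanly with $t_4$ playing the role $t_3$ played before; once that element is found the inductive computation of the iterated brackets $\ell^{(n)}$ is routine.
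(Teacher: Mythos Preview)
Your plan for part~(1) has a genuine gap. The Vandermonde trick in Lemma~\ref{ppl3} worked because $\mathcal{R}_3$ is a Poisson torus: it is a free $\mathcal{Q}$-module on $(t_3^j)_{j\in\mathbb{Z}}$ for a genuine commutative \emph{subalgebra} $\mathcal{Q}=\mathbb{K}[t_4^{\pm1},t_5^{\pm1},t_6^{\pm1}]$, and one can pick a monomial $z\in\mathcal{Q}$ that is Poisson-central in $\mathcal{Q}$ yet satisfies $\{t_3,z\}=2zt_3$. For the passage $\mathcal{R}_4\to\mathcal{R}_5$ no such structure is available. By Corollary~\ref{pc4c1} the algebra $\mathcal{R}_4=\mathcal{R}_5[t_4^{-1}]$ has basis $z_1^{i_1}t_3^{\xi}t_4^{j}t_5^{i_5}t_6^{i_6}$ with $\xi\in\{0,1,2\}$, so the natural ``coefficient space'' along $(t_4^j)$ is spanned by $z_1^{i_1}t_3^{\xi}t_5^{i_5}t_6^{i_6}$; but this span is \emph{not} closed under multiplication (since $t_3^3$ re-enters $t_4$ via \eqref{pde3}), hence not a subalgebra, and in any case $\{t_4,z_1\}=-2t_3^2$ is not of the form $c\,z_1t_4$. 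So the hypotheses of the Vandermonde argument fail and you cannot simply ``find the right central monomial''. The paper abandons the Vandermonde device here and instead argues directly: write $x=x_++x_-$ in the basis of Corollary~\ref{pc4c1}, assume a minimal $j_0<0$ with nonvanishing coefficient, compute $\mathrm{ham}_{x_-}(t_6)$ and $\mathrm{ham}_{x_-}(t_3)$ (which must lie in $\mathcal{R}_5$), and compare with the basis $\mathfrak{P}_5$. The $t_6$-bracket forces $k=i-j_0$ at the minimal level; the $t_3$-bracket then yields a chain of three linear relations \eqref{2pe}--\eqref{4pe} among the coefficients $a_{(\xi,\cdot)}$ for $\xi=0,1,2$ which, combined with $3i-k-3j_0\neq0$, cascade to force all of them to vanish --- a contradiction.

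Your plan for part~(2) is workable but more laborious than necessary. Rather than expressing $z_1,z_2$ as Laurent polynomials in $t_3,\dots,t_6$ and matching against $\mathfrak{P}_5$, the paper simply applies $\rho$ to $z_2=t_2+\tfrac{2}{3}t_3^3t_4^{-1}$ using the already-known values $\rho(t_\kappa)=\lambda_\kappa t_\kappa$, obtaining $\rho(z_2)=-\lambda_5 z_2+\tfrac{2}{3}(3\lambda_3-\lambda_4+\lambda_5)t_3^3t_4^{-1}$. Since $\mathcal{D}(z_2)\in\mathcal{R}_5$ and $\mathrm{ham}_x(z_2)\in\mathcal{R}_5$ once part~(1) is done, the extra term lies in $\mathcal{R}_5$; substituting \eqref{pde3} for $t_3^3$ shows this forces $3\lambda_3-\lambda_4+\lambda_5=0$ (otherwise $1\in\mathcal{R}_5t_4t_6$). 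The relation $\lambda_6=-3\lambda_3$ then drops out of applying $\rho$ to \eqref{pde3} itself, and $\rho(z_1)=-(\lambda_3+\lambda_5)z_1$ follows from $z_1=f_1+\tfrac{1}{3}t_3^2t_4^{-1}$ together with $\rho(f_1)=-(\lambda_3+\lambda_5)f_1$ from Lemma~\ref{ppl3}. Part~(3) is exactly as you say.
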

\begin{proof}
In this proof, we denote $\underline{\upsilon}:=(i,j,k,l)\in  \mathbb{N}\times \mathbb{Z}^3.$

1.   We already know that 
 $x\in\mathcal{R}_4=\mathcal{R}_5[t_4^{-1}].$ Given the basis $\mathfrak{P}_5$ of $\mathcal{R}_5$ (Prop. \ref{pc4p20}),  $x$ can be written as 
 $x=\displaystyle\sum_{(\xi, \underline{\upsilon})\in I}a_{(\xi,\underline{\upsilon})}z_{1}^{i}t_3^{\xi}
t_4^{j}t_5^{k}t_6^{l},$ 
where $I$ is a finite subset of $\{0,1,2\}\times \mathbb{N}\times \mathbb{Z}^3$ and the
 $a_{(\xi, \underline{\upsilon})}$ are scalars.   Write 
 $x=x_-+x_+,$ where 
$$x_+=\sum_{\substack{(\xi, \underline{\upsilon})\in I\\ j\geq 0}} a_{(\xi, {\underline{\upsilon})}}z_{1}^{i}t_3^{\xi}
 t_4^{j}t_5^{k}t_6^{l}\ \  \text{and} \ \ x_-=\displaystyle \sum_{\substack{(\xi, \underline{\upsilon})\in I \\ j<0}}a_{(\xi,\underline{\upsilon})}z_{1}^{i}t_3^{\xi}
t_4^{j}t_5^{k}t_6^{l}.$$    
 Suppose that $x_-\neq 0.$ Then, there exists a minimum  $j_0<0$ such that  $a_{(\xi, i,j_0,k,l)}\neq 0$ for some
$(\xi,i,j_0,k,l)\in I$ and $a_{(\xi,i,j,k,l)}=0$ for all $(\xi,i,j_0,k,l)\in I$ with $j<j_0.$  Given this assumption, write
\begin{align*}
 x_-=\displaystyle\sum_{ \substack{(\xi,\underline{\upsilon})\in I\\ j_0 \leq j\leq -1}}a_{(\xi,\underline{\upsilon})}z_{1}^{i}t_3^{\xi}
 t_4^{j}t_5^{k}t_6^{l}.
\end{align*}
Contrary to our assumption, we aim to show that $x_-=0.$
 Let $s=3,6.$ Then, $\mathcal{D}(t_s)=\text{ham}_{x_+}(t_s)+\text{ham}_{x_-}(t_s)+
 \rho(t_s)\in \mathcal{R}_5$ for each $s=3,6.$ This implies that $\text{ham}_{x_-}(t_s)\in \mathcal{R}_5,$ since $\text{ham}_{x_+}(t_s)+\rho(t_s)=\text{ham}_{x_+}(t_s)+
 \lambda_st_s\in \mathcal{R}_5.$ Set $\underline{w}:=(i,j,k,l)\in \mathbb{N}^2\times \mathbb{Z}^2.$
 One can therefore write
$\text{ham}_{x_-}(t_s)\in \mathcal{R}_5$ in terms of the basis $\mathfrak{P}_5$ of $\mathcal{R}_5$ as:
  \begin{align}
\text{ham}_{x_-}(t_s)&= \sum_{(\xi,\underline{w})\in J} b_{(\xi, {\underline{w}})}z_{1}^{i}
t_3^{\xi}t_4^{j}t_5^{k}t_6^{l}, \label{pe12}
 \end{align}
 where  $ J$ is a finite subset of $\{0,1,2\}\times \mathbb{N}^2\times \mathbb{Z}^2$ and the $b_{(\xi, \underline{w})}$ are scalars. 
 
 When $s=6,$ then using Remark \ref{ev29}(2), one can also express $\text{ham}_{x_-}(t_6)$ as:
 \begin{align*}
 \text{ham}_{x_-}(t_6)&= \sum_{ \substack{(\xi,\underline{\upsilon})\in I\\ j_0 \leq j\leq -1}} 3(k+j-i)a_{(\xi,\underline{\upsilon})}
z_{1}^{i}t_3^{\xi}
t_4^{j}t_5^{k}t_6^{l+1}.
\end{align*}  
Comparing this expression for $\text{ham}_{x_-}(t_6)$ to \eqref{pe12} (when $s=6$), we have that
$$
\sum_{ \substack{(\xi,\underline{\upsilon})\in I\\ j_0 \leq j\leq -1}} 3(k+j-i)a_{(\xi,\underline{\upsilon})} 
z_{1}^{i}t_3^{\xi}
t_4^{j}t_5^{k}t_6^{l+1}= \sum_{(\xi,\underline{w})\in J} b_{(\xi, {\underline{w}})}z_{1}^{i}
t_3^{\xi}t_4^{j}t_5^{k}t_6^{l}.$$ 

As $\mathfrak{P}_5$ is a basis for $\mathcal{R}_5$ (Prop. \ref{pc4p20}), we deduce from Corollary \ref{pc4c1} that\newline 
 $\left(z_{1}^{i}
t_3^{\xi}t_4^{j}t_5^{k}t_6^{l}\right)_{(i\in \mathbb{N}; j,k,l\in \mathbb{Z}; \xi\in \{0,1,2\})}$ is a basis for $\mathcal{R}_5[t_4^{-1}].$
  Now, at $j=j_0,$ denote $\underline{\upsilon}=(i,j,k,l)$ by $\underline{\upsilon}_0:=
 (i,j_0,k,l).$ Since $\underline{v}_0\in \mathbb{N}\times \mathbb{Z}^3$ (with $j_0<0$) and $\underline{w}=(i,j,k,l)\in \mathbb{N}^2\times \mathbb{Z}^2$ (with $j\geq 0$), it follows from the above equality that, at $\underline{\upsilon}_0,$  we must have
 \begin{align}
 k=i-j_0,\label{pe13}
 \end{align}
 for some $(\xi, \underline{v}_0)\in I.$ 
 
Similarly, when $s=3,$ then using Remark \ref{ev29}(2), one can also express $\text{ham}_{x_-}(t_3)$ as: 
 \begin{align*}
 \text{ham}_{x_-}(t_3)=&-\sum \left[\frac{3}{2}\beta(3i-k-3j_0)a_{2, i,j_0,k,l+1}+2(i+1)\alpha
 a_{(0, i+1,j_0,k+1,l)}\right]z_1^it_4^{j_0}t_5^{k}t_6^l\\
 &+\sum \left[(3i-k-3j_0)a_{(0,i,j_0,k,l)}-2(i+1)\alpha a_{(1, i+1,j_0,k+1,l)}\right]
 z_1^it_3t_4^{j_0}t_5^{k}t_6^l\\
 &+\sum \left[(3i-k-3j_0)a_{(1, i,j_0,k,l)}-2(i+1)\alpha a_{(2, i+1,j_0,k+1,l)}\right]
 z_1^it_3^2t_4^{j_0}t_5^{k}t_6^l+\mathcal{K},
\end{align*} 
where $\mathcal{K}\in \text{Span}\left( \mathfrak{P}_5\setminus \{z_{1}^{i}t_3^{\xi}
 t_4^{j_0}t_5^{k}t_6^{l}\mid (\xi, i,j_0,k,l)\in \{0,1,2\}\times \mathbb{N}\times \mathbb{Z}^3 \}\right)$ 
(note that one will need the following two expressions 
$z_{2}=2(z_{1}t_3-\alpha t_5^{-1})\ \ \text{and} \ \ t_3^3=3z_1t_3t_4-3\alpha t_4t_5^{-1}-\dfrac{3\beta}{2} t_6^{-1}$ from \eqref{pde2} and \eqref{pde3} to express some of the monomials in terms of the basis $\mathfrak{P}_5$ of $\mathcal{R}_5$). Comparing this expression for 
 $\text{ham}_{x_-}(t_3)$ to \eqref{pe12} (when $s=3$) reveals that 
\begin{align*}
\sum_{(\xi,\underline{w})\in J} & b_{(\xi, {\underline{w}})}z_{1}^{i}
t_3^{\xi}t_4^{j}t_5^{k}t_6^{l}=\\
 &-\sum \left[\frac{3}{2}\beta(3i-k-3j_0)a_{(2, i,j_0,k,l+1}+2(i+1)\alpha
 a_{(0, i+1,j_0,k+1,l)}\right]z_1^it_4^{j_0}t_5^{k}t_6^l\\
 &+\sum \left[(3i-k-3j_0) a_{(0,i,j_0,k,l)}-2(i+1)\alpha a_{(1, i+1,j_0,k+1,l)}\right]
 z_1^it_3t_4^{j_0}t_5^{k}t_6^l\\
 &+\sum \left[(3i-k-3j_0)a_{(1, i,j_0,k,l)}-2(i+1)\alpha a_{(2, i+1,j_0,k+1,l)}\right]
 z_1^it_3^2t_4^{j_0}t_5^{k}t_6^l+\mathcal{K}.
\end{align*}  
We have already established that 
 $\left(z_{1}^{i}
t_3^{\xi}t_4^{j}t_5^{k}t_6^{l}\right)_{(i\in \mathbb{N}; j,k,l\in \mathbb{Z}; \xi\in \{0,1,2\})}$ is a basis for $\mathcal{R}_5[t_4^{-1}].$ Since $\underline{v}_0=(i,j_0,k,l)\in \mathbb{N}\times \mathbb{Z}^3$ (with $j_0<0$) and $\underline{w}=(i,j,k,l)\in \mathbb{N}^2\times \mathbb{Z}^2$ (with $j\geq 0$), it follows from the above equality that, at $\underline{\upsilon}_0,$  we must have
 \begin{align}
\frac{3}{2}\beta(3i-k-3j_0)a_{(2, i,j_0,k,l+1)}+2(i+1)\alpha
 a_{(0, i+1,j_0,k+1,l)}&=0,\label{2pe}\\
(3i-k-3j_0)a_{(0,i,j_0,k,l)}-2(i+1)\alpha a_{(1, i+1,j_0,k+1,l)}&=0,\label{3pe}\\
 (3i-k-3j_0)a_{(1, i,j_0,k,l)}-2(i+1)\alpha a_{(2, i+1,j_0,k+1,l)}&=0.\label{4pe}
 \end{align}
Suppose that there exists $(\xi, i,j_0,k,l)\in I$ such that $3i-k-3j_0=0.$  Then, 
\begin{align}
k=3(i-j_0).\label{5pe}
\end{align}
Comparing \eqref{5pe} to \eqref{pe13} clearly shows that
 $i-j_0=0$ which implies that $ i=j_0<0,$ a contradiction (note that $i\geq 0$). 
Therefore, $3i-k-3j_0\neq 0$ for all $(\xi, i,j,k)\in I.$

Now, observe that if there exists $\xi\in \{0,1,2\}$ such that $a_{(\xi, i,j_0,k,l)}=0$ for all $(i,j_0,k,l)\in \mathbb{N}\times \mathbb{Z}^3$, then one can easily deduce from equations \eqref{2pe}, \eqref{3pe} and \eqref{4pe} that $a_{(\xi, i,j_0,k,l)}=0$ for all $(\xi, i,j_0,k,l)\in I.$  This  contradicts our initial assumption. Therefore, for each $\xi\in \{0,1,2\},$
there exists some $(i,j_0,k,l)\in \mathbb{N}\times \mathbb{Z}^3$ such that $a_{(\xi, i,j_0,k,l)}\neq 0.$  Without loss of generality, let
$(u,j_0,v,w)$ be the greatest element in the lexicographic order on $\mathbb{N}\times \mathbb{Z}^3$ such that $a_{(0,u,j_0,v,w)}\neq 0$ and $a_{(0,i,j_0,k,l)}=0$ for all $i>u.$

From \eqref{3pe}, at $(i,j_0,k,l)=(u,j_0,v,w)$, we have: 
\begin{align*}
 (3u-v-3j_0)a_{(0, u,j_0,v,w)}-2(u+1)\alpha a_{(1, u+1,j_0,v+1,w)}&=0.
\end{align*}
From \eqref{4pe}, at $(i,j_0,k,l)=(u+1,j_0,v+1,w)$, we have: 
\begin{align*}
(3u-v-3j_0)a_{(1, u+1,j_0,v+1,w)}-2(u+1)\alpha a_{(2, u+2,j_0,v+2,w)}&=0.
\end{align*}
Finally, from \eqref{2pe}, at $(i,j_0,k,l)=(u+2,j_0,v+2,w-1),$ we have: 
\begin{align*}
\frac{3}{2}\beta(3u-v-3j_0)a_{(2,u+2,j_0,v+2,w)}+2(u+1)\alpha
 a_{(0,u+3,j_0,v+3,w-1)}&=0.
\end{align*}
Since $3i-k-3j_0\neq 0$  for all $(i,j_0,k,l)\in I$ and $u+3>u,$ it follows from the above three displayed equations (beginning from the last one)  that
$$a_{(0, u+3,j_0,v+3,w-1)}=0\Rightarrow 
a_{(2, u+2,j_0,v+2,w)}=0\Rightarrow a_{(1, u+1,j_0,v+1,w)}=0\Rightarrow 
a_{(0, u,j_0,v,w)}=0,$$ a contradiction!
Hence, $a_{(0, i,j_0,k,l)}=0$ for all $(i,j_0,k,l)\in \mathbb{N}\times \mathbb{Z}^3.$
From \eqref{2pe}, \eqref{3pe} and \eqref{4pe}, one can then conclude that  $a_{(\xi, i,j_0,k,l)}=0$ for all $(\xi, i,j_0,k,l)\in I.$ This is a contradiction to our assumption, hence $x_-=0.$ Consequently, $x=x_+\in \mathcal{R}_5$ as desired.

2. It follows from Remark \ref{prrr} that 
$z_{2}=t_{2}+\frac{2}{3} t_3^3t_4^{-1}.$  Since $\rho(t_\kappa)=\lambda_\kappa t_\kappa, \ \kappa\in \{2,3,4,5,6\},$  with $\lambda_2=-\lambda_5$ (see Lemma \ref{ppl3}), it follows that
\begin{align*}
\rho(z_{2})=&-\lambda_5t_{2}+\frac{2}{3}(3\lambda_3-\lambda_4)
t_3^3t_4^{-1}
=-\lambda_5z_{2}+\frac{2}{3}(3\lambda_3-\lambda_4+\lambda_5)
t_3^3t_4^{-1}.
\end{align*}
Furthermore, 
\begin{align*}
\mathcal{D}(z_{2})&=\text{ham}_x(z_{2})+\rho(z_{2})
=\text{ham}_x(z_{2})
-\lambda_5z_{2}+\frac{2}{3}(3\lambda_3-\lambda_4+\lambda_5)
t_3^3t_4^{-1}\in \mathcal{R}_5.
\end{align*}
We have that $(3\lambda_3-\lambda_4+\lambda_5)
t_3^3t_4^{-1}\in \mathcal{R}_5,$ since $\text{ham}_x(z_{2})
-\lambda_5z_{2}\in \mathcal{R}_5.$
This implies that
$(3\lambda_3-\lambda_4+\lambda_5)
t_3^3\in \mathcal{R}_5{t}_4.$  Set 
$w:=3\lambda_3-\lambda_4+\lambda_5.$ Suppose that $w\neq 0.$ From \eqref{pde3}, we have:
$$t_3^3=3z_{1}t_3t_4-\dfrac{3}{2}\beta t_6^{-1}-3\alpha t_4t_5^{-1}.$$ It follows that
 $$wt_3^3=
3wz_1t_3t_4-3w\alpha t_4t_5^{-1}-\frac{3}{2}w\beta t_6^{-1} \in \mathcal{R}_5t_4.$$ 
Since $t_3^3, \ t_4t_5^{-1}$ and $z_1t_3t_4$ are all elements of $\mathcal{R}_5t_4,$ this implies that
$t_6^{-1}\in \mathcal{R}_5t_4.$ Hence, $1\in \mathcal{R}_5t_4t_6,$ a contradiction (see the basis $\mathfrak{P}_5$ of 
$\mathcal{R}_5$ (Prop. \ref{pc4p20})). Therefore, 
$w=3\lambda_3-\lambda_4+\lambda_5=0,$ and so $\lambda_4=
3\lambda_3+\lambda_5.$ This further implies that $\rho(z_{2})=-\lambda_5 z_{2}$ as desired.

Again, from Lemma \ref{ppl3},  we have that $\rho(f_1)=-(\lambda_3+\lambda_5)f_1.$ Recall from Remark \ref{prrr} that $z_{1}=f_{1}+\frac{1}{3}t_3^2t_4^{-1}.$ 
It follows that
\begin{align*}
\rho(z_{1})=&-(\lambda_3+\lambda_5)f_{1}+\frac{1}{3}(2\lambda_3-\lambda_4)t_3^2t_4^{-1}
=-(\lambda_3+\lambda_5)z_{1}+\frac{1}{3}(3\lambda_3-\lambda_4+\lambda_5)t_3^2t_4^{-1}\\
=&-(\lambda_3+\lambda_5)z_{1}+\frac{1}{3}(3\lambda_3-(3\lambda_3+\lambda_5)+\lambda_5)t_3^2t_4^{-1}
=-(\lambda_3+\lambda_5)z_{1}.
\end{align*}
Finally, we know that $\rho(t_6)=\lambda_6t_6.$  From the relation \eqref{pde3}, we have: $$t_3^3=3z_1t_3t_4-3\alpha t_4t_5^{-1}-\dfrac{3\beta}{2} t_6^{-1}.$$
This implies that $$t_6^{-1}=\frac{2}{3\beta}(3z_1t_3t_4-3\alpha t_4t_5^{-1}-t_3^3).$$ Apply $\rho$ to this relation to obtain
$$-\lambda_6t_6^{-1}=3\lambda_3\left( \frac{2}{3\beta}\left(3z_1t_3t_4-3\alpha t_4t_5^{-1}-t_3^3 \right)\right).$$
Clearly, $\lambda_6=-3\lambda_3$ as desired.

3. Set $\lambda_1:=-(\lambda_3+\lambda_5)$ and $\lambda_2:=-\lambda_5.$ Remember that $z_1=x_{1,5}$ and $z_2=x_{2,5}.$  It follows from points (1) and (2) that $\mathcal{D}(x_{\kappa,5})=\mathrm{ham}_x(x_{\kappa,5})+\rho (x_{\kappa,5})=\mathrm{ham}_x(x_{\kappa,5})+\lambda_\kappa x_{\kappa,5}$ for  all $\kappa\in \{1,\ldots,6\}.$ 

In conclusion, $\mathcal{D}=\text{ham}_x+\rho$ with $x\in \mathcal{R}_5.$ 
\end{proof}

We are now ready to describe $\mathcal{D}$ as a Poisson derivation of $\mathcal{A}_{\alpha,\beta}.$
\begin{lem}
\label{ev30}
\begin{itemize}
\item[1.] $x\in \mathcal{A}_{\alpha,\beta}.$
\item[2.] $\rho(x_\kappa)=0$ for all
$\kappa\in\{1,\ldots,6\}.$
\item[3.] $\mathcal{D}=\mathrm{ham}_x.$ 
\end{itemize}
\end{lem}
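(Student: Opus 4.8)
The plan is to mirror the descent carried out in the previous two lemmas, now going from $\mathcal{R}_5$ down through $\mathcal{R}_6$ to $\mathcal{R}_7=\mathcal{A}_{\alpha,\beta}$, but since we lack an explicit basis for $\mathcal{R}_6$ (see the remark following Proposition \ref{pc4p20}) I would perform the step from $\mathcal{R}_5$ directly to $\mathcal{A}_{\alpha,\beta}$ using the basis $\mathfrak{P}$ of $\mathcal{A}_{\alpha,\beta}$ from Proposition \ref{pc3p5} together with the localization identities recorded in Remark \ref{prrr}. Concretely, we already know from the preceding lemma that $\mathcal{D}=\mathrm{ham}_x+\rho$ with $x\in\mathcal{R}_5$ and with $\lambda_4=3\lambda_3+\lambda_5$, $\lambda_6=-3\lambda_3$, $\lambda_1=-(\lambda_3+\lambda_5)$, $\lambda_2=-\lambda_5$. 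The first task is point (1): show $x$ actually lies in $\mathcal{A}_{\alpha,\beta}=\mathcal{R}_7$. Using Remark \ref{prrr} one has $x_i$ expressed in terms of $t_3,\dots,t_6$ and the intermediate variables, so $\mathcal{R}_5=\mathcal{R}_7\Sigma_6^{-1}\Sigma_5^{-1}$; decompose $x$ with respect to the powers of $t_5$ (equivalently $x_5$) and $t_6$ (equivalently $x_6$) that appear with negative exponent, writing $x=x_++x_-$, and argue that the negative part must vanish. The mechanism should be the same Vandermonde/iterated-bracket trick used in Lemma \ref{ppl3}(1): pick a Poisson-central (or suitably behaved) monomial $z$ in the relevant subtorus, iterate $\mathrm{ham}_{x_-}(z)$, observe that each iterate stays in $\mathcal{A}_{\alpha,\beta}$ (because $\mathcal{D}$ and $\rho$ and $\mathrm{ham}_{x_+}$ all preserve it), and invert a Vandermonde-type system to conclude each graded piece $b_j x_5^{-j}$ or $b_j x_6^{-j}$ individually lies in $\mathcal{A}_{\alpha,\beta}$, forcing $x_-=0$.

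For point (2), once $x\in\mathcal{A}_{\alpha,\beta}$, we must show the scalar part $\rho$ is identically zero on the generators. The idea is to exploit the defining relations \eqref{pe1e} and \eqref{pe2e} of $\mathcal{A}_{\alpha,\beta}$: $\rho$ is a Poisson derivation of $\mathcal{A}_{\alpha,\beta}$ with $\rho(x_\kappa)=\lambda_\kappa x_\kappa$ for $\kappa\in\{3,4,5,6\}$ and with $\lambda_1,\lambda_2$ as above, and applying $\rho$ to \eqref{pe1e} and \eqref{pe2e} and using that $\alpha,\beta$ are scalars (so $\rho(\alpha)=\rho(\beta)=0$) gives linear constraints on the $\lambda_i$. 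Writing out $\rho$ applied to $x_1x_3x_5-\tfrac32 x_1x_4-\tfrac12 x_2x_5+\tfrac12 x_3^2=\alpha$ yields, term by term, the eigenvalue sums $\lambda_1+\lambda_3+\lambda_5$, $\lambda_1+\lambda_4$, $\lambda_2+\lambda_5$, $2\lambda_3$ each multiplying the corresponding monomial; since those monomials are linearly independent in $\mathcal{A}_{\alpha,\beta}$ (by Proposition \ref{pc3p5}, after reducing $x_3^2$ via Lemma \ref{pl2}(1) if necessary) but the right side is $0$, each coefficient sum must vanish, giving $\lambda_3=0$, then $\lambda_5=0$, and hence via the relations already in hand $\lambda_4=\lambda_6=\lambda_1=\lambda_2=0$. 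One has to be a little careful because $\mathcal{A}_{\alpha,\beta}$ is not a polynomial ring — the relations \eqref{pe1e}, \eqref{pe2e} and those of Lemma \ref{pl2} intertwine the monomials — so the cleanest route is probably to first observe $\rho(x_3)=\lambda_3 x_3$ together with $\rho$ of Lemma \ref{pl2}(1) forces consistency only if $\lambda_3=0$, and propagate from there. Point (3) is then immediate: $\rho=0$ on all generators means $\rho=0$, so $\mathcal{D}=\mathrm{ham}_x$ with $x\in\mathcal{A}_{\alpha,\beta}$, i.e. $\mathcal{D}$ is inner.

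The main obstacle I anticipate is point (1), the containment $x\in\mathcal{A}_{\alpha,\beta}$, precisely because we do not have a workable linear basis for the intermediate algebra $\mathcal{R}_6$. In the two earlier lemmas the descent went one localization step at a time and leaned on the explicit bases $\mathfrak{P}_4$ and $\mathfrak{P}_5$; here we have to jump two steps (from $\mathcal{R}_5$ to $\mathcal{R}_7$) and control simultaneously the negative powers of both $x_5$ and $x_6$ using only the basis $\mathfrak{P}$ of $\mathcal{A}_{\alpha,\beta}$ and the substitution formulas of Remark \ref{prrr}. This will likely require a two-stage argument — first clearing negative $x_6$-powers, then negative $x_5$-powers — each stage invoking the Vandermonde trick with a different central monomial, and care must be taken that the relations \eqref{pe1e}, \eqref{pe2e} and Lemma \ref{pl2} are correctly used to re-express everything in terms of the basis $\mathfrak{P}$ at each stage. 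Once $x\in\mathcal{A}_{\alpha,\beta}$ is secured, points (2) and (3) are short computations of the type already appearing in the paper.
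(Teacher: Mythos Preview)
Your proposal for part (2) contains a concrete error: you assert that $\rho(x_\kappa)=\lambda_\kappa x_\kappa$ for $\kappa\in\{3,4,5,6\}$, but this is false for $\kappa=3,4$. What the preceding lemmas give you is $\rho(t_\kappa)=\lambda_\kappa t_\kappa$, and $t_3\neq x_3$, $t_4\neq x_4$ (see Remark \ref{prrr}: $x_3=t_3+\tfrac32 t_4t_5^{-1}+t_5^2t_6^{-1}$ and $x_4=t_4+\tfrac23 t_5^3t_6^{-1}$). Consequently $\rho(x_3)$ and $\rho(x_4)$ each pick up extra terms involving $t_5^{-1}$ and $t_6^{-1}$, and your scheme of applying $\rho$ to \eqref{pe1e} as if each $x_\kappa$ were an eigenvector breaks down. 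The paper's route is precisely to compute these extra terms: from $\rho(x_4)=(3\lambda_3+\lambda_5)x_4+\tfrac43\lambda_5 t_5^3t_6^{-1}$ and the fact that $\rho=\mathcal{D}-\mathrm{ham}_x$ preserves $\mathcal{A}_{\alpha,\beta}$ one gets $\lambda_5 t_5^3\in\mathcal{A}_{\alpha,\beta}t_6$, hence $\lambda_5=0$; then similarly $\rho(x_3)$ forces $\lambda_3=0$. Only \emph{after} that does the paper invoke \eqref{pe1e} and \eqref{pe2e}, to pin down $\rho(x_1)$ and $\rho(x_2)$.

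For part (1), the Vandermonde/iterated-bracket trick you propose is not what the paper uses and is unlikely to go through as stated. That trick (Lemma \ref{ppl3}(1)) relied on a monomial $z$ Poisson-central in the coefficient subalgebra $\mathcal{Q}$, so that $\{b_jt_3^j,z\}=2jb_jzt_3^j$; here the coefficients of your $x_-$ involve $x_1,x_2,x_3,x_4$, and since $Z_P(\mathcal{A}_{\alpha,\beta})=\mathbb{K}$ there is no nontrivial $z$ that Poisson-commutes with them. The paper instead follows the pattern of the \emph{second} descent lemma: it computes $\mathrm{ham}_{x_-}(x_3)\,t_5t_6\in\mathcal{A}_{\alpha,\beta}$ explicitly, first kills negative $t_6$-powers by reading off the coefficient at the minimal exponent $l_0$, and then kills negative $t_5$-powers by a more involved coefficient comparison that requires rewriting $x_4^2$ and $x_3x_4^2$ via Lemma \ref{pl2}(2),(4) to stay inside the basis $\mathfrak{P}$. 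Your two-stage outline is correct in spirit, but the engine is coefficient comparison against $\mathfrak{P}$ (and Corollary \ref{pc4c2}), not a Vandermonde inversion.
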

\begin{proof}
In this proof, we denote $\underline{\upsilon}:=(i,j,k,l)\in \mathbb{N}^2\times\mathbb{Z}^2.$  Also, recall from the PDDA of $\mathcal{A}_{\alpha,\beta}$ at the beginning of this section that $t_5=x_5$ and $t_6=x_6.$

1. Given the basis $\mathfrak{P}$ of $\mathcal{A}_{\alpha,\beta}$ (Prop. \ref{pc3p5}), one can write
 $x\in\mathcal{R}_5=\mathcal{A}_{\alpha,\beta}[t_5^{-1},t_6^{-1}]$ as $$x=\displaystyle\sum_{(\epsilon_1, \epsilon_2,\underline{\upsilon})\in I}a_{(\epsilon_1,\epsilon_2, \underline{\upsilon})}x_{1}^{i}x_{2}^{j}x_3^{\epsilon_1}
x_4^{\epsilon_2}t_5^{k}t_6^{l},$$
where $I$ is a finite subset of $\{0,1\}^2\times \mathbb{N}^2\times\mathbb{Z}^2$ and $a_{(\epsilon_1,\epsilon_2,\underline{\upsilon})}$ are scalars. Write 
 $x=x_-+x_+,$ where 
 $$x_+=\sum_{\substack{(\epsilon_1, \epsilon_2,\underline{\upsilon})\in I\\ k, \ l\geq 0 }}a_{(\epsilon_1,\epsilon_2, \underline{\upsilon})}x_{1}^{i}x_{2}^{j}x_3^{\epsilon_1}
 x_4^{\epsilon_2}t_5^{k}t_6^{l},$$ and $$
x_-=\displaystyle\sum_{\substack{(\epsilon_1, \epsilon_2,\underline{\upsilon})\in I\\ k<0 \ \text{or} \ l<0}}a_{(\epsilon_1,\epsilon_2, \underline{\upsilon})}x_{1}^{i}x_{2}^{j}x_3^{\epsilon_1}
 x_4^{\epsilon_2}t_5^{k}t_6^{l}.$$

Suppose that $x_-\neq 0.$ Then, there exists a minimum negative integer $k_0$ or $l_0$ such that 
$a_{(\epsilon_1,\epsilon_2, i,j,k_0,l)}\neq 0$ or $a_{(\epsilon_1,\epsilon_2, i,j,k,l_0)}\neq 0$  for some  $(\epsilon_1,\epsilon_2, i,j,k_0,l), (\epsilon_1,\epsilon_2, i,j,k,l_0)\in I,$  
and  $a_{(\epsilon_1,\epsilon_2, i,j,k,l)}=0$ whenever $k<k_0$ or $l<l_0.$  Write
 $$x_-=\displaystyle\sum_{\substack{(\epsilon_1, \epsilon_2,\underline{\upsilon})\in I\\
 k_0\leq k \leq -1 \ \text{or} \ l_0\leq l\leq -1}}a_{(\epsilon_1,\epsilon_2, \underline{\upsilon})}x_{1}^{i}x_{2}^{j}x_3^{\epsilon_1}
 x_4^{\epsilon_2}t_5^{k}t_6^{l}.$$

Now $\mathcal{D}(x_3)=\text{ham}_{x_+}(x_3)+\text{ham}_{x_-}(x_3)+
 \rho(x_3)\in \mathcal{A}_{\alpha,\beta}.$  From Remark \ref{prrr}, we have that
 $x_3=x_{3,6}+t_5^2t_6^{-1}$ and $x_{3,6}=t_3+\frac{3}{2}t_4t_5^{-1}.$ Putting these two together gives
\begin{align*}
x_3
=t_3+\frac{3}{2}t_4t_5^{-1}+t_5^2t_6^{-1}.
\end{align*} 
Again, from Remark \ref{prrr}, we also have that $t_4=x_4-\frac{2}{3}
t_5^3t_6^{-1}.$  Note that $\rho(t_\kappa)
=\lambda_\kappa t_\kappa, \ \kappa = 3,4,5,6.$ 

Now,
 \begin{align}
 \rho(x_3)&=\lambda_3t_3+\frac{3}{2}(\lambda_4-\lambda_5)t_4t_5^{-1}+(2\lambda_5-\lambda_6)t_5^2t_6^{-1}\nonumber\\
 &=\lambda_3\left( x_{3,6}-\frac{3}{2}t_4t_5^{-1}\right) +\frac{3}{2}(\lambda_4-\lambda_5)t_4t_5^{-1}+(2\lambda_5-\lambda_6)
 t_5^2t_6^{-1}\nonumber\\
 &=\lambda_3x_{3,6}-\frac{3}{2}(\lambda_3-\lambda_4+\lambda_5)t_4
 t_5^{-1}+(2\lambda_5-\lambda_6)
 t_5^2t_6^{-1}\nonumber\\
 &=\lambda_3(x_3-t_5^2t_6^{-1})-\frac{3}{2}(\lambda_3-\lambda_4+\lambda_5)\left( x_{4}-\frac{2}{3}t_5^3t_6^{-1}\right) 
t_5^{-1}+(2\lambda_5-\lambda_6)
 t_5^2t_6^{-1}\nonumber\\
 &=\lambda_3x_3+\alpha_1x_4t_5^{-1}
 +\alpha_2t_5^2t_6^{-1},\label{pfe1}
\end{align}  
 where $\alpha_1=\frac{3}{2}(\lambda_4-\lambda_3-\lambda_5)$ and $\alpha_2=(3\lambda_5-\lambda_4-\lambda_6).$
Therefore,
$\mathcal{D}(x_3)=\text{ham}_{x_+}(x_3)+\text{ham}_{x_-}(x_3)+
 \lambda_3x_3+\alpha_1x_4t_5^{-1}
 +\alpha_2t_5^2t_6^{-1}\in \mathcal{A}_{\alpha,\beta}.$  It follows that
 $\mathcal{D}(x_3)t_5t_6=\text{ham}_{x_+}(x_3)t_5t_6+\text{ham}_{x_-}(x_3)t_5t_6+
 \lambda_3x_3t_5t_6+
 \alpha_1x_4t_6
 +\alpha_2t_5^3\in \mathcal{A}_{\alpha,\beta}.$ Hence, $\text{ham}_{x_-}(x_3)t_5t_6\in \mathcal{A}_{\alpha,\beta},$ since 
 $\text{ham}_{x_+}(x_3)t_5t_6+
 \lambda_3x_3t_5t_6+
 \alpha_1x_4t_6
 +\alpha_2t_5^3\in \mathcal{A}_{\alpha,\beta}.$
 
 One can also verify that
\begin{align}\text{ham}_{x_-}(x_3)t_5t_6
=\sum_{(\epsilon_1,\epsilon_2, \underline{v})\in I}&a_{(\epsilon_1,\epsilon_2, \underline{v})}\left(  (i+3j-3\epsilon_2-k) x_1^ix_2^jx_3^{\epsilon_1+1}x_4^{\epsilon_2}t_5^{k+1}t_6^{l+1}\right.\nonumber  \\ &-3kx_1^ix_2^jx_3^{\epsilon_1}x_4^{\epsilon_2+1}t_5^{k}t_6^{l+1} +ix_1^{i-1}x_2^{j+1}x_3^{\epsilon_1}x_4^{\epsilon_2}t_5^{k+1}t_6^{l+1}\nonumber \\
&\left.-6l x_1^ix_2^jx_3^{\epsilon_1}x_4^{\epsilon_2}t_5^{k+3}t_6^l \right).\label{pe15}
\end{align}  

Assume that there exists $l<0$ such that $a_{(\epsilon_1,\epsilon_2,i,j,k,l)}\neq 0.$ It follows from our initial assumption that
 $a_{(\epsilon_1,\epsilon_2,i,j,k,l_0)}\neq 0.$ Now, at $l=l_0,$ denote $\underline{\upsilon}=(i,j,k,l)$ by $\underline{\upsilon}_0:=(i,j,k,l_0).$   
  From \eqref{pe15}, we have that
 \begin{align*}
 \text{ham}_{x_-}(x_3)t_5t_6=&-\sum_{(\epsilon_1, \epsilon_2,\underline{\upsilon}_0)\in I}
6l_0
 a_{(\epsilon_1,\epsilon_2, \underline{\upsilon}_0)}x_{1}^{i}x_{2}^{j}x_3^{\epsilon_1}
 x_4^{\epsilon_2}t_5^{k+3}t_6^{l_0} +\mathcal{J}_1,
 \end{align*}
where $\mathcal{J}_1 \in \text{Span}\left( \mathfrak{P}\setminus 
 \{x_1^{i}x_2^{j}x_3^{\epsilon_1}x_4^{\epsilon_2}
 t_5^{k}t_6^{l_0}\mid  \epsilon_1,\epsilon_2\in \{0,1\}, \ k\in\mathbb{Z} \ \text{and} \  i,j\in \mathbb{N}  \}\right).$ 
 
Set $\underline{w}:=(i,j,k,l)\in \mathbb{N}^4.$ 
 One can also write $\text{ham}_{x_-}(x_3)t_5t_6\in \mathcal{A}_{\alpha,\beta}$ in terms of the basis $\mathfrak{P}$ of $\mathcal{A}_{\alpha,\beta}$ (Prop. \ref{pc3p5}) as:
\begin{align}
 \text{ham}_{x_-}(x_3)t_5t_6=&\sum_{(\epsilon_1,\epsilon_2, \underline{w})\in J}b_{(\epsilon_1,\epsilon_2,\underline{w})}x_1^{i}
x_2^{j}x_3^{\epsilon_1}x_4^{\epsilon_2}
t_5^{k}t_6^{l},\label{pc4e10}
 \end{align}
 where $J$ is a finite subset of  $\{0,1\}^2\times \mathbb{N}^4$ and $b_{(\epsilon_1,\epsilon_2,\underline{w})}\in \mathbb{K}.$  It follows that
 $$\sum_{(\epsilon_1,\epsilon_2, \underline{w})\in J}b_{(\epsilon_1,\epsilon_2,\underline{w})}x_1^{i}
x_2^{j}x_3^{\epsilon_1}x_4^{\epsilon_2}
t_5^{k}t_6^{l}=-\sum_{(\epsilon_1, \epsilon_2,\underline{\upsilon}_0)\in I}
6l_0
 a_{(\epsilon_1,\epsilon_2, \underline{\upsilon}_0)}x_{1}^{i}x_{2}^{j}x_3^{\epsilon_1}
 x_4^{\epsilon_2}t_5^{k+3}t_6^{l_0} +\mathcal{J}_1.$$
 As $\mathfrak{P}$ is a basis for $\mathcal{A}_{\alpha,\beta},$ we deduce from Corollary \ref{pc4c2} that \newline
 $\left(x_1^{i}
x_2^{j}x_3^{\epsilon_1}x_4^{\epsilon_2}
t_5^{k}t_6^{l}\right)_{((\epsilon_1,\epsilon_2, \underline{v})\in \{0,1\}^2\times \mathbb{N}^2 \times \mathbb{Z}^2)}$ is a basis for $\mathcal{A}_{\alpha,\beta}[t_5^{-1}, t_6^{-1}].$ Since $\underline{v}_0=(i,j,k,l_0)\in \mathbb{N}^2\times \mathbb{Z}^2$ (with $l_0<0$) and $\underline{w}=(i,j,k,l)\in \mathbb{N}^4$ (with $l\geq 0$)  in the above equality,  we must have
 $$6l_0 a_{(\epsilon_1,\epsilon_2,\underline{\upsilon}_0)}=0.$$
 Note that $l_0\neq 0,$ it follows that $a_{(\epsilon_1,\epsilon_2,\underline{\upsilon}_0)}=a_{(\epsilon_1,\epsilon_2,i,j,k,l_0)}$ are all zero, a contradiction! Therefore, $l\geq 0$ (i.e. there is no negative exponent of $t_6$).
 
Given that $l\geq 0,$ it follows from our initial assumption that there exists $k=k_0<0$ such that $a_{(\epsilon_1,\epsilon_2,i,j,k_0,l)}\neq 0.$  The rest of the proof will show that this assumption cannot also hold. 

Set $\underline{\upsilon}_0:=(i,j,k_0,l)\in \mathbb{N}^2\times \mathbb{Z}\times \mathbb{N}.$ From \eqref{pe15}, we have that
 \begin{align*}
 \text{ham}_{x_-}(x_3)t_5t_6=&- \displaystyle\sum_{(\epsilon_1,\epsilon_2, \underline{\upsilon}_0)\in I}3k a_{(\epsilon_1,\epsilon_2, \underline{\upsilon}_0)}x_{1}^{i}x_{2}^{j}x_3^{\epsilon_1}
 x_4^{\epsilon_2+1}t_5^{k_0}t_6^{l+1}+V,
 \end{align*}
 where $V\in \mathcal{J}_2 := \text{Span}\left( \mathfrak{P}\setminus 
 \{x_1^{i}x_2^{j}x_3^{\epsilon_1}x_4^{\epsilon_2}
 t_5^{k_0}t_6^{l}\mid \epsilon_1,\epsilon_2\in \{0,1\} \ \text{and} \ i,j,l\in \mathbb{N} \}\right).$ 
 It follows that
 \begin{align}
  \text{ham}_{x_-}(x_3)t_5t_6=\nonumber\\
  -\sum_{(\epsilon_1,\epsilon_2,\underline{v})\in I}& 3k_0 a_{(0,0,\underline{v})}x_{1}^{i}x_{2}^{j}
x_4t_5^{k_0}t_6^{l+1}-
 \sum_{(\epsilon_1,\epsilon_2,\underline{v})\in I}3k_0a_{(1,0,\underline{v})}x_{1}^{i}x_{2}^{j}x_3
x_4t_5^{k_0}t_6^{l+1}\nonumber\\
 -\sum_{(\epsilon_1,\epsilon_2,\underline{v})\in I}&3k_0 a_{(0,1, \underline{v})}x_{1}^{i}x_{2}^{j}
x_4^2t_5^{k_0}t_6^{l+1}-
\sum_{(\epsilon_1,\epsilon_2,\underline{v})\in I}3k_0 a_{(1,1, \underline{v})}x_{1}^{i}x_{2}^{j}x_3
 x_4^2t_5^{k_0}t_6^{l+1} +V.\label{pe18}
 \end{align}
 Write the relations in Lemma \ref{pl2}(2),(4) as follows:
\begin{align}
x_4^2=& \frac{2}{3}\beta- \frac{2}{3}x_2x_4x_6
+\frac{8}{9}\alpha x_3x_6+ \frac{4}{3}x_1x_3x_4x_6
+L_1,\label{pe19}\\
\nonumber\\
x_3x_4^2=&
 \frac{2}{3}\beta x_3- \frac{2}{3}x_2x_3
x_4x_6
+\frac{16}{9}\alpha^2 x_6+ \frac{16}{3}\alpha x_1x_4
x_6
+ \frac{8}{3}\beta x_1^2x_6
\nonumber\\&- \frac{8}{3}x_1^2
x_2x_4x_6^2
+ \frac{32}{9}\alpha x_1^2x_3x_6^2+
 \frac{16}{3}x_1^3x_3x_4x_6^2+
L_2,\label{pe20}
\end{align}
where $L_1$ and $L_2$ are some elements of the ideal  $\mathcal{A}_{\alpha,\beta}t_5\subseteq \mathcal{J}_2.$ Substitute \eqref{pe19} and \eqref{pe20} into  \eqref{pe18} and simplify to obtain:
\begin{align}
\text{ham}_{x_-}(e_3)t_5t_6
= \sum&[\lambda_{1,1}\beta a_{(0,1,i,j,k_0,l-1)}+\lambda_{1,2}\alpha^2a_{(1,1, i,j,k_0,l-2)}\nonumber\\&
+\lambda_{1,3}\beta a_{(1,1, i-2,j,k_0,l-2)}]x_1^ix_2^jt_5^{k_0}t_6^l\nonumber \\
+&\sum[\lambda_{2,1}\alpha a_{(0,1, i,j,k_0,l-2)}  
+\lambda_{2,2}\beta a_{(1,1, i,j,k_0,l-1)}\nonumber\\&+\lambda_{2,3}\alpha a_{(1,1,i-2,j,k_0,l-3)}]x_1^ix_2^jx_3t_5^{k_0}t_6^l\nonumber\\
 +&\sum[\lambda_{3,1}a_{(0,1, i,j-1,k_0,l-2)}  
+\lambda_{3,2}\alpha a_{(1,1, i-1,j,k_0,l-2)}\nonumber\\
                        &+\lambda_{3,3}a_{(1,1, i-2,j-1,k_0,l-3)}+\lambda_{3,4}a_{(0,0,i,j,k_0,l-1)}]x_1^ix_2^jx_4t_5^{k_0}t_6^l\nonumber\\
+&\sum[\lambda_{4,1}a_{(0,1,i-1,j,k_0,l-2)}  
+\lambda_{4,2}a_{(1,1,i,j-1,k_0,l-2)}
\nonumber\\
&+\lambda_{4,3}a_{(1,1,i-3,j,k_0,l-3)}+\lambda_{4,4}a_{(1,0,i,j,k_0,l-1)}]x_1^ix_2^jx_3x_4t_5^{k_0}t_6^l+V',
\label{pe21}
 \end{align}
where $V'\in \mathcal{J}_2$.  Also, $\lambda_{s,t}:=\lambda_{s,t}(j,k_0,l)$ are some families of scalars which are non-zero for all $s,t\in\{1,2,3,4\}$ and $j,l\in\mathbb{N}$, except 
 $\lambda_{1,4}$ and $\lambda_{2,4}$ which are assumed to be zero since they do not exist in the above expression.  Note that although each  $\lambda_{s,t}$ depends on $j,k_0,l,$ we have not made this dependency explicit in the above expression since the minimum requirement we need to complete the proof is for all the $\lambda_{s,t}$ existing in the above expression to be non-zero, which we already have.
   
Observe that \eqref{pe21} and \eqref{pc4e10} are equal, hence
\begin{align*}
\sum_{(\epsilon_1,\epsilon_2, \underline{w})\in J}b_{(\epsilon_1,\epsilon_2,\underline{w})}x_1^{i}
x_2^{j}x_3^{\epsilon_1}x_4^{\epsilon_2}
t_5^{k}t_6^{l}=
&\sum[\lambda_{1,1}\beta a_{(0,1,i,j,k_0,l-1)}+\lambda_{1,2}\alpha^2a_{(1,1, i,j,k_0,l-2)}\nonumber\\&
+\lambda_{1,3}\beta a_{(1,1, i-2,j,k_0,l-2)}]x_1^ix_2^jt_5^{k_0}t_6^l\nonumber \\
+&\sum[\lambda_{2,1}\alpha a_{(0,1, i,j,k_0,l-2)}  
+\lambda_{2,2}\beta a_{(1,1, i,j,k_0,l-1)}\nonumber\\&+\lambda_{2,3}\alpha a_{(1,1,i-2,j,k_0,l-3)}]x_1^ix_2^jx_3t_5^{k_0}t_6^l\nonumber\\
 +&\sum[\lambda_{3,1}a_{(0,1, i,j-1,k_0,l-2)}  
+\lambda_{3,2}\alpha a_{(1,1, i-1,j,k_0,l-2)}\nonumber\\
                        &+\lambda_{3,3}a_{(1,1, i-2,j-1,k_0,l-3)}+\lambda_{3,4}a_{(0,0,i,j,k_0,l-1)}]x_1^ix_2^jx_4t_5^{k_0}t_6^l\nonumber\\
+\sum&[\lambda_{4,1}a_{(0,1,i-1,j,k_0,l-2)}  
+\lambda_{4,2}a_{(1,1,i,j-1,k_0,l-2)}
\nonumber\\
+\lambda_{4,3}&a_{(1,1,i-3,j,k_0,l-3)}+\lambda_{4,4}a_{(1,0,i,j,k_0,l-1)}]x_1^ix_2^jx_3x_4t_5^{k_0}t_6^l+V'.
\end{align*}
We have previously established that $\left(x_1^{i}
x_2^{j}x_3^{\epsilon_1}x_4^{\epsilon_2}
t_5^{k}t_6^{l}\right)_{((\epsilon_1,\epsilon_2, \underline{v})\in \{0,1\}^2\times \mathbb{N}^2 \times \mathbb{Z}^2)}$ is a basis of $\mathcal{A}_{\alpha,\beta}[t_5^{-1}, t_6^{-1}]$ (note that $l\geq 0$ in this part of the proof). 

Since $\underline{v}_0=(i,j,k_0,l)\in \mathbb{N}^2\times \mathbb{Z}\times \mathbb{N}$ (with $k_0<0$) and $\underline{w}=(i,j,k,l)\in                      \mathbb{N}^4$ (with $k\geq 0$)  in the above equality, it follows  that
\begin{align}
\lambda_{1,1}&\beta a_{(0,1, i,j,k_0,l-1)}+\lambda_{1,2}\alpha^2 a_{(1,1, i,j,k_0,l-2)}
+\lambda_{1,3}\beta a_{(1,1, i-2,j,k_0,l-2)}=0,\label{pc4e15}\\
\lambda_{2,1}&\alpha a_{(0,1, i,j,k_0,l-2)}  
+\lambda_{2,2}\beta a_{(1,1, i,j,k_0,l-1)}+\lambda_{2,3}\alpha a_{(1,1, i-2,j,k_0,l-3)}
=0,\label{pc4e16}\\
\lambda_{3,1}&a_{(0,1, i,j-1,k_0,l-2)}  
+\lambda_{3,2}\alpha a_{(1,1, i-1,j,k_0,l-2)}+\lambda_{3,3}a_{(1,1, i-2,j-1,k_0,l-3)}\nonumber\\&+\lambda_{3,4}a_{(0,0, i,j,k_0,l-1)}=0,\label{pc4e17}\\
\lambda_{4,1}&a_{(0,1, i-1,j,k_0,l-2)}  
+\lambda_{4,2}a_{(1,1, i,j-1,k_0,l-2)}+\lambda_{4,3}a_{(1,1,i-3,j,k_0,l-3)}\nonumber\\
&+\lambda_{4,4}a_{(1,0, i,j,k_0,l-1)}=0.\label{pc4e18}
\end{align}   
From \eqref{pc4e15} and \eqref{pc4e16}, one can easily deduce that
\begin{align}
&a_{(0,1, i,j,k_0,l)}=-\frac{\alpha^2\lambda_{1,2}}{\beta\lambda_{1,1}}a_{(1,1, i,j,k_0,l-1)}
-\frac{\lambda_{1,3}}{\lambda_{1,1}}a_{(1,1, i-2,j,k_0,l-1)},\label{pc4e19}\\
&  
a_{(1,1, i,j,k_0,l)}=-\frac{\alpha\lambda_{2,1}}{\beta\lambda_{2,2}}a_{(0,1, i,j,k_0,l-1)}-\frac{\alpha\lambda_{2,3}}{\beta\lambda_{2,2}}a_{(1,1, i-2,j,k_0,l-2)}.\label{pc4e20}
\end{align}
Note that  $a_{(\epsilon_1,\epsilon_2, i,j,k_0,l)}:=0$ whenever $i<0$ or $j<0$ or $l<0$ for all $\epsilon_1,\epsilon_2\in \{0,1\}.$ 

\textbf{Claim.} The coefficients
$a_{(0,1, i,j,k_0,l)}$ and $a_{(1,1, i,j,k_0,l)}$ are all zero for all $l\geq 0$. We now justify the claim by an induction on $l.$ From \eqref{pc4e19} and \eqref{pc4e20}, the result is obviously true when $l=0.$ For $l\geq 0,$ assume that $a_{(0,1, i,j,k_0,l)}=a_{(1,1, i,j,k_0,l)}=0.$ Then, it follows from  \eqref{pc4e19} and \eqref{pc4e20} that
$$a_{(0,1, i,j,k_0,l+1)}=-\frac{\alpha^2\lambda_{1,2}}{\beta\lambda_{1,1}}a_{(1,1, i,j,k_0,l)}
-\frac{\lambda_{1,3}}{\lambda_{1,1}}a_{(1,1, i-2,j,k_0,l)},$$
$$a_{(1,1, i,j,k_0,l+1)}=-\frac{\alpha\lambda_{2,1}}{\beta\lambda_{2,2}}a_{(0,1,i,j,k_0,l)}-\frac{\alpha\lambda_{2,3}}{\beta\lambda_{2,2}}a_{(1,1, i-2,j,k_0,l-1)}.$$
From the inductive hypothesis, $a_{(1,1,i,j,k_0,l)}=a_{(1,1,i-2,j,k_0,l)}=a_{(0,1,i,j,k_0,l)}=a_{(1,1, i-2,j,k_0,l-1)}$ \newline $=0.$ 
Hence, $a_{(1,1, i,j,k_0,l+1)}=a_{(0,1,i,j,k_0,l+1)}=0.$  By the principle of mathematical induction,  $a_{(0,1, i,j,k_0,l)}=a_{(1,1,i,j,k_0,l)}=0$ for all $l\geq 0$ as desired.
Given that the families $a_{(0,1, i,j,k_0,l)}$ and $a_{(1,1, i,j,k_0,l)}$ are all zero, it follows from \eqref{pc4e17} and \eqref{pc4e18} that $a_{(0,0, i,j,k_0,l)}$ and 
$a_{(1,0, i,j,k_0,l)}$ are also zero for all $(i,j,k_0,l)\in \mathbb{N}^2\times\mathbb{Z}\times \mathbb{N}.$  This contradicts
our assumption. Hence, $x_-=0.$  Consequently, $x=x_+\in\mathcal{A}_{\alpha,\beta}$ as desired.

2. 
From Remark \ref{prrr}, we have that
$x_4=x_{4,6}+\frac{2}{3}t_5^3t_6^{-1}
=t_4+\frac{2}{3}t_5^3t_6^{-1}$. Again, from Lemma \ref{ev30}, we have that $\lambda_4=3\lambda_3+\lambda_5$ and $\lambda_6=-3\lambda_3.$ Therefore,

\begin{align*}
\rho(x_4)
&=\lambda_4t_{4}+\frac{2}{3}(3\lambda_5-\lambda_6)t_5^3
t_6^{-1}\\&=(3\lambda_3+\lambda_5)x_{4,6}+2(\lambda_3+\lambda_5)t_5^3
t_6^{-1}
\\
&=(3\lambda_3+\lambda_5)\left( x_4-\frac{2}{3}t_5^3t_6^{-1}\right) +2(\lambda_3+\lambda_5)t_5^3
t_6^{-1}\\
&=(3\lambda_3+\lambda_5)x_4+\frac{4}{3}\lambda_5t_5^3t_6^{-1}.
\end{align*}
Hence, 
\begin{align*}
\mathcal{D}(x_4)&=\text{ham}_x(x_4)+\rho(x_4)
=\text{ham}_x(x_4)+(3\lambda_3+\lambda_5)x_4+\frac{4}{3}\lambda_5t_5^3t_6^{-1}\in \mathcal{A}_{\alpha,\beta}.
\end{align*}
It follows that
$\lambda_5t_5^3t_6^{-1}\in \mathcal{A}_{\alpha,\beta},$ since 
$\text{ham}_x(x_4)+(3\lambda_3+\lambda_5)x_4\in \mathcal{A}_{\alpha,\beta}.$ Consequently,  $
\lambda_5t_5^3\in \mathcal{A}_{\alpha,\beta}t_6.$ Using the basis of $\mathcal{A}_{\alpha,\beta}$ (Prop. \ref{pc3p5}), we easily have that $\lambda_5=0$.  Therefore, $\rho(x_4)
=3\lambda_3x_4$ and $\rho(t_5)
=0.$ We already know from Lemma \ref{ev30} that $\rho(t_6)
=-3\lambda_3t_6.$   
From \eqref{pfe1}, we have
$\rho(x_3)=\lambda_3x_3+\frac{3}{2}(\lambda_4-\lambda_3-\lambda_5)x_4t_5^{-1}
+(3\lambda_5-\lambda_4-\lambda_6)t_5^2t_6^{-1}.$  Given that $\lambda_4=3\lambda_3, \ \lambda_5=0$ and $\lambda_6=-3\lambda_3,$ we have that $\rho(x_3)=\lambda_3x_3+3\lambda_3x_4t_5^{-1}.$
Now, $\mathcal{D}(x_3)=\text{ham}_x(x_3)+\rho(x_3)=\text{ham}_x(x_3)+\lambda_3x_3+3\lambda_3x_4t_5^{-1}
\in \mathcal{A}_{\alpha,\beta}.$ Observe that $\text{ham}_x(x_3), \lambda_3x_3\in \mathcal{A}_{\alpha,\beta}.$ Hence, $\lambda_3x_4t_5^{-1}
\in \mathcal{A}_{\alpha,\beta}$ which implies that $ \lambda_3x_4
\in \mathcal{A}_{\alpha,\beta}t_5.$ Similarly, $\lambda_3=0.$  We now have that
$\rho(x_3)=\rho(x_4)=\rho(x_5)=\rho(x_6)=0.$ We finish the proof by showing that 
$\rho(x_1)=\rho(x_2)=0.$ Recall from \eqref{pe2e} that
$$x_2x_4x_6-\frac{2}{3}x_3^3x_6-\frac{2}{3}x_2x_5^3+2x_3^2x_5^2-3
x_3x_4x_5+\frac{3}{2}x_4^2=\beta.$$ Apply $\rho$ to this relation to obtain $\rho(x_2)x_4x_6-\frac{2}{3}\rho(x_2)x_5^3=0.$ This implies that\newline
$\rho(x_2)\left( x_4x_6-\frac{2}{3}x_5^3\right)=0.$ Since $x_4x_6-\frac{2}{3}x_5^3\neq 0,$ it follows that $\rho(x_2)=0.$ Similarly, from \eqref{pe1e}, we have that
$$x_1x_3x_5-\frac{3}{2}x_1x_4-\frac{1}{2}x_2x_5+\frac{1}{2}x_3^2=\alpha.$$ Apply $\rho$ to this relation to obtain
$\rho(x_1)\left(x_3x_5-\frac{3}{2}x_4\right)=0.$ Since $x_3x_5-\frac{3}{2}x_4\neq 0,$ we must have: $\rho(x_1)=0.$ 
In conclusion, $\rho(x_\kappa)=0$ for all $\kappa\in \{1,\ldots,6\}.$ 

3. As a result of (1) and (2), we have that $\mathcal{D}(x_\kappa)=\text{ham}_x(x_\kappa).$ Consequently, $\mathcal{D}=\text{ham}_x$ as desired.
\end{proof}
\subsubsection{Poisson derivations of $\mathcal{A}_{\alpha, 0}$ and $\mathcal{A}_{0, \beta}$}
Following procedures similarly to the previous case (i.e. $\mathcal{A}_{\alpha,\beta}$ with $\alpha\beta\neq 0$), one can also compute the Poisson derivations of $\mathcal{A}_{\alpha,0}$ and $\mathcal{A}_{0,\beta}.$ The computations have been done, however, for the avoidance of redundancy, we are not going to include them here. We only summarize the results. Before we do that, we compute explicitly the scalar Poisson  derivations of $\mathcal{A}_{\alpha, 0}$ and $\mathcal{A}_{0,\beta}.$
\begin{lem}
\label{pev26}
Let $(\alpha,\beta)\in \mathbb{K}^2\setminus \{(0,0)\}.$ Suppose that $\vartheta$ and $\tilde{\vartheta}$ are linear maps  on $\mathcal{A}_{\alpha,0}$ and $\mathcal{A}_{0,\beta}$ respectively, and are defined by:
$$\vartheta(x_1)=-x_1, \ \ \vartheta(x_2)=-x_2, \ \ \vartheta(x_3)=0, \ \ \vartheta(x_4)=x_4, \ \ \vartheta(x_5)=x_5, \ \  \ \vartheta(x_6)=2x_6,$$
{and}
$$\tilde{\vartheta}(x_1)=-2x_1, \ \ \tilde{\vartheta}(x_2)=-3x_2, \ \ \tilde{\vartheta}(x_3)=-x_3, \ \ \tilde{\vartheta}(x_4)=0, \ \ \tilde{\vartheta}(x_5)=x_5, \ \  \ \tilde{\vartheta}(x_6)=3x_6.$$
Then,
 $\vartheta$  and $\tilde{\vartheta}$ extended to $\mathcal{A}_{\alpha,0}$
and $\mathcal{A}_{0,\beta}$ respectively using the Leibniz rule  are  $\mathbb{K}$-Poisson derivations of $\mathcal{A}_{\alpha,0}$
and $\mathcal{A}_{0,\beta}$ respectively.  
\end{lem}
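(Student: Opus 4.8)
The plan is to verify directly that the two prescribed linear maps $\vartheta$ and $\tilde\vartheta$ satisfy the two defining conditions of a Poisson derivation: the Leibniz rule with respect to the commutative multiplication and the Leibniz rule with respect to the Poisson bracket. By Remark \ref{ev29}(1) it suffices to check the bracket-Leibniz identity $\vartheta(\{x_i,x_j\})=\{\vartheta(x_i),x_j\}+\{x_i,\vartheta(x_j)\}$ on the generators $x_1,\dots,x_6$, since $\vartheta$ is already defined on all of $\mathcal{A}_{\alpha,0}$ via the product-Leibniz rule. First I would observe that $\vartheta$ (resp. $\tilde\vartheta$) is really the ``eigenvalue derivation'' attached to a $\mathbb{Z}$-grading: assigning weights $(-1,-1,0,1,1,2)$ to $(x_1,\dots,x_6)$, the map $\vartheta$ is the operator sending a monomial to (sum of weights)$\times$(monomial). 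So the key structural point is that each of the fifteen Poisson bracket relations listed just before the lemma is \emph{weight-homogeneous} for this grading, and moreover has the same weight on both sides, with that weight equal to the weight of $\vartheta$ applied to the bracket. Concretely, $\{x_i,x_j\}$ is a sum of monomials all of weight $w_i+w_j$; applying $\vartheta$ multiplies by $w_i+w_j$; while $\{\vartheta(x_i),x_j\}+\{x_i,\vartheta(x_j)\}=(w_i+w_j)\{x_i,x_j\}$ by bilinearity. Hence the identity holds \emph{provided} the brackets are homogeneous of the expected weight. The only subtlety is that in $\mathcal{A}_{\alpha,0}$ the relation $\{x_4,x_2\}=-3x_2x_4-4x_3^3$ and $\{x_6,x_2\}=3x_2x_6+9x_4-18x_3x_5$ involve several monomials, so one must check all of them carry weight $w_2+w_4=0$ and $w_2+w_6=1$ respectively — e.g. $w(x_3^3)=0=w_2+w_4$, $w(x_4)=1=w_2+w_6$, $w(x_3x_5)=1$, $w(x_2x_6)=1$ — which they do.

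The step that genuinely requires using the hypothesis $\beta=0$ (resp. $\alpha=0$) is the consistency of $\vartheta$ with the two quadratic relations \eqref{pe1e} and \eqref{pe2e} that cut out $\mathcal{A}_{\alpha,\beta}$ inside $\mathbb{K}[x_1,\dots,x_6]$; equivalently, $\vartheta$ must be well-defined on the quotient, i.e.\ it must send the ideal $\langle\Omega_1-\alpha,\Omega_2-\beta\rangle$ into itself, which (since $\vartheta$ is a derivation for the product) amounts to $\vartheta(\Omega_1-\alpha)$ and $\vartheta(\Omega_2-\beta)$ lying in that ideal. Now $\Omega_1=x_1x_3x_5-\tfrac32x_1x_4-\tfrac12x_2x_5+\tfrac12x_3^2$ is homogeneous of weight $w_1+w_3+w_5=0$, so $\vartheta(\Omega_1)=0$, hence $\vartheta(\Omega_1-\alpha)=0$ unconditionally — good. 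But $\Omega_2=x_2x_4x_6-\tfrac23x_3^3x_6-\cdots$ is homogeneous of weight $w_2+w_4+w_6=2$, so $\vartheta(\Omega_2)=2\Omega_2$ and $\vartheta(\Omega_2-\beta)=2\Omega_2=2(\Omega_2-\beta)+2\beta$; this lies in $\langle\Omega_1-\alpha,\Omega_2-\beta\rangle$ if and only if $\beta=0$. This is exactly where the hypothesis enters, and symmetrically for $\tilde\vartheta$: there $\Omega_1$ has weight $w_1+w_3+w_5=-2+(-1)+1=\ldots$ let me instead just record that for the weights $(-2,-3,-1,0,1,3)$ defining $\tilde\vartheta$ one gets $w(\Omega_1)=-2$ and $w(\Omega_2)=0$, so $\tilde\vartheta(\Omega_2-\beta)=0$ always while $\tilde\vartheta(\Omega_1-\alpha)=-2(\Omega_1-\alpha)-2\alpha$ lies in the ideal iff $\alpha=0$.

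So the proof I would write out has three short movements: (i) present both weight vectors and note that $\vartheta,\tilde\vartheta$ are the associated eigenvalue operators, hence automatically derivations for the product; (ii) tabulate the weights of all monomials appearing on the right-hand sides of the fifteen bracket relations and check each relation is homogeneous of weight $w_i+w_j$, which yields the bracket-Leibniz identity by the bilinearity argument above; (iii) verify that $\vartheta$ descends to $\mathcal{A}_{\alpha,0}$ by checking $\vartheta(\Omega_1-\alpha),\vartheta(\Omega_2-\beta)\in\langle\Omega_1-\alpha,\Omega_2-\beta\rangle$ using $\beta=0$, and symmetrically for $\tilde\vartheta$ using $\alpha=0$. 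I expect the main obstacle to be purely bookkeeping: there is no conceptual difficulty once the grading is identified, but one must be careful to treat \emph{every} monomial in the longer relations (those for $\{x_4,x_2\}$ and $\{x_6,x_2\}$, and the multi-term expressions for $\Omega_1,\Omega_2$) and to get the weights of $\Omega_1$ and $\Omega_2$ right, since it is precisely the weight of whichever $\Omega_i$ is nonzero that forces the corresponding parameter to vanish. A useful sanity check to include is that these weight vectors are consistent with the $\mathcal{H}$-action weights recorded earlier (the torus $\mathcal{H}=(\mathbb{K}^*)^2$ acts with weights that a posteriori explain why only the one-parameter families survive when $\alpha$ or $\beta$ is zero), tying the computation back to the $\mathcal{H}$-stratification picture.
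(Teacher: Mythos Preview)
Your proposal is correct and in fact conceptually cleaner than the paper's own proof. The paper proceeds by direct verification: it states that one must check $\vartheta$ preserves the two defining relations \eqref{pe1e}, \eqref{pe2e} and satisfies the bracket-Leibniz identity on each of the fifteen brackets $\{x_i,x_j\}$, then carries out only two of these checks explicitly (the relation \eqref{pe1e} and the bracket $\{x_6,x_2\}$), leaving the remaining cases to the reader.

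Your approach is different in that you first identify $\vartheta$ and $\tilde\vartheta$ as the degree operators for two $\mathbb{Z}$-gradings on the polynomial ring, with weight vectors $(-1,-1,0,1,1,2)$ and $(-2,-3,-1,0,1,3)$. This reduces the product-Leibniz rule to a tautology, reduces all fifteen bracket-Leibniz checks to the single homogeneity check that every monomial on the right-hand side of $\{x_i,x_j\}$ has weight $w_i+w_j$, and reduces well-definedness on the quotient to computing the weights of $\Omega_1$ and $\Omega_2$. You correctly isolate the one genuinely nontrivial point, namely that $\vartheta(\Omega_2-\beta)=2(\Omega_2-\beta)+2\beta$ forces $\beta=0$ (and symmetrically $\tilde\vartheta(\Omega_1-\alpha)$ forces $\alpha=0$). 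The payoff is that your argument explains \emph{why} the verification succeeds rather than merely confirming that it does, and it connects naturally to the $\mathcal{H}$-action: your weight vectors are infinitesimal versions of one-parameter subgroups of the torus $\mathcal{H}=(\mathbb{K}^*)^2$ acting on $\mathcal{A}$. The paper's approach, by contrast, requires no structural insight and would be the safer choice if one were unsure whether such a grading existed.
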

\begin{proof}
We need to show that $\vartheta$ satisfies the following two relations (see \eqref{pe1e} and \eqref{pe2e}):
\begin{align*}
x_1x_3x_5&-\frac{3}{2}x_1x_4-\frac{1}{2}x_2x_5+\frac{1}{2}x_3^2=\alpha,\\
x_2x_4x_6&-\frac{2}{3}x_3^3x_6-\frac{2}{3}x_2x_5^3+2x_3^2x_5^2-3
x_3x_4x_5+\frac{3}{2}x_4^2=\beta,
\end{align*}
and the Poisson bracket of $\mathcal{A}_{\alpha,\beta}$ (see Subsection \ref{sec6.4}) when $\alpha\neq 0$ and $\beta=0,$ and do the same thing for 
 $\tilde{\vartheta}$ when $\alpha=0$ and $\beta\neq 0.$ We will only do this for  the relation $x_1x_3x_5-\frac{3}{2}x_1x_4-\frac{1}{2}x_2x_5+\frac{1}{2}x_3^2=\alpha$ and the Poisson bracket
 $\{x_6, x_2\}=3x_2x_6+9x_4-18x_3x_5$ in  $\mathcal{A}_{\alpha, 0}$, and leave the remaining ones for the reader to verify.  
 We have: 
  
\begin{align*}
\vartheta(x_1)x_3x_5+x_1\vartheta(x_3)x_5
+x_1x_3\vartheta(x_5)&-\frac{3}{2}[\vartheta(x_1)x_4+x_1\vartheta(x_4)]\\-\frac{1}{2}[\vartheta(x_2)x_5&+x_2\vartheta(x_5)]+\vartheta(x_3)x_3=0, 
\end{align*}
and
\begin{align*}
\vartheta(\{x_6, x_2\})&=\vartheta(3x_2x_6+9x_4-18x_3x_5)\\
&=3[\vartheta(x_2)x_6+x_2\vartheta
(x_6)]+9\vartheta(x_4)-18[\vartheta(x_3)x_5+x_3\vartheta(x_5)]\\
&=3(-x_2x_6+2x_2x_6)+9x_4-18x_3x_5\\
&=3x_2x_6+9x_4-18x_3x_5\\
&=\{x_6,x_2\}\\
&=2\{x_6,x_2\}-\{x_6,x_2\}\\
&=\{2x_6, x_2\}+\{x_6, -x_2\}\\
&=\{\vartheta(x_6), x_2\}+\{x_6, \vartheta(x_2)\}.
\end{align*}
\end{proof}
 
We summarize our main results in this section in the theorem below. Recall that HP$^1(\Delta)$ denote the first Poisson cohomology group of the Poisson algebra $\Delta$. 

\begin{thm}
Given 
$\mathcal{A}_{\alpha,\beta}=\mathbb{K}[X_1,\ldots, X_6]/\langle \Omega_1-\alpha,\Omega_2-\beta\rangle,$ with $(\alpha,\beta)\in \mathbb{K}^2\setminus \{(0,0)\},$  we have the following results:
\begin{itemize}
\item[1.]  if $\alpha, \beta\neq 0;$ then every Poisson derivation $\mathcal{D}$ of $\mathcal{A}_{\alpha,\beta}$ can uniquely be written as $\mathcal{D}=\mathrm{ham}_x,$ where $x\in\mathcal{ A}_{\alpha,\beta}.$
\item[2.]if $\alpha\neq 0$ and $\beta=0,$ then every Poisson derivation $\mathcal{D}$ of $\mathcal{A}_{\alpha,0}$ can uniquely be written as $\mathcal{D}=\mathrm{ham}_x+\lambda\vartheta,$ where $\lambda\in \mathbb{K}$ and $x\in \mathcal{A}_{\alpha,0}.$
 \item[3.] if $\alpha=0$ and $\beta\neq 0,$ then every Poisson derivation $\mathcal{D}$ of $\mathcal{A}_{0,\beta}$ can uniquely be written as $\mathcal{D}=\mathrm{ham}_x+\lambda\tilde{\vartheta},$ where $\lambda\in \mathbb{K}$ and $x\in \mathcal{A}_{0,\beta}.$
\item[4.] $HP^1(\mathcal{A}_{\alpha,0})=\mathbb{K}[\vartheta]$ and $HP^1(\mathcal{A}_{0,\beta})=\mathbb{K}[\tilde{\vartheta}],$ where
 $[\vartheta]$ and $[\tilde{\vartheta}]$ respectively denote the classes of $\vartheta$ and $\tilde{\vartheta}$  modulo the space of inner Poisson derivations.
\item[5.] if $\alpha,\beta\neq 0;$ then $HP^1(\mathcal{A}_{\alpha,\beta})=\{[0]\},$ where $[0]$ denotes the class of $0$   modulo the space of inner Poisson derivations.
\end{itemize}
 \end{thm}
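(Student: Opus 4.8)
The plan is to assemble the proof of the final theorem entirely from the lemmas already established in Section \ref{chap6}, since all the hard analytic work of extending and pulling back Poisson derivations has been done. For part (1), I would take an arbitrary $\mathcal{D}\in\mathrm{Der}_P(\mathcal{A}_{\alpha,\beta})$ with $\alpha,\beta\neq 0$. By localization $\mathcal{D}$ extends uniquely along the chain \eqref{pe} to a Poisson derivation of the Poisson torus $\mathcal{R}_3$; by Corollary \ref{pic2} (since $Z_P(\mathcal{R}_3)=\mathbb{K}$) it decomposes as $\mathcal{D}=\mathrm{ham}_x+\rho$ with $x\in\mathcal{R}_3$ and $\rho$ a scalar Poisson derivation given by $\rho(t_i)=\lambda_i t_i$. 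Then I invoke Lemma \ref{ppl3}, the subsequent lemma on $\mathcal{R}_5$, and Lemma \ref{ev30} in sequence: these successively force $x\in\mathcal{R}_4$, then $x\in\mathcal{R}_5$, then $x\in\mathcal{A}_{\alpha,\beta}$, and simultaneously force all $\lambda_i=0$, i.e.\ $\rho=0$. Hence $\mathcal{D}=\mathrm{ham}_x$ with $x\in\mathcal{A}_{\alpha,\beta}$. Uniqueness of $x$ up to an additive constant follows because $Z_P(\mathcal{A}_{\alpha,\beta})=\mathbb{K}$ (Lemma on $Z_P(\mathcal{R}_i)$), so $\mathrm{ham}_x=\mathrm{ham}_{x'}$ iff $x-x'\in\mathbb{K}$; to get genuine uniqueness I would normalize, e.g.\ by taking $x$ with zero constant term in the basis $\mathfrak{P}$ of Proposition \ref{pc3p5}.

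For parts (2) and (3) the structure is identical: the paper asserts the analogous chain of lemmas goes through for $\mathcal{A}_{\alpha,0}$ and $\mathcal{A}_{0,\beta}$, with the one difference that the scalar part $\rho$ need not vanish entirely — one scalar degree of freedom survives, realized by $\vartheta$ (resp.\ $\tilde\vartheta$) of Lemma \ref{pev26}. So I would say: running the same extension/pull-back argument, every $\mathcal{D}$ is of the form $\mathrm{ham}_x+\rho$ with $x\in\mathcal{A}_{\alpha,0}$ and $\rho$ a scalar Poisson derivation, and the space of scalar Poisson derivations of $\mathcal{A}_{\alpha,0}$ is one-dimensional, spanned by $\vartheta$ (this uses that $\vartheta$ is indeed a Poisson derivation, Lemma \ref{pev26}, plus the constraints on the $\lambda_i$ coming from the relations \eqref{pe1e}, \eqref{pe2e} applied as in Lemma \ref{ev30}(2)). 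Thus $\mathcal{D}=\mathrm{ham}_x+\lambda\vartheta$ for a unique $\lambda\in\mathbb{K}$ and a unique normalized $x$. The key point to spell out here is that $\vartheta$ is not itself inner: if $\vartheta=\mathrm{ham}_y$ for some $y\in\mathcal{A}_{\alpha,0}$, then comparing eigenvalue data (e.g.\ $\vartheta(x_3)=0$ while the Hamiltonian flows of the generators have prescribed weights) yields a contradiction; alternatively one observes $\vartheta$ acts with nonzero trace-like weight on the grading, which no Hamiltonian derivation can do on a Poisson-simple algebra with trivial Poisson centre.

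Parts (4) and (5) are then immediate bookkeeping. By definition $\mathrm{HP}^1(\Delta)=\mathrm{Der}_P(\Delta)/\mathrm{InnDer}_P(\Delta)$. In case $\alpha,\beta\neq 0$, part (1) says every Poisson derivation is inner, so $\mathrm{HP}^1(\mathcal{A}_{\alpha,\beta})=\{[0]\}$, giving (5). In the mixed cases, parts (2)–(3) give a short exact description $\mathrm{Der}_P=\mathrm{InnDer}_P\oplus\mathbb{K}\vartheta$ (resp.\ $\oplus\,\mathbb{K}\tilde\vartheta$), and since $\vartheta$ (resp.\ $\tilde\vartheta$) is non-inner, the quotient is the one-dimensional space $\mathbb{K}[\vartheta]$ (resp.\ $\mathbb{K}[\tilde\vartheta]$), proving (4).

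The main obstacle is not in part (5) itself — that line is trivial once (1) is in hand — but in making sure the citation of the lemma chain is airtight: I must check that Lemmas \ref{ppl3}, the $\mathcal{R}_5$-lemma, and \ref{ev30} really do conclude $\rho=0$ and not merely $\rho$ lies in some subspace, and that their hypotheses ($\alpha\beta\neq 0$) are exactly the ones in force. Concretely, the delicate step is Lemma \ref{ev30}(2), where the relations \eqref{pe1e} and \eqref{pe2e} are used to kill the last scalars $\lambda_1,\lambda_2$; when $\alpha$ or $\beta$ is zero one of these relations degenerates and a scalar survives — that is precisely the dichotomy between (1) and (2)–(3), so I would emphasize in the write-up exactly which relation degenerates and why it leaves a one-parameter family. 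The uniqueness claims also require a brief but genuine argument (normalization against the bases $\mathfrak{P}$ and the triviality of the Poisson centre), which I would not skip.
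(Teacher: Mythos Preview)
Your proposal is correct and follows essentially the same approach as the paper: assemble part~(1) from the chain Lemma~\ref{ppl3} $\to$ the $\mathcal{R}_5$-lemma $\to$ Lemma~\ref{ev30}, declare that the analogous chain with one surviving scalar handles parts~(2)--(3), and read off (4)--(5) as bookkeeping. One small sharpening: for the non-innerness of $\vartheta$ (and the uniqueness of the decomposition in (2)--(3)), rather than the ad hoc ``eigenvalue data'' or ``trace-like weight'' arguments you sketch, it is cleaner to extend both $\vartheta$ and a putative $\mathrm{ham}_y$ to the torus $\mathcal{R}_3$ and invoke the \emph{uniqueness} clause of Theorem~\ref{pic1}/Corollary~\ref{pic2} directly---a nonzero scalar derivation on $\mathcal{R}_3$ can never coincide with an inner one there, and this is exactly what the paper's framework is set up to deliver.
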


Let $(\alpha, \beta)\in \mathbb{K}^2\setminus \{(0,0)\}$.
One can easily conclude that the first Poisson cohomology group HP$^1( \mathcal{A}_{\alpha,\beta})$ is isomorphic to the first Hochschild cohomology group HH$^1(A_{\alpha,\beta})$ studied in  \cite[Theorem 5.12]{lo}.  

It is natural to ask whether HP$^i( \mathcal{A}_{\alpha,\beta})$ is isomorphic to HH$^i(A_{\alpha,\beta})$ for all $i$.





\begin{minipage}{\textwidth}
\noindent S Launois \\
School of Mathematics, Statistics and Actuarial Science,\\
University of Kent\\
Canterbury, Kent, CT2 7FS,\\ UK\\[0.5ex]
email: S.Launois@kent.ac.uk \\

\noindent I Oppong\\
School of Mathematics, Statistics and Actuarial Science,\\
University of Kent\\
Canterbury, Kent, CT2 7FS,\\ UK\\[0.5ex]
email: I.Oppong@kent.ac.uk \\
\end{minipage}

\end{document}